\numberwithin{equation}{section}%
\newcommand{\Z}{\mathbb{Z}}
\newcommand{\R}{\mathbb{R}}
\DeclareMathOperator{\E}{\mathbb{E}}
\newcommand{\al}{\alpha}
\newcommand{\la}{\lambda}
\newcommand{\La}{\Lambda}
\newcommand{\be}{\beta}
\newcommand{\g}{\gamma}
\newcommand{\ga}{\gamma}
\newcommand{\ka}{\varkappa}
\newcommand{\Sym}{\mathsf{Sym}}
\newcommand{\Yb}{\mathbb{Y}}
\newcommand{\ab}{\boldsymbol\alpha}
\newcommand{\bb}{{\boldsymbol\beta}}
\newcommand{\Pl}{\mathbf{Pl}}
\newcommand{\M}{\mathscr{M}}
\newcommand{\HL}{\mathscr{HL}}
\newcommand{\MM}{\mathscr{MM}}
\newcommand{\MP}{\mathscr{MP}}
\newcommand{\x}{\mathbf{x}}
\newcommand{\Ptab}{\mathscr{P}}
\newcommand{\Ab}{\mathbf{A}}
\newcommand{\Bb}{\mathbf{B}}
\newcommand{\Ps}{\mathsf{P}}
\newcommand{\Qs}{\mathsf{Q}}
\newcommand{\Us}{\mathscr{U}}
\newcommand{\Ds}{\mathscr{D}}
\DeclareMathOperator{\Prob}{\mathrm{Prob}}
\DeclareMathOperator{\qProb}{\mathrm{`Prob'}}
\newcommand{\bi}[2]{\big[\begin{smallmatrix}
	#2\\\rule{0pt}{7.5pt}#1
\end{smallmatrix}\big]}
\newcommand{\A}{\mathcal{A}}
\newcommand{\prech}{\prec_{\mathsf{h}}}
\newcommand{\precb}{\nearrow}
\newcommand{\precv}{\prec_{\mathsf{v}}}
\newcommand{\de}{\mathsf{e}}
\newcommand{\F}{\mathscr{F}}
\DeclareMathOperator{\nt}{\mathsf{next}}
\newcommand{\WW}{\mathsf{W}}
\newcommand{\NN}{\mathsf{N}}
\newcommand{\MMM}{\mathsf{M}}
\newcommand{\rp}{\mathsf{r}}
\newcommand{\anum}{\mathsf{a}}
\newcommand{\bnum}{\mathsf{b}}
\newcommand{\mm}{\mathsf{m}}
\newcommand{\wm}{\mathsf{w}}
\newcommand{\pu}{p_+}
\newcommand{\pd}{p_-}
\newcommand{\pz}{p_0}
\newcommand{\T}{\mathsf{T}}
\newcommand{\Q}{\mathsf{Q}}
\newcommand{\q}{\mathfrak{q}}
\newcommand{\Ub}{\mathbb{U}}
\def\note#1{\textup{\textsf{\Large\color{blue}}}}
\newtheorem{proposition}{Proposition}[section]
\newtheorem{lemma}[proposition]{Lemma}
\newtheorem{theorem}[proposition]{Theorem}
\theoremstyle{definition}
\newtheorem{definition}[proposition]{Definition}
\newtheorem{remark}[proposition]{Remark}
\newtheoremstyle{dynrule}
{}
{}
{\slshape}
{}
{\bf}
{.}
{.5em}
{}
\theoremstyle{dynrule}
\newtheorem{sampling}{Sampling algorithm}
\newtheorem{conjecture}[proposition]{Conjecture}
\begin{document}
\title[Law of Large Numbers for Infinite Random Matrices over a Finite Field]{Law of Large Numbers for Infinite\\{}Random Matrices over a Finite Field}

\author[A. Bufetov]{Alexey Bufetov}
\address{A. Bufetov,
Department of Mathematics,
International Laboratory of Representation Theory and Mathematical Physics,
Higher School of Economics,
Vavilova str. 7, Moscow, 117312, Russia,
and
Institute for Information Transmission Problems, Bolshoy Karetny per.~19, Moscow, 127994, Russia}
\email{alexey.bufetov@gmail.com}

\author[L. Petrov]{Leonid Petrov}
\address{L. Petrov,
Department of Mathematics, Northeastern University, 360 Huntington ave., Boston, MA 02115, USA,
and
Institute for Information Transmission Problems, Bolshoy Karet\-ny per.~19, Moscow, 127994, Russia}
\email{lenia.petrov@gmail.com}

\subjclass[2010]{Primary 05E10; Secondary 20G40; 60J10; 82C22}
\keywords{Asymptotic representation theory; general linear groups over a finite field;
Kerov's conjecture; Hall--Littlewood symmetric functions;
Law of Large Numbers for rows and columns of random Young diagrams;
randomized Robinson-Schensted insertion}

\dedicatory{To Grigori Olshanski on the occasion of his 65th birthday}

\begin{abstract}
	Asymptotic representation theory
	of general linear groups
	$GL(n,F_\q)$
	over a finite field
	leads to studying
	probability measures $\rho$ on the
	group $\Ub$ of all infinite uni-uppertriangular
	matrices over $F_\q$,
	with the condition that $\rho$
	is invariant under
	conjugations
	by arbitrary infinite matrices.
	Such probability measures form an
	infinite-dimensional simplex,
	and the description of its extreme points
	(in other words, ergodic measures $\rho$)
	was conjectured by Kerov in connection
	with nonnegative specializations of
	Hall--Littlewood symmetric functions.
	
	Vershik and Kerov also conjectured the following Law of Large Numbers.
	Consider an infinite random matrix drawn from an
	ergodic measure coming from the Kerov's conjectural classification and its $n \times n$ submatrix formed by the first rows and
    columns.
	The sizes of Jordan blocks of the submatrix can be
	interpreted as a (random) partition of $n$,
	or, equivalently, as a (random)
	Young diagram $\la(n)$
	with $n$ boxes. Then, as $n\to\infty$, the rows and columns of
	$\la(n)$ have almost sure limiting frequencies corresponding to parameters of this ergodic measure.

	Our main result is the proof of this Law of Large Numbers.
	We achieve it by analyzing
	a new randomized
	Robinson--Schensted--Knuth (RSK) insertion algorithm
	which samples random Young diagrams $\la(n)$
	coming from ergodic measures.
	The probability
	weights of these Young diagrams are expressed in terms of
	Hall--Littlewood symmetric functions.
	Our insertion algorithm is a modified and extended version
	of a recent construction by
	Borodin and the second author~\cite{BorodinPetrov2013NN}.
	On the other hand, our
	randomized RSK insertion
	generalizes a version of the RSK insertion
	introduced by Vershik and Kerov
	\cite{Vershik1986} in connection
	with asymptotic representation theory
	of symmetric groups (which is governed by nonnegative
	specializations of Schur symmetric functions).
\end{abstract}

\maketitle

\setcounter{tocdepth}{1}
\tableofcontents
\setcounter{tocdepth}{2}

\section{Introduction} 
\label{sec:introduction}

In
\S\S \ref{sub:infinite_random_matrices_over_a_finite_field}--\ref{sub:randomized_rsk_insertion_algorithm} we describe the
setup and our main results, and then in
\S \ref{sub:symmetric_groups}
and
\S \ref{sub:asymptotic_representation_theory_of_linear_groups_over_a_finite_field}
we discuss connections with asymptotic representation theory.

\subsection{Infinite random matrices over a finite field} 
\label{sub:infinite_random_matrices_over_a_finite_field}

Let $\q=p^{d}$ be a prime power, $F_\q$
be the corresponding finite field,
and $GL(n,F_\q)$ be the
group of all invertible $n\times n$ matrices over $F_\q$.

Let $\Ub$ be the group of all
infinite uni-uppertriangular matrices over
$F_\q$, i.e.,
matrices $X=[X_{ij}]_{i,j=1}^{\infty}$
for which $X_{ii}=1$ and $X_{ij}=0$ for $i>j$.
This is a compact group (under the topology of pointwise
convergence of matrix elements).

For any uni-uppertriangular $n\times n$ matrix $g\in GL(n,F_\q)$,
denote by $Cyl_g\subset \Ub$ the cylindrical subset
consisting of all infinite matrices in $\Ub$ whose top
$n\times n$ corner coincides with $g$.
Note that all eigenvalues of such a matrix $g$ are all equal to $1$,
and so its conjugacy class in $GL(n,F_\q)$
is completely determined
by sizes of its Jordan blocks. We will identify
sizes of these blocks
with partitions $\la$ of~$n$
(=~Young diagrams $\la$ with $|\la|=n$ boxes; see \S \ref{sub:young_diagrams} for notation).

\begin{definition}\label{def:central}
	A probability Borel measure $\rho$ on $\Ub$ is called
	\emph{central} if for any finite
	uni-uppertriangular matrix $g$ the measure
	$\rho(Cyl_g)$ depends only on the conjugacy class of
	$g$, i.e., on the partition corresponding to
	sizes of its Jordan blocks.
\end{definition}

Centrality property
means conjugation-invariance in the sense that
if $h\in GL(\infty,F_\q)=\bigcup_{n=1}^{\infty}GL(n,F_\q)$
(i.e., $h$ is an
infinite matrix which differs from the identity matrix
in a finite number of matrix elements)
and $M\subset \Ub$ is a Borel subset
such that
$hMh^{-1}\subset \Ub$, then
it must be that $\rho(M)=\rho(hMh^{-1})$.

Central probability measures on $\Ub$ form a convex set.
Its extreme points (i.e., central measures which cannot
be expressed as nontrivial convex combinations
of other central measures) will be referred to as
\emph{ergodic central measures}.

\medskip

The classification of ergodic central measures on $\Ub$
is a well-known open problem related to the asymptotic
representation theory of
the linear groups $GL(n,F_\q)$, see \S \ref{sub:asymptotic_representation_theory_of_linear_groups_over_a_finite_field}
below
for more discussion and references.
A conjectural answer to the problem
is given by Kerov
\cite{Kerov1992HL},
\cite[Ch.~2.9]{Kerov-book}:\footnote{The conjecture was
originally formulated in equivalent terms of
nonnegative specializations
of Hall--Littlewood symmetric functions.
Another equivalent formulation involves coherent probability
measures on the Young branching graph
with formal edge multiplicities depending on $\q$ (cf. \S \ref{sub:plancherel_specializations_and_young_graph}).
See \cite[Thm. 2.3]{Borodin1995_ext},
\cite{Fulman2000RM}, \cite[Prop. 4.7]{GorinKerovVershikFq2012} for details of these equivalences.}
\begin{conjecture}[Kerov]\label{conj:Spec_HL}
	Ergodic central measures
	on $\Ub$ are in one-to-one correspondence with
	triplets $(\ab;\boldsymbol\be;\ga)\in\R^{2\infty+1}$
	such that
	\begin{align}\label{alpha_beta_gamma_intro1}
		\ab=(\al_1\ge\al_2\ge \ldots\ge0),
		\qquad
		\boldsymbol\be=
		(\be_1\ge\be_2\ge \ldots\ge 0),
		\qquad\ga\ge0,
	\end{align}
	and
	\begin{align}\label{alpha_beta_gamma_intro2}
		\sum_{i=1}^{\infty}\al_i+
		\sum_{i=1}^{\infty}\frac{\be_i}{1-\q^{-1}}+
		\frac{\gamma}{1-\q^{-1}}=1.
	\end{align}
	
	The correspondence is established via the measures of cylindrical sets:
	\begin{align}\label{ergodic_central}
		\rho^{\ab;\boldsymbol\be;\ga}(Cyl_g)=
		\frac{\q^{-n(n-1)/2+\sum_{i}(i-1)\la_i}}
		{(1-\q^{-1})^{n}}
		Q_{\la}(\ab;\bb;\Pl_\ga\,|\, 0,\q^{-1})
	\end{align}
	for any $n\ge1$ and any uni-uppertriangular
	$n\times n$
	matrix $g$, where $\la$, $|\la|=n$, corresponds to Jordan
	block sizes of $g$.
	Here $Q_{\la}(\ab;\bb;\Pl_\ga\,|\, 0,\q^{-1})$
	denotes the specialization of
	the ``$Q$'' Hall--Littlewood symmetric function, see
	\S \ref{sec:preliminaries}.\footnote{Our
	notation of parameters
	$(\ab;\boldsymbol\be;\ga)$
	borrowed from Borodin--Corwin
	\cite{BorodinCorwin2011Macdonald}
	and also used in Borodin--Petrov \cite{BorodinPetrov2013NN}
	differs from the one of Kerov
	\cite{Kerov-book}, see also
	Gorin--Kerov--Vershik
	\cite{GorinKerovVershikFq2012}.
	Details are explained in 
	Remark \ref{rmk:Kerov_different}
	below.}
\end{conjecture}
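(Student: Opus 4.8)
The final statement is the (to date unproven) classification of the extreme points of the infinite-dimensional simplex of central measures; the natural line of attack is the ergodic method of Vershik and Kerov, which I sketch below together with the point at which it stalls.

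\textbf{Step 1: combinatorial reduction.} For each $n$, setting $M_n(\la)$ to be the common value $\rho(Cyl_g)$ summed over all uni-uppertriangular $g\in GL(n,F_\q)$ of Jordan type $\la$ (equivalently, that value times the number of such $g$) defines a probability measure on partitions of $n$; the prefactor in \eqref{ergodic_central} is precisely engineered so that $M_n(\la)$ equals a clean combinatorial constant times $Q_\la(\ab;\bb;\Pl_\ga\mid0,\q^{-1})$. Compatibility of the cylinder sets under the embeddings $GL(n,F_\q)\hookrightarrow GL(n+1,F_\q)$ turns $\{M_n\}_{n\ge1}$ into a coherent system on the Young graph $\Yb$ equipped with $\q$-deformed edge multiplicities --- equivalently, by the equivalences recalled right after the Conjecture, the Pieri coefficients of Hall--Littlewood functions at $t=\q^{-1}$. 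Hence: central measures $\leftrightarrow$ coherent systems, ergodic central measures $\leftrightarrow$ extreme points, and the whole problem is the description of the \emph{boundary} of this branching graph.

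\textbf{Step 2: the ergodic method.} By general boundary theory, every extreme coherent system is a pointwise limit, along some sequence of diagrams $\la^{(n)}$ with $|\la^{(n)}|=n$ growing ``regularly'', of the relative-dimension functionals
\[
  \mu\;\longmapsto\;\frac{\bigl(\text{weighted number of paths }\mu\nearrow\la^{(n)}\text{ in }\Yb\bigr)}{\bigl(\text{weighted number of paths }\varnothing\nearrow\la^{(n)}\bigr)},
\]
which in the present weighting is, up to explicit factors, the ratio of a skew Hall--Littlewood function $Q_{\la^{(n)}/\mu}$ to the straight function $Q_{\la^{(n)}}$. So classifying extreme points reduces to determining all possible such limits as $\la^{(n)}$ ranges over regular sequences.

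\textbf{Step 3: Hall--Littlewood asymptotics.} Using the Cauchy identity for Hall--Littlewood functions together with the multiplicativity of the one-variable specializations, one shows that any such limit, viewed as a function of $\mu$, is forced to have the form $\mathrm{const}\cdot Q_\mu(\ab;\bb;\Pl_\ga\mid0,\q^{-1})$, with the triplet $(\ab;\bb;\ga)$ reading off the limiting frequencies of the macroscopic rows of $\la^{(n)}$ (the $\al_i$) and the structure of its asymptotically short columns (the $\be_i$ and $\ga$, normalized by powers of $\q$); the constraint \eqref{alpha_beta_gamma_intro2} is the $n=1$ identity $M_1(\{(1)\})=1$ read through \eqref{ergodic_central}. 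Conversely, these specializations are Hall--Littlewood-nonnegative and produce genuine coherent systems, so every triplet does give a point of the simplex. This last verification is a Hall--Littlewood analogue of the Edrei--Thoma / Vershik--Kerov classification for the symmetric groups.

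\textbf{The main obstacle.} The crux is \emph{completeness}: showing that no further extreme points occur and that distinct triplets yield distinct measures. This demands uniform control of the Hall--Littlewood ratios of Step 2 --- enough to extract subsequential limits and, more delicately, to prove that each limit is a \emph{multiplicative} functional of precisely the asserted shape. For $t=\q^{-1}\in(0,1)$ this is genuinely harder than the Schur case $t=0$, because the branching now carries nontrivial parameter-dependent edge multiplicities and the asymptotic profile of $\la^{(n)}$ no longer determines the limit through a single transparent integral transform. It is exactly this rigidity/positivity step that has not been pushed through in general, which is why the statement is recorded here as Kerov's \emph{conjecture}; the present paper does not attack it, but takes the classification as input and proves the Law of Large Numbers for the measures \eqref{ergodic_central}.
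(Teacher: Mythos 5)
You were asked about Conjecture \ref{conj:Spec_HL}, which the paper does not prove: it is recorded explicitly as Kerov's open conjecture, and the only proved ingredients the paper relies on are that the formula \eqref{ergodic_central} does define central measures and that these are ergodic (quoted from Gorin--Kerov--Vershik, Prop.~4.7). So there is no proof in the paper to compare your proposal against, and your write-up, as you acknowledge, is not a proof either. Your Steps 1--2 are a correct account of the standard reduction: centrality turns the system $\{\HL_n^{\ab;\bb;\Pl_\ga}\}$ into a coherent family on the Young graph with Hall--Littlewood edge multiplicities at $t=\q^{-1}$ (this matches \S\ref{sub:plancherel_specializations_and_young_graph} and the references given after the conjecture), extreme points correspond to the boundary, and the Vershik--Kerov ergodic method expresses boundary points as limits of ratios of skew to straight Hall--Littlewood functions along regular sequences $\la^{(n)}$. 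Your remark that \eqref{alpha_beta_gamma_intro2} is just the $n=1$ normalization is also correct, since $Q_{(1)}(\cdot\,|\,0,\q^{-1})=(1-\q^{-1})p_1$ cancels the prefactor in \eqref{ergodic_central}.

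The genuine gap is exactly where you place it, in Step 3: the claim that every such limit is forced to be a multiplicative functional of the form $\mathrm{const}\cdot Q_\mu(\ab;\bb;\Pl_\ga\,|\,0,\q^{-1})$ is asserted rather than argued, and this completeness statement (together with injectivity of the parametrization) is the entire content of the conjecture; for $t\in(0,1)$ no one has carried it out, which is precisely why the paper states it as a conjecture and only proves the Law of Large Numbers for the measures \eqref{ergodic_central} assuming the parametrized family as given. One further caution about your heuristic dictionary: in the Hall--Littlewood setting the passage between asymptotic row/column data and the parameters $(\ab;\bb;\ga)$ is more delicate than in the Schur case (cf.\ Remark \ref{rmk:Kerov_different} and \S\ref{sub:asymptotic_representation_theory_of_linear_groups_over_a_finite_field}, where $\be$-parameters appear through geometric sequences in $\q^{-1}$), so even the identification of the limiting triplet from a regular sequence $\la^{(n)}$ would need a genuine argument, not just an analogy with the Edrei--Thoma case.
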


Probability measures $\rho^{\ab;\boldsymbol\be;\ga}$
on $\Ub$
with cylindrical probabilities \eqref{ergodic_central} exist and
are indeed ergodic, see Gorin--Kerov--Vershik
\cite[Prop. 4.7]{GorinKerovVershikFq2012}.

One example of
a central measure is the uniform measure
on $\Ub$
studied by Borodin
\cite{Borodin1995}, \cite{Borodin1995_ext}.
This measure is ergodic, it corresponds to
taking $\al_i=(1-\q^{-1})\q^{1-i}$, $i=1,2,\ldots$,
and setting all $\be_i$'s and $\gamma$ to zero (see also \S \ref{sub:asymptotic_representation_theory_of_linear_groups_over_a_finite_field} below for connection to unipotent traces).

\medskip

Restating \eqref{ergodic_central} in terms
of the distribution of the Young diagram
$\la$ corresponding to Jordan block sizes, one
arrives at the probability distribution
\begin{align}\label{HL_coherent_intro}
	\HL_n^{\ab;\bb;\Pl_\ga}(\la):=
	n!\,
	Q_\la(\ab;\bb;\Pl_\ga\,|\,0,\q^{-1})
	P_\la(\Pl_{1}\,|\,0,\q^{-1})
\end{align}
on the set $\Yb_n$ of all Young diagrams with $n$
boxes. Here $P_\la$ is
the ``$P$'' Hall--Littlewood symmetric function,\footnote{$P_\la$ is a constant multiple
of $Q_\la$. We use the standard notation of \cite{Macdonald1995}.} and
$P_\la(\Pl_{1}\,|\,0,\q^{-1})$
denotes the so-called Plancherel specialization of $P_\la$
(see \S \ref{sec:preliminaries}).
This Plancherel specialization
incorporates
the number of uni-uppertriangular
matrices from $GL(n,F_\q)$
having the given Jordan blocks sizes determined
by~$\la$.
The passage from
\eqref{ergodic_central} to \eqref{HL_coherent_intro}
follows from
Fulman \cite[Thm. 1]{Fulman2000eigen}
and Gorin--Kerov--Vershik
\cite[\S4]{GorinKerovVershikFq2012},
see also \S \ref{sub:plancherel_specializations_and_young_graph} below for a connection to the Plancherel specialization and
to the Young graph with certain formal edge	multiplicities.

\begin{remark}\label{rmk:coupling_intro}
	Because the measures $\HL_n^{\ab;\bb;\Pl_\ga}$
	for various $n$ come from the
	same distribution $\rho^{\ab;\boldsymbol\be;\ga}$
	on infinite matrices over $F_\q$,
	they satisfy certain \emph{coherency relations}
	(see \S \ref{sub:plancherel_specializations_and_young_graph}). Note that
	for various $n=1,2,\ldots$
	the corresponding
	random Young diagrams
	$\la(n)\in\Yb_n$ distributed according to
	$\HL_n^{\ab;\bb;\Pl_\ga}$
	are defined on \emph{the same probability
	space} (on which the infinite random
	matrix is defined).
\end{remark}

We refer to Gorin--Kerov--Vershik \cite{GorinKerovVershikFq2012}
and
Fulman
\cite{Fulman1999},
\cite{Fulman2000RM},
\cite{Fulma2013NT}
for further connections between
random matrices over a finite field
and Hall--Littlewood symmetric functions.
See also \S \ref{sub:asymptotic_representation_theory_of_linear_groups_over_a_finite_field} below for a
brief discussion of asymptotic
representation theory of
the groups~$GL(n,F_\q)$.

\subsection{Law of Large Numbers} 
\label{sub:law_of_large_numbers}

Our main result is the proof of
the following Law of Large Numbers
for sizes of Jordan blocks
of random infinite uni-uppertriangular matrices
over $F_\q$
under an ergodic central measure:

\begin{theorem}[Vershik--Kerov's conjecture
\cite{VK07}, \cite{GorinKerovVershikFq2012}]\label{thm:LLN_intro}
	Let $(\ab;\boldsymbol\be;\ga)\in\R^{2\infty+1}$
	be any triplet satisfying \eqref{alpha_beta_gamma_intro1}--\eqref{alpha_beta_gamma_intro2}
	such that the third parameter $\gamma$ is zero.
	Let for each $n=1,2,\ldots$, $\la(n)\in\Yb_n$ be the random
	Young diagram
	corresponding to sizes of Jordan blocks
	of the $n\times n$ truncation of the
	random matrix distributed according to
	$\rho^{\ab;\boldsymbol\be;0}$.
	Then, $\rho^{\ab;\boldsymbol\be;0}$-almost surely,
	\begin{align}
		\frac{\la_i(n)}{n}\to\al_i,\qquad
		\frac{\la_i'(n)}{n}\to
		\frac{\be_i}{1-\q^{-1}},\qquad
		i=1,2,\ldots,
		\label{LLN_intro}
	\end{align}
	where $\la_i(n)$ and $\la_i'(n)$ denotes the length
	of the $i$-th row (resp. column) of
	$\la(n)$ (see also \S \ref{sub:young_diagrams}).
\end{theorem}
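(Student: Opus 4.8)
The plan is to establish Theorem~\ref{thm:LLN_intro} by means of the randomized RSK insertion algorithm constructed in the body of the paper, which produces the random Young diagram $\la(n)$, distributed as $\HL_n^{\ab;\bb;\Pl_0}$, as the shape obtained after successively inserting $n$ random letters drawn from a source whose letter frequencies are governed by $(\ab;\boldsymbol\be)$. Two features of the algorithm will be exploited: it realizes all the $\la(n)$, $n=1,2,\dots$, on a single probability space with $\la(1)\subset\la(2)\subset\cdots$ and exactly one box added per step (a coupling compatible with the distribution $\rho^{\ab;\boldsymbol\be;0}$ of the infinite matrix, cf.\ Remark~\ref{rmk:coupling_intro}); and all randomness beyond the word itself lies in finitely many ``bumping coins'' per step, each of which can displace a box by at most one position. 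Inside this coupling I would reduce the almost sure statement \eqref{LLN_intro} to three ingredients: (i) the expectation asymptotics $\frac1n\,\E\big[\la_1(n)+\dots+\la_k(n)\big]\to\al_1+\dots+\al_k$ for every fixed $k$, and the analogous statement for the columns $\la_1'(n),\dots,\la_\ell'(n)$; (ii) concentration of these partial sums around their means; and (iii) the elementary remark that, since $0\le\la_i(n+1)-\la_i(n)\le1$ and likewise for $\la_i'$, almost sure convergence of $\la_i(n)/n$ and $\la_i'(n)/n$ along the subsequence $n=m^2$ already forces convergence for all $n$ (between consecutive squares these ratios change by at most $(2m+1)/m^2=o(1)$).

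For the rows, the heart of the argument is to understand how the inserted letters are apportioned among the rows of the growing tableau. Via a Greene-type extremal description of the insertion, together with a monotonicity/coupling comparison against simpler sources, one shows that asymptotically exactly $\al_i n$ of the first $n$ letters ever reach row $i$; equivalently, $\frac1n\,\E\big[\la_1(n)+\dots+\la_k(n)\big]\to\al_1+\dots+\al_k$. Subtracting two consecutive such limits (and using $\la_1(n)\ge\la_2(n)\ge\cdots$) gives $\E[\la_i(n)]/n\to\al_i$ for every $i$. The degeneration $\q\to1$ of this argument should reproduce the row part of Vershik and Kerov's theorem for $S(\infty)$, which serves as a consistency check on the combinatorics.

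For the columns, the same scheme applies to the part of the insertion governed by the parameters $\bb$, with the parameter $\q$ and the Hall--Littlewood normalization (the factor $(1-\q^{-1})^{-n}$ in \eqref{ergodic_central}, equivalently the normalization in \eqref{HL_coherent_intro}) entering through the $\q$-randomized bumping, so that the relative contribution to column $j$ becomes $\be_j/(1-\q^{-1})$ rather than $\be_j$. This yields $\frac1n\,\E\big[\la_1'(n)+\dots+\la_\ell'(n)\big]\to\big(\be_1+\dots+\be_\ell\big)/(1-\q^{-1})$ and, via $\la_1'(n)\ge\la_2'(n)\ge\cdots$, the column statement $\E[\la_i'(n)]/n\to\be_i/(1-\q^{-1})$. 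The hypothesis $\gamma=0$ is precisely what makes the row-type and column-type inputs exhaust a genuine random word --- their limiting frequencies summing to $1$ by \eqref{alpha_beta_gamma_intro2} --- and keeps the algorithm tractable; when $\gamma>0$ the leftover relative mass $\gamma/(1-\q^{-1})$ would produce a ``bulk'' of the diagram that is neither thin rows nor thin columns and would demand a separate treatment.

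It remains to supply ingredient (ii). Since changing one letter of the word, or flipping one bumping coin, alters each $\la_i(n)$ and each $\la_i'(n)$ --- hence each of the partial sums above --- by a bounded amount, the bounded-differences inequality of McDiarmid (Azuma) gives $\Prob\big(\big|\tfrac1n\sum_{i\le k}\la_i(n)-(\al_1+\dots+\al_k)\big|>\varepsilon\big)\le e^{-c(\varepsilon,k)\,n}$ for large $n$, which is summable along $n=m^2$; Borel--Cantelli together with the interpolation remark (iii) then yields \eqref{LLN_intro} almost surely. (Ergodicity of $\rho^{\ab;\boldsymbol\be;0}$ offers an alternative, conceptual route to the \emph{values} of the limits: any almost sure limit of $\la_i(n)/n$ is measurable with respect to the tail, hence almost surely constant, and the constant is the $L^1$-limit supplied by (i).) The genuine obstacle, I expect, is entirely in the row and column \emph{expectation asymptotics}: proving the Greene-type two-sided bounds for this new randomized insertion, correctly identifying the limiting frequencies --- the plain $\al_i$ for rows versus the $\q$-enhanced $\be_i/(1-\q^{-1})$ for columns --- and controlling the interaction between the $\ab$- and $\bb$-inputs within a single run of the algorithm.
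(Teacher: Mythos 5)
Your overall frame (realize $\la(n)$ via the randomized insertion on a single probability space, then deduce \eqref{LLN_intro}) matches the paper's starting point, but the proof has a genuine gap exactly where you flag it: ingredient (i), the identification of the limiting frequencies, is asserted rather than proved. You appeal to ``a Greene-type extremal description of the insertion, together with a monotonicity/coupling comparison,'' but no Greene-type invariant is available (or established) for this $t$-randomized Hall--Littlewood insertion --- Greene's theorem is a statement about the deterministic RSK, and extending it here is not a routine step; it is the whole content of the theorem. The paper closes this gap quite differently: after reducing to finitely many parameters and using the alphabet-reordering freedom (Remark \ref{rmk:ordering}) to make rows depend only on the $\ab$-part and columns only on the $\bb$-part of the array, it constructs explicit pathwise monotone couplings of the leftmost/rightmost particles with two simple auxiliary systems --- a TASEP with particle-dependent speeds for the rows (Lemma \ref{lemma:row_proof_s7}, with Proposition \ref{prop:s7_T} supplying its LLN via the Schur/Vershik--Kerov case) and a $t$-geometric pushing system for the columns (Lemma \ref{lemma:col_proof_s7}, with Proposition \ref{prop:s7_Q} proved by the reflected-random-walk estimate of Lemma \ref{lemma:s7_1} showing that pushes occur only $O(\sqrt n)$ times). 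These couplings give one-sided almost sure bounds, which are upgraded to two-sided convergence not by concentration but by the deterministic counting identity $\sum_i\la^{(m)}_i(n)=\sum_i N_i(n)$ together with the classical LLN for the letter counts (Propositions \ref{prop:row_proof_s7}, \ref{prop:col_proof_s7}); the passage to infinitely many parameters is a separate argument (Lemmas \ref{lemma:main_s7_lower}--\ref{lemma:main_s7_upper}) using $\sum_{i\le\anum}\la_i(n)+\sum_{j\le\bnum}\la_j'(n)\le n+\anum\bnum$, a step your proposal does not address.

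Two further points in your scheme would also need real work even granting (i). First, the bounded-differences step: for the deterministic RSK the $1$-Lipschitz dependence of the partial row sums on a single letter follows from Greene's theorem, but here you must also control the influence of a single ``bumping coin,'' and since a flipped coin changes the configuration after that step and the two trajectories then evolve under the same future randomness without any obvious contraction, the claim that each coordinate moves the final partial sums by $O(1)$ is unjustified (the paper never needs it). Second, ``controlling the interaction between the $\ab$- and $\bb$-inputs'' is not an afterthought: the paper resolves it structurally, by the observation that each row of an $\A$-tableau contains at most one dual letter and that the lower floors of the tower evolve autonomously, which is what licenses treating rows and columns separately. So while your decomposition (expectations plus concentration plus interpolation along $n=m^2$) is a coherent alternative skeleton, the heart of the theorem --- the frequency identification and the monotone comparisons that deliver it --- is missing from the proposal.
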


In fact, we prove 
Theorem \ref{thm:LLN_intro} 
for any value of the parameter $t=\q^{-1}\in[0,1)$ in the measures
\eqref{HL_coherent_intro}
(this $t$
is usually referred to as the Hall--Littlewood parameter), 
not just for an inverse of a prime power.
See Theorem \ref{thm:main_s7}.

\begin{remark}
	The Law of Large Numbers implies
	existence of the asymptotic speeds
	of particles in the
	\emph{$q$-PushTASEP}
	($q$-deformed pushing
	totally asymmetric simple exclusion
	process)
	with varying particle
	speeds. This
	($1+1$)-dimensional
	continuous-time integrable particle
	system was introduced and studied in
	Borodin--Petrov \cite{BorodinPetrov2013NN}
	and Corwin--Petrov \cite{CorwinPetrov2013}.
	See Remark \ref{rmk:qpushtasep}
	for more detail.
\end{remark}

\begin{remark}\label{rmk:branching1}
	Probability measures \eqref{HL_coherent_intro}
	may be viewed as extreme coherent measures on the
	Young graph with certain formal edge multiplicities
	related to Hall--Littlewood polynomials
	(cf. Remarks \ref{rmk:coupling_intro} and \ref{rmk:branching_graphs}).
	In view of this connection, one would expect these measures
	to have asymptotic frequencies
	(as it happens for extreme coherent measures on other branching graphs,
	e.g., see \cite{VK81AsymptoticTheory}, \cite{Kerov1989},
	\cite{VK82CharactersU}, \cite{OkOl1998}, \cite{Gorin2010q},
	\cite{BorodinOlsh2011GT}, \cite{Petrov2012GT}).
	Our Theorem \ref{thm:LLN_intro}
	is exactly a statement about these asymptotic frequencies.
	See also Conjecture \ref{conj:Mac} below for the case of Macdonald edge multiplicities.
\end{remark}

Let us now formulate three conjectures related
to Theorem \ref{thm:LLN_intro}.

\begin{conjecture}[Case $\ga>0$]
	We believe that the technical assumption
	$\ga=0$ can be dropped, and the same convergence
	\eqref{LLN_intro} could be established for any
	triplet
	$(\ab;\boldsymbol\be;\ga)$
	with \eqref{alpha_beta_gamma_intro1}--\eqref{alpha_beta_gamma_intro2}.
\end{conjecture}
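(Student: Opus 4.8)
The plan is to run the same randomized RSK insertion as in the $\ga=0$ case, after first enlarging it so that it also samples the measures $\HL_n^{\ab;\bb;\Pl_\ga}$ with $\ga>0$. Recall that a Plancherel specialization $\Pl_\ga$ is a limit of ordinary one-variable specializations, namely $N$ variables each equal to $\ga/N$ as $N\to\infty$. Since the RSK of this paper already processes a finite collection of one-variable ``$\al$-columns'', I would append $N$ such columns with parameter $\ga/N$ and pass to the limit $N\to\infty$, checking that the per-step insertion probabilities converge and that no mass escapes, so that the limiting rule is a well-defined Markovian box-insertion process whose output $\la(n)$ is distributed according to $\HL_n^{\ab;\bb;\Pl_\ga}$. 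Concretely the $\ga$-part should contribute, in an auxiliary continuous time, a rate-$\ga$ stream of single-box Plancherel-type insertions (the Hall--Littlewood deformation of the Vershik--Kerov RSK move used for $\ga=0$). As in Remark~\ref{rmk:coupling_intro}, all $\la(n)$ are then realized on one probability space, so the almost sure statement \eqref{LLN_intro} makes sense.

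With this coupling in place, \eqref{LLN_intro} again splits into the two families of limits $\la_i(n)/n\to\al_i$ and $\la_i'(n)/n\to\be_i/(1-\q^{-1})$, and one decomposes the boxes of $\la(n)$ according to which input triggered each insertion: $\al$-boxes, $\be$-boxes, and $\ga$-boxes (Plancherel). The $\al$- and $\be$-contributions are controlled exactly as in the $\ga=0$ proof --- $\al$-columns pile into finitely many rows, $\be$-columns into finitely many columns --- but these concentration statements must now be re-verified in the presence of the extra $\ga$-column. This I expect to be a monotonicity/coupling argument: box-insertion dynamics are attractive (monotone in the input stream), so the $\al$- and $\be$-contributions to any fixed row or column are squeezed between their values in the system with the $\ga$-column present and in the system with it removed, the latter being governed by $\HL_n^{\ab;\bb;\Pl_0}$ for which Theorem~\ref{thm:main_s7} applies. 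As a consistency check, note that \eqref{alpha_beta_gamma_intro2} gives $\sum_i\al_i+\sum_i\frac{\be_i}{1-\q^{-1}}=1-\frac{\ga}{1-\q^{-1}}<1$ when $\ga>0$, so a positive fraction of the $n$ boxes cannot be accounted for by any fixed rows or columns; this is precisely the diffuse mass that the $\ga$-part must carry.

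The heart of the matter, and the step I expect to be the main obstacle, is to show that the $\ga$-part contributes only $o(n)$ boxes to each fixed row $\la_i(n)$ and to each fixed column $\la_i'(n)$. Equivalently: run the insertion driven by the Plancherel stream alone; the resulting random Young diagram has $\Theta(n)$ boxes, yet every fixed row and every fixed column has length $o(n)$ --- the pure Plancherel shape escapes to infinity in both directions with zero asymptotic frequency. For $\q^{-1}=0$ this is the classical Vershik--Kerov--Logan--Shepp behavior ($\la_1\sim 2\sqrt{n}$), and it is exactly what the conjecture predicts for the parameters $\ab=\bb=0$, $\ga=1-\q^{-1}$. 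For $\q^{-1}\in(0,1)$ I would establish $\la_1(n)=o(n)$ and $\la_1'(n)=o(n)$ for the Plancherel-only diagram through moment or tail estimates obtained from the Hall--Littlewood (Macdonald) process machinery, then upgrade these one-point bounds to all fixed rows and columns by interlacing, and finally transfer them to the full coupled system by the attractiveness coupling of the previous paragraph. The genuine difficulty is uniformity: the tail control on how far a Plancherel-inserted box can travel must be strong enough to survive being combined with the $\al$- and $\be$-columns, which could in principle push Plancherel boxes down into low rows en masse. Once the $\al$-, $\be$- and $\ga$-contributions are each pinned down, summing them yields \eqref{LLN_intro} for arbitrary $\ga\ge0$.
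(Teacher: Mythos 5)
First, note that the statement you are addressing is stated in the paper as an open conjecture: the authors prove the Law of Large Numbers only for $\ga=0$ (Theorem \ref{thm:main_s7}) and explicitly leave the case $\ga>0$ unproved, so there is no proof in the paper to compare against. Your text is a strategy, not a proof, and its first stage is in any case already available: the paper's Sampling Algorithm \ref{alg:main} and Theorem \ref{thm:it_samples} treat general $(\ab;\bb;\Pl_\ga)$, with the Plancherel part implemented through the continuum of letters $\A^{[\Pl]}$ (each such letter occurring at most once in the $\A$-tableau); no new limit of $\al$-columns is needed to sample $\HL_n^{\ab;\bb;\Pl_\ga}$.

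The genuine gap is exactly the step you flag as ``the heart of the matter'' and then defer. The $\ga=0$ proof hinges on a reduction that is unavailable when $\ga>0$: for rows, each row of an $\A$-tableau contains at most one copy of each dual letter, so finitely many dual parameters perturb each row length by $O(1)$ and the analysis collapses to the finite interlacing array governed by $\ab$ alone (and symmetrically for columns). Plancherel letters destroy this: there are $\sim \ga n/(1-t)$ distinct Plancherel letters, each appearing once but with no constraint tying them to $O(1)$ boxes per row or per column, so the macroscopic Plancherel mass could a priori settle into finitely many rows or columns. Proving that it does not --- i.e.\ that the $\ga$-stream contributes $o(n)$ boxes to every fixed row and column, \emph{uniformly in the presence of the $\ab$- and $\bb$-streams} --- is precisely the content of the conjecture, and your proposal supplies no argument for it beyond an appeal to unspecified ``moment or tail estimates.'' Moreover, the ``attractiveness'' you invoke to squeeze the $\al$- and $\be$-contributions between the systems with and without the $\ga$-column is asserted, not proved: the randomized Hall--Littlewood insertion is not known to be monotone in the input stream, and the couplings of \S\ref{sec:proof_of_the_law_of_large_numbers} are bespoke one-sided comparisons (a TASEP lower bound for rows, the $\Q$-dynamics upper bound for columns) constructed level by level, not consequences of a general monotonicity. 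Finally, the paper's passage from liminf bounds to limsup bounds uses the exact box count $\sum_i\la^{(m)}_i(n)=\sum_i N_i(n)$ together with $p_1=1$; with $\ga>0$ this counting argument only closes once the Plancherel contribution has been pinned down, which returns you to the unresolved step. So the proposal identifies the right obstacle but does not overcome it, and the conjecture remains open as far as your argument goes.
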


\begin{conjecture}[Central Limit Theorem]\label{conj:CLT_intro}
	If the $\alpha$- and the $\beta$-parameters
	are distinct (when they are positive), i.e.,
	$\al_1>\al_2> \ldots$
	and $\be_1>\be_2> \ldots$,
	then the lengths of rows and columns
	of random Young diagrams $\la(n)$
	satisfy a
	Central Limit Theorem: The infinite
	vector
	$\{\la_1(n),\la_2(n),\ldots;\la_1'(n),\la_2'(n),\ldots\}$
	is asymptotically jointly Gaussian
	after subtracting the limiting means \eqref{LLN_intro} and
	normalizing by $\sqrt n$.
	The limiting covariances are equal to:\footnote{Here and below $\mathbf{1}_{A}$ means
	the indicator of $A$.}
	\begin{enumerate}[$\bullet$]
		\item
		$\displaystyle\al_i\mathbf{1}_{i=j}-\al_i\al_j$ between $\la_i$
		and $\la_j$;
		\smallskip
		\item
		$\displaystyle\frac{\be_i}{1-\q^{-1}}\mathbf{1}_{i=j}-
		\frac{\be_i\be_j}{(1-\q^{-1})^{2}}$ between $\la_i'$
		and $\la_j'$;
		\item $\displaystyle-\frac{\al_i\be_j}{1-\q^{-1}}$
		between $\la_i$
		and $\la_j'$.
	\end{enumerate}
\end{conjecture}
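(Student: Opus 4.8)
The plan is to build on the coupling underlying Theorem~\ref{thm:LLN_intro}: realize all the $\la(n)$ simultaneously through the randomized RSK insertion at Hall--Littlewood parameter $t=\q^{-1}$, so that $(\la(n))_{n\ge1}$ is a Markov chain adapted to a filtration $(\mathcal{F}_n)_{n\ge0}$ in which, at each step, exactly one row length and exactly one column length increase by one. Writing $\eta_i(m):=\la_i(m)-\la_i(m-1)\in\{0,1\}$ and $\eta_i'(m):=\la_i'(m)-\la_i'(m-1)\in\{0,1\}$, so that $\sum_i\eta_i(m)=\sum_i\eta_i'(m)=1$, I would Doob-decompose $\la_i(n)=A_i(n)+M_i(n)$ with $A_i(n)=\sum_{m=1}^{n}\Prob(\eta_i(m)=1\mid\mathcal{F}_{m-1})$ and $M_i$ a martingale, and likewise $\la_i'(n)=A_i'(n)+M_i'(n)$. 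A preliminary step, refining the proof of Theorem~\ref{thm:LLN_intro}, is to show that the one-step conditional probabilities themselves converge almost surely, $\Prob(\eta_i(m)=1\mid\mathcal{F}_{m-1})\to\al_i$ and $\Prob(\eta_i'(m)=1\mid\mathcal{F}_{m-1})\to\be_i/(1-t)$.

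Next I would handle the martingale parts. At each step the family $(\eta_i(m),\eta_i'(m))_i$ encodes a single ``which category'' choice among the categories $\{\text{row }i\}_i\sqcup\{\text{column }i\}_i$, whose conditional probabilities converge almost surely to $(\al_i)_i\sqcup(\be_i/(1-t))_i$, a collection summing to $1$ precisely because $\ga=0$. Hence the one-step conditional covariance of the martingale increments converges almost surely to the multinomial covariance matrix whose entries are exactly the three displayed families $\al_i\mathbf{1}_{i=j}-\al_i\al_j$, $\frac{\be_i}{1-t}\mathbf{1}_{i=j}-\frac{\be_i\be_j}{(1-t)^2}$, and $-\frac{\al_i\be_j}{1-t}$. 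A Lindeberg-type martingale central limit theorem for triangular arrays — the Lindeberg condition being automatic from $|\eta|\le1$ — then yields, jointly over any finite index set, $n^{-1/2}(M_i(n),M_i'(n))\Rightarrow$ the claimed Gaussian vector.

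The substantive step is to show that the compensators do not contribute at scale $\sqrt n$, i.e.\ $A_i(n)-n\al_i=o(\sqrt n)$ and $A_i'(n)-n\be_i/(1-t)=o(\sqrt n)$ in probability. For this I would extract a \emph{rate} in the law of large numbers for the transition kernels: $\Prob(\eta_i(m)=1\mid\mathcal{F}_{m-1})$ is an explicit function of the shape $\la(m-1)$ (a ratio of specialized Hall--Littlewood functions $Q_\la(\ab;\bb;\Pl_0\,|\,0,t)$), which I would Taylor-expand about the limit shape $(m\al_i,m\be_i/(1-t))$. Here the distinctness hypotheses $\al_1>\al_2>\cdots$ and $\be_1>\be_2>\cdots$ enter essentially: they keep the rows and columns with indices near $i$ separated on the macroscopic scale $\asymp m$, so that this function is smooth in the relevant coordinates with gradient $O(1/m)$; combined with the $O(\sqrt m)$ fluctuations of those coordinates (already understood from the martingale part), the per-step discrepancy is $o(1/\sqrt m)$, and summing gives $o(\sqrt n)$, with a maximal inequality upgrading this to the required probabilistic estimate. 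Putting the two parts together gives $n^{-1/2}(\la_i(n)-n\al_i,\la_i'(n)-n\be_i/(1-t))\Rightarrow$ Gaussian for finitely many indices; the summability $\sum_i\al_i<\infty$ and $\sum_i\be_i<\infty$ supplies tightness of the tail and promotes this to convergence of the full infinite vector.

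The main obstacle is precisely this compensator estimate. It requires a genuinely quantitative, and uniform in the relevant range of shapes, second-order asymptotic analysis of the ratios of $Q_\la$'s defining the insertion probabilities, together with a verification that the feedback of the $\sqrt n$-scale shape fluctuations into those probabilities is of lower order rather than renormalizing the covariance. The distinct-parameter assumption is there to prevent the nearby rows or columns from colliding (which would destroy the needed smoothness), and this is also why the statement is only a conjecture: the law-of-large-numbers level control of the kernels that suffices for Theorem~\ref{thm:LLN_intro} is not by itself strong enough to drive this argument.
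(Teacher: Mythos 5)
The statement you have attacked is Conjecture \ref{conj:CLT_intro}: the paper does not prove it, and says so explicitly. The authors only indicate, right after the conjecture and again in Remark \ref{rmk:CLT}, that it \emph{should} follow from their method once the $O(\sqrt n)$ estimates in the coupling arguments of \S \ref{sec:proof_of_the_law_of_large_numbers} are sharpened to $O(1)$ under the distinctness assumptions $\al_1>\al_2>\ldots$, $\be_1>\be_2>\ldots$; the CLT for $\la(n)$ would then be inherited directly from the classical multivariate CLT for the i.i.d. letters of the input word $w=\xi_1\xi_2\ldots$, which is exactly where the multinomial covariance matrix comes from. So there is no proof in the paper to compare with; your plan takes a genuinely different route (Doob decomposition of the coherent chain $\la(n)$ plus a martingale CLT, instead of transferring the letter-count CLT through the randomized RSK coupling), and, as you acknowledge yourself, it is not a proof.

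The gap is where you locate it, but it is more serious than a missing technical lemma. First, your preliminary step — almost sure convergence of the one-step conditional probabilities $\Prob(\eta_i(m)=1\mid\mathcal{F}_{m-1})$, which are the univariate transition probabilities \eqref{toy_sampling} built from ratios of specialized Hall--Littlewood functions — is not delivered by the paper's LLN argument (that argument proceeds by coupling with auxiliary TASEP-type and pushing dynamics, not by asymptotics of these ratios), so it would have to be established from scratch. Second, the compensator estimate as you sketch it does not close: a gradient of order $1/m$ against shape fluctuations of order $\sqrt m$ gives a per-step discrepancy of order $1/\sqrt m$, not $o(1/\sqrt m)$, and $\sum_{m\le n}m^{-1/2}\asymp\sqrt n$, which is the same order as the fluctuations you are trying to isolate. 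In feedback schemes of this kind (urn-like dynamics where the jump law depends on the current shape) such a compensator term can genuinely renormalize the limiting covariance, so one must actually prove it is negligible, or compute its contribution, and this requires uniform second-order control of the Hall--Littlewood ratios near the limit shape — control that neither your proposal nor the paper supplies. The authors' suggested path sidesteps this by aiming for an $O(1)$ coupling between $\{\la_i(n),\la_i'(n)\}$ and the letter-count vector, for which the covariance you wrote down is exact; that sharpened coupling is precisely the step that remains open and is why the statement stands as a conjecture.
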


One can also replace $\q^{-1}$ by any value of the
Hall--Littlewood parameter $t\in[0,1)$ in the
formulation of Conjecture \ref{conj:CLT_intro}.
In the case of symmetric groups
(corresponding to $t=0$,
see \S \ref{sub:symmetric_groups} below)
a similar Central Limit Theorem was
established by
Feray and Meliot
\cite{FerayMeliot2012}, \cite{MeliotCLT2011}
and
Bufetov \cite{BufetovCLT}.
In this case, the behavior of fluctuations changes
when the
assumptions
on parameters
$(\ab;\boldsymbol\be;\ga)$
are not satisfied. This suggests the same restrictions
on parameters for the $t>0$ case as well.

Conjecture \ref{conj:CLT_intro} should be
accessible by the technique
of the present paper: The main idea behind our proof of Theorem \ref{thm:LLN_intro} 
is that the asymptotic behavior of random Young diagrams is shown to be the same 
as the asymptotic behavior of random words of fixed length
with independently distributed letters 
(see \S \ref{sub:randomized_rsk_insertion_algorithm} below). 
Conjecture \ref{conj:CLT_intro} asserts that the asymptotic behavior of fluctuations coincides as well,
so the covariance matrix for these two models should be the same. 
However, we do not pursue this direction here.

For the uniform measure on $\Ub$,
the Law of Large Numbers (Theorem \ref{thm:LLN_intro})
and the Central Limit Theorem
(Conjecture \ref{conj:CLT_intro})
were established by Borodin
\cite{Borodin1995}, \cite{Borodin1995_ext}.

\medskip

One can replace the Hall--Littlewood
symmetric functions by the Macdonald ones
which depend on two parameters $q,t\in[0,1)$,
see \S \ref{sub:preliminaries}
(note the
difference between the Macdonald parameter $q$ and the
prime power $\q=t^{-1}$ which is the size of the base finite
field).
The corresponding probability
measures $\M_n^{\ab;\bb;\Pl_\ga}$ on Young diagrams with $n$
boxes can be defined similarly to
\eqref{HL_coherent_intro}.
These measures with Macdonald parameters
were introduced by Fulman
\cite{fulman1997probabilistic}
and studied in great detail by
Borodin and Corwin \cite{BorodinCorwin2011Macdonald}.
See also Forrester--Rains \cite{ForresterRains2005Macdonald}.

\begin{conjecture}[Law of Large Numbers with
Macdonald parameters]\label{conj:Mac}
	Let $\la(n)$ be the random
	Young diagram
	distributed according to
	$\M_n^{\ab;\bb;\Pl_\ga}$.
	Then, with almost sure convergence,
	\begin{align}
		\frac{\la_i(n)}{n}\to\al_i,\qquad
		\frac{\la_i'(n)}{n}\to
		\be_i \frac{1-q}{1-t},\qquad
		i=1,2,\ldots.
		\label{LLN_Mac}
	\end{align}
\end{conjecture}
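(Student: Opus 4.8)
The plan is to follow the proof of Theorem~\ref{thm:LLN_intro}, replacing the Hall--Littlewood specialization $(q,t)=(0,\q^{-1})$ by a general Macdonald pair $(q,t)\in[0,1)^2$. The first step is to lift the randomized RSK insertion of \S\ref{sub:randomized_rsk_insertion_algorithm} --- itself a modification of the construction of Borodin and the second author~\cite{BorodinPetrov2013NN} --- from the Hall--Littlewood to the Macdonald level. Using the Macdonald Pieri rules one obtains a Markov chain $\{\la(n)\}_{n\ge1}$ of Young diagrams, defined on a single probability space, which adds one box at each step and whose $n$-th marginal is $\M_n^{\ab;\bb;\Pl_\ga}$. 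As in the $q=0$ case the Plancherel component $\Pl_\ga$ is the delicate one, so I would first establish the conjecture under the restriction $\ga=0$, exactly as in Theorem~\ref{thm:LLN_intro}; then only the one-variable Macdonald specializations in $\ab$ and $\bb$ enter the Pieri coefficients.

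The second, central step is the reduction used in this paper: show that with probability tending to $1$ the diagram $\la(n)$ differs by $o(n)$ in every row and column from the diagram obtained by applying a ($q$-deformed) RSK insertion to a random word $w_1w_2\cdots w_n$ whose letters are i.i.d.\ with a distribution on an ordered alphabet determined by $(\ab;\bb;\Pl_\ga)$. The $\al$-letters are row-inserted and the $\bb$-letters are dual-(column-)inserted, and the Macdonald weights are arranged so that the limiting contribution of the $\bb$-letters to the columns is rescaled by the factor $\tfrac{1-q}{1-t}$ --- which collapses to $\tfrac1{1-t}=\tfrac1{1-\q^{-1}}$ at $q=0$ and is precisely the source of the factor $\tfrac{1-q}{1-t}$ in \eqref{LLN_Mac}; the contribution of the $\al$-letters to the rows is unaffected by $q$ and $t$ in the limit, giving the bare $\al_i$.

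The third step is a law of large numbers for this random-word model. By a Greene-type invariant of the inserted word together with a subadditive ergodic argument in the spirit of Vershik--Kerov~\cite{Vershik1986}, the partial sums $\la_1(n)+\cdots+\la_k(n)$ are, up to $o(n)$, longest-increasing-subsequence type functionals of $w_1\cdots w_n$ that concentrate at $n(\al_1+\cdots+\al_k)$, while the partial sums $\la_1'(n)+\cdots+\la_k'(n)$, governed by the dual/column part of the insertion applied to the $\bb$-letters, concentrate at $n\tfrac{1-q}{1-t}(\be_1+\cdots+\be_k)$ by the law of large numbers for the number of boxes they deposit. Taking successive differences yields the per-row and per-column statements \eqref{LLN_Mac} for the word model; combining with the $o(n)$ coupling of Step~2 and a Borel--Cantelli argument along the coherently coupled sequence $\{\la(n)\}$ upgrades convergence in probability to almost sure convergence.

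The main obstacle is Step~2, and in particular the concentration estimate inside it. When $q=0$ many of the insertion probabilities degenerate to $0$ or~$1$, so the additional randomness introduced by the randomized RSK is easy to control; when $q>0$ every insertion step genuinely randomizes (a displaced box moves to the next row with a $q$-dependent probability), and one must show that the aggregate of these fluctuations is $o(n)$ almost surely. I expect this to require either a monotone coupling of the $q>0$ dynamics with the $q=0$ dynamics, or a direct concentration argument based on the explicit multiplicative observables of the Macdonald measure in the spirit of Borodin--Corwin~\cite{BorodinCorwin2011Macdonald} (controlling, e.g., exponential moments of single rows and columns of $\la(n)$). Relaxing the $\ga=0$ restriction afterwards is an additional difficulty, but it is already open for $q=0$, and I would treat it, if at all, only as a second stage.
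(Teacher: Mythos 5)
The statement you are addressing is not proved in the paper at all: it is stated as Conjecture \ref{conj:Mac} and left open, and your text is likewise a programme rather than a proof, so there is no complete argument to certify. The decisive gap sits exactly where you place your Step 1 and Step 2. The sequence $\la(n)$ coupled on one probability space with marginals $\M_n^{\ab;\bb;\Pl_\ga}$ does exist at general $(q,t)$ (the univariate dynamics \eqref{Q_A} has nonnegative rates, so Sampling Algorithm \ref{alg:toy_sampling} works verbatim), but the reduction to an i.i.d.\ word model is carried out in the paper through the multivariate RSK-type dynamics of \S\ref{sec:three_particular_dynamics_on_macdonald_processes}--\ref{sec:rsk_type_algorithm_for_sampling_hl_coherent_measures}, and for $q>0$ those dynamics are only \emph{formal}: the pushing quantities $\rp_j^{h}$ of \eqref{r_j_h} are honest probabilities only in the degenerate cases singled out in Proposition \ref{prop:nonneg}, namely $(0,t)$ with $h=+\infty$ and $(t,0)$ with $h=1$, and the footnote opening \S\ref{sec:rsk_type_algorithm_for_sampling_hl_coherent_measures} states explicitly that the general $(q,t)$ analogues involve negative entries in the transition matrices. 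So the object your plan takes as given in Step 1 --- a genuine randomized Macdonald RSK insertion sampling $\M_n^{\ab;\bb;\Pl_\ga}$ from an i.i.d.\ word --- is precisely what is currently missing, and without it Step 2 has no dynamics to couple.

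Even granting such an algorithm, your Step 3 diverges from what actually makes the $q=0$ proof work, and the substitute you propose is not obviously available. The paper does not use Greene invariants or subadditivity; it proves the word-model LLN by two explicit monotone couplings --- with a TASEP with particle-dependent speeds (Proposition \ref{prop:s7_T}, Lemma \ref{lemma:row_proof_s7}) for rows, and with the pushing system $\{\Q_j\}$ (Proposition \ref{prop:s7_Q}, Lemma \ref{lemma:col_proof_s7}) for columns --- and both couplings hinge on the special product form of the $q=0$ pushing probabilities (Propositions \ref{prop:rj_0t} and \ref{prop:rj_t0}), e.g.\ the fact that a push occurs with probability $t^{\,\mathrm{gap}}$, which feeds the $O(\sqrt n)$ random-walk estimate of Lemma \ref{lemma:s7_1}. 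For general $(q,t)$ the analogous quantities \eqref{T_i}--\eqref{S_j} do not collapse to such one-parameter expressions, so neither the monotonicity of the couplings nor the interaction-count bound is automatic. You correctly flag this as the main obstacle, but naming the obstacle (monotone coupling with the $q=0$ dynamics, or exponential-moment bounds via Macdonald observables) does not close it; as written, the proposal reproduces the reasons the statement is a conjecture rather than resolving them.
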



\subsection{Randomized Robinson--Schensted--Knuth (RSK)
insertion} 
\label{sub:randomized_rsk_insertion_algorithm}

Our main technique for studying
probability measures $\HL_n^{\ab;\bb;\Pl_\ga}$
\eqref{HL_coherent_intro}
and proving Theorem~\ref{thm:LLN_intro}
is a certain new sampling algorithm for these measures.
Namely, we introduce a randomized version of the
classical Robinson--Schensted--Knuth (RSK) insertion
algorithm which samples $\HL_n^{\ab;\bb;\Pl_\ga}$.
About the classical RSK, e.g., see Stanley
\cite[Ch. 7]{Stanley1999}, Sagan \cite{sagan2001symmetric},
and also Borodin--Petrov \cite[\S7]{BorodinPetrov2013NN}.

Let us describe this insertion
algorithm in the case when
there are
only finitely many nonzero $\al$
parameters, namely, $\ab=(\al_1,\ldots,\al_N)$,
and that all $\be_i$ and $\gamma$ are zero.
Then \eqref{alpha_beta_gamma_intro2} means
that $\al_1+\ldots+\al_N=1$.
The case of general parameters is described
in \S \ref{sec:rsk_type_algorithm_for_sampling_hl_coherent_measures}.
The input of the algorithm is
a word $w=\xi_1\xi_2 \ldots\xi_n$,
where $\xi_i\in\A=\{1,2,\ldots,N\}$. Applied to a fixed word
$w$, the algorithm produces a \emph{random}
interlacing integer array (see Fig.~\ref{fig:interlacing_intro})
\begin{align*}
	\{\la^{(m)}_{i}\colon m=1,\ldots,N,\; i=1,\ldots,m\},
	\qquad \la^{(m)}_{i+1}\le \la^{(m-1)}_{i}\le
	\la^{(m)}_{i}.
\end{align*}
In an interlacing array we call a particle $\la^{(m)}_{i}$
\emph{blocked} if $\la^{(m)}_{i}=\la^{(m-1)}_{i-1}$.
Otherwise the particle is called \emph{free}.

The interlacing array can be interpreted
as a semistandard Young tableau
$\Ptab$ of shape $\la^{(N)}=(\la^{(N)}_{1}\ge
\ldots\ge\la^{(N)}_{N})$.
For general parameters, the
alphabet $\A$ is different, and the notion
of a semistandard Young tableau $\Ptab$
has to be changed accordingly.
Such more general $\A$-tableaux first appeared in the
work of Vershik and Kerov
\cite{Vershik1986},
see also Berele and Regev \cite{BereleRegev}.
(For definitions
of standard tableaux, semistandard tableaux,
and $\A$-tableaux
see Definition \ref{def:Atableaux} and Remark \ref{rmk:SSYT_SYT}.)
\begin{figure}[htbp]
\begin{center}
\begin{tabular}{rl}
\begin{tikzpicture}
	[scale=.5,thick]
	\node at (0.5,-0.5) {1};
	\node at (1.5,-0.5) {1};
	\node at (2.5,-0.5) {1};
	\node at (3.5,-0.5) {1};
	\node at (4.5,-0.5) {1};
	\node at (5.5,-0.5) {2};
	\node at (6.5,-0.5) {3};
	\node at (7.5,-0.5) {3};
	\node at (8.5,-0.5) {3};
	\node at (9.5,-0.5) {4};
	\node at (0.5,-1.5) {2};
	\node at (1.5,-1.5) {2};
	\node at (2.5,-1.5) {4};
	\node at (3.5,-1.5) {4};
	\node at (4.5,-1.5) {4};
	\node at (5.5,-1.5) {4};
	\node at (0.5,-2.5) {3};
	\node at (1.5,-2.5) {3};
	\node at (0.5,-3.5) {4};
	\draw (0,0) -- (10,0);
	\draw (0,-1) -- (10,-1);
	\draw (0,-2) -- (6,-2);
	\draw (0,-3) -- (2,-3);
	\draw (0,-4) -- (1,-4);
	\draw (0,0) -- (0,-4);
	\draw (1,0) -- (1,-4);
	\draw (2,0) -- (2,-3);
	\draw (3,0) -- (3,-2);
	\draw (4,0) -- (4,-2);
	\draw (5,0) -- (5,-2);
	\draw (6,0) -- (6,-2);
	\draw (7,0) -- (7,-1);
	\draw (8,0) -- (8,-1);
	\draw (9,0) -- (9,-1);
	\draw (10,0) -- (10,-1);
\end{tikzpicture}\\\rule{0pt}{90pt}
\raisebox{7pt}{\begin{tikzpicture}
[scale=1, very thick]
\def\sp{0.09};
\def\x{.5};
\def\y{.7};
\node at (0,0) {5};
\node at (\x,\y) {6};	
\node at (-\x,\y) {2};	
\node at (2*\x,2*\y) {9};	
\node at (0,2*\y) {2};	
\node at (-2*\x,2*\y) {2};	
\node at (-3*\x,3*\y) {1};	
\node at (-1*\x,3*\y) {2};	
\node at (1*\x,3*\y) {6};	
\node at (3*\x,3*\y) {10};	
\end{tikzpicture}}
&\hspace{60pt}
\begin{tikzpicture}
[scale=1, very thick]
\def\sp{0.09};
\def\x{.54};
\def\y{.7};
\def\opac{.45}
\def\wid{1.2}
\draw[line width=\wid, color=blue, opacity=\opac]
(\x,\y)--++(-\x,-\y)--++(-3*\x,\y);
\draw[line width=\wid, color=blue, opacity=\opac]
(4*\x,2*\y)--++(-3*\x,-\y)--++(-4*\x+3/2*\sp,\y)
--++(-3/2*\sp,-\y)--++(-3/2*\sp,\y);
\draw[line width=\wid, color=blue, opacity=\opac]
(5*\x,3*\y)--++(-\x,-\y)--++(-3*\x,\y)
--++(-4*\x+3/2*\sp,-\y)--++(-3/2*\sp,\y)
--++(-3/2*\sp,-\y)--++(-\x+3/2*\sp,\y);
\foreach \pt in
{
(0,0),
(-3*\x,\y), (\x,\y),
(-3*\x-\sp*3/2,2*\y), (-3*\x+\sp*3/2,2*\y), (4*\x,2*\y),
(-4*\x,3*\y), (-3*\x,3*\y), (1*\x,3*\y), (5*\x,3*\y),
}
{
\draw[fill] \pt circle (\sp);
}
\foreach \ll in {1,2,3,4}
{
\draw[->, thick] (-6*\x,\ll*\y-\y) -- (7*\x,\ll*\y-\y)
node[right] {$\la^{(\ll)}$};
}
\foreach \ver in {-5,...,5}
{
\draw[dotted] (\ver*\x,-.3*\y)--(\ver*\x,3.3*\y);
}
\node at (-5*\x-2*\sp,0-3*\sp) {0};
\end{tikzpicture}
\end{tabular}
\end{center}
\caption{A
semistandard Young tableau $\Ptab$
and the corresponding
interlacing integer array of depth $N=4$.
The shape of the Young tableau is
$\la^{(N)}=(10,6,2,1)$.
Particles (on the right) are located at positions $\la^{(m)}_{i}$,
where $m$ and $i$
represent vertical and horizontal coordinates,
respectively. When there are several particles
occupying the same position, we draw them close to each other.
Zigzags indicate the interlacing property.}
\label{fig:interlacing_intro}
\end{figure}
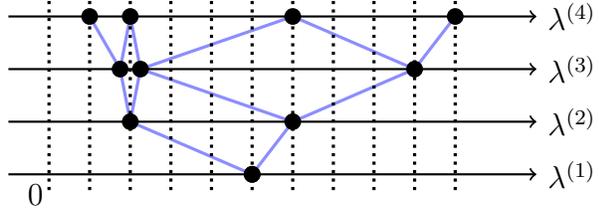

The randomized insertion algorithm starts
from the empty configuration, i.e.,
$\la^{(m)}_{i}=0$ for all $m$ and $i$.
Letters from the word $w=\xi_1\ldots\xi_n$
arrive one by one, and are inserted into the
semistandard tableau $\Ptab$.
When a letter $\xi_j$
is inserted, the
interlacing array (corresponding to $\Ptab$
as on Fig.~\ref{fig:interlacing_intro})
undergoes the following modifications:
\begin{enumerate}[$\bullet$]
	\item
	First, at the level $\xi_j$ the
	leftmost free particle moves to the right by one.
	\item
	After that, modifications propagate upwards to
	all levels $m=\xi_j,\xi_j+1,\ldots,N$ as follows:
	\begin{enumerate}[$\circ$]
		\item If a particle $\la^{(m)}_{i}$ moves to the
		right by one,
		and $\la^{(m)}_{i}=\la^{(m+1)}_{i}$ before the move,
		then the particle $\la^{(m+1)}_{i}$ also immediately
		moves to the right by one with probability one.
		This second move restores the interlacing which was
		broken by the first move
		(\emph{mandatory short-range pushing}).
		\item
		Otherwise, if a particle $\la^{(m)}_{i}$ has moved (to the right by one), then at the
		next level $m+1$
		the first free upper right neighbor of
		$\la^{(m)}_{i}$ immediately moves to the right by one
		with probability
		$r_i(\la^{(m)},\la^{(m+1)}\,|\,\q^{-1})$ (\emph{pushing}),
		or the upper left neighbor
		$\la^{(m+1)}_{i+1}$ moves with the complementary
		probability
		$1-r_i(\la^{(m)},\la^{(m+1)}\,|\,\q^{-1})$ (\emph{pulling}).
		These probabilities depend on $\q$ and on the
		number of particles at levels
		$m$ and $m+1$ which occupy the horizontal
		position of $\la^{(m)}_{i}$ before its move.
		They are
		determined
		as on Fig.~\ref{fig:pushing_intro}
		(see
		\S \ref{sub:pushing_and_pulling_probabilities}
		for a complete description).
	\end{enumerate}
\end{enumerate}
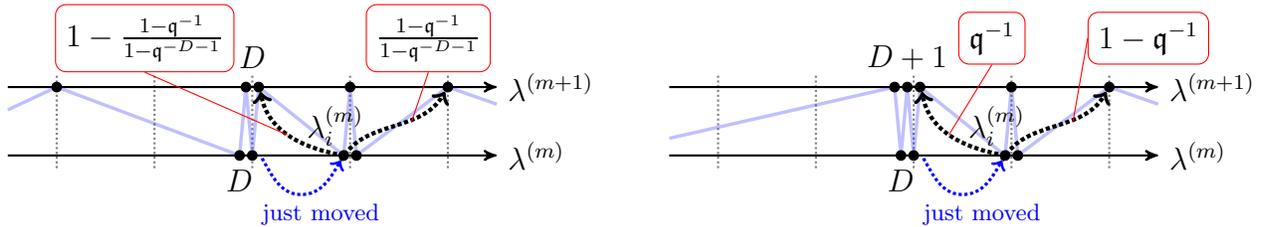
\begin{figure}[htbp]
\begin{center}
	\begin{tabular}{ll}
		\begin{tikzpicture}[
		    scale=1.3,
		    axis/.style={thick, ->, >=stealth'},
		    block/.style ={rectangle, draw=red,
			align=center, rounded corners, minimum height=1em}]
		    \def\y{.7}
		    \draw[axis] (0,0) -- (5,0) node(xline)[right]{$\la^{(m)}$};
		    \draw[axis] (0,\y) -- (5,\y) node(xline)[right]{$\la^{(m+1)}$};
		    \foreach \hh in {.5, 1.5, 2.5, 3.5, 4.5}
		    {
		    	\draw[densely dotted, thick, opacity=.5] (\hh,-.13) -- (\hh,\y+.13);
		    }
		    \def\sp{.13};
		    \def\opac{.25}
			\draw[line width=1.3, color=blue, opacity=\opac]
			(0,2*\y/3)--(.5,\y)--(2.5-\sp,0)--++(\sp/2,\y)--++(\sp/2,-\y)
			--++(\sp/2,\y)--(3.5-\sp/2,0)--++(\sp/2,\y)--++(\sp/2,-\y)
			--(4.5,\y)--(5,3*\y/4);
		    \foreach \pt in
		    {(2.5-\sp,0),(2.5,0),
		    (3.5+\sp/2,0),(3.5-\sp/2,0),
		    (.5,\y),(2.5-\sp/2,\y),(2.5+\sp/2,\y),(3.5,\y),
		    (4.5,\y)}
		    {
		    	\draw[fill] \pt circle (1.4pt);
		    }
		    \draw[->,densely dotted, very thick, color = blue]
	    	(2.5+\sp-.03,-0.03) to [in=180, out=-60] (3,-.4) to [in=-120, out=0] (3.5-\sp/2-.03,-0.05);
	    	\draw[->,densely dotted, ultra thick]
	    	(3.5-\sp/2-.03,-0.05) to [in=-120, out=60] (4.5-.03,\y-0.05);
	    	\draw[->,densely dotted, ultra thick]
	    	(3.5-\sp/2,0) to [in=-80, out=170] (2.5+\sp/2+.02,\y-0.05);
	    	\node at (2.5-\sp,-.26) {$D$};
	    	\node at (2.5,\y+.3) {$D$};
	    	\node at (3.2, -.6) {\scriptsize\color{blue}just moved};
	    	\node at (3.35,.3) {$\la^{(m)}_i$};
	    	\draw (4.3,\y+.5) node[block] (r) {$\frac{1-\q^{-1}}{1-\q^{-D-1}}$};
	    	\draw (1.4,\y+.5) node[block] (l) {$1-\frac{1-\q^{-1}}{1-\q^{-D-1}}$};
	    	\draw[color=red] (l.south) -- (2.86,.21);
	    	\draw[color=red] (r.south) -- (4.13,.35);
		\end{tikzpicture}
		&\hspace{10pt}
		\begin{tikzpicture}[
		    scale=1.3,
		    axis/.style={thick, ->, >=stealth'},
		    block/.style ={rectangle, draw=red,
			align=center, rounded corners, minimum height=1em}]
		    \def\y{.7}
		    \draw[axis] (0,0) -- (5,0) node(xline)[right]{$\la^{(m)}$};
		    \draw[axis] (0,\y) -- (5,\y) node(xline)[right]{$\la^{(m+1)}$};
		    \foreach \hh in {.5, 1.5, 2.5, 3.5, 4.5}
		    {
		    	\draw[densely dotted, thick, opacity=.5] (\hh,-.13) -- (\hh,\y+.13);
		    }
		    \def\sp{.13};
		    \def\opac{.25}
			\draw[line width=1.3, color=blue, opacity=\opac]
			(0,\y/4)--(2.5-3*\sp/2,\y)--(2.5-\sp,0)--++(\sp/2,\y)--++(\sp/2,-\y)
			--++(\sp/2,\y)--(3.5-\sp/2,0)--++(\sp/2,\y)--++(\sp/2,-\y)
			--(4.5,\y)--(5,3*\y/4);
		    \foreach \pt in
		    {(2.5-\sp,0),(2.5,0),
		    (3.5+\sp/2,0),(3.5-\sp/2,0),
		    (2.5-3*\sp/2,\y),(2.5-\sp/2,\y),(2.5+\sp/2,\y),(3.5,\y),
		    (4.5,\y)}
		    {
		    	\draw[fill] \pt circle (1.4pt);
		    }
		    \draw[->,densely dotted, very thick, color = blue]
	    	(2.5+\sp-.03,-0.03) to [in=180, out=-60] (3,-.4) to [in=-120, out=0] (3.5-\sp/2-.03,-0.05);
	    	\draw[->,densely dotted, ultra thick]
	    	(3.5-\sp/2-.03,-0.05) to [in=-120, out=60] (4.5-.03,\y-0.05);
	    	\draw[->,densely dotted, ultra thick]
	    	(3.5-\sp/2,0) to [in=-80, out=170] (2.5+\sp/2+.02,\y-0.05);
	    	\node at (2.5-\sp,-.26) {$D$};
	    	\node at (2.5-\sp/2,\y+.3) {$D+1$};
	    	\node at (3.2, -.6) {\scriptsize\color{blue}just moved};
	    	\node at (3.35,.3) {$\la^{(m)}_{i}$};
	    	\draw (4.9,\y+.5) node[block] (r) {$1-\q^{-1}$};
	    	\draw (3.3,\y+.5) node[block] (l) {$\q^{-1}$};
	    	\draw[color=red] (l.south) -- (2.86,.21);
	    	\draw[color=red] (r.west) -- (4.13,.35);
		\end{tikzpicture}
	\end{tabular}
\end{center}
\caption{Pushing and pulling probabilities in the sampling algorithm
(note that the number $D$ can be zero).}
\label{fig:pushing_intro}
\end{figure}

For example, if the current state of the interlacing
array is as on Fig.~\ref{fig:interlacing_intro}
and the next inserted letter is $\xi_{20}=2$, then
the result of this insertion will be distributed as follows
(values that changed are framed):
\begin{align*}
	\framebox{\scalebox{.9}{
	\begin{tikzpicture}
	[scale=.9, thick]
	\def\y{.7}
	\def\t{.27}
	\def\g{11.5}
	\node at (\g,0) {5};
	\node at (\g+\t,\y) {6};\node at (\g-\t,\y) {\framebox{3}};
	\node at (\g+2*\t,2*\y) {9};\node at (\g-0*\t,2*\y) {\framebox{3}};\node at (\g-2*\t,2*\y) {2};
	\node at (\g+3*\t,3*\y) {10};\node at (\g+1*\t,3*\y) {\framebox{7}};\node at (\g-1*\t,3*\y) {2};\node at (\g-3*\t,3*\y) {1};
	\end{tikzpicture}}}
	\raisebox{30pt}{\quad\mbox{with prob. $\dfrac{1}{1+\q^{-1}}$,}}
	\hspace{50pt}
	\framebox{\scalebox{.9}{
	\begin{tikzpicture}
	[scale=.9, thick]
	\def\y{.7}
	\def\t{.27}
	\def\g{11.5}
	\node at (\g,0) {5};
	\node at (\g+\t,\y) {6};\node at (\g-\t,\y) {\framebox{3}};
	\node at (\g+2*\t,2*\y) {9};\node at (\g-0*\t,2*\y) {\framebox{3}};\node at (\g-2*\t,2*\y) {2};
	\node at (\g+3*\t,3*\y) {10};\node at (\g+1*\t,3*\y) {6};\node at (\g-1*\t,3*\y) {\framebox{3}};\node at (\g-3*\t,3*\y) {1};
	\end{tikzpicture}}}
	\raisebox{30pt}{\quad\mbox{with prob. $\dfrac{\q^{-1}}{1+\q^{-1}}$.}}
\end{align*}
Indeed, the propagation of the move from level $2$ to level $3$
is a mandatory pushing, and
from level $3$ to level $4$
the pushing or pulling probability is
determined as on Fig.~\ref{fig:pushing_intro} (left),
with $D=1$.
See also \S \ref{sub:an_example} for another example
involving general parameters.

\begin{theorem}\label{thm:it_samples_intro}
	If the randomized RSK insertion algorithm
	is applied to a random
	word $w=\xi_1\xi_2 \ldots\xi_n$ with independent
	letters $\xi_i\in\{1,2,\ldots,N\}$
	such that $P(\xi_i=k)=\al_k$,
	then the distribution of the
	top row $\la^{(N)}$ of the array
	is exactly $\HL_n^{\ab;{\boldsymbol 0};\Pl_0}$,
	i.e., the measure
	\eqref{HL_coherent_intro}
	with the parameters $\ab=(\al_1,\ldots,\al_N)$.
\end{theorem}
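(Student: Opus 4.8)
The plan is to prove a stronger statement about the joint law of the full interlacing array $\Lambda=(\lambda^{(1)}\prec\dots\prec\lambda^{(N)})$ and then deduce the theorem by marginalizing onto the top row. Concretely, I would show by induction on $n$ that after inserting a random word $w=\xi_1\cdots\xi_n$ with $P(\xi_i=k)=\al_k$ the array $\Lambda$ has distribution
\begin{equation*}
	\Prob_n(\Lambda)=n!\,P_{\lambda^{(N)}}(\Pl_1\,|\,0,t)\prod_{m=1}^{N}\varphi_{\lambda^{(m)}/\lambda^{(m-1)}}(t)\,\al_m^{|\lambda^{(m)}|-|\lambda^{(m-1)}|},\qquad \lambda^{(0)}:=\varnothing,\ \ t=\q^{-1},
\end{equation*}
where the $\varphi_{\mu/\nu}(t)$ are the Hall--Littlewood Pieri coefficients of Macdonald's book, so that $Q_\lambda(x_1,\dots,x_N\,|\,0,t)=\sum_{\Lambda:\,\lambda^{(N)}=\lambda}\prod_{m}\varphi_{\lambda^{(m)}/\lambda^{(m-1)}}(t)\,x_m^{|\lambda^{(m)}|-|\lambda^{(m-1)}|}$ (the tableau expansion of $Q_\lambda$). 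Granting this, summing over all $\Lambda$ with fixed top row $\lambda$ gives exactly $n!\,Q_\lambda(\ab\,|\,0,t)\,P_\lambda(\Pl_1\,|\,0,t)=\HL_n^{\ab;{\boldsymbol 0};\Pl_0}(\lambda)$, which is the theorem; in particular the conditional law of $\Lambda$ given $\lambda^{(N)}=\lambda$ is the natural Hall--Littlewood deformation of the ``uniform-content random semistandard tableau'' of classical RSK.

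For the inductive step (the base case $n=0$ being trivial) I would insert one more letter $\xi_{n+1}$, equal to $k$ with probability $\al_k$, let $T_k(\Lambda\to\Lambda')$ be the transition kernel on arrays produced by the randomized insertion of $k$ --- the leftmost-free-particle move at level $k$, followed by the mandatory short-range pushings and the long-range pushing/pulling probabilities of Figure~\ref{fig:pushing_intro} propagating up through levels $k,k+1,\dots,N$ --- and verify the balance relation
\begin{equation*}
	\sum_{\Lambda}\Prob_n(\Lambda)\sum_{k}\al_k\,T_k(\Lambda\to\Lambda')=\Prob_{n+1}(\Lambda').
\end{equation*}
Since the insertion of $k$ leaves $\lambda^{(1)},\dots,\lambda^{(k-1)}$ untouched and acts one level at a time, this identity factors into: a bottom identity at level $k$ (adding a box in the leftmost free row of $\lambda^{(k)}$ matches the change of $\varphi_{\lambda^{(k)}/\lambda^{(k-1)}}$ and produces the factor $\al_k$); the multi-level propagation of that move; and, after all levels have been updated, the upgrade of the Plancherel prefactor from $n!\,P_{\lambda^{(N)}}(\Pl_1\,|\,0,t)$ to $(n+1)!\,P_{\tilde\lambda^{(N)}}(\Pl_1\,|\,0,t)$, which is exactly the coherency of the Plancherel specialization under Hall--Littlewood branching.

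The heart of the matter is the one-step identity for a single propagation $m\to m+1$: assuming $\lambda^{(m)}$ has just gained a box in row $i$ and the pair $(\lambda^{(m)},\lambda^{(m+1)})$ is in the appropriate conditional state, one must verify that the pushing probability $r_i(\lambda^{(m)},\lambda^{(m+1)}\,|\,t)$, its complement, and the deterministic short-range push in the blocked case $\lambda^{(m)}_i=\lambda^{(m+1)}_i$ equal exactly the ratios $\varphi_{\tilde\lambda^{(m+1)}/\tilde\lambda^{(m)}}(t)/\varphi_{\lambda^{(m+1)}/\lambda^{(m)}}(t)$ (up to the $\al_{m+1}$-power bookkeeping) that keep the updated pair in the correct conditional state. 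Because $\varphi_{\mu/\nu}(t)$ is a product of factors $1-t^{c}$ over the relevant column multiplicities of $\mu$ and $\nu$, these ratios collapse to $\tfrac{1-t}{1-t^{D+1}}$, $\q^{-1}$, and $1-\q^{-1}$ --- precisely the quantities displayed in Figure~\ref{fig:pushing_intro}, with $D$ the number of particles of $\lambda^{(m)}$ and of $\lambda^{(m+1)}$ stacked at the pre-move position of $\lambda^{(m)}_i$.

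The routine-but-delicate part, and the main obstacle, is exactly this inductive bookkeeping: pinning down the conditional (``Hall--Littlewood Gibbs'') form of the law of the array given its top row; enumerating the cases of a propagation step (a free upper-right neighbour present or absent, mandatory push, reaching the diagonal $i=m+1$); tracking how a box added low in the array forces a move at every higher level; and cleanly separating the ``content'' part of the weight (the powers $\al_m^{\bullet}$, which reassemble into $Q_\lambda$) from the ``recording'' part (which reassembles into $n!\,P_\lambda(\Pl_1\,|\,0,t)$). An alternative would be to factor each $T_k$ into the elementary stochastic maps of Borodin--Petrov \cite{BorodinPetrov2013NN} and to invoke their intertwining with Hall--Littlewood transfer matrices together with the skew Cauchy identity; I expect the paper to take the more self-contained inductive route, carrying out the case-by-case identification with Figure~\ref{fig:pushing_intro} directly.
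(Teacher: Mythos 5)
Your overall skeleton is sound: the strengthened statement about the joint law of the whole array is correct (your formula with the $\varphi$-coefficients is the de-poissonized Hall--Littlewood process, equivalent after telescoping the $b_\la$-factors to the Gibbs form $\propto Q_{\la^{(N)}}(\Pl_1)\prod_m P_{\la^{(m)}/\la^{(m-1)}}(\al_m)$), and marginalizing over arrays with fixed top row via the branching expansion of $Q_\la$ does give $n!\,Q_\la(\ab\,|\,0,t)P_\la(\Pl_1\,|\,0,t)=\HL_n^{\ab;\boldsymbol 0;\Pl_0}(\la)$. The balance relation $\sum_{\Lambda}\Prob_n(\Lambda)\sum_k\al_k T_k(\Lambda\to\Lambda')=\Prob_{n+1}(\Lambda')$ is indeed the right thing to prove, and the sequential, two-levels-at-a-time structure of the update is the right way to factor it.

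The genuine gap is in what you call the heart of the matter. The one-step check is \emph{not} a matching of each individual pushing/pulling probability with a ratio $\varphi_{\tilde\la^{(m+1)}/\tilde\la^{(m)}}/\varphi_{\la^{(m+1)}/\la^{(m)}}$: for a fixed new pair $(\la^{(m)},\la^{(m+1)})$ the same configuration is reached from \emph{several} old upper states (the new box may have arisen as a push of the first free upper-right neighbor triggered at one row, as a pull of the upper-left neighbor triggered at another, or --- in the full induction --- as an independent jump when the inserted letter is $m+1$), so what must hold is a \emph{summed} balance identity over these histories, i.e.\ precisely the general identity \eqref{general_identity} (a refinement of the infinitesimal skew Cauchy identity \eqref{inf_skew_Cauchy}), together with the $W$-term for the choice ``leftmost free particle jumps'' ($h=+\infty$). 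Verifying that the probabilities of Fig.~\ref{fig:pushing_intro} satisfy this aggregated identity is the actual mathematical content of the theorem; it does not ``collapse'' factor-by-factor but relies on telescoping cancellations of the quantities $S_i-T_{i-1}$ of \eqref{r_j_h} (cf.\ the block analysis in the proof of Proposition \ref{prop:rj_0t}), and in the paper this verification is Proposition \ref{prop:alpha_bi}, quoted from \cite[\S6.4.2, \S6.5.3]{BorodinPetrov2013NN}. Your proposal asserts this step rather than proving it, and the form in which you state it would not literally go through. Ironically, the ``alternative'' you set aside --- factoring each insertion into the elementary bivariate `dynamics' of Borodin--Petrov and invoking their intertwining with the Hall--Littlewood structure --- is exactly the route the paper takes: Theorem \ref{thm:it_samples} is deduced from Algorithm \ref{alg:sampling_bi} and the properties of bivariate `dynamics' of \S\ref{sub:bivariate_dynamics}--\S\ref{sec:three_particular_dynamics_on_macdonald_processes} (plus an algebraic limit transition for infinitely many parameters, which is not needed for the finite-$N$ statement here), rather than by a self-contained case-by-case induction.
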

We prove this theorem
along with the corresponding statement for general parameters
in \S \ref{sub:full_sampling_algorithm}
(see Theorem \ref{thm:it_samples}).
It implies that the measures \eqref{HL_coherent_intro}
we are interested in are
images of Bernoulli measures on words
under a certain randomized algorithm
(this property is the same for general parameters).
Analyzing this algorithm, we prove that the
main contributions to lengths of rows and columns
under \eqref{HL_coherent_intro}
come from the Bernoulli measure part, and
thus arrive at the Law of Large Numbers
(Theorem \ref{thm:LLN_intro}).

Let us now make a number of remarks on
our randomized RSK insertion algorithm. 

\begin{remark}[Discrete analogue of Dyson's Brownian motion]
	In the course of our randomized RSK insertion algorithm,
	the top row $\la^{(N)}$ (i.e., the
	shape of the corresponding Young tableau)
	evolves according to a certain Markov chain
	which first appeared in Fulman
	\cite{fulman1997probabilistic}.
	This process (which we describe in
	\S \ref{sub:univariate_dynamics}) can be viewed as a
	certain discrete analogue
	of the Dyson's Brownian motion
	\cite{dyson1962brownian}.
\end{remark}

\begin{remark}[Symmetric groups]
	If the Hall--Littlewood parameter $t$
	(which is equal to $\q^{-1}$ for the purposes of
	studying random matrices over $F_\q$)
	is set to zero, then the randomized insertion
	becomes \emph{deterministic},
	and coincides with the classical RSK insertion
	with column insertion. We discuss the $t=0$ degeneration
	in detail
	in \S \ref{sub:symmetric_groups} below.
\end{remark}

\begin{remark}[Sampling algorithms for measures involving Hall--Littlewood functions]
\label{rmk:Borodin_Fulman}
	In the previous years, various
	(probabilistic) algorithms were constructed for
	sampling probability measures
	related to Hall--Littlewood symmetric
	functions (such as our measures $\HL_n^{\ab;\bb;\Pl_\ga}$).
	Analyzing these algorithms,
	one manages to extract certain specific properties
	of the sampled probability measures.
	A sampling algorithm
	for random Young diagrams corresponding to
	the uniform measure on $\Ub$ was
	introduced
	by Kirillov \cite{KirillovTriangular}
	and studied
	by Borodin
	\cite{Borodin1995},
	\cite{Borodin1995_ext}.
	It is well-adapted to proving
	Theorem \ref{thm:LLN_intro} for this uniform measure.

	There are also sampling algorithms constructed by
	Fulman
	\cite{Fulman1999},
	\cite{Fulman2000RM},
	which
	allow to obtain information
	about certain other probability measures
	on Young diagrams related
	to Hall--Littlewood symmetric functions.
	Namely, it is possible to derive
	(in some form) distributions
	of observables
	of random Young diagrams
	such as their row or column lengths.
	See also Fulman \cite{Fulma2013NT} for a recent connection
	of measures involving Hall--Littlewood symmetric functions
	to the Cohen-Lenstra heuristics of
	Number Theory.
\end{remark}

\begin{remark}[Randomized RSK insertions]
	Other randomized RSK insertion algorithms
	were also developed and studied by O'Connell
	and Pei \cite{OConnellPei2012},
	\cite{Pei2013Symmetry}.
	The latter paper
	also explains the
	randomized RSK insertion algorithms
	in a more traditional language of
	Young tableaux and
	Fomin's growth diagrams
	(about the latter see Fomin \cite{fomin1979thesis},
	\cite{Fomin1986},
	\cite{fomin1994duality},
	\cite{fomin1995schensted}).

	A family of algorithms	
	for sampling the measures
	$\HL_n^{\ab;\boldsymbol 0;\Pl_0}$
	was constructed in Borodin--Petrov \cite{BorodinPetrov2013NN}.
	A priori these algorithms involved negative transition probabilities.
	The sampling algorithm described above is present in \cite{BorodinPetrov2013NN} in a
	hidden form: it is singled out by requiring nonnegative transition probabilities.
	In \S\S \ref{sec:macdonald_processes_and_bivariate_continuous_time_dynamics_}--\ref{sec:three_particular_dynamics_on_macdonald_processes} we also generalise the mechanism of \cite{BorodinPetrov2013NN}
	to sample measures
	$\HL_n^{\ab;\bb;\Pl_\ga}$.
	The passage from parameters $\ab$ to parameters $\bb$
	is possible via
    a certain duality related to the
	transposition of Young diagrams, see
	\S \ref{sub:duality}. The Plancherel part $\Pl_\ga$ can be added by considering
	interlacing particle arrays with ``continuous floors'', cf. \S \ref{sub:adding_a_plancherel_parameter}.
	Our sampling algorithm for the measures $\HL_n^{\ab;\bb;\Pl_\ga}$
	lives on interlacing arrays generalizing the ones on Fig.~\ref{fig:interlacing_intro},
	see Fig.~\ref{fig:A_particle} in \S \ref{sec:rsk_type_algorithm_for_sampling_hl_coherent_measures}.
\end{remark}

\begin{remark}[Branching graphs]\label{rmk:branching_graphs}
	It is worth noting that the algorithm
	of Borodin which samples measures
	\eqref{HL_coherent_intro} for special parameters
	(Remark \ref{rmk:Borodin_Fulman}),
	differs significantly from our construction.
	Namely, the former
	employs the coherency
	property
	on the Young graph with Hall--Littlewood formal edge multiplicities
	which corresponds to adding one box to a Young diagram
	(cf. Remarks \ref{rmk:coupling_intro}, \ref{rmk:branching1},
	and \S \ref{sub:plancherel_specializations_and_young_graph}).
	Moreover, this algorithm can be viewed as a univariate
	dynamics of \S \ref{sub:univariate_dynamics}.

	On the other hand,
	our construction benefits from a
	connection to Gelfand-Tsetlin
	graph with another type of branching
	corresponding to adding horizontal strips to
	a Young diagram
	(with edge multiplicities also related
	to Hall--Littlewood symmetric functions).
	One can say that we
	work with Markov dynamics on paths
	in the Gelfand-Tsetlin graph
	(these are interlacing particle configurations),
	and our constructions are in the spirit of
	dynamics on interlacing particle arrays
	of
	Borodin--Ferrari
	\cite{BorFerr2008DF}, Borodin \cite{Borodin2010Schur},
	Borodin--Olshanski \cite{BorodinOlshanski2010GTs},
	Borodin--Corwin
	\cite{BorodinCorwin2011Macdonald}
	(they are based on an idea of
	Diaconis--Fill \cite{DiaconisFill1990}),
	and
	more general multivariate dynamics developed in
	Borodin--Petrov
	\cite{BorodinPetrov2013NN}.

	The interplay between
	the
	coherency
	property on the Young graph and
	its connections to the Gelfand-Tsetlin
	graph was employed
	in, e.g., Borodin--Gorin
	\cite{BG2011non}.
	Representation-theoretic
	consequences of connections
	between the Young and Gelfand-Tsetlin graphs are
	discussed in
	Borodin--Olshanski
	\cite{BorodinOlsh2011Bouquet}.
\end{remark}



\subsection{Symmetric groups} 
\label{sub:symmetric_groups}

Here and in the next subsection we briefly summarize
representation-theoretic constructions which lead to the
classification problem of \S \ref{sub:infinite_random_matrices_over_a_finite_field}.
We start with an analogous problem for
symmetric groups.
This part of the introduction is not essential for understanding
our main results and constructions.

\medskip

One of the central problems of the asymptotic
representation theory of symmetric groups $S(n)$
is to classify \emph{irreducible characters}
of the infinite symmetric group
$S(\infty)=\bigcup_{n=1}^{\infty}S(n)$.
Elements of $S(\infty)$ are permutations of the
infinite set $\{1,2,\ldots\}$ which move only finitely many numbers.
A character of $S(\infty)$ is a positive definite central function $\chi$
on $S(\infty)$ which is normalized by $\chi(e)=1$.
Characters of $S(\infty)$
form a convex set, and irreducible characters
are (by definition) extreme points of this set.
Irreducible characters correspond to finite factor representations
of $S(\infty)$, e.g., see \cite{VK81AsymptoticTheory}.


\begin{theorem}[Edrei \cite{Edrei1952}\footnote{See also Aissen--Edrei--Schoenberg--Whitney
\cite{AESW51}.} and Thoma \cite{Thoma1964}]\label{thm:thoma}
Irreducible characters of $S(\infty)$
are in one-to-one correspondence with
triplets $\tilde{\Ab}:=(\tilde\ab;\tilde{\boldsymbol\be};\tilde\ga)\in\R^{2\infty+1}$
such that
\begin{align}\label{alpha_beta_gamma_intro1_s}
	\tilde\ab=(\tilde\al_1\ge\tilde\al_2\ge \ldots\ge0),
	\qquad
	\tilde{\boldsymbol\be}=
	(\tilde\be_1\ge\tilde\be_2\ge \ldots\ge 0),
	\qquad\tilde\ga\ge0,
\end{align}
and
\begin{align}\label{alpha_beta_gamma_intro2_s}
	\sum_{i=1}^{\infty}\tilde\al_i+
	\sum_{i=1}^{\infty}\tilde\be_i+
	\tilde\gamma=1.
\end{align}
The correspondence is established
by restricting $\chi$ to the subgroup
$S(n)\subset S(\infty)$ permuting
the first $n$ numbers.
The restriction $\chi|_{_{S(n)}}$
can be decomposed into a convex combination of normalized irreducible
characters of $S(n)$ which are indexed by
Young diagrams $\la\in\Yb_n$:
\begin{align*}
	\chi|_{_{S(n)}}=\sum_{\la\in\Yb_n}
	\mathcal{S}_n^{\tilde{\Ab}}(\la)\frac{\chi_\la}{\dim\chi_\la},
\end{align*}
and the coefficients of this combination (this is a probability
measure on $\Yb_n$) are
\begin{align}
	\mathcal{S}_n^{\tilde{\Ab}}(\la)=
	n!\,
	s_\la(\tilde{\Ab})
	s_\la(\Pl_{1}),
	\label{Schur_coherent_intro}
\end{align}
where $s_\la$ is the Schur symmetric function, and the measure above is
given as the product of two specializations of $s_\la$ (see definitions in \S \ref{sec:preliminaries}).
\end{theorem}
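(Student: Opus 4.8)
The plan is to translate the statement into harmonic analysis on the Young branching graph and then to identify its minimal boundary with the Thoma simplex. Given a character $\chi$ of $S(\infty)$, its restriction $\chi|_{S(n)}$ decomposes into irreducibles, and the coefficients are $\mathcal S_n^{\tilde\Ab}(\la)=\dim\chi_\la\cdot\varphi(\la)$ with $\varphi(\la):=\langle\chi|_{S(n)},\chi_\la\rangle_{S(n)}$; positive definiteness of $\chi$ is equivalent to $\varphi\ge0$, and the branching rule $\chi_\la|_{S(n-1)}=\sum_{\mu\nearrow\la}\chi_\mu$ forces $\varphi(\mu)=\sum_{\la:\,\mu\nearrow\la}\varphi(\la)$, with $\varphi(\varnothing)=\chi(e)=1$. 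Conversely every nonnegative normalized harmonic function on $\Yb$ arises from a unique $\chi$, and extreme $\chi$ correspond to minimal (= extreme) harmonic functions, i.e. to points of the minimal Martin boundary of $\Yb$. So the first, soft, step is Choquet theory together with this dictionary; it reduces the whole problem to describing the Martin boundary of Young's lattice and to computing the associated $\varphi$.

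Second, I would prove that extreme characters are \emph{multiplicative}: $\chi(g_1g_2)=\chi(g_1)\chi(g_2)$ whenever $g_1,g_2\in S(\infty)$ have disjoint supports. This is the one genuinely operator-algebraic input; it follows from the uniqueness of the trace on the finite factor generated by an extreme $\chi$ (equivalently, from triviality of an appropriate tail). Hence $\chi$ is determined by the numbers $p_k:=\chi\big((1\,2\,\cdots\,k)\big)$, $k\ge1$, with $p_1=1$, since any permutation is a product of disjoint cycles. Plugging $\chi(\rho)=\prod_i p_{\rho_i}$ into $\varphi(\la)=\sum_{\rho\vdash n}z_\rho^{-1}\chi(\rho)\chi_\la(\rho)$ and comparing with the Frobenius expansion $s_\la=\sum_\rho z_\rho^{-1}\chi_\la(\rho)\,p_\rho$, we see that $\varphi$ is nothing but the restriction to Schur functions of the algebra homomorphism $f\colon\Lambda\to\R$ of the ring of symmetric functions determined by $f(p_k)=p_k$, and $\varphi\ge0$ says exactly that $f$ is nonnegative on every Schur function.

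Third — and here lies the heart of the matter — I would classify such Schur-nonnegative homomorphisms. Record $f$ via its values $h_n:=f(h_n)$ on complete homogeneous symmetric functions. The Jacobi--Trudi identity $s_\la=\det[h_{\la_i-i+j}]$ realizes every $f(s_\la)$ as a minor of the one-sided Toeplitz matrix $[h_{j-i}]_{i,j\ge0}$ (with $h_{<0}=0$, $h_0=1$), and conversely every such minor is of this form; therefore $f$ is Schur-nonnegative if and only if $(h_n)_{n\ge0}$ is a P\'olya frequency sequence. The Aissen--Edrei--Schoenberg--Whitney characterization of these sequences then gives $\sum_{n\ge0}h_nz^n=e^{\tilde\ga z}\prod_i\frac{1+\tilde\be_iz}{1-\tilde\al_iz}$ for nonnegative parameters ordered as in \eqref{alpha_beta_gamma_intro1_s}, and the normalization $h_1=p_1=1$ is precisely \eqref{alpha_beta_gamma_intro2_s}. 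Translating back, $\varphi(\la)=s_\la(\tilde\ab;\tilde\bb;\tilde\ga)$ in the notation of Section~\ref{sec:preliminaries}, and since $\dim\chi_\la=n!\,s_\la(\Pl_1)$ by the hook-length / Plancherel dimension formula, one obtains $\mathcal S_n^{\tilde\Ab}(\la)=n!\,s_\la(\tilde\Ab)\,s_\la(\Pl_1)$ as in \eqref{Schur_coherent_intro}; coherency and total mass one then hold automatically, and injectivity of $\tilde\Ab\mapsto\chi$ follows by reading off the $p_k$ from $f$.

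The main obstacle is step three: the classification of P\'olya frequency sequences (equivalently, of totally positive one-sided Toeplitz matrices) is Edrei's theorem, whose proof is a genuine piece of complex analysis resting on the Hadamard factorization of entire functions of genus zero. An alternative that sidesteps it is the Vershik--Kerov ergodic method — realize each minimal $\varphi$ as $\varphi(\la)=\lim_{n\to\infty}\dim(\la,\mu(n))/\dim\mu(n)$ along a sequence $\mu(n)\in\Yb_n$ whose normalized row and column lengths converge to $(\tilde\ab,\tilde\bb)$, and evaluate this limit through the asymptotics of skew dimensions — but proving existence and exhaustiveness of such limits requires comparable analytic work. Everything else (the Choquet reduction, multiplicativity, and the bookkeeping with symmetric functions) is routine.
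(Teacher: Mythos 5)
The paper never proves Theorem \ref{thm:thoma}: it is quoted as classical background, with the proof delegated to Edrei \cite{Edrei1952}, Aissen--Edrei--Schoenberg--Whitney \cite{AESW51} and Thoma \cite{Thoma1964} (see also the equivalent formulations via Schur-nonnegative specializations recalled in \S\ref{sub:completeness}), so there is no in-paper argument to compare yours against. Your outline is the standard route to this result and is essentially sound: the Choquet/branching-graph dictionary between characters of $S(\infty)$ and nonnegative normalized harmonic functions on the Young graph, multiplicativity of extreme characters via the factor (tail-triviality) property, identification of the resulting functional with a Schur-nonnegative homomorphism of $\Sym$ determined by its values on the power sums, and the reduction via Jacobi--Trudi to total positivity of the one-sided Toeplitz sequence $(h_n)$, at which point the Edrei/AESW classification of P\'olya frequency sequences yields \eqref{alpha_beta_gamma_intro1_s}--\eqref{alpha_beta_gamma_intro2_s}, while $\dim\chi_\la=n!\,s_\la(\Pl_1)$ gives \eqref{Schur_coherent_intro}.

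One small inaccuracy you should repair: it is not true that every minor of the Toeplitz matrix $[h_{j-i}]$ is a straight Schur function; general minors are the skew functions $s_{\la/\mu}=\det[h_{\la_i-\mu_j-i+j}]$. The implication you actually need (nonnegativity on straight Schur functions forces nonnegativity of all minors) still holds because $s_{\la/\mu}=\sum_\nu c^{\la}_{\mu\nu}s_\nu$ with nonnegative Littlewood--Richardson coefficients, so state that bridge explicitly instead of claiming the converse identification of minors. Beyond this, you correctly isolate the genuinely hard ingredient --- the analytic classification of totally positive one-sided Toeplitz sequences (or, alternatively, the full Vershik--Kerov ergodic-method argument) --- and that is exactly the content the paper outsources to its citations; your proposal is as complete as one can be without reproducing that analysis.
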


The problem of classifying irreducible
characters of $S(\infty)$ can be also formulated in
equivalent terms of nonnegative specializations of
Schur symmetric functions \cite{VK1981Characters}, \cite{VK81AsymptoticTheory},
see also \cite{Kerov1998}.

Measures \eqref{Schur_coherent_intro} also satisfy a certain coherency
property on the Young graph, see \S \ref{sub:plancherel_specializations_and_young_graph}.
At the level of formulas \eqref{Schur_coherent_intro} and \eqref{HL_coherent_intro},
Theorem \ref{thm:thoma} is the degeneration of Conjecture \ref{conj:Spec_HL}
when the parameter $t=\q^{-1}$ is set to zero (then the Hall--Littlewood $P_\la$
and $Q_\la$ both become the
Schur function $s_\la$).

\begin{remark}
	For $t=0$, the randomized RSK insertion algorithm
	we develop
	is deterministic, it
	was introduced by Vershik and Kerov
	\cite{Vershik1986}
	in connection with Theorem \ref{thm:thoma}
	(and further exploited in, e.g., Sniady
	\cite{Sniady2013},
	see also
 	Romik--Sniady \cite{RomikSniady2011}).
	The Law of Large Numbers
	in this setting (i.e., an analogue of Theorem \ref{thm:LLN_intro}
	for $t=\q^{-1}$ being zero)
	was obtained earlier also by Vershik and Kerov \cite{VK1981Characters},
	by a direct investigation of measures
	\eqref{Schur_coherent_intro} (they are simpler than
	\eqref{HL_coherent_intro} in that they admit more direct explicit formulas).
	Bufetov	
	\cite{BufetovCLT}
	used the deterministic RSK insertion
	algorithm of \cite{Vershik1986}
	to establish a corresponding
	Central Limit Theorem (Conjecture
	\ref{conj:CLT_intro} for $t=0$).
\end{remark}

\begin{remark}
	We also note that a problem
	of classifying ergodic conjugation-invariant measures
	on Hermitian matrices over the \emph{complex numbers}
	(instead of $F_\q$ as in Conjecture \ref{conj:Spec_HL})
	was considered by Olshanski and Vershik
	\cite{OlVer1996}.
	This setup is also deeply
	related to Schur symmetric functions.
	Moreover, it arises as
	a degeneration in a certain sense
	of the problem coming from
	asymptotic representation theory of
	unitary groups (over complex numbers).
	About the latter problem, see Edrei
	\cite{Edrei53} (and also Aissen--Edrei--Schoenberg--Whitney \cite{AESW51},
	Aissen--Schoenberg--Whitney
	\cite{ASW52}),
	Voiculescu \cite{Voiculescu1976},
	Vershik--Kerov \cite{VK82CharactersU},
	Boyer \cite{Boyer1983},
	Okounkov--Olshanski \cite{OkOl1998},
	Borodin--Olshanski \cite{BorodinOlsh2011GT},
	Petrov \cite{Petrov2012GT},
	Gorin--Panova \cite{GorinPanova2012}.
\end{remark}


\subsection{Asymptotic representation theory of linear groups over a finite field} 
\label{sub:asymptotic_representation_theory_of_linear_groups_over_a_finite_field}

The desire to construct a meaningful
asymptotic representation theory of the groups
$GL(n,F_\q)$
(which is in some sense
a deformation of the corresponding theory for
symmetric groups; the latter one was briefly described in \S \ref{sub:symmetric_groups})
leads to considering various groups of infinite
matrices over $F_\q$
which play the role of a natural
$n=\infty$ analogue of
the groups
$GL(n,F_\q)$.
A direct analogue of $S(\infty)$, the group $GL(\infty,F_\q)$ (see the discussion after
Definition \ref{def:central}),
in fact leads to a poor representation theory, see Thoma \cite{Thoma1972} and
Skudlarek \cite{Skudlarek1976}.

First example of a ``right'' $n=\infty$ analogue
is the
group $\mathbb{GLB}$
(see Vershik and Kerov \cite{VK98}, and also Vershik's historical preface in \cite{GorinKerovVershikFq2012})
of all invertible almost upper-triangular matrices
over $F_\q$. Namely,
$\mathbb{GLB}$
consists of all matrices $X=[X_{ij}]_{i,j=1}^{\infty}$
whose upper $n\times n$ corner is invertible for a
large enough $n$, and, moreover, $X_{ij}=0$ for $i>j$ and $i>n$, and
$X_{ii}\ne 0$ for $i>n$.

A very similar representation
theory arises for another group, $\mathbb{GLU}$,
which consists of all matrices
$X=[X_{ij}]_{i,j=1}^{\infty}\in\mathbb{GLB}$ for which
$X_{ii}=1$ for large enough $i$.
For both groups, there is a natural notion of characters
which are traces of the so-called Schwartz-Bruhat algebra
of the group.
Principal (unipotent)
extreme
traces of this Schwartz-Bruhat algebra
are parametrized by
the same triplets
$(\tilde\ab;\tilde{\boldsymbol\be};\tilde\ga)\in\R^{2\infty+1}$
satisfying \eqref{alpha_beta_gamma_intro1_s}--\eqref{alpha_beta_gamma_intro2_s}
as for the infinite symmetric group,
see \cite{VK98}, \cite[Thm. 2.24]{GorinKerovVershikFq2012}.
A posteriori, when the classification is known, these unipotent extreme traces
are
identified with
extreme traces of the infinite-dimensional Iwahori-Hecke algebra
$\mathcal{H}_{\infty}(\q)$. Traces of the latter were classified in
Vershik--Kerov \cite{VK89Hecke}
and Meliot \cite[\S7]{MeliotCLT2011}.
See also \cite[\S3.3]{GorinKerovVershikFq2012} for the identification of two classifications.

Let us now make connection of this classification of extreme unipotent traces
to Conjecture \ref{conj:Spec_HL} which is open.
By \cite[Theorems 4.2 and 4.6]{GorinKerovVershikFq2012},
to every unipotent trace of $\mathbb{GLU}$
indexed by $(\tilde\ab;\tilde{\boldsymbol\be};\tilde\ga)$
as above corresponds a unique central probability
measure on $\Ub\subset\mathbb{GLU}$ (Definition \ref{def:central}).
Moreover, this central probability measure is ergodic, and
it is indexed by parameters
\begin{align*}
	\big\{\al_r\big\}_{r=1}^{\infty}=\big\{
	\tilde\al_i(1-\q^{-1})\q^{1-j}
	\big\}_{i,j=1}^{\infty},\qquad
	\be_i=\tilde\be_i(1-\q^{-1}),\qquad
	\ga=\tilde\ga(1-\q^{-1})
\end{align*}
in the sense of Conjecture \ref{conj:Spec_HL}.
(Note that this transformation is \emph{different} from the more straightforward
reparametrization
described in Remark \ref{rmk:Kerov_different} below.)

We refer to Vershik--Kerov
\cite{VK98} and Gorin--Kerov--Vershik \cite{GorinKerovVershikFq2012}
for further details and connections to asymptotic representation theory.


\subsection{Outline of the paper}

In \S \ref{sec:preliminaries} and \S \ref{sec:coherent_measures_on_partitions} we recall
necessary objects related to Young diagrams and Macdonald
(and Hall--Littlewood) symmetric functions.
In particular, in \S \ref{sec:coherent_measures_on_partitions}
we discuss our main object: \emph{coherent measures}
on Young diagrams related to Macdonald
symmetric functions (this is a generalization of
the measures \eqref{HL_coherent_intro}).
In \S \ref{sec:macdonald_processes_and_bivariate_continuous_time_dynamics_} we recall and extend
the general formalism of
\cite{BorodinPetrov2013NN}
for constructing Markov dynamics
which map coherent measures onto each other.
In \S \ref{sec:three_particular_dynamics_on_macdonald_processes}
and \S \ref{sec:rsk_type_algorithm_for_sampling_hl_coherent_measures} we construct our
randomized RSK insertion algorithm for sampling coherent measures.
In \S \ref{sec:proof_of_the_law_of_large_numbers} we employ this
sampling algorithm to prove the Law of Large Numbers.

\subsection{Acknowledgments}

This work was started at the 2013
Cornell Probability Summer School,
and we would like to thank the organizers for the
invitation and warm hospitality.
We are very grateful to Alexei Borodin, Jason Fulman, Vadim Gorin, Grigori Olshanski, and Anatoly Vershik for
helpful discussions.
We also would like to thank the anonymous referee
for extremely valuable suggestions on improving the presentation of our results.

A.B. was partially supported by Simons Foundation--IUM scholarship, by Moebius Foundation
for Young Scientists, by ``Dynasty'' foundation,
and by the RFBR grant 13-01-12449.



\section{Preliminaries} 
\label{sec:preliminaries}

\subsection{Young diagrams} 
\label{sub:young_diagrams}

Let $\Yb$ denote the set of all \emph{partitions}, i.e., integer sequences of the form
$\la=(\la_1\ge \la_2\ge \ldots\ge\la_{\ell(\la)}>0)$, where $\la_i\in\Z_{\ge0}$.
We always identify partitions with \emph{Young diagrams}
as in \cite[I.1]{Macdonald1995}, see also Fig.~\ref{fig:transpose}.
The number $\ell(\la)$
of nonzero components
of $\la$ is called the \emph{length} of the partition.
Also, let $|\la|:=\sum_{i=1}^{\ell(\la)}\la_i$ be the number of
boxes in the corresponding Young diagram.
When needed, we will append partitions by zeroes, and identify $\la$ with
$(\la_1,\ldots,\la_{\ell(\la)},0,0,\ldots)$.
The empty partition is denoted by $\varnothing =(0,0,\ldots)$.
For $n\ge0$, let $\Yb_n:=\{\la\in\Yb\colon |\la|=n\}$ be the set of Young diagrams with
$n$ boxes.

For two Young diagrams $\mu,\la$ such that $\ell(\mu)\le\ell(\la)$ and
$\mu_i\le\la_i$ for all $i=1,\ldots,\ell(\la)$, we
will write $\mu\subseteq\la$.
In this case, the set difference
of the diagram $\la$ and the diagram $\mu$
is denoted by $\la/\mu$ and called a \emph{skew Young diagram}.

If $\mu\subseteq\la$ and, moreover,
\begin{align}\label{interlace}
	\la_1\ge\mu_1\ge\la_2\ge\mu_2 \ge\ldots\ge
	\la_{\ell(\la)-1}\ge\mu_{\ell(\mu)}\ge
	\la_{\ell(\la)}
\end{align}
(this implies that $\ell(\la)=\ell(\mu)$ or $\ell(\la)=\ell(\mu)+1$),
then we say that the diagram $\la$ is obtained from $\mu$
by adding a \emph{horizontal strip} (or, equivalently, that
\emph{the skew diagram $\la/\mu$ is a horizontal strip}),
and denote this by $\mu\prech\la$.

\begin{figure}[htbp]
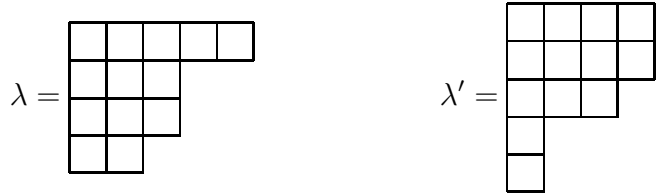

\begin{equation*}
	\la=\begin{array}{|c|c|c|c|c|}
	    \hline
	    \ &\ &\ &\ &\ \\
	    \hline
	    \ &\ &\ \\
	    \cline{1-3}
	    \ &\ &\ \\
		\cline{1-3}
		\ &\ \\
		\cline{1-2}
	\end{array}
	\qquad
	\qquad
	\qquad
	\la'=\begin{array}{|c|c|c|c|}
	    \hline
	    \ &\ &\ &\ \\
	    \hline
	    \ &\ &\ &\ \\
	    \hline
	    \ &\ &\ \\
	    \cline{1-3}
	    \ \\
		\cline{1-1}
		\ \\
		\cline{1-1}
	\end{array}
\end{equation*}
\caption{Young diagram $\la=(5,3,3,2)$ and its transpose
$\la'=(4,4,3,1,1)$.}
\label{fig:transpose}
\end{figure}

If $\la\in\Yb$ is represented by a Young diagram,
then, reflecting it with respect to the main diagonal,
one gets the \emph{transposed diagram} (see Fig.~\ref{fig:transpose}).

We say that two diagrams $\mu\subseteq\la$ differ by a \emph{vertical strip}
(equivalently, that \emph{the skew diagram $\la/\mu$ is a vertical strip})
and denote this by $\mu\precv\la$,
iff the transposed diagrams $\mu'\subseteq\la'$ differ by a horizontal strip.

For a Young diagram $\la$,
let $\Us(\la)$ (respectively, $\Ds(\la)$) denote the
set of all boxes that can be added to
(respectively, removed from) the diagram $\la$
in such a way that the result is \emph{again} a Young diagram (see Fig.~\ref{fig:U_D}).
\begin{figure}[htbp]
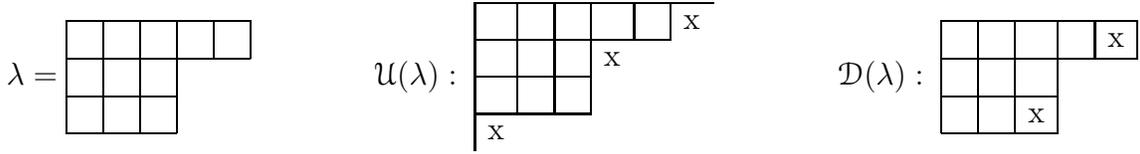

\begin{align*}
	\la=\begin{array}{|c|c|c|c|c|}
	    \hline
	    \ &\ &\ &\ &\ \\
	    \hline
	    \ &\ &\ \\
	    \cline{1-3}
	    \ &\ &\ \\
		\cline{1-3}
	\end{array}
	\qquad \qquad
	\Us(\la):\;\begin{array}{|c|c|c|c|c|c}
	    \cline{1-6}
	    \ &\ &\ &\ & \ &\mathrm{x}\\
	    \cline{1-5}
	    \ &\ &\ &\multicolumn{1}{c}{\mathrm{x}}\\
	    \cline{1-3}
	    \ &\ &\ \\
		\cline{1-3}
		\multicolumn{1}{|c}{\mathrm{x}}
	\end{array}
	\qquad \qquad
	\Ds(\la):\;\begin{array}{|c|c|c|c|c|}
	    \hline
	    \ &\ &\ &\ & \mathrm{x}\\
	    \hline
	    \ &\ &\  \\
	    \cline{1-3}
	    \ &\ &\mathrm{x} \\
		\cline{1-3}
	\end{array}
\end{align*}
\caption{Boxes that can be added to (or removed from)
a Young diagram.}
\label{fig:U_D}
\end{figure}
For $\la,\nu\in\Yb$,
we will write $\la\precb\nu$ if $\nu$ is obtained from $\la$ by
adding a box.
Note that adding a box is a very particular case of adding a horizontal
(or vertical) strip.\footnote{However, note that a horizontal
or a vertical strip is allowed to be empty.}
The operation of adding a box will be also denoted as
$\nu=\la+\square$, or, equivalently, as $\la=\nu-\square$.


\subsection{Macdonald symmetric functions} 
\label{sub:preliminaries}

Probability measures we consider in the present paper are described in
terms of nonnegative specializations of
Macdonald (and, in particular, Hall--Littlewood) symmetric functions.
Let us briefly recall the necessary definitions.
We refer to \cite{Macdonald1995} and \cite[\S2]{BorodinCorwin2011Macdonald}
for details.

By $\Sym$ denote the \emph{algebra of symmetric functions} over $\R$ \cite[I.2]{Macdonald1995}.
It is a commutative algebra
$\R[p_1,p_2,\ldots]$
generated by $1$ and by the (algebraically independent)
Newton power sums
\begin{align*}
	p_k(x_1,x_2,\ldots)=\sum_{i=1}^{\infty}x_i^{k},\qquad k=1,2,\ldots.
\end{align*}
Products of power sums $p_\la:=p_{\la_1}p_{\la_2}\ldots p_{\la_{\ell(\la)}}$,
where $\la\in\Yb$ (with the agreement $p_{\varnothing}=1$), form a linear basis in $\Sym$.
(All linear bases in $\Sym$
that we consider
will be
indexed by the set $\Yb$.)
The algebra $\Sym$ possesses a natural grading
which is defined by setting
$\deg p_k=k$, $k=1,2,\ldots$.

\begin{remark}\label{rmk:Sym_proj_lim}
	Alternatively,
	each element of $\Sym$ may be viewed as a
	symmetric
	formal power series
	in $x_1,x_2,\ldots$
	in which degrees of all monomials
	are bounded.
	If in a symmetric function
	$f(x_1,x_2,\ldots)$ all but finitely many (say, $N$) of the variables
	are set to zero, then we get a usual symmetric polynomial
	$f(x_1,\ldots,x_N)=f(x_1,\ldots,x_N,0,0,\ldots)$
	in finitely many variables.
	
	Moreover, every $f\in\Sym$ can be understood as a sequence of
	symmetric polynomials $f_N$ in $N$ variables, $N=1,2,\ldots$,
	such that $\sup_N\deg f_N<\infty$
	and the polynomials $\{f_N\}$ are compatible in the sense that
	$f_{N+1}(x_1,\ldots,x_N,0)=f_{N}(x_1,\ldots,x_N)$.\footnote{This
	means that $\Sym$
	is the \emph{projective limit}
	(in the category of graded algebras)
	of algebras of symmetric polynomials
	in growing number of variables.}
\end{remark}

A remarkable two-parameter family of linear bases in $\Sym$
is formed by the \emph{Macdonald symmetric functions} \cite[VI]{Macdonald1995}.
Let $q,t\in[0,1)$. Consider a bilinear scalar product
$\langle\cdot,\cdot\rangle_{q,t}$
in $\Sym$ defined on $\{p_\la\}$
by
\begin{align*}
	\langle p_\la,p_\mu\rangle_{q,t}:=\mathbf{1}_{\la=\mu}z_{\la}(q,t),\qquad
	z_\la(q,t):=\bigg(\prod_{i\ge1}i^{m_i}(m_i)!\bigg)
	\cdot
	\bigg(\prod_{i=1}^{\ell(\la)}
	\frac{1-q^{\la_i}}{1-t^{\la_i}}\bigg),
\end{align*}
where $\la=(1^{m_1}2^{m_2}\ldots)$ means that $\la$ has $m_1$ parts equal to 1, $m_2$ parts equal to 2, etc.

\begin{definition}\label{def:Macd_symm_f}
	The \emph{Macdonald symmetric functions}
	$P_\la(\x\,|\,q,t)$
	(where $\x=(x_1,x_2,\ldots)$
	and $\la$ runs over all partitions)
	form a unique family of
	homogeneous symmetric functions
	such that:
	\begin{enumerate}[(1)]
		\item The functions $\{P_\la\}$ are pairwise
		orthogonal with respect to
		the scalar product $\langle\cdot,\cdot\rangle_{q,t}$.
		\item For every $\la$, we have
		\begin{align*}
			P_\la(\x\,|\,q,t)=
			x_1^{\la_1}\ldots x_{\ell(\la)}^{\la_{\ell(\la)}}{}+
			{}\mbox{lower monomials in lexicographic order}.
		\end{align*}
		The dependence on the parameters $(q,t)$
		is in coefficients of the
		lexicographically
		lower monomials.\footnote{Lexicographic order
		means that, for example, $x_1^{2}$ is higher
		than $\mathrm{const}\cdot x_1x_2$
		which is in turn higher
		than $\mathrm{const}\cdot x_2^{2}$.}
	\end{enumerate}
	When this does not lead to a confusion,
	we will omit
	the notation $(q,t)$,
	and simply write $P_\la(\x)$
	or $P_\la$ instead of $P_\la(\x\,|\,q,t)$.
	
	Also define
	$Q_\la:=
	{P_\la}/{\langle P_\la,P_\la \rangle_{q,t}}$,
	so that the functions $P_\la$ and $Q_\mu$ are
	orthonormal.
\end{definition}

In view of Remark \ref{rmk:Sym_proj_lim}, one can also speak about the Macdonald
symmetric polynomials $P_\la(x_1,\ldots,x_N\,|\,q,t)$. They can be alternatively
defined as eigenfunctions of certain $q$-difference operators
\cite[VI.3]{Macdonald1995}.

There are several
important special cases of the
parameters $(q,t)$. We are mainly interested in
one of them corresponding to setting the first parameter
$q$ to zero.
Then the Macdonald symmetric functions become the \emph{Hall--Littlewood
symmetric functions} \cite{Littlewood1961}, \cite[III]{Macdonald1995}.
If one further sets $t=0$ (or, equivalently, takes the Macdonald symmetric functions with $q=t$),
then one gets the \emph{Schur symmetric functions}.
In contrast with the general
Macdonald case,
both the Hall--Littlewood and Schur symmetric functions
admit rather explicit formulas
(see I.(3.1) and III.(2.1) in \cite{Macdonald1995},
respectively),
but we will not use them.
See also \S \ref{sub:completeness} for
other interesting particular cases of the Macdonald parameters $(q,t)$.

\begin{definition}\label{def:skew_Macd_symm_f}
	A \emph{skew Macdonald symmetric function} $Q_{\la/\mu}$ indexed by $\mu,\la\in\Yb$ is defined as the only symmetric function such that $\langle Q_{\la/\mu},P_\nu\rangle_{q,t}=\langle Q_{\la},P_\mu P_\nu\rangle_{q,t}$ for all $\nu\in\Yb$.
	The $P$ version is then defined through $Q_{\la/\mu}$
	as $P_{\la/\mu}:=
	\dfrac{\langle P_\la,P_\la\rangle_{q,t}}
	{\langle P_\mu,P_\mu\rangle_{q,t}}Q_{\la/\mu}$.
	Skew functions vanish unless $\mu\subseteq\la$.
	One also has $P_{\la/\varnothing}=P_\la$ and $Q_{\la/\varnothing}=Q_\la$.
\end{definition}


\subsection{Specializations of $\Sym$} 
\label{sub:specializations_of_sym_}

By a \emph{specialization} of the algebra $\Sym$ we mean an algebra homomorphism $\Ab\colon\Sym\to\R$. Such a map is completely determined by its values $\Ab(p_k)$ on the power sums. The \emph{trivial} specialization $\varnothing$ is defined as taking value 1 at the constant function $1\in\Sym$ and sending all the power sums $p_k$, $k\ge1$, to zero.

For two specializations $\Ab_1$ and $\Ab_2$, we define their \emph{union} $\Ab=(\Ab_1,\Ab_2)$
(sometimes we will also use the notation $\Ab_1\cup\Ab_2$)
as the specialization defined on power sums as
\begin{align*}
p_k(\Ab_1,\Ab_2)=p_k(\Ab_1)+p_k(\Ab_2), \qquad k\ge 1.
\end{align*}

If $\Ab$ is a specialization, define its \emph{multiple} $a\cdot \Ab$ (where $a\in\R$)
by requiring that on homogeneous functions $f\in\Sym$,
$f(a\cdot \Ab)=a^{\deg f}f(\Ab)$.

Important examples of specializations are the so-called \emph{finite length specializations}
$\Ab_{y_1,\ldots,y_N}$, where
$y_1,\ldots,y_N\in\R$,
defined as follows.
For $f\in\Sym$, let $f_N$ be the corresponding symmetric polynomial
in $N$ variables (see~Remark \ref{rmk:Sym_proj_lim}).
The image of $f$ under $\Ab_{y_1,\ldots,y_N}$
is
\begin{align}\label{finite_length_spec}
	f\mapsto f_N(y_1,\ldots,y_N).
\end{align}
The finite length specializations suggest the notation:
For $f\in\Sym$ and a specialization $\Ab$ we will write
$f(\Ab)$ instead of $\Ab(f)$.
For finite length specializations we will use a more
intuitive notation $f(y_1,\ldots,y_N)$ instead of
$f(\Ab_{y_1,\ldots,y_N})$.

\begin{definition}\label{def:nonneg_spec}
	A specialization $\Ab$ of $\Sym$ is said to be
	\emph{$(q,t)$-nonnegative}\footnote{Sometimes we will also use the term \emph{Macdonald-nonnegative},
	cf. \cite[\S 2.2.1]{BorodinCorwin2011Macdonald}.}
	if $P_{\la/\mu}(\Ab\,|\,q,t)\ge0$ for any partitions
	$\la,\mu\in\Yb$.
	The set
	\begin{align}\label{spec_support}
		\Yb(\Ab):=\{\la\in\Yb\colon
		P_\la(\Ab\,|\,q,t)>0\}
	\end{align}
	is the \emph{support} of a specialization $\Ab$.
\end{definition}
There is no known classification of $(q,t)$-nonnegative specializations.
However,
a wide class of such specializations
was introduced by Kerov \cite[II.9]{Kerov-book},
and he
conjectured that they exhaust all Macdonald-nonnegative specializations
(see also \S \ref{sub:completeness} below for more discussion).

These specializations depend on nonnegative parameters
$\{\al_i\}_{i\ge1}$, $\{\be_i\}_{i\ge1}$ and $\ga$ such that
$\sum_{i=1}^{\infty}(\al_i+\be_i)<\infty$.
For definiteness, we will always assume that
$\al_1\ge\al_2\ge \ldots\ge0$ and
$\be_1\ge\be_2\ge \ldots\ge0$.
The corresponding specialization is
defined on the power sums via the exponent of a generating function
(in a formal variable $u$) as follows:
\begin{align}\label{Pi_nonneg_spec}
	\exp\bigg(
	\sum_{n=1}^{\infty}\frac{1}{n}
	\frac{1-t^{n}}{1-q^{n}}p_n(\Ab)u^n
	\bigg)=
	\exp(\gamma u) \prod_{i\ge 1} \frac{(t\alpha_iu;q)_\infty}{(\alpha_i u;q)_\infty}\,(1+\beta_i u)=: \Pi(u;\Ab),
\end{align}
where the (infinite) $q$-Pochhammer symbol is defined as
\begin{align*}
	(a;q)_{\infty}:=\prod_{i=0}^{\infty}(1-aq^{i})=(1-a)(1-aq)(1-aq^{2})\ldots.
\end{align*}
In more detail, \eqref{Pi_nonneg_spec} means that
\begin{align}\label{al_be_ga_Newton}
	p_1(\Ab)&=\sum_{i\ge1}\al_i+\bigg(\ga+\sum_{i\ge1}\be_i\bigg)\frac{1-q}{1-t},
	&p_k(\Ab)=\sum_{i\ge1}\al_i^{k}+
	(-1)^{k-1}\frac{1-q^{k}}{1-t^{k}}\sum_{i\ge1}\be_i^{k},
\end{align}
where $k=2,3,\ldots$.
It can be verified that \eqref{Pi_nonneg_spec} defines $(q,t)$-nonnegative specializations, cf. \cite[Prop. 2.2.2]{BorodinCorwin2011Macdonald}.
In the Hall--Littlewood case (i.e., when $q=0$),
the product in \eqref{Pi_nonneg_spec} turns into
$\Pi(u;\Ab)=
e^{\ga u} \prod_{i\ge 1}
\frac{1-t\al_i u}{1-\al_i u}(1+\beta_i u)$.

\begin{remark}\label{rmk:finite_length_spec_alpha}
	When $\ga=0$, all $\be_i=0$, and only finitely many of the $\al_i$'s are nonzero, then the specialization defined by \eqref{Pi_nonneg_spec} reduces to a finite length specialization
	\eqref{finite_length_spec}.
\end{remark}
In view of Remark \ref{rmk:finite_length_spec_alpha}, we will refer to the $\al_i$'s as to the
\emph{usual variables}. We will also call the $\be_i$'s the \emph{dual variables}
(the name is motivated by the presence of a certain duality
involving transposition of Young diagrams, see
\S \ref{sub:duality} below).\footnote{Sometimes 
to emphasize that we are working with dual variables,
we will use the hat notation. For example, 
a dual variable equal to $1$ will be denoted by $\hat1$.} The parameter $\ga$
will be called the \emph{Plancherel parameter}.
We will denote by
$(\ab;\bb;\Pl_\ga)$
the specialization
defined by \eqref{Pi_nonneg_spec}
with parameters $\ab=(\al_1,\al_2,\ldots)$, $\boldsymbol\be=(\be_1,\be_2,\ldots)$, and $\ga$.
We will always assume that the specialization $(\ab;\bb;\Pl_\ga)$
is \emph{nontrivial} (i.e., not all of the parameters are equal to zero).
This property is equivalent to requiring that
$p_1(\ab;\bb;\Pl_\ga)>0$.
If all $\al_i$ and all $\be_j$ are zero,
we will call such a specialization a \emph{pure Plancherel}
specialization, and will denote it simply by $\Pl_\ga$.

Clearly, a multiple $a\cdot (\ab;\bb;\Pl_\ga)$ of the specialization
$(\ab;\bb;\Pl_\ga)$ corresponds to multiplying all the parameters
$\al_i,\be_i$, and $\ga$ by this factor $a$.
Union of specializations
$(\ab;\bb;\Pl_\ga)\cup(\ab';\bb';\Pl_{\ga'})$
leads to the new parameters $\ab\cup\ab'$, $\boldsymbol\be\cup\boldsymbol\be'$
(these are unions as sets),
and to the addition of the Plancherel parameters $\ga$ and $\ga'$.

\begin{remark}\label{rmk:Kerov_different}
	Note that our notation (borrowed from \cite{BorodinCorwin2011Macdonald}
	and also used in \cite{BorodinPetrov2013NN})
	differs from the one used by Kerov
	\cite{Kerov-book}, see also, e.g.,
	\cite{GorinKerovVershikFq2012}.
	Namely, take a specialization $\Ab=(\ab;\bb;\Pl_\ga)$
	described by 
	\eqref{al_be_ga_Newton}, and consider 
	other parameters $\tilde\al_i$, $\tilde\be_j$
	and $\tilde \ga$ defined as
	\begin{align*}
		\tilde\gamma=\frac{1-q}{1-t}\gamma,
		\qquad
		\big\{\tilde \al_{r}\big\}_{r=1}^{\infty}=
		\big\{\al_{i}\big\}_{i=1}^{\infty}
		\cup
		\big\{{-q}\be_{i}t^{j-1}\big\}_{i,j=1}^{\infty}
		,\qquad
		\big\{\tilde \be_{r}\big\}_{r=1}^{\infty}=
		\big\{\be_i t^{j-1}\big\}_{i,j=1}^{\infty}.
	\end{align*}
	(In the Hall--Littlewood ($q=0$) case,
	passing to these new parameters
	reduces to rescaling the Plancherel parameter
	and replacing each $\be_i$ by the geometric
	sequence $\be_i,\be_it,\be_it^{2},\ldots$.)
	Then the map
	\begin{align}
		p_k\mapsto\tilde\ga\mathbf{1}_{k=1}+\sum_{i\ge1}\tilde\al_i^{k}+
		(-1)^{k-1}\sum_{i\ge1}\tilde\be_i^{k}, \qquad k\ge1
		\label{al_be_ga_Newton_Kerov}
	\end{align}
	is the same as \eqref{al_be_ga_Newton}.
	References 
	\cite{Kerov-book}
	and 
	\cite{GorinKerovVershikFq2012}
	use parametrization \eqref{al_be_ga_Newton_Kerov}
	of specializations. 
\end{remark}


\subsection{Remark: Completeness of the list of $(q,t)$-nonnegative specializations} 
\label{sub:completeness}

The fact that specializations \eqref{Pi_nonneg_spec} indeed exhaust all possible
$(q,t)$-nonnegative specializations was established in the following particular cases
of parameters $q$ and $t$:
\begin{enumerate}[(1)]
	\item $t=q^{\theta}$ and $q\to 1$, where $\theta >0$ is a new parameter \cite{Kerov1998}.
	In this case the Macdonald symmetric functions reduce to the Jack symmetric functions
	introduced in
	\cite{Jack1}, \cite{Jack2} (see also
	\cite[VI.10]{Macdonald1995}).
	\item When $\theta=1$ in (1), the Jack symmetric functions become the Schur symmetric functions.
	The statement about nonnegative specializations in this case is equivalent to the
	classification of totally nonnegative triangular Toeplitz matrices
	\cite{Edrei1952}, \cite{AESW51},
	and to the classification of extreme characters of the infinite symmetric
	group
	\cite{Thoma1964}. See also
	\cite{VK81AsymptoticTheory}, \cite{VK1981Characters}.
	\item When $q=0$ and $t=1$, the Macdonald polynomials degenerate to the
	monomial symmetric functions. The classification of $(0,1)$-nonnegative specializations
	is equivalent to classification of partition structures in the sense of Kingman \cite{Kingman1978}
	(see also \cite{Kerov1989}).
	In this case, the parameters $\{\be_i\}$ do not enter the classification.
	One can also view this as a particular case $\theta=0$ in (1).
	\item Another interesting particular case is $q=0$ and $t=-1$, and the
	corresponding
	classification
	result is given in
	\cite{Nazarov1992}
	(see also \cite{IvanovNewYork3517-3530}).
	The $(0,-1)$-nonnegative specializations are related to projective characters
	of the infinite symmetric group.
\end{enumerate}
Cases (2), (3), and (4) above fall under the general Hall--Littlewood
picture which corresponds to $q=0$.\footnote{However, in the present paper
we restrict ourselves to $t\in[0,1)$, which excludes cases (3) and (4)
from the consideration.}
The classification result for $(0,t)$-nonnegative specializations
(we also refer to them as to \emph{HL-nonnegative specializations})
has not been proven for general values of the parameter~$t$.

When $t=\q^{-1}=p^{-d}\in(0,1)$ is the inverse of a prime power, the classification
of HL-nonnegative specializations is related
to random infinite triangular matrices over the finite
field $F_{\q}$, see \S \ref{sub:infinite_random_matrices_over_a_finite_field}.



\section{Coherent measures on partitions} 
\label{sec:coherent_measures_on_partitions}

\subsection{$(q,t)$-coherent measures} 
\label{sub:_q_t_coherent_measures}

For each fixed $n\ge0$,
consider a probability measure on partitions
with $n$ boxes defined as follows:
\begin{align}\label{coherent_def}
	\M_n^{\ab;\bb;\Pl_\ga}(\la):=
	\frac{n!}{\big(p_1(\ab;\bb;\Pl_\ga)\big)^{n}}\,
	P_\la(\ab;\bb;\Pl_\ga\,|\,q,t)
	Q_\la(\Pl_{1}\,|\,q,t)
	,\qquad
	\la\in\Yb_n.
\end{align}
Here the
first specialization
$(\ab;\bb;\Pl_\ga)$ is any Macdonald nonnegative specialization
defined by
\eqref{Pi_nonneg_spec},
the second specialization
$\Pl_{1}$ is the pure Plancherel specialization with parameter
$\ga=1$,
and $p_1(\ab;\bb;\Pl_\ga)$ is given in \eqref{al_be_ga_Newton}.
Note that $p_1(\ab;\bb;\Pl_\ga)$ also depends on $(q,t)$,
but we omit this dependence.

\begin{lemma}\label{lemma:sum_to_one}
	Expression \eqref{coherent_def} indeed defines a probability measure
	on $\Yb_n$, i.e.,
	\begin{align*}
		\mbox{$\M_n^{\ab;\bb;\Pl_\ga}(\la)\ge 0$\quad for all $\la\in\Yb_n$,\quad and\quad}\sum_{\la\in\Yb_n}
		\M_n^{\ab;\bb;\Pl_\ga}(\la)=1.
	\end{align*}
\end{lemma}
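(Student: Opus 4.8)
The plan is to prove both nonnegativity and the normalization separately, using standard facts about skew Macdonald functions and their generating (Cauchy-type) identities.

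\textbf{Nonnegativity.} First I would observe that $\M_n^{\ab;\bb;\Pl_\ga}(\la)\ge 0$ is essentially immediate from the definitions. The prefactor $n!/\big(p_1(\ab;\bb;\Pl_\ga)\big)^{n}$ is positive because the specialization is assumed nontrivial, so $p_1(\ab;\bb;\Pl_\ga)>0$. The factor $P_\la(\ab;\bb;\Pl_\ga\,|\,q,t)$ is nonnegative because $(\ab;\bb;\Pl_\ga)$ is a $(q,t)$-nonnegative specialization in the sense of Definition \ref{def:nonneg_spec} (taking $\mu=\varnothing$, so that $P_{\la/\varnothing}=P_\la$). Finally $Q_\la(\Pl_1\,|\,q,t)\ge 0$ since the pure Plancherel specialization $\Pl_1$ is also $(q,t)$-nonnegative and $Q_\la$ is a positive constant multiple of $P_\la$.

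\textbf{Normalization.} The key tool is the Macdonald--Cauchy identity
\begin{align*}
	\sum_{\la\in\Yb}P_\la(\Ab\,|\,q,t)\,Q_\la(\Bb\,|\,q,t)=\Pi(\Ab;\Bb),
\end{align*}
valid for any two nonnegative specializations $\Ab,\Bb$, where $\Pi(\Ab;\Bb)=\prod_{k\ge1}\exp\!\big(\tfrac1k\tfrac{1-t^k}{1-q^k}p_k(\Ab)p_k(\Bb)\big)$ (this is the standard reproducing kernel from \cite[VI.2]{Macdonald1995}). I would apply it with $\Ab=(\ab;\bb;\Pl_\ga)$ and $\Bb=z\cdot\Pl_1$, i.e.\ the Plancherel specialization rescaled by a formal parameter $z$. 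Since $Q_\la$ is homogeneous of degree $|\la|$, we have $Q_\la(z\cdot\Pl_1)=z^{|\la|}Q_\la(\Pl_1)$, and since $\Pl_1$ has $p_k(\Pl_1)=\mathbf 1_{k=1}$ (after accounting for the normalization in \eqref{al_be_ga_Newton}, the Plancherel specialization contributes only to $p_1$), the right-hand side collapses to $\Pi(\Ab;z\cdot\Pl_1)=\exp\!\big(z\,p_1(\ab;\bb;\Pl_\ga)\big)$. Expanding both sides as power series in $z$ and extracting the coefficient of $z^n$ gives
\begin{align*}
	\sum_{\la\in\Yb_n}P_\la(\ab;\bb;\Pl_\ga\,|\,q,t)\,Q_\la(\Pl_1\,|\,q,t)
	=\frac{\big(p_1(\ab;\bb;\Pl_\ga)\big)^n}{n!},
\end{align*}
which is exactly the statement that $\sum_{\la\in\Yb_n}\M_n^{\ab;\bb;\Pl_\ga}(\la)=1$.

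\textbf{Main obstacle.} The only genuinely delicate point is justifying the manipulations with the infinite-variable / formal-specialization version of the Cauchy identity and the interchange of the sum over $\la$ with the $z$-expansion. Since every term is nonnegative (by the nonnegativity argument above) and the total sums to a finite value, Tonelli's theorem licenses the rearrangement; and the Cauchy identity for nonnegative specializations follows from its finite-variable counterpart by the projective-limit description of $\Sym$ recalled in Remark \ref{rmk:Sym_proj_lim}, together with continuity of the specializations. I would also double-check the precise normalization of $p_1(\Pl_1)$ against \eqref{al_be_ga_Newton} to make sure the factor $p_1(\ab;\bb;\Pl_\ga)$ appears with the correct power and no stray $\tfrac{1-q}{1-t}$; this is the kind of bookkeeping where a sign or a factor can slip, so it warrants care but presents no conceptual difficulty.
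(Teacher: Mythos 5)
Your proof is correct and is essentially the paper's argument: nonnegativity from the $(q,t)$-nonnegativity of the specializations, and normalization from the Cauchy identity $\sum_\la P_\la(\Ab)Q_\la(\Pl_z)=e^{z\,p_1(\Ab)}$ read off in homogeneous degree $n$. The paper obtains the same identity as the $\la=\varnothing$ case of its Lemma \ref{lemma:skew_Plancherel} (the skew Cauchy identity specialized at $\Pl_1$), so introducing the formal parameter $z$ versus extracting homogeneous components is only a cosmetic difference, and your flagged bookkeeping point resolves exactly as you expect since the factor $\frac{1-t}{1-q}$ in $\Pi$ cancels $p_1(\Pl_1)=\frac{1-q}{1-t}$.
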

\begin{proof}
	The nonnegativity follows from the fact that $(\ab;\bb;\Pl_\ga)$
	is a nonnegative specialization.
	To show that the weights sum to one, we use the identity
	\begin{align*}
		n!\sum_{\la\in\Yb_n}
		P_\la(\x\,|\,q,t)
		Q_\la(\Pl_{1}\,|\,q,t)
		=(p_1(\x))^{n}
	\end{align*}
	which is a particular case of Lemma \ref{lemma:skew_Plancherel} below
	(corresponding to setting $\la=\varnothing$ in \eqref{skew_Plancherel_identity}),
	and take $\x=(\ab;\bb;\Pl_\ga)$.
\end{proof}
We will call
\eqref{coherent_def} the \emph{$(q,t)$-coherent measures}
(about the name, see \S \ref{sub:plancherel_specializations_and_young_graph}).
In the Hall--Littlewood case $q=0$, we will refer to the $(0,t)$-coherent measures as
to the \emph{HL-coherent measures}, and will denote them by
$\HL_n^{\ab;\bb;\Pl_\ga}$.
The HL-coherent measures are the main object of the present paper,
they are
related to
random infinite triangular
matrices over a finite field, see~\S \ref{sub:infinite_random_matrices_over_a_finite_field}, and \eqref{HL_coherent_intro}
in particular.\footnote{Note that
because $P_\la$ is a multiple of $Q_\la$,
\eqref{coherent_def} reduces to
\eqref{HL_coherent_intro} when $q=0$,
$t=\q^{-1}$, and $p_1(\ab;\bb;\Pl_\ga)=1$.}

\begin{remark}\label{rmk:scale_inv_of_coherent}
	By the homogeneity of
	$p_1$ and $P_\la$ in \eqref{coherent_def},
	the measure $\M_n^{\ab;\bb;\Pl_\ga}$
	is invariant under multiplication of
	the specialization $(\ab;\bb;\Pl_\ga)$
	by any positive number.
	Thus, to simplify certain formulas below,
	we will sometimes assume that
	the specialization
	is such that $p_1(\ab;\bb;\Pl_\ga)=1$.
\end{remark}


\subsection{Poissonization and Macdonald measures} 
\label{sub:poissonization_and_macdonald_measures}

Let $\tau>0$ be a new parameter (later it will play the role of time),
and let us \emph{mix} the measures
$\M_n^{\ab;\bb;\Pl_\ga}$
by means of the Poisson distribution with the parameter
$\tau p_1(\ab;\bb;\Pl_\ga)$
on the set of indices $n$:\footnote{The reason
for the multiplication of $\tau$ by $p_1(\ab;\bb;\Pl_\ga)$ is the future convenience of certain formulas.
Note that for now we are not assuming that $p_1(\ab;\bb;\Pl_\ga)=1$
(see Remark \ref{rmk:scale_inv_of_coherent}).}
\begin{align}\label{poissonized_coherent_def}
	\MM_\tau^{\ab;\bb;\Pl_\ga}:=
	e^{-\tau  p_1(\ab;\bb;\Pl_\ga)}
	\sum_{n=0}^{\infty}
	\frac{\big(\tau  p_1(\ab;\bb;\Pl_\ga)\big)^{n}}{n!}
	\,\M_n^{\ab;\bb;\Pl_\ga}.
\end{align}
That is, $\MM_\tau^{\ab;\bb;\Pl_\ga}$ is the probability measure
on the set $\Yb$ of \emph{all} Young diagrams, and
from \eqref{coherent_def} we get
\begin{align}\label{poissonized_of_lambda}
	\MM_\tau^{\ab;\bb;\Pl_\ga}(\la)=
	e^{-\tau  p_1(\ab;\bb;\Pl_\ga)}
	P_\la(\ab;\bb;\Pl_\ga\,|\,q,t)
	Q_\la(\Pl_{\tau}\,|\,q,t),\qquad
	\la\in\Yb.
\end{align}
The poissonized measures \eqref{poissonized_of_lambda}
belong to the class of \emph{Macdonald measures}
of \cite{BorodinCorwin2011Macdonald},
see also \cite{fulman1997probabilistic}.
(This is why we use the notation $\MM$.)
One can recover $\M_n$ from $\MM_\tau$
by conditioning on the event that the Young diagram
$\la$ distributed according to $\MM_\tau$
has exactly $n$ boxes.
\begin{remark}\label{rmk:depoiss}
	The passage from
	$\MM_\tau$ to $\M_n$ may be called \emph{de-poissonization}.
	There are analytic tools relating poissonized
	and de-poissonized
	measures (e.g., see \cite{baik1999distribution}),
	but for the purposes of the Law of Large Numbers
	(Theorem \ref{thm:LLN_intro})
	we do not need to employ them.
\end{remark}
We continue the discussion of Macdonald measures in \S \ref{sub:macdonald_measures} below.



\section{Macdonald processes and bivariate continuous-time `dynamics'} 
\label{sec:macdonald_processes_and_bivariate_continuous_time_dynamics_}

Here we recall and extend the general formalism of
\cite{BorodinPetrov2013NN}
for constructing \emph{formal} continuous-time Markov jump `dynamics'
which map Macdonald processes to Macdonald processes
(with evolved parameters).
Formality means that we allow `dynamics'
to have negative `jump rates' or `transition probabilities'
(we will indicate the absence
of the positivity
assumption with single quotation marks).
All our results
can be restated in linear algebraic terms,
as statements about action of formal Markov semigroups (that is, we allow the presence of negative numbers
in transition matrices)
on probability measures.
However, to make the discussion more
understandable, we will use probabilistic
language
even when speaking
about formal `dynamics'.

\begin{remark}
The
sampling algorithm
which we construct
at the Hall--Littlewood ($q=0$) level in \S\ref{sec:rsk_type_algorithm_for_sampling_hl_coherent_measures} below
involves only nonnegative probabilities.
\end{remark}

\subsection{Macdonald measures and Macdonald processes} 
\label{sub:macdonald_measures}

Let $\Ab$ be a $(q,t)$-non\-negative specialization of the algebra of symmetric functions
(\S \ref{sec:preliminaries}).
Let $\tau\ge0$ be a parameter.
We will consider the following \emph{Macdonald measures}
on Young diagrams:
\begin{align}\label{Macdonald_Measures}
	\MM_\tau^{\Ab}(\la):=\frac{P_\la(\Ab\,|\,q,t)Q_\la(\Pl_\tau\,|\,q,t)}{\Pi(\Ab;\Pl_\tau)},\qquad
	\la\in\Yb.
\end{align}
Here for any two specializations $\Ab,\Bb$ we have set
\begin{align}\label{Pi_A_B}
	\Pi(\Ab;\Bb):=\exp\left(\sum_{n=0}^{\infty}\frac{1}{n}\frac{1-t^n}{1-q^n}\,p_n(\Ab)p_n(\Bb)\right)
\end{align}
provided that this expression is
finite.\footnote{
Note that the expression $\Pi(u;\Ab)$ in \eqref{Pi_nonneg_spec}
is a particular case of \eqref{Pi_A_B}
corresponding to $\Bb=(u)$,
a specialization into a single usual variable.}
The normalization of the measures \eqref{Macdonald_Measures}
follows from
the Cauchy identity
\cite[VI]{Macdonald1995}
\begin{align}\label{Cauchy}
	\sum_{\la\in\Yb}
	P_\la(\Ab\,|\,q,t)Q_\la(\Bb\,|\,q,t)=\Pi(\Ab;\Bb).
\end{align}

Note that we consider only a particular case of the Macdonald measures
when one of the specializations is a pure Plancherel specialization.
In this case
$\Pi(\Ab;\Pl_\tau)=e^{\tau p_1(\Ab)}<\infty$ for any specialization $\Ab$,
cf. \S \ref{sub:poissonization_and_macdonald_measures}.
See also \cite[\S2]{BorodinCorwin2011Macdonald}, \cite{BCGS2013} about more general Macdonald measures.

\begin{remark}\label{rmk:tau=0}
	For $\tau=0$, the measure $\MM_{\tau}^{\Ab}$ \eqref{Macdonald_Measures}
	is concentrated on the empty diagram $\varnothing\in\Yb$.
	For any $\tau>0$, the support of
	this measure
	coincides with the support $\Yb(\Ab)\subseteq\Yb$
	of the specialization $\Ab$ (see \eqref{spec_support}) because the value of $Q_\la(\Pl_\tau\,|\,q,t)$ is strictly positive for all $\lambda \in \mathbb Y$.	
\end{remark}

Let now $\Ab$ and $\Bb$ be two
$(q,t)$-nonnegative
specializations with $\Pi(\Ab;\Bb)<\infty$.
There is a certain
\emph{stochastic link}
mapping the measure
$\MM_\tau^{\Ab\cup\Bb}$ to $\MM_\tau^{\Ab}$.
(Here $\Ab\cup\Bb$ is the union of specializations, cf. \S \ref{sub:specializations_of_sym_}.)
Namely, consider the following matrices with rows and columns indexed by Young diagrams:
\begin{align}\label{stoch_links}
	\La^{\Ab\cup\Bb}_{\Ab}(\la,\bar\la):=
	\frac{P_{\bar\la}(\Ab)}
	{P_\la(\Ab\cup\Bb)}P_{\la/\bar\la}(\Bb),\qquad
	\la\in\Yb(\Ab\cup\Bb),\quad \bar\la\in\Yb(\Ab).
\end{align}
\begin{proposition}[{\cite[\S 2.3.1]{BorodinCorwin2011Macdonald}}]\label{prop:links_properties}
	The quantities
	$\La^{\Ab\cup\Bb}_{\Ab}(\la,\bar\la)$
	are nonnegative and
	\begin{align*}
		\sum_{\bar\la\in\Yb(\Ab)}\La^{\Ab\cup\Bb}_{\Ab}(\la,\bar\la)=1
	\end{align*}
	(hence the name ``stochastic link'').
	Moreover,
	\begin{align}\label{MM_stoch_links}
		\MM_\tau^{\Ab\cup\Bb}\La^{\Ab\cup\Bb}_{\Ab}=\MM_\tau^{\Ab}.
	\end{align}
	The latter identity is understood in the matrix sense, and the measures
	$\MM_\tau^{\Ab\cup\Bb}$ and $\MM_\tau^{\Ab}$ should be viewed as
	row vectors.
\end{proposition}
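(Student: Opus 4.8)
\textbf{Proof proposal for Proposition \ref{prop:links_properties}.}

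The plan is to reduce everything to the branching rule for skew Macdonald $P$-functions together with the Cauchy identity. First I would recall that the skew functions $P_{\la/\bar\la}(\Bb\,|\,q,t)$ are Macdonald-nonnegative in the sense of Definition \ref{def:nonneg_spec}, so that each entry $\La^{\Ab\cup\Bb}_{\Ab}(\la,\bar\la)$ is a ratio of nonnegative quantities; one only needs that the denominator $P_\la(\Ab\cup\Bb)$ is strictly positive, which holds precisely because $\la\in\Yb(\Ab\cup\Bb)$ by the definition of the support \eqref{spec_support}. This settles nonnegativity. Note also that the support condition on $\bar\la$ is automatic: if $P_{\bar\la}(\Ab)=0$ then the corresponding entry vanishes, so the sum over $\bar\la\in\Yb(\Ab)$ is the same as the sum over all $\bar\la\in\Yb$.

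For the row-sum identity, the key input is the coproduct/branching identity for Macdonald symmetric functions, namely
\begin{align*}
	P_\la(\x,\y\,|\,q,t)=\sum_{\bar\la\subseteq\la}P_{\bar\la}(\x\,|\,q,t)\,P_{\la/\bar\la}(\y\,|\,q,t),
\end{align*}
which is \cite[VI.(7.9$'$)]{Macdonald1995}. Applying this with the two groups of variables specialized via $\Ab$ and $\Bb$ respectively, and using that $P_\la$ under the union specialization $\Ab\cup\Bb$ is obtained by merging variable sets (cf.\ \S\ref{sub:specializations_of_sym_}), one gets $P_\la(\Ab\cup\Bb)=\sum_{\bar\la}P_{\bar\la}(\Ab)P_{\la/\bar\la}(\Bb)$. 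Dividing by $P_\la(\Ab\cup\Bb)$ yields $\sum_{\bar\la}\La^{\Ab\cup\Bb}_{\Ab}(\la,\bar\la)=1$, which is the stochasticity claim.

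For the intertwining relation \eqref{MM_stoch_links}, I would compute the $\bar\la$-entry of the row vector $\MM_\tau^{\Ab\cup\Bb}\La^{\Ab\cup\Bb}_{\Ab}$ directly from the definitions \eqref{Macdonald_Measures} and \eqref{stoch_links}:
\begin{align*}
	\big(\MM_\tau^{\Ab\cup\Bb}\La^{\Ab\cup\Bb}_{\Ab}\big)(\bar\la)
	=\sum_{\la}\frac{P_\la(\Ab\cup\Bb)Q_\la(\Pl_\tau)}{\Pi(\Ab\cup\Bb;\Pl_\tau)}\cdot\frac{P_{\bar\la}(\Ab)}{P_\la(\Ab\cup\Bb)}P_{\la/\bar\la}(\Bb)
	=\frac{P_{\bar\la}(\Ab)}{\Pi(\Ab\cup\Bb;\Pl_\tau)}\sum_{\la}P_{\la/\bar\la}(\Bb)\,Q_\la(\Pl_\tau).
\end{align*}
The $P_\la(\Ab\cup\Bb)$ factors cancel, and the remaining sum is evaluated by the skew Cauchy identity $\sum_{\la}P_{\la/\bar\la}(\Bb)Q_\la(\Cb)=\Pi(\Bb;\Cb)\sum_{\mu}Q_{\bar\la/\mu}(\Cb)P_\mu(\Bb)$ specialized at $\Cb=\Pl_\tau$; since $\Pl_\tau$ sends $p_k$ to $0$ for $k\ge2$ and the only way to get a nonvanishing contribution compatible with the degree is $\mu=\bar\la$, this collapses to $Q_{\bar\la}(\Pl_\tau)\,\Pi(\Bb;\Pl_\tau)$. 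Using the multiplicativity $\Pi(\Ab\cup\Bb;\Pl_\tau)=\Pi(\Ab;\Pl_\tau)\Pi(\Bb;\Pl_\tau)$, the $\Pi(\Bb;\Pl_\tau)$ factors cancel and we are left with $P_{\bar\la}(\Ab)Q_{\bar\la}(\Pl_\tau)/\Pi(\Ab;\Pl_\tau)=\MM_\tau^{\Ab}(\bar\la)$, as desired. The main thing to be careful about is justifying the interchange of summations and the use of the skew Cauchy identity in this specialized (possibly infinite-variable) setting; this is handled by the absolute convergence guaranteed by the hypothesis $\Pi(\Ab;\Bb)<\infty$ together with $\Pi(\Ab\cup\Bb;\Pl_\tau)<\infty$, exactly as in \cite[\S2.3]{BorodinCorwin2011Macdonald}. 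Alternatively, since all this is cited from \cite{BorodinCorwin2011Macdonald}, one may simply invoke that reference; but the short computation above makes the statement self-contained.
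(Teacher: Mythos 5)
Your overall route is the standard one and is essentially the argument behind the reference \cite[\S 2.3.1]{BorodinCorwin2011Macdonald} that the paper cites in place of a proof: nonnegativity from Macdonald-nonnegativity of $\Ab$ and $\Bb$ together with $P_\la(\Ab\cup\Bb)>0$ for $\la\in\Yb(\Ab\cup\Bb)$; stochasticity from the branching rule $P_\la(\Ab\cup\Bb)=\sum_{\bar\la}P_{\bar\la}(\Ab)P_{\la/\bar\la}(\Bb)$ (\cite[VI.(7.9')]{Macdonald1995}, cf.\ \eqref{sequential_P_polys}); and the intertwining \eqref{MM_stoch_links} by cancelling $P_\la(\Ab\cup\Bb)$ and using $\Pi(\Ab\cup\Bb;\Pl_\tau)=\Pi(\Ab;\Pl_\tau)\Pi(\Bb;\Pl_\tau)$. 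These parts are fine.

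There is, however, a genuine flaw in the skew Cauchy step. The identity you invoke, $\sum_{\la}P_{\la/\bar\la}(\Bb)Q_\la(\Cb)=\Pi(\Bb;\Cb)\sum_{\mu}Q_{\bar\la/\mu}(\Cb)P_{\mu}(\Bb)$, is false as written: already in the one-variable Schur case with $\bar\la=(1)$, $\Bb=(x)$, $\Cb=(y)$, the left-hand side equals $y/(1-xy)$ while your right-hand side equals $(x+y)/(1-xy)$. The correct statement is \eqref{skew_Cauchy} with $\nu=\varnothing$, whose right-hand side carries the factor $P_{\varnothing/\mu}(\Bb)$ and hence collapses to the single term $\mu=\varnothing$, giving directly $\sum_{\la}P_{\la/\bar\la}(\Bb)Q_{\la}(\Pl_\tau)=\Pi(\Bb;\Pl_\tau)\,Q_{\bar\la}(\Pl_\tau)$. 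Your ``degree'' argument that only $\mu=\bar\la$ survives because $\Pl_\tau$ kills $p_k$ for $k\ge2$ does not work: $Q_{\bar\la/\mu}(\Pl_\tau)>0$ for \emph{every} $\mu\subseteq\bar\la$ (cf.\ Remark \ref{rmk:tau=0} and Lemma \ref{lemma:skew_Plancherel_2} together with \eqref{phi_psi_bb}), so no term drops out; and even if the sum did reduce to $\mu=\bar\la$, the surviving term in your version would be $Q_{\bar\la/\bar\la}(\Pl_\tau)P_{\bar\la}(\Bb)=P_{\bar\la}(\Bb)$, not the value $Q_{\bar\la}(\Pl_\tau)\Pi(\Bb;\Pl_\tau)$ you then use. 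The value you plug in is nonetheless the correct one, so the repair is purely local: replace this step by \eqref{skew_Cauchy} with $\nu=\varnothing$ and $\Ab$ there taken to be $\Bb$, $\Bb$ there taken to be $\Pl_\tau$; after that the cancellation of $\Pi(\Bb;\Pl_\tau)$ and the identification with $\MM_\tau^{\Ab}(\bar\la)$ go through exactly as you wrote.
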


One should understand \eqref{MM_stoch_links} as a \emph{compatibility relation}
between the Macdonald measures $\MM_\tau^{\Ab\cup\Bb}$ on $\Yb(\Ab\cup\Bb)$
and $\MM_\tau^{\Ab}$ on $\Yb(\Ab)$.
Now let us consider the \emph{joint distribution}
of the pair of Young diagrams $(\bar\la,\la)$ which arises from that
relation.
This joint distribution is supported on the subset\footnote{Note
that the condition
$\bi{\bar\la}{\la}\in\Yb^{(2)}(\Ab;\Bb)$
implies that $\bar\la\subseteq\la$.}
\begin{align}\label{2_support}
	\Yb^{(2)}(\Ab;\Bb):=\left\{
	\bi{\bar\la}{\la}\in
	\Yb(\Ab)\times\Yb(\Ab\cup\Bb)\colon
	\La^{\Ab\cup\Bb}_{\Ab}(\la,\bar\la)>0\right\}
	\subseteq \Yb(\Ab)\times\Yb(\Ab\cup\Bb),
\end{align}
and is given by
\begin{align}\label{Macdonald_process}
	\MP_{\tau}^{\Ab;\Bb}(\bar\la;\la):=
	\MM_\tau^{\Ab\cup\Bb}(\la)
	\La^{\Ab\cup\Bb}_{\Ab}(\la,\bar\la)=
	\frac{P_{\bar\la}(\Ab)P_{\la/\bar\la}(\Bb)Q_{\la}(\Pl_\tau)}
	{\Pi(\Ab;\Pl_\tau)\Pi(\Bb;\Pl_\tau)},\qquad
	\bi{\bar\la}{\la}\in\Yb^{(2)}(\Ab;\Bb).
\end{align}
This distribution
is a particular case of a
\emph{Macdonald process}
introduced in
\cite[\S 2.2]{BorodinCorwin2011Macdonald} (see also~\cite{BCGS2013}).

If in the definition
\eqref{Macdonald_process} one replaces the measure
$\MM_\tau^{\Ab\cup\Bb}(\la)$ by \emph{any} probability measure
on $\Yb(\Ab\cup\Bb)$,
then the resulting
measure on
$\Yb^{(2)}(\Ab;\Bb)$
will be \emph{compatible} with $\La^{\Ab\cup\Bb}_{\Ab}$
in a way similar to \eqref{MM_stoch_links}.
We will refer to this wider class of measures
as to the \emph{Gibbs measures}.


\subsection{Univariate dynamics} 
\label{sub:univariate_dynamics}

Let us now describe certain continuous-time Markov jump dynamics
on $\Yb$
which act nicely on Macdonald measures.\footnote{These
dynamics may be viewed as discrete $(q,t)$-analogues
of the classical Dyson Brownian motion
\cite{dyson1962brownian}
from random matrix theory, e.g., see
\cite{BorodinPetrov2013Lect}.}
Let $\Ab$ be a $(q,t)$-nonnegative specialization.
The \emph{univariate} continuous-time Markov dynamics introduced in \cite[\S 2.3.1]{BorodinCorwin2011Macdonald}
(see also \cite[\S 4.3]{BorodinPetrov2013NN})
lives on the set of Young diagrams
$\Yb(\Ab)$ and (during time $\sigma\ge0$) maps the Macdonald measure
$\MM^{\Ab}_{\tau}$ into the measure
$\MM^{\Ab}_{\tau+\sigma}$ with evolved time parameter $\tau+\sigma$.
This univariate dynamics is defined through the jump rate matrix
having the form
\begin{align}\label{Q_A}
	\Qs_\Ab(\la,\nu):=\begin{cases}
		\dfrac{P_\nu(\Ab)}{P_\la(\Ab)}\psi'_{\nu/\la},
		&\mbox{if $\la\precb\nu$};
		\\
		-\displaystyle\sum_{\square\in\Us(\la)}\Qs_\Ab(\la,\la+\square),&\mbox{if $\nu=\la$};
		\\
		0,&\mbox{otherwise}.
	\end{cases}
\end{align}
Here
\begin{align}\label{psi_prime}
	\psi'_{\nu/\la}=\psi'_{\nu/\la}(q,t):=Q_{\nu/\la}(\hat 1\,|\,q,t)
\end{align}
is the value of the skew Macdonald symmetric function under
the specialization into one dual variable (equal to one),
it is given by
\cite[VI.(6.24.iv)]{Macdonald1995},
see also \S \ref{sub:_q_t_quantities} below.

We summarize properties of the univariate dynamics in the following proposition:
\begin{proposition}\label{prop:univariate_properties}
	{\rm{}\bf{}(1)} Jump rates $\Qs_\Ab$ define a Feller Markov jump process with semigroup
	$\{\Ps_\Ab(\tau)\}_{\tau\ge0}$, where
	$\Ps_\Ab(\tau)=\exp(\tau\Qs_\Ab)$.

	\noindent{\rm{}\bf{}(2)} The action of the univariate dynamics on Macdonald measures is given
	by
	\begin{align}\label{MM_P_update}
		\MM^{\Ab}_{\tau}\Ps_\Ab(\sigma)=
		\MM^{\Ab}_{\tau+\sigma},\qquad \sigma\ge0.
	\end{align}

	\noindent{\rm{}\bf{}(3)} The univariate dynamics are compatible with the stochastic links in the
	sense that (as
	$\Yb(\Ab\cup\Bb)\times\Yb(\Ab)$ matrices)
	\begin{align}\label{Q_La_commute}
		\La^{\Ab\cup\Bb}_{\Ab}\Qs_{\Ab}=
		\Qs_{\Ab\cup\Bb}\La^{\Ab\cup\Bb}_{\Ab}
		\qquad\mbox{and}\qquad
		\La^{\Ab\cup\Bb}_{\Ab}\Ps_{\Ab}(\tau)=
		\Ps_{\Ab\cup\Bb}(\tau)\La^{\Ab\cup\Bb}_{\Ab},
		\quad \tau\ge0.
	\end{align}
	In other words, ``the following diagram is commutative'':
	\begin{equation*}
		\xymatrixcolsep{4pc}\xymatrix{
		\Yb(\Ab\cup\Bb) \ar@{-->}[d]^{\La^{\Ab\cup\Bb}_{\Ab}} \ar@{-->}[r]^{\Ps_{\Ab\cup\Bb}(\tau)}
		&
		\Yb(\Ab\cup\Bb)\ar@{-->}[d]^{\La^{\Ab\cup\Bb}_{\Ab}}\\
		\Yb(\Ab) \ar@{-->}[r]^{\Ps_{\Ab}(\tau)} &\Yb(\Ab)}
	\end{equation*}
\end{proposition}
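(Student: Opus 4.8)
The plan is to prove the three parts in order, since each builds on the previous one, and in each case to reduce the claim to a finitely-supported computation by fixing the number of boxes. For part (1), I would first observe that for fixed $\tau$ the Macdonald measure $\MM^\Ab_\tau$ is supported on $\Yb(\Ab)$ and, more importantly, that the jump rate matrix $\Qs_\Ab$ only connects $\la$ to $\nu$ with $|\nu|=|\la|+1$; hence restricted to $\bigcup_{k\le n}\Yb_k$ the matrix $\Qs_\Ab$ is a genuine (honest, nonnegative off-diagonal, zero row sums) $Q$-matrix on a finite set, because $\psi'_{\nu/\la}=Q_{\nu/\la}(\hat1\,|\,q,t)\ge0$ by $(q,t)$-nonnegativity of the single dual variable $\hat1$ (this follows from the branching rule and the explicit product formula \cite[VI.(6.24.iv)]{Macdonald1995}), and $P_\nu(\Ab)/P_\la(\Ab)>0$ on the support. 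Since the process can only increase $|\la|$ by one at each jump and the total jump rate out of $\la$ is finite, there is no explosion, so $\Ps_\Ab(\tau)=\exp(\tau\Qs_\Ab)$ is a well-defined stochastic semigroup; Feller continuity is automatic on the discrete state space $\Yb(\Ab)$ with the natural topology. The key structural fact to record here is that $\Qs_\Ab$ preserves the grading up to a shift by one, which makes all subsequent identities reducible to identities among symmetric functions of a fixed degree.

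For part (2), the identity $\MM^\Ab_\tau\Ps_\Ab(\sigma)=\MM^\Ab_{\tau+\sigma}$ is equivalent, by differentiating in $\sigma$ at $\sigma=0$ (legitimate because everything is finite-dimensional in each degree), to the infinitesimal relation $\MM^\Ab_\tau\Qs_\Ab=\frac{d}{d\tau}\MM^\Ab_\tau$. Writing $\MM^\Ab_\tau(\la)=e^{-\tau p_1(\Ab)}P_\la(\Ab)Q_\la(\Pl_\tau)$ and using that $\frac{d}{d\tau}Q_\la(\Pl_\tau)=\sum_{\mu:\,\mu\precb\la}\psi'_{\la/\mu}Q_\mu(\Pl_\tau)-p_1(\Ab)\,$-type corrections, the claim unwinds to the single-variable skew Pieri/branching identity for $Q$ under the Plancherel specialization, i.e. that $Q_\la(\Pl_{\tau})$ satisfies the ODE dictated by adding one box with weight $\psi'$; this is exactly Lemma \ref{lemma:skew_Plancherel} (or its $\la=\varnothing$ special case used already in Lemma \ref{lemma:sum_to_one}) combined with the defining property of $Q_{\la/\mu}$. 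The bookkeeping with the $e^{-\tau p_1(\Ab)}$ prefactor and the diagonal entries of $\Qs_\Ab$ is routine and matches the normalization $\Pi(\Ab;\Pl_\tau)=e^{\tau p_1(\Ab)}$.

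For part (3) it suffices to prove the first (infinitesimal) commutation $\La^{\Ab\cup\Bb}_\Ab\Qs_\Ab=\Qs_{\Ab\cup\Bb}\La^{\Ab\cup\Bb}_\Ab$; the semigroup version then follows by exponentiation (again legitimate degree-by-degree). Spelling out both sides at an entry $(\la,\bar\nu)$ with $\la\in\Yb(\Ab\cup\Bb)$, $\bar\nu\in\Yb(\Ab)$, and substituting the definitions $\La^{\Ab\cup\Bb}_\Ab(\la,\bar\la)=P_{\bar\la}(\Ab)P_{\la/\bar\la}(\Bb)/P_\la(\Ab\cup\Bb)$ and the two forms of $\Qs$, every term acquires a common factor $1/P_\la(\Ab\cup\Bb)$ and the identity reduces to a bilinear relation among skew $P$'s: on the left one sums over $\bar\la\precb\bar\nu$, on the right over $\nu\precb\la$, and the matching of these two sums is precisely the commutation between "add a box on the $\Ab$-side" and "the horizontal-strip link to $\Bb$", which in turn is a consequence of associativity/coassociativity of the Pieri rules — concretely, of the identity $\sum_{\bar\la} P_{\la/\bar\la}(\Bb)\,\frac{P_{\bar\nu}(\Ab)}{P_{\bar\la}(\Ab)}\psi'_{\bar\nu/\bar\la}=\sum_{\nu}\frac{P_\nu(\Ab\cup\Bb)}{P_\la(\Ab\cup\Bb)}\psi'_{\nu/\la}\,\frac{P_{\bar\nu}(\Ab)P_{\nu/\bar\nu}(\Bb)}{P_\nu(\Ab\cup\Bb)}$, both sides being symmetric-function expansions of the same object $Q_{\hat1}\cdot$(skew $P$) evaluated appropriately. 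This step — verifying that the two ways of "adding a box and then applying the strip link" agree — is the main obstacle, and I expect to handle it by reducing to the Cauchy-type identity \eqref{Cauchy}/Pieri rules for skew Macdonald functions rather than by direct manipulation of the product formulas; I would also note that this commutation is already implicit in \cite[\S2.3.1]{BorodinCorwin2011Macdonald}, so the argument can be phrased as "this is the Macdonald-measures case of the general compatibility in \cite{BorodinCorwin2011Macdonald}, which we now recall" and then fill in the one-line reduction above.
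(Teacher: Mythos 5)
Your strategy coincides with the intended one: the paper's own proof is just a pointer to \cite[\S 2.3.1]{BorodinCorwin2011Macdonald} and \cite[\S 4.3]{BorodinPetrov2013NN}, and the substance of part (3) is exactly the infinitesimal skew Cauchy identity \eqref{inf_skew_Cauchy}, which the paper records right after the proposition and observes to be equivalent to \eqref{Q_La_commute}; your part (2) likewise reduces correctly to the one-box Pieri/branching identity $p_1P_\la=\sum_{\square\in\Us(\la)}\psi'_{\la+\square/\la}P_{\la+\square}$ under the Plancherel specialization.

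Two concrete points need repair. First, the bilinear identity you display in part (3) is false as written, because you dropped the diagonal entries of $\Qs_\Ab$ and $\Qs_{\Ab\cup\Bb}$. Spelling out $(\La^{\Ab\cup\Bb}_{\Ab}\Qs_\Ab)(\la,\bar\nu)=(\Qs_{\Ab\cup\Bb}\La^{\Ab\cup\Bb}_{\Ab})(\la,\bar\nu)$ with the full definition \eqref{Q_A}, the common factor is $P_{\bar\nu}(\Ab)/P_\la(\Ab\cup\Bb)$ and what one actually needs is
\begin{align*}
\sum_{\square\in\Us(\la)}P_{\la+\square/\bar\nu}(\Bb)\,\psi'_{\la+\square/\la}
=p_1(\Bb)\,P_{\la/\bar\nu}(\Bb)
+\sum_{\bar\square\in\Ds(\bar\nu)}P_{\la/\bar\nu-\bar\square}(\Bb)\,\psi'_{\bar\nu/\bar\nu-\bar\square},
\end{align*}
which is \eqref{inf_skew_Cauchy} with $(\Ab,\la,\nu)$ there replaced by $(\Bb,\bar\nu,\la)$. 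The term $p_1(\Bb)P_{\la/\bar\nu}(\Bb)$ arises precisely from the mismatch of the diagonal rates $-p_1(\Ab)$ versus $-p_1(\Ab\cup\Bb)$; without it your displayed equation already fails at $\la=\bar\nu=\varnothing$. The fix is the paper's \S 4.3 argument: take $\Bb=\hat\varepsilon$ in the skew Cauchy identity \eqref{skew_Cauchy} and extract the coefficient of $\varepsilon$. Second, in part (1) ``finite exit rate at each state'' does not by itself exclude explosion (a pure-birth chain with rates growing along the grading would explode); the correct reason is that the exit rate is constant, $-\Qs_\Ab(\la,\la)=p_1(\Ab)$ (equation \eqref{diagonal_Q}, i.e.\ the $\la=\varnothing$ case of \eqref{inf_skew_Cauchy}), so the jump times are i.i.d.\ exponential, $\Qs_\Ab$ is a bounded operator, and $\Ps_\Ab(\tau)=\exp(\tau\Qs_\Ab)$ is an honest (indeed uniformly continuous) Feller semigroup. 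With these two repairs your argument is complete and agrees with the proof the paper delegates to the references.
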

\begin{proof}
	See \cite[\S 2.3.1]{BorodinCorwin2011Macdonald} and \cite[\S 4.3]{BorodinPetrov2013NN}.
\end{proof}


\subsection{Infinitesimal skew Cauchy identity} 
\label{sub:infinitesimal_skew_cauchy_identity}

The skew Cauchy identity \cite[VI.7]{Macdonald1995}
states that for two Macdonald nonnegative specializations $\Ab$ and $\Bb$
with $\Pi(\Ab;\Bb)<\infty$ one has
\begin{align}\label{skew_Cauchy}
	\sum_{\kappa\in\Yb}P_{\kappa/\la}(\Ab)Q_{\kappa/\nu}(\Bb)
	=\Pi(\Ab;\Bb)
	\sum_{\mu\in\Yb}Q_{\la/\mu}(\Bb)P_{\nu/\mu}(\Ab)
\end{align}
for any $\la,\nu\in\Yb$.
When $\la=\nu=\varnothing$, this identity
turns into the usual Cauchy identity
\eqref{Cauchy}.
We will need the following infinitesimal version of \eqref{skew_Cauchy}:
\begin{proposition}[infinitesimal skew Cauchy identity]
	Let $\Bb=\hat\varepsilon$ be the specialization into one dual variable equal to $\varepsilon$.
	Taking the coefficient
	by $\varepsilon$ in both sides of
	\eqref{skew_Cauchy} yields the following identity
	for any $\la,\nu\in\Yb$:
	\begin{align}\label{inf_skew_Cauchy}
		\sum_{\square\in\Us(\nu)}
		P_{\nu+\square/\la}(\Ab)\psi'_{\nu+\square/\nu}
		=
		p_1(\Ab)P_{\nu/\la}(\Ab)+
		\sum_{\bar\square\in\Ds(\la)}
		P_{\nu/\la-\bar\square}(\Ab)\psi'_{\la/\la-\bar\square}.
	\end{align}
\end{proposition}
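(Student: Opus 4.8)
The plan is to extract the coefficient of $\varepsilon^1$ from both sides of the skew Cauchy identity \eqref{skew_Cauchy} specialized at $\Bb=\hat\varepsilon$, a single dual variable equal to $\varepsilon$. First I would record how the relevant objects depend on $\varepsilon$. For a single dual variable, $Q_{\kappa/\nu}(\hat\varepsilon\,|\,q,t)$ is homogeneous of degree $|\kappa|-|\nu|$ in $\varepsilon$, but more is true: by the combinatorial/branching description of skew Macdonald functions in one (dual) variable \cite[VI.(6.24)]{Macdonald1995}, $Q_{\kappa/\nu}(\hat\varepsilon)$ vanishes unless $\kappa/\nu$ is a vertical strip, and since $\deg Q_{\kappa/\nu}=|\kappa/\nu|$, the only contribution of order $\varepsilon^1$ comes from $\kappa=\nu+\square$, in which case $Q_{\nu+\square/\nu}(\hat\varepsilon)=\psi'_{\nu+\square/\nu}\,\varepsilon$ by the definition \eqref{psi_prime} (here a one-box vertical strip is the same as a one-box horizontal strip). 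Likewise $Q_{\la/\mu}(\hat\varepsilon)$ contributes at order $\varepsilon^0$ only when $\mu=\la$ (giving $1$) and at order $\varepsilon^1$ only when $\la=\mu+\square$, i.e.\ $\mu=\la-\bar\square$ for $\bar\square\in\Ds(\la)$, giving $\psi'_{\la/\la-\bar\square}\,\varepsilon$. Finally, $\Pi(\Ab;\hat\varepsilon)=\exp\!\big(\sum_{n\ge1}\tfrac1n\tfrac{1-t^n}{1-q^n}p_n(\Ab)p_n(\hat\varepsilon)\big)$; for the single dual variable $\hat\varepsilon$ one has $p_n(\hat\varepsilon)=(-1)^{n-1}\tfrac{1-q^n}{1-t^n}\varepsilon^n$ by \eqref{al_be_ga_Newton} (the $\be_1=\varepsilon$, everything else zero case), so $\tfrac1n\tfrac{1-t^n}{1-q^n}p_n(\Ab)p_n(\hat\varepsilon)=\tfrac{(-1)^{n-1}}{n}p_n(\Ab)\varepsilon^n$, whence $\Pi(\Ab;\hat\varepsilon)=\exp\!\big(\sum_{n\ge1}\tfrac{(-1)^{n-1}}{n}p_n(\Ab)\varepsilon^n\big)=1+p_1(\Ab)\varepsilon+O(\varepsilon^2)$.

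Next I would plug these expansions into \eqref{skew_Cauchy}. The left-hand side $\sum_{\kappa}P_{\kappa/\la}(\Ab)Q_{\kappa/\nu}(\hat\varepsilon)$ has no constant term (since $\kappa=\nu$ forces $Q_{\nu/\nu}(\hat\varepsilon)=1$ but then the term is $P_{\nu/\la}(\Ab)$, which is the order-$\varepsilon^0$ piece — wait, that is constant, so I must be careful: the $\kappa=\nu$ term contributes $P_{\nu/\la}(\Ab)$ at order $\varepsilon^0$), and its order-$\varepsilon^1$ part is $\sum_{\square\in\Us(\nu)}P_{\nu+\square/\la}(\Ab)\psi'_{\nu+\square/\nu}$. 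The right-hand side is $\Pi(\Ab;\hat\varepsilon)\sum_{\mu}Q_{\la/\mu}(\hat\varepsilon)P_{\nu/\mu}(\Ab)$; its $\varepsilon^0$ part is $1\cdot Q_{\la/\la}(\hat\varepsilon)P_{\nu/\la}(\Ab)=P_{\nu/\la}(\Ab)$ (matching the left side at order $0$, a consistency check), and its $\varepsilon^1$ part is $p_1(\Ab)\cdot P_{\nu/\la}(\Ab)\;+\;\sum_{\bar\square\in\Ds(\la)}\psi'_{\la/\la-\bar\square}\,P_{\nu/\la-\bar\square}(\Ab)$, where the first summand comes from the $O(\varepsilon)$ term of $\Pi$ times the $\varepsilon^0$ term $P_{\nu/\la}(\Ab)$, and the second from $\Pi$'s constant term times the $\varepsilon^1$ term of the sum over $\mu$. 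Equating the coefficients of $\varepsilon^1$ on both sides gives exactly \eqref{inf_skew_Cauchy}.

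The only genuinely non-routine point — and the one I would state carefully — is the claim that for a single dual variable the skew functions $Q_{\kappa/\nu}(\hat\varepsilon)$ and $Q_{\la/\mu}(\hat\varepsilon)$ are supported on vertical strips and that the one-box case reproduces $\psi'$ as in \eqref{psi_prime}. This follows from the branching rule for Macdonald $Q$-functions specialized in one dual variable, which is \cite[VI.(6.24.iv)]{Macdonald1995} (and is precisely what formula \eqref{psi_prime} abbreviates); alternatively it is the content of the skew Pieri-type rule. Everything else is a mechanical Taylor expansion in $\varepsilon$, together with the elementary evaluation $p_n(\hat\varepsilon)=(-1)^{n-1}\tfrac{1-q^n}{1-t^n}\varepsilon^n$ from \eqref{al_be_ga_Newton} and the resulting $\Pi(\Ab;\hat\varepsilon)=1+p_1(\Ab)\varepsilon+O(\varepsilon^2)$. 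I would also note in passing that convergence/finiteness of $\Pi(\Ab;\hat\varepsilon)$ is automatic for $\varepsilon$ in a neighbourhood of $0$ since only $p_1(\Ab)$ enters at the orders we need, so taking the coefficient of $\varepsilon$ is legitimate. Thus the proof is a short formal-power-series computation, with the branching rule as the single external input.
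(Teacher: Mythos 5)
Your proposal is correct and follows essentially the same route as the paper: expand $\Pi(\Ab;\hat\varepsilon)=1+\varepsilon\,p_1(\Ab)+O(\varepsilon^{2})$ via \eqref{al_be_ga_Newton}, use the one-dual-variable specialization $Q_{\kappa/\nu}(\hat\varepsilon)=\varepsilon^{|\kappa|-|\nu|}\psi'_{\kappa/\nu}$ (supported on vertical strips) on both sides of \eqref{skew_Cauchy}, and compare coefficients of $\varepsilon$. The extra care you take with the vertical-strip support and the $\varepsilon^{0}$ consistency check is fine but not beyond what the paper's own two-line argument already contains.
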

\begin{proof}
	We have (see  \eqref{al_be_ga_Newton} and \eqref{Pi_A_B})
	\begin{align*}
		\Pi(\Ab;\hat\varepsilon)=
		\exp\left(
		\sum_{n\ge0}p_n(\Ab)(-1)^{n-1}\varepsilon^{n}
		\right)=
		1+\varepsilon p_1(\Ab)+O(\varepsilon^{2}).
	\end{align*}
	Now \eqref{skew_Cauchy}
	takes the form
	\begin{align*}
		\sum_{\kappa\in\Yb}P_{\kappa/\la}(\Ab)\varepsilon^{|\ka|-|\nu|}\psi'_{\kappa/\nu}
		=\big(
		1+\varepsilon p_1(\Ab)+O(\varepsilon^{2})\big)
		\sum_{\mu\in\Yb}\psi'_{\la/\mu}\varepsilon^{|\la|-|\mu|}(\Bb)P_{\nu/\mu}(\Ab).
	\end{align*}
	The desired claim follows by considering the
	coefficient by $\varepsilon$ in the above identity.
\end{proof}
One can readily see that
\eqref{inf_skew_Cauchy}
is essentially equivalent to the
commutation relation
\eqref{Q_La_commute} between
$\La^{\Ab\cup\Bb}_{\Ab}$ and
the univariate jump rate matrices
\eqref{Q_A}.
See \cite[\S2.4]{BorodinPetrov2013NN} for more
detail.

Identity \eqref{inf_skew_Cauchy} readily implies that
the diagonal elements of the
jump rate matrix \eqref{Q_A} are
\begin{align}\label{diagonal_Q}
	\Qs_\Ab(\nu,\nu)=-p_1(\Ab).
\end{align}
Indeed, one needs to put $\la=\varnothing$ in \eqref{inf_skew_Cauchy}
(this kills the sum in the right-hand side),
and divide both sides by $P_{\nu/\la}(\Ab)=P_{\nu}(\Ab)$.

	
\subsection{Bivariate `dynamics'} 
\label{sub:bivariate_dynamics}

This subsection
extends results of
\cite[\S 2 and \S 5]{BorodinPetrov2013NN}. See
also
\cite{DiaconisFill1990},
\cite{BorFerr2008DF}, and
\cite[\S8]{BorodinOlshanski2010GTs}
for related constructions, and \cite{BorodinPetrov2013Lect} for a survey.

We call a continuous-time Markov
`dynamics'\footnote{This actually is the first place
when we drop the
nonnegativity assumption.}
on the space $\Yb^{(2)}(\Ab;\Bb)$ \eqref{2_support}
with matrix of `jump rates' $\Qs^{(2)}_{\Ab;\Bb}$
a \emph{bivariate `dynamics'} if the following three conditions are
satisfied:

\begin{enumerate}[(1)]
\item
The `dynamics' $\Qs^{(2)}_{\Ab;\Bb}$ preserves the class of Gibbs measures
on $\Yb^{(2)}(\Ab;\Bb)$.

\item Assume that $\Qs^{(2)}_{\Ab;\Bb}$
starts from a Gibbs measure on $\Yb^{(2)}(\Ab;\Bb)$.
Then on the \emph{upper level} $\Yb(\Ab\cup\Bb)$,
the `dynamics' $\Qs^{(2)}_{\Ab;\Bb}$ must reduce to the univariate dynamics $\Qs_{\Ab\cup\Bb}$.

\item The `dynamics' $\Qs^{(2)}_{\Ab;\Bb}$ evolves according to a
\emph{sequential update}, with interaction propagating from the lower to the upper 
level. Note that by Proposition \ref{prop:univariate_properties}.(3),
sequential update property plus the above condition (2) 
imply that 
on the lower level
$\Yb(\Ab)$, $\Qs^{(2)}_{\Ab;\Bb}$ must reduce to the corresponding univariate dynamics
$\Qs_{\Ab}$. Hence,
the `jump rates' of the bivariate `dynamics' must have the form
(here $\bi{\bar\la}{\la},\bi{\bar\nu}{\nu}\in\Yb^{(2)}(\Ab;\Bb)$):
\begin{align}\label{Q2}
	\Qs^{(2)}_{\Ab;\Bb}
	\big(\bi{\bar\la}{\la},\bi{\bar\nu}{\nu}
	\big)=
	\begin{cases}
		W(\la,\nu\,|\,\bar\nu),&\mbox{if $\bar\la=\bar\nu$};\\
		\Qs_\Ab(\bar\la,\bar\nu)V(\la,\nu\,|\,\bar\la,\bar\nu),
		&\mbox{if $\bar\la\ne\bar\nu$};\\
		\Qs_\Ab(\bar\nu,\bar\nu)+
		W(\nu,\nu\,|\,\bar\nu),
		&\mbox{if $\bar\la=\bar\nu$ and $\la=\nu$}.
	\end{cases}
\end{align}
Here $W(\la,\nu\,|\,\bar\nu)$ is the
`rate' of an independent jump
$\la\to\nu$ on the upper level (given
that there were no jumps
on the lower level, so
$\bar\nu=\bar\la$).
We assume that these `rates' satisfy
\begin{align}\label{W_conditions}
	\sum_{\nu\ne\la}
	W(\la,\nu\,|\,\bar\nu)=-W(\la,\la\,|\,\bar\nu)\qquad
	\mbox{for all
	$\bi{\bar\nu}\la\in\Yb^{(2)}(\Ab;\Bb)$}.
\end{align}
The quantity $V(\la,\nu\,|\,\bar\la,\bar\nu)$ is the
`conditional probability' that the jump $\bar\la\to\bar\nu$
on the lower level
triggers an instantaneous move $\la\to\nu$
on the
upper level. Note that we do not forbid the possibility that
$\la=\nu$, i.e., that the jump does not propagate upwards
(we will soon forbid such moves, see \S \ref{sub:rsk_type_dynamics} below).
The `probabilities' of triggered moves must satisfy
\begin{align}\label{V_condition}
	V(\la,\nu\,|\,\bar\la,\bar\la)=
	\mathbf{1}_{\la=\nu}
	,\qquad \qquad
	\sum_{\nu}V(\la,\nu\,|\,\bar\la,\bar\nu)=1
\end{align}
(where $\bi{\bar\la}{\la},\bi{\bar\la}{\nu}\in\Yb^{(2)}(\Ab;\Bb)$ and
$\bi{\bar\la}{\la},\bi{\bar\nu}{\nu}\in\Yb^{(2)}(\Ab;\Bb)$
in the first and in the second equality, respectively).

Note
that by \eqref{Q_A},
\eqref{W_conditions}, and \eqref{V_condition},
$$\displaystyle\sum\nolimits_{\bi{\bar\la}{\la}\ne\bi{\bar\nu}{\nu}}
\Qs^{(2)}_{\Ab;\Bb}
\big(
\bi{\bar\la}{\la},\bi{\bar\nu}{\nu}\big)
=-\Qs^{(2)}_{\Ab;\Bb}
\big(
\bi{\bar\la}{\la},\bi{\bar\la}{\la}\big),$$
as it should be for a matrix of `jump rates'.
\end{enumerate}

We see that bivariate `dynamics'
describe ways to \emph{stitch together} the
univariate dynamics $\Qs_\Ab$ and $\Qs_{\Ab\cup\Bb}$
into a Markov `dynamics' on the
space $\Yb^{(2)}(\Ab;\Bb)$.
Such a stitching is not unique,
and all possible ways to construct
a bivariate `dynamics' can be characterized as follows:
\begin{theorem}[\cite{BorodinPetrov2013NN}]
\label{thm:general_identity}
	The `jump rates' $W$ and the
	`probabilities' of triggered moves $V$
	satisfying \eqref{W_conditions} and \eqref{V_condition}
	correspond to a bivariate `dynamics' if and
	only if
	\begin{align}\label{general_identity}
		\begin{array}{ll}
		&\displaystyle
		\sum_{\bar\square\in\Ds(\bar\nu)}
		V(\la,\la+\square\,|\,\bar\nu-\bar\square,\bar\nu)
		P_{\la/\bar\nu-\bar\square}(\Bb)\psi'_{\bar\nu/\bar\nu-\bar\square}
		\\
		&\hspace{160pt}{}+W(\la,\la+\square\,|\,\bar\nu)P_{\la/\bar\nu}(\Bb)=
		P_{\la+\square/\bar\nu}(\Bb)\psi'_{\la+\square/\la},		
	\end{array}
	\end{align}
	for all $\bar\nu\in\Yb(\Ab)$, $\la\in\Yb(\Ab\cup\Bb)$,
	and all $\square\in\Us(\la)$, such that
	$\bi{\bar\nu}{\la+\square}\in\Yb^{(2)}(\Ab;\Bb)$.
\end{theorem}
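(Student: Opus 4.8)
The plan is to recast the three defining properties (1)--(3) of a bivariate `dynamics' into a single \emph{intertwining relation} between the `jump rate' matrix $\Qs^{(2)}_{\Ab;\Bb}$ and the stochastic link $\La^{\Ab\cup\Bb}_{\Ab}$, and then to check that, after substituting the explicit expressions \eqref{Q2} and \eqref{stoch_links}, this relation is literally \eqref{general_identity}. This is the linear-algebraic content of the Diaconis--Fill commutation formalism (see \cite{DiaconisFill1990}, \cite{BorFerr2008DF}, and \cite[\S2 and \S5]{BorodinPetrov2013NN}), carried out here in the `formal' setting in which entries may be negative, so that only algebraic manipulations of $\La^{\Ab\cup\Bb}_{\Ab}$ (Proposition \ref{prop:links_properties}) and of the univariate `dynamics' (Proposition \ref{prop:univariate_properties}) are invoked.

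\textbf{Step 1: reduction to an intertwining.} A Gibbs measure on $\Yb^{(2)}(\Ab;\Bb)$ is by definition of the form $M\bigl(\bi{\bar\la}{\la}\bigr)=m(\la)\,\La^{\Ab\cup\Bb}_{\Ab}(\la,\bar\la)$ for an arbitrary probability measure $m$ on $\Yb(\Ab\cup\Bb)$. I claim that, for $\Qs^{(2)}_{\Ab;\Bb}$ of the form \eqref{Q2} with \eqref{W_conditions} and \eqref{V_condition} in force, conditions (1)--(2) hold if and only if
\[
	\sum_{\bar\la}\La^{\Ab\cup\Bb}_{\Ab}(\la,\bar\la)\,
	\Qs^{(2)}_{\Ab;\Bb}\bigl(\bi{\bar\la}{\la},\bi{\bar\nu}{\nu}\bigr)
	=\Qs_{\Ab\cup\Bb}(\la,\nu)\,\La^{\Ab\cup\Bb}_{\Ab}(\nu,\bar\nu)
\]
for all $\la\in\Yb(\Ab\cup\Bb)$ and $\bi{\bar\nu}{\nu}\in\Yb^{(2)}(\Ab;\Bb)$. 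Indeed, if this identity holds, then for any Gibbs $M$ one computes $\bigl(M\Qs^{(2)}_{\Ab;\Bb}\bigr)\bigl(\bi{\bar\nu}{\nu}\bigr)=\bigl(m\Qs_{\Ab\cup\Bb}\bigr)(\nu)\,\La^{\Ab\cup\Bb}_{\Ab}(\nu,\bar\nu)$, so the solution of $\dot M_t=M_t\Qs^{(2)}_{\Ab;\Bb}$ started at $M$ is $M_t\bigl(\bi{\bar\la}{\la}\bigr)=\bigl(m\,\Ps_{\Ab\cup\Bb}(t)\bigr)(\la)\,\La^{\Ab\cup\Bb}_{\Ab}(\la,\bar\la)$, which is again Gibbs and whose upper marginal has evolved by $\Ps_{\Ab\cup\Bb}$ — this is exactly (1)--(2). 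Conversely, applying (1)--(2) to the degenerate Gibbs measure with $m=\delta_{\la}$ and differentiating at time $0$ recovers the displayed identity for this $\la$, and $\la$ was arbitrary. Finally, in the RSK-type setting considered (as in \cite{BorodinPetrov2013NN}) the only upper moves $\la\to\nu$ carried by $W$ and $V$ add a single box, i.e. $\nu\in\{\la\}\cup\{\la+\square:\square\in\Us(\la)\}$; hence it suffices to verify the displayed identity for $\nu=\la$ and for $\nu=\la+\square$.

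\textbf{Step 2: substitution and the single-box case.} Fix $\bar\nu\in\Yb(\Ab)$, $\la\in\Yb(\Ab\cup\Bb)$ and $\square\in\Us(\la)$ with $\bi{\bar\nu}{\la+\square}\in\Yb^{(2)}(\Ab;\Bb)$, and take $\nu=\la+\square$ in the identity of Step 1. By \eqref{Q2}, the summand on the left is nonzero only for $\bar\la=\bar\nu$ (contributing $W(\la,\la+\square\,|\,\bar\nu)$, since $\la\ne\la+\square$) or for $\bar\la=\bar\nu-\bar\square$ with $\bar\square\in\Ds(\bar\nu)$ (contributing $\Qs_{\Ab}(\bar\nu-\bar\square,\bar\nu)\,V(\la,\la+\square\,|\,\bar\nu-\bar\square,\bar\nu)$), because by \eqref{Q_A} the matrix $\Qs_{\Ab}$ has off-diagonal entries only along single-box additions. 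Writing out $\La^{\Ab\cup\Bb}_{\Ab}$ via \eqref{stoch_links} and $\Qs_{\Ab}$, $\Qs_{\Ab\cup\Bb}$ via \eqref{Q_A}, each side acquires the common positive prefactor $P_{\bar\nu}(\Ab)/P_{\la}(\Ab\cup\Bb)$ (on the right one writes $\Qs_{\Ab\cup\Bb}(\la,\la+\square)=\tfrac{P_{\la+\square}(\Ab\cup\Bb)}{P_{\la}(\Ab\cup\Bb)}\psi'_{\la+\square/\la}$ together with the formula for $\La^{\Ab\cup\Bb}_{\Ab}(\la+\square,\bar\nu)$, and the factor $P_{\la+\square}(\Ab\cup\Bb)$ cancels). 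Dividing by this prefactor — legitimate since $P_{\bar\nu}(\Ab)>0$ and $P_{\la}(\Ab\cup\Bb)>0$ by the choice of the index sets — turns the identity of Step 1 into exactly \eqref{general_identity}.

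\textbf{Step 3: the diagonal case, conclusion, and the main difficulty.} For the remaining target $\nu=\la$, summing the identity of Step 1 over all targets $\bi{\bar\nu}{\nu}$ and using that both $\Qs^{(2)}_{\Ab;\Bb}$ and $\Qs_{\Ab\cup\Bb}$ have vanishing row sums — together with \eqref{W_conditions}, \eqref{V_condition}, and the value \eqref{diagonal_Q} of the diagonal of $\Qs_{\Ab\cup\Bb}$ — shows that the $\nu=\la$ instance is an automatic consequence of the $\nu=\la+\square$ instances over all $\square\in\Us(\la)$. Hence the full intertwining of Step 1 is equivalent to \eqref{general_identity} for all admissible triples $(\bar\nu,\la,\square)$, and therefore, by Step 1, to conditions (1)--(2); condition (3) is built into the form \eqref{Q2} from which we started. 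This yields both implications of the theorem. The computations of Steps 2--3 are routine; I expect the one delicate point to be Step 1 — justifying the passage between the ``three conditions'' description of a bivariate `dynamics' and the single intertwining identity \emph{purely algebraically}, since negative `rates' rule out any probabilistic coupling, and keeping careful track of the supports $\Yb(\Ab)$, $\Yb(\Ab\cup\Bb)$, $\Yb^{(2)}(\Ab;\Bb)$ so that every division performed above is by a strictly positive quantity.
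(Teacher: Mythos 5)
Your proposal follows essentially the same route as the paper's own (sketched) proof: both rest on the Diaconis--Fill-type intertwining between $\Qs^{(2)}_{\Ab;\Bb}$ and the link $\La^{\Ab\cup\Bb}_{\Ab}$, obtained by computing the infinitesimal probability of reaching a new configuration $\bi{\bar\nu}{\nu}$ in two ways (bivariate `dynamics' versus univariate dynamics on top followed by Gibbs resampling); your Steps 1--2 are a correct, fleshed-out version of exactly this computation, with the prefactor $P_{\bar\nu}(\Ab)/P_{\la}(\Ab\cup\Bb)$ cancelling as you say (the paper defers these details to \cite[\S2.4 and Prop.~5.3]{BorodinPetrov2013NN}), and your caveat that $W$ and $V$ are supported on single-box additions matches Remark \ref{rmk:general_identity}(1).

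Step 3, however, is not justified by the mechanism you state. Summing the intertwining identity over \emph{all} targets $\bi{\bar\nu}{\nu}$ and using the vanishing row sums of $\Qs^{(2)}_{\Ab;\Bb}$ and $\Qs_{\Ab\cup\Bb}$ (plus stochasticity of $\La^{\Ab\cup\Bb}_{\Ab}$) produces only the tautology $0=0$; granting the off-diagonal instances, it shows merely that the \emph{sum over $\bar\nu$} of the discrepancies in the diagonal instances vanishes, not that each instance with $\nu=\la$ and a fixed $\bar\nu$ holds. The correct derivation of the diagonal case for fixed $\bar\nu$ is: sum \eqref{general_identity} over $\square\in\Us(\la)$, use \eqref{V_condition} to rewrite $\sum_{\square}V(\la,\la+\square\,|\,\bar\nu-\bar\square,\bar\nu)$ as $1-V(\la,\la\,|\,\bar\nu-\bar\square,\bar\nu)$ and \eqref{W_conditions} to rewrite $\sum_{\square}W(\la,\la+\square\,|\,\bar\nu)$ as $-W(\la,\la\,|\,\bar\nu)$, and then evaluate $\sum_{\square\in\Us(\la)}P_{\la+\square/\bar\nu}(\Bb)\psi'_{\la+\square/\la}$ by the infinitesimal skew Cauchy identity \eqref{inf_skew_Cauchy} (with $\Ab$ there replaced by $\Bb$). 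That identity --- equivalent to the commutation relation \eqref{Q_La_commute}, and exactly what Remark \ref{rmk:general_identity}(4) points to --- is a genuine symmetric-function input that row sums alone cannot supply; note in particular that \eqref{diagonal_Q}, which you do invoke, is itself a consequence of \eqref{inf_skew_Cauchy}. With this one-line repair Step 3 goes through and your argument coincides with the paper's.
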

\begin{proof}[Idea of proof]
	This is established in the same way as
	\cite[Prop. 5.3]{BorodinPetrov2013NN}
	with the help of the general discussion of \cite[\S2.4]{BorodinPetrov2013NN}.
	The idea of the proof is the following.
	Fix $\la\in\Yb(\Ab\cup\Bb)$ and sample $\bar\la\in\Yb(\Ab)$
	according to the Gibbs property. 
	The new configuration
	$\bi{\bar\nu}{\nu}$
	(arising after an infinitesimal amount of time)
	can be reached in two ways: either by running the 
	bivariate dynamics 
	$\Qs^{(2)}_{\Ab;\Bb}$ \eqref{Q2},
	or by running the univariate dynamics 
	$\Qs_{\Ab\cup\Bb}$ to get from $\la$ to $\nu$,
	and then sampling $\bar\nu$
	according to the Gibbs property. 
	Thus, we arrive at two 
	different expressions 
	for the infinitesimal probability
	of the resulting configuration $\bi{\bar\nu}{\nu}$,
	and \eqref{general_identity} is the equality between them.
	In this equality the Young diagrams $\bar\nu=\la+\square$, $\nu$, and $\la$
	are fixed, and the summation
	goes over $\bar\la=\bar\nu-\bar\square$.
\end{proof}
\begin{remark}\label{rmk:general_identity}
	Let us make several comments about the general identity \eqref{general_identity}:
	
	\begin{enumerate}[(1)]
		\item
	 Due to properties (1)--(2) of the bivariate `dynamics',
	during an infinitesimally small time interval under $\Qs^{(2)}_{\Ab;\Bb}$, at most one box
	can be added to each of the Young diagrams on the lower and on the upper level.
	In \eqref{general_identity}, these added boxes are
	denoted by $\bar\square$ and $\square$, respectively.
	
	\item Identity \eqref{general_identity} is written down for
	each fixed new state $\bar\nu$ on the lower level and
	all possible moves $\la\to\la+\square$ on the upper level.
	The summation in the left-hand side is over
	all ``histories'' $\bar\nu-\bar\square\to\bar\nu$
	on the lower level.
		
	\item
	Observe that \eqref{general_identity}
	is essentially independent of the lower
	specialization $\Ab$.
	That is, it
	depends on $\Ab$
	only through the requirement that $\bar\nu\in\Yb(\Ab)$,
	so $\Ab$ does not affect the form of
	the identity \eqref{general_identity}
	which is written down
	for each fixed $\bar\nu$ separately.

	\item If we sum \eqref{general_identity} over all $\square\in\Us(\la)$,
	then we get the infinitesimal
	skew Cauchy identity \eqref{inf_skew_Cauchy}.
	Thus, one may think
	that bivariate `dynamics' correspond to refinements of
	\eqref{inf_skew_Cauchy} (or, equivalently, of the commutation relations
	\eqref{Q_La_commute}).

	\item
	Identity \eqref{general_identity}
	extends the results of \cite{BorodinPetrov2013NN}
	in the sense that the latter paper
	deals only with the case when $\Bb$ is a specialization
	into a single usual variable.
	For such $\Bb$,
	the Young diagrams $\bar\nu$ and $\la+\square$
	in \eqref{general_identity}
	must differ by a horizontal strip
	(and hence the pair $\bi{\bar\nu}{\la+\square}$ can be represented
	as a configuration of
	interlacing particles on two levels,
	cf. \S \ref{sub:young_diagrams} and also \S \ref{sub:adding_a_usual_variable} below).
	For other $\Bb$'s,
	condition
	$\bi{\bar\nu}{\la+\square}\in\Yb^{(2)}(\Ab;\Bb)$
	will be different (and can be more complicated).
	\end{enumerate}
\end{remark}


\subsection{RSK-type `dynamics'} 
\label{sub:rsk_type_dynamics}

In the present paper we will deal
only with the following subclass of
bivariate `dynamics':
\begin{definition}\label{def:RSK_type}
	A bivariate `dynamics' $\Qs^{(2)}_{\Ab;\Bb}$ is called
	\emph{RSK-type} if for any
	$\bi{\bar\nu}{\la}\in\Yb^{(2)}(\Ab;\Bb)$
	and any $\bar\square\in\Ds(\bar\nu)$,
	one has
	\begin{align*}
		V(\la,\la\,|\,\bar\nu-\bar\square,\bar\nu)=0.
	\end{align*}
	This means that a jump on the lower level
	always propagates to the upper level. See
	\S \ref{sec:rsk_type_algorithm_for_sampling_hl_coherent_measures} below and also
	\cite{BorodinPetrov2013NN} for more discussion including
	connections to the classical RSK (Robinson--Schensted--Knuth) insertion algorithm.
\end{definition}

\begin{proposition}\label{prop:RSK-type}
	For an RSK-type bivariate `dynamics',
	the diagonal elements of the
	`jump rate' matrix are given by
	\begin{align*}
		\Qs^{(2)}_{\Ab;\Bb}
		\big(
		\bi{\bar\la}{\la},
		\bi{\bar\la}{\la}\big)=
		\Qs_{\Ab\cup\Bb}(\la,\la),
		\qquad
		\bi{\bar\la}{\la}\in\Yb^{(2)}(\Ab;\Bb).
	\end{align*}
\end{proposition}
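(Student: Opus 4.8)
The plan is to compute the diagonal element $\Qs^{(2)}_{\Ab;\Bb}\big(\bi{\bar\la}{\la},\bi{\bar\la}{\la}\big)$ directly from the third case of the definition \eqref{Q2}, namely
\begin{align*}
	\Qs^{(2)}_{\Ab;\Bb}\big(\bi{\bar\la}{\la},\bi{\bar\la}{\la}\big)=\Qs_\Ab(\bar\la,\bar\la)+W(\la,\la\,|\,\bar\la),
\end{align*}
and to show that the right-hand side equals $\Qs_{\Ab\cup\Bb}(\la,\la)$. By \eqref{diagonal_Q} we already know $\Qs_\Ab(\bar\la,\bar\la)=-p_1(\Ab)$ and $\Qs_{\Ab\cup\Bb}(\la,\la)=-p_1(\Ab\cup\Bb)=-p_1(\Ab)-p_1(\Bb)$, since $p_1$ is additive under union of specializations. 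So the identity to prove reduces to $W(\la,\la\,|\,\bar\la)=-p_1(\Bb)$.

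The key step is to extract $W(\la,\la\,|\,\bar\la)$ from the RSK-type hypothesis together with the general identity \eqref{general_identity}. First I would use condition \eqref{W_conditions}, which gives $W(\la,\la\,|\,\bar\la)=-\sum_{\nu\ne\la}W(\la,\nu\,|\,\bar\la)$; since the jump rates $W$ only add a single box (by Remark \ref{rmk:general_identity}.(1)), this is $-\sum_{\square\in\Us(\la)}W(\la,\la+\square\,|\,\bar\la)$. Now sum the general identity \eqref{general_identity} over all $\square\in\Us(\la)$ with $\bar\nu=\bar\la$ fixed: by Remark \ref{rmk:general_identity}.(4) the total equals the infinitesimal skew Cauchy identity \eqref{inf_skew_Cauchy} (with $\nu=\bar\la$ there). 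Concretely, summing \eqref{general_identity} over $\square$ yields
\begin{align*}
	\sum_{\bar\square\in\Ds(\bar\la)}\bigg(\sum_{\square\in\Us(\la)}V(\la,\la+\square\,|\,\bar\la-\bar\square,\bar\la)\bigg)P_{\la/\bar\la-\bar\square}(\Bb)\psi'_{\bar\la/\bar\la-\bar\square}
	+\bigg(\sum_{\square\in\Us(\la)}W(\la,\la+\square\,|\,\bar\la)\bigg)P_{\la/\bar\la}(\Bb)
\end{align*}
must equal the right-hand side $\sum_{\square}P_{\la+\square/\bar\la}(\Bb)\psi'_{\la+\square/\la}$. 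By the RSK-type hypothesis, $V(\la,\la\,|\,\bar\la-\bar\square,\bar\la)=0$, and combined with the normalization $\sum_\nu V(\la,\nu\,|\,\bar\la-\bar\square,\bar\la)=1$ from \eqref{V_condition}, the inner $V$-sum over $\square\in\Us(\la)$ equals exactly $1$ for each $\bar\square$. Meanwhile \eqref{inf_skew_Cauchy} with $\Ab\rightsquigarrow\Bb$, $\nu=\bar\la$, $\la=\bar\la$ (i.e.\ the identity applied with lower diagram $\bar\la$ and the roles arranged so that $P_{\bar\la/\bar\la}=1$) reads
\begin{align*}
	\sum_{\square\in\Us(\bar\la)}P_{\bar\la+\square/\bar\la}(\Bb)\psi'_{\bar\la+\square/\bar\la}=p_1(\Bb)+\sum_{\bar\square\in\Ds(\bar\la)}P_{\bar\la/\bar\la-\bar\square}(\Bb)\psi'_{\bar\la/\bar\la-\bar\square};
\end{align*}
more to the point, the version of \eqref{inf_skew_Cauchy} directly relevant here has $\la=\bar\la$ and $\nu=\la$, giving $\sum_{\square\in\Us(\la)}P_{\la+\square/\bar\la}(\Bb)\psi'_{\la+\square/\la}=p_1(\Bb)P_{\la/\bar\la}(\Bb)+\sum_{\bar\square\in\Ds(\bar\la)}P_{\la/\bar\la-\bar\square}(\Bb)\psi'_{\bar\la/\bar\la-\bar\square}$. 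Substituting this and the fact that the $V$-sums are $1$ into the summed general identity, the terms $\sum_{\bar\square}P_{\la/\bar\la-\bar\square}(\Bb)\psi'_{\bar\la/\bar\la-\bar\square}$ cancel from both sides, leaving $\big(\sum_{\square}W(\la,\la+\square\,|\,\bar\la)\big)P_{\la/\bar\la}(\Bb)=p_1(\Bb)P_{\la/\bar\la}(\Bb)$. Since $\bi{\bar\la}{\la}\in\Yb^{(2)}(\Ab;\Bb)$ forces $P_{\la/\bar\la}(\Bb)>0$, we may divide to get $\sum_{\square\in\Us(\la)}W(\la,\la+\square\,|\,\bar\la)=p_1(\Bb)$, hence $W(\la,\la\,|\,\bar\la)=-p_1(\Bb)$, and the proposition follows by adding $\Qs_\Ab(\bar\la,\bar\la)=-p_1(\Ab)$.

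The main obstacle I anticipate is bookkeeping the precise instances of \eqref{inf_skew_Cauchy} and \eqref{general_identity}: one must be careful about which diagram plays the role of $\la$ versus $\nu$ in \eqref{inf_skew_Cauchy}, and about the fact that summing \eqref{general_identity} over $\square$ for a \emph{fixed} $\bar\nu$ (rather than over both) is what produces \eqref{inf_skew_Cauchy} — this is exactly Remark \ref{rmk:general_identity}.(4), so it should go through, but the indices need to be tracked carefully. A secondary point is justifying that $W$ only adds one box, which follows from Remark \ref{rmk:general_identity}.(1) (at most one box is added on each level in infinitesimal time), together with the fact that on the upper level the bivariate dynamics reduces to $\Qs_{\Ab\cup\Bb}$, which itself only adds single boxes by \eqref{Q_A}. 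Everything else is a short algebraic cancellation.
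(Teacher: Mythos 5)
Your proof is correct, but it follows a different route from the paper. The paper's own proof is a one-liner: it cites identity (2.20) of \cite{BorodinPetrov2013NN} with $x_k=y_k=\la$, $y_{k-1}=\bar\la$, observes that the RSK-type condition collapses the sum over $x_{k-1}$ to the single term $x_{k-1}=y_{k-1}$, and thus obtains $\Qs_\Ab(\bar\la,\bar\la)+W(\la,\la\,|\,\bar\la)=\Qs_{\Ab\cup\Bb}(\la,\la)$ directly, without ever computing $W(\la,\la\,|\,\bar\la)$. You instead stay entirely inside the present paper's formalism: you sum \eqref{general_identity} over $\square\in\Us(\la)$ for fixed $\bar\nu=\bar\la$, use the RSK-type hypothesis together with the normalization \eqref{V_condition} to make each inner $V$-sum equal to $1$, match the result against the instance of \eqref{inf_skew_Cauchy} with lower diagram $\bar\la$ and upper diagram $\la$, cancel the common terms, divide by $P_{\la/\bar\la}(\Bb)>0$, and conclude $W(\la,\la\,|\,\bar\la)=-p_1(\Bb)$, which combined with \eqref{diagonal_Q} and the third case of \eqref{Q2} gives the claim. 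This is essentially the same underlying skew-Cauchy bookkeeping as in the cited identity, but your version is self-contained (no appeal to the companion paper) and produces as a byproduct the explicit values $W(\la,\la\,|\,\bar\la)=-p_1(\Bb)$ and $\Qs^{(2)}_{\Ab;\Bb}\big(\bi{\bar\la}{\la},\bi{\bar\la}{\la}\big)=-p_1(\Ab\cup\Bb)$, which the paper states only \emph{after} the proposition as consequences; in effect you reverse that logical order. The only points to keep implicit bookkeeping straight on are that both $W$ and $V$ move the upper diagram by at most one box (as you note, this is built into the bivariate formalism and Remark \ref{rmk:general_identity}), that the terms with $\bi{\bar\la}{\la+\square}\notin\Yb^{(2)}(\Ab;\Bb)$ drop out of both sides because $P_{\la+\square/\bar\la}(\Bb)=0$ there, and that the $V$-normalization is applied only to histories $\bar\la-\bar\square$ carrying nonzero weight $P_{\la/\bar\la-\bar\square}(\Bb)\,\psi'_{\bar\la/\bar\la-\bar\square}$ — all at the same level of rigor as the paper itself.
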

Using \eqref{diagonal_Q}, we also have
$\Qs^{(2)}_{\Ab;\Bb}
\big(
\bi{\bar\la}{\la},
\bi{\bar\la}{\la}\big)=-p_1(\Ab\cup\Bb)$,
and $W(\la,\la\,|\,\bar\la)=-p_1(\Bb)$.
\begin{proof}
	Follows from \cite[(2.20)]{BorodinPetrov2013NN}
	with $x_k=y_k=\la$ and $y_{k-1}=\bar\la$.
	By the definition of RSK-type `dynamics',
	the sum over $x_{k-1}$ in
	that identity
	reduces to only one summand corresponding to $x_{k-1}=y_{k-1}$.
	This leads to
	$\Qs_{\Ab}(\bar\la,\bar\la)+W(\la,\la\,|\,\bar\la)=\Qs_{\Ab\cup\Bb}(\la,\la)$,
	which is equivalent to the desired claim
	(see \eqref{Q2}).
\end{proof}



\section{Three particular bivariate ‘dynamics’ on Macdonald processes} 
\label{sec:three_particular_dynamics_on_macdonald_processes}

Here we present three explicit examples of RSK-type bivariate
`dynamics'
$\Qs^{(2)}_{\Ab;(\al)}$,
$\Qs^{(2)}_{\Ab;(\be)}$, and
$\Qs^{(2)}_{\Ab;\Pl_\ga}$,
(where
$\Ab$ is an arbitrary $(q,t)$-nonnegative specialization,
and $\al,\be,\ga>0$).
In \S \ref{sec:rsk_type_algorithm_for_sampling_hl_coherent_measures}
below we will use them
as building blocks for our RSK-type sampling algorithm.

\subsection{$(q,t)$-quantities and their properties} 
\label{sub:_q_t_quantities}

Let us write down
explicit formulas for various quantities
related to Macdonald polynomials, and also list some relations
between them. 
Let us define
$\psi_{\la/\mu}(q,t)$,
$\varphi_{\la/\mu}(q,t)$,
$\psi'_{\la/\mu}(q,t)$, and
$\varphi'_{\la/\mu}(q,t)$
by the following one-variable specialization formulas
for any $\la,\mu\in\Yb$:
\begin{equation}\label{PQ_and_psi}
	\begin{array}{rcl}
		P_{\la/\mu}(\al\,|\,q,t)&=& \al^{|\la|-|\mu|}\psi_{\la/\mu}(q,t)
		\mathbf1_{\mu\prech\la};\\
		\rule{0pt}{14pt}
		Q_{\la/\mu}(\al\,|\,q,t)&=& \al^{|\la|-|\mu|}\varphi_{\la/\mu}(q,t)
		\mathbf1_{\mu\prech\la};\\
		\rule{0pt}{14pt}
		P_{\la/\mu}(\hat\be\,|\,q,t)&=&
		\be^{|\la|-|\mu|}\varphi'_{\la/\mu}(q,t)
		\mathbf1_{\mu\precv\la};\\
		\rule{0pt}{14pt}
		Q_{\la/\mu}(\hat\be\,|\,q,t)&=&
		\be^{|\la|-|\mu|}\psi'_{\la/\mu}(q,t)
		\mathbf1_{\mu\precv\la}
	\end{array}
\end{equation}
(recall that the notation $\hat\be$
emphasizes that we are specializing 
polynomials into one
dual variable).
All formulas in the rest of this subsection
describe various properties of the above quantities.
They follow from properties of Macdonald symmetric functions,
and we refer to 
\cite[VI]{Macdonald1995} for details.

Let us first list properties which do not involve swapping
the parameters $q$ and $t$. Define
\begin{align*}
	b_\la(q,t):= \frac{1}{\langle P_\la(\cdot\,|\,q,t),
	P_\la(\cdot\,|\,q,t)\rangle_{q,t}}=
	\frac{Q_\la(\cdot\,|\,q,t)}{P_\la(\cdot\,|\,q,t)},\qquad \la\in\Yb,
\end{align*}
see \S \ref{sub:preliminaries}.
(The dot means that we
take symmetric functions in arbitrary variables.)
An explicit formula for $b_\la(q,t)$
may be found in \cite[VI.(6.19)]{Macdonald1995},
but we do not need it.
We have
\begin{align}
	Q_{\la/\mu}(\cdot\,|\,q,t)&=\frac{b_\la(q,t)}{b_\mu(q,t)}P_{\la/\mu}(\cdot\,|\,q,t);\label{P_Q_bb}\\
	\varphi_{\la/\mu}(q,t)&=\frac{b_\la(q,t)}{b_\mu(q,t)}\psi_{\la/\mu}(q,t);\qquad
	\varphi'_{\la/\mu}(q,t)=\frac{b_\mu(q,t)}{b_\la(q,t)}\psi'_{\la/\mu}(q,t);\label{phi_psi_bb}.
\end{align}
In the special case when $\mu\precb\la$, one also has
\begin{align}\label{phi_psi_one_box}
	\varphi_{\la/\mu}(q,t)=\frac{1-t}{1-q}\psi'_{\la/\mu}(q,t),\qquad
	\varphi'_{\la/\mu}(q,t)=\frac{1-q}{1-t}\psi_{\la/\mu}(q,t).
\end{align}

Now let us turn to formulas involving swapping of $q$ with $t$:
\begin{align}
	b_\la(q,t)=\frac{1}{b_{\la'}(t,q)};\label{bb_and_psi_swap}
	\qquad \psi'_{\la/\mu}(q,t)&=\psi_{\la'/\mu'}(t,q);
	\qquad
	\varphi'_{\la/\mu}(q,t)=\varphi_{\la'/\mu'}(t,q);
	\\
	P_{\la/\mu}(\hat\be\,|\,q,t)&=
	Q_{\la'/\mu'}(\be\,|\,t,q).\label{PQ_swap}
\end{align}

\begin{remark}\label{rmk:endo}
	The ``symmetry''
	between the parameters $q$ and $t$
	in
	\eqref{bb_and_psi_swap}--\eqref{PQ_swap}
	follows from
	the existence
	of an endomorphism of
	$\Sym$ defined
	by its action
	on the power sums as
	\begin{align*}
		\omega_{q,t}\colon p_k\mapsto(-1)^{k-1}\frac{1-q^{k}}{1-t^{k}}p_k,\qquad k=1,2,\ldots.
	\end{align*}
	We have $\omega_{q,t}\omega_{t,q}=\mathrm{id}$, and
	\begin{align*}
		\omega_{q,t}P_{\la/\mu}(\x\,|\,q,t)
		=Q_{\la'/\mu'}(\x\,|\,t,q),\qquad
		\omega_{q,t}Q_{\la/\mu}(\x\,|\,q,t)=P_{\la'/\mu'}(\x\,|\,t,q),\qquad
		\la,\mu\in\Yb.
	\end{align*}

	One readily sees that applying the endomorphism $\omega_{t,q}\colon \Sym\to\Sym$ and then a $(q,t)$-nonnegative specialization
	$(\ab;\bb;\Pl_\ga\,|\,q,t)$
	mapping
	$\Sym$ to $\R$, one gets another specialization which is now $(t,q)$-nonnegative
	(note the swapping of the usual and dual variables):
	\begin{align*}
		(\ab;\bb;\Pl_\ga\,|\,q,t)
		\circ\omega_{t,q}=
		(\boldsymbol\be;{\ab};\Pl_{\frac{1-q}{1-t}\ga}\,|\,t,q).
	\end{align*}
\end{remark}

Finally, let us list several explicit formulas
for the above $(q,t)$-quantities
which we will use.
For $\mu\prech\la$,
\begin{align}\label{psi_horiz_strip}
	\psi_{\la/\mu}(q,t)&=\prod_{1\le i\le j\le \ell(\mu)}
	\frac{f(q^{\mu_i-\mu_j}t^{j-i})f(q^{\la_i-\la_{j+1}}t^{j-i})}
	{f(q^{\la_i-\mu_j}t^{j-i})f(q^{\mu_i-\la_{j+1}}t^{j-i})},
	\qquad \qquad
	f(u):=\frac{(tu;q)_{\infty}}{(qu;q)_{\infty}}.
\end{align}
Let $\la=\mu+\square$ for some box $\square\in\Us(\mu)$,
and $j$ be the row number of that box.
We will denote this situation as $\la=\mu+\de_j$.
In this case,
\begin{align}\label{psip_one_box}
	\psi'_{\mu+\de_j/\mu}(q,t)=
	\prod_{i=1}^{j-1}
	\frac{(1-q^{\mu_i-\mu_j}t^{j-i-1})
	(1-q^{\la_i-\la_j}t^{j-i+1})}
	{(1-q^{\mu_i-\mu_j}t^{j-i})
	(1-q^{\la_i-\la_j}t^{j-i})}.
\end{align}

We will also need certain combinations of the
quantities $\psi_{\la/\mu}$ and $\psi'_{\mu+\square/\mu}$
which were employed in \cite{BorodinPetrov2013NN} (in particular, see \cite[\S 5.4]{BorodinPetrov2013NN}).
Namely, let $\bar\nu,\la\in\Yb$
be such that $\bar\nu\prech\la$.
Let a box $\bar\square\in\Ds(\bar\nu)$ belong to row number $i$
(so we can use the notation $\bar\nu-\bar\square=\bar\nu-\bar\de_i$). Define
\begin{align}
	\nonumber
	&
	T_{i}(\bar\nu,\la\,|\,q,t):=\frac{\psi_{\la/\bar\nu-\bar\de_i}(q,t)}
	{\psi_{\la/\bar\nu}(q,t)}
	\psi'_{\bar\nu/\bar\nu-\bar\de_i}(q,t)=
	\frac{(1-q^{\la_i - \bar\nu_i}t)(1-q^{\bar\nu_i-\la_{i+1}})}
	{(1-q^{\la_i - \bar\nu_i+1})
	(1-q^{\bar\nu_i-1-\la_{i+1}}t)}
	\\&
	\label{T_i}\hspace{30pt}\times
	\prod_{r=1}^{i-1}
	\frac{
	(1-q^{\la_r - \bar \nu_i} t^{i - r+1})(1-q^{\bar\nu_r-\bar\nu_i+1}t^{i-r-1})}
	{
	(1-q^{\la_r - \bar \nu_i + 1} t^{i - r})
	(1-q^{\bar\nu_r-\bar\nu_i}t^{i-r})}
	\prod_{s=i+1}^{\ell(\bar\nu)}
	\frac{(1-q^{\bar\nu_i - \bar\nu_s-1} t^{s - i+1})
	(1-q^{\bar\nu_i - \la_{s + 1}} t^{s -i})}
	{(1-q^{\bar\nu_i - \bar\nu_s} t^{s - i})
	(1-q^{\bar\nu_i - \la_{s + 1}-1} t^{s -i+1})}.
\end{align}
Also, for $\bar\nu\prech\la$, let a box $\square\in\Us(\la)$ belong to row number $j$.
Define
\begin{align}
	&
	S_{j}(\bar\nu,\la\,|\,q,t):=\nonumber
	\frac{\psi_{\la+\de_j/\bar\nu}(q,t)}
	{\psi_{\la/\bar\nu}(q,t)}
	\psi'_{\la+\de_j/\la}(q,t)=
	\prod_{r=1}^{j-1}
	\frac{(1-q^{\bar\nu_r-\la_j}t^{j-r-1})
	(1-q^{\la_r-\la_j-1}t^{j-r+1})
	}
	{(1-q^{\bar\nu_r-\la_j-1}t^{j-r})
	(1-q^{\la_r-\la_j}t^{j-r})
	}
	\\&
	\label{S_j}
	\hspace{245pt}\times
	\prod_{s=j}^{\ell(\bar\nu)}
	\frac{(1-q^{\la_j-\la_{s+1}+1}t^{s-j})
	(1-q^{\la_j-\bar\nu_s}t^{s-j+1})
	}
	{(1-q^{\la_j-\la_{s+1}}t^{s-j+1})
	(1-q^{\la_j-\bar\nu_s+1}t^{s-j})}.
\end{align}
The product formulas in \eqref{T_i}--\eqref{S_j} readily follow from \eqref{psi_horiz_strip}
and
\eqref{psip_one_box}.


\subsection{Bivariate `dynamics' with a usual variable} 
\label{sub:adding_a_usual_variable}

In this subsection we present functions $(W_{(\al)},V_{(\al)})$
corresponding (as in \S \ref{sub:bivariate_dynamics}) to an RSK-type bivariate `dynamics'
$\Qs^{(2)}_{\Ab;(\al)}$, where $\Ab$ is an arbitrary $(q,t)$-nonnegative
specialization and $\al>0$.

\begin{lemma}\label{lemma:2_support_alpha}
	The state space
	\eqref{2_support}
	of a bivariate `dynamics'
	$\Qs^{(2)}_{\Ab;(\al)}$
	can be characterized as
	\begin{align}\label{2_support_alpha}
		\Yb^{(2)}\big(\Ab;(\al)\big)=
		\left\{
		\bi{\bar\la}{\la}\colon
		\text{$\bar\la\in\Yb(\Ab)$ and $\bar\la\prech\la$}
		\right\}.
	\end{align}
\end{lemma}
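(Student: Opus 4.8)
The plan is to compute $\Yb^{(2)}\big(\Ab;(\al)\big)$ directly from its definition \eqref{2_support}, using the explicit form of the stochastic link \eqref{stoch_links}. By definition, a pair $\bi{\bar\la}{\la}$ belongs to $\Yb^{(2)}\big(\Ab;(\al)\big)$ exactly when $\bar\la\in\Yb(\Ab)$, $\la\in\Yb(\Ab\cup(\al))$, and
\[
\La^{\Ab\cup(\al)}_{\Ab}(\la,\bar\la)=\frac{P_{\bar\la}(\Ab)}{P_{\la}(\Ab\cup(\al))}\,P_{\la/\bar\la}\big((\al)\big)>0 .
\]
Once the first two membership conditions are in force, the factor $P_{\bar\la}(\Ab)/P_{\la}(\Ab\cup(\al))$ is a quotient of strictly positive reals, so I would reduce the positivity of the link to the single condition $P_{\la/\bar\la}\big((\al)\big)>0$.

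Next I would pin down when $P_{\la/\bar\la}\big((\al)\big)>0$. By the one-variable specialization formula \eqref{PQ_and_psi}, $P_{\la/\bar\la}\big((\al)\big)=\al^{|\la|-|\bar\la|}\,\psi_{\la/\bar\la}(q,t)\,\mathbf{1}_{\bar\la\prech\la}$, and since $\al>0$ this is positive iff $\bar\la\prech\la$ and $\psi_{\la/\bar\la}(q,t)>0$. The latter strict positivity is the only point requiring any verification: from the product formula \eqref{psi_horiz_strip}, $\psi_{\la/\bar\la}(q,t)$ is a finite product of factors $f(q^{a}t^{b})$ with integers $a,b\ge0$, where $f(u)=(tu;q)_\infty/(qu;q)_\infty$; for $u\in(0,1]$ and $q,t\in[0,1)$ every factor $1-tuq^{i}$ and $1-uq^{i+1}$ lies in $(0,1]$, so $f(u)>0$ and hence $\psi_{\la/\bar\la}(q,t)>0$. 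Thus $P_{\la/\bar\la}\big((\al)\big)>0$ if and only if $\bar\la\prech\la$.

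With these observations the inclusion ``$\subseteq$'' will be immediate: membership in $\Yb^{(2)}\big(\Ab;(\al)\big)$ forces $\bar\la\in\Yb(\Ab)$ by definition and $\bar\la\prech\la$ by the previous step. For ``$\supseteq$'', given $\bar\la\in\Yb(\Ab)$ with $\bar\la\prech\la$, the one remaining thing to check is that $\la\in\Yb(\Ab\cup(\al))$, i.e. $P_{\la}(\Ab\cup(\al))>0$; here I would invoke the branching rule for (skew) Macdonald symmetric functions, $P_{\la}(\Ab\cup(\al))=\sum_{\mu\in\Yb}P_{\mu}(\Ab)\,P_{\la/\mu}\big((\al)\big)$ (see \cite[VI]{Macdonald1995}), a sum of nonnegative terms whose $\mu=\bar\la$ summand equals $P_{\bar\la}(\Ab)\,P_{\la/\bar\la}\big((\al)\big)>0$. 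Combined with the first paragraph this yields $\La^{\Ab\cup(\al)}_{\Ab}(\la,\bar\la)>0$, hence $\bi{\bar\la}{\la}\in\Yb^{(2)}\big(\Ab;(\al)\big)$, which finishes the argument.

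The ``hard part'' here is only the bookkeeping check that $\psi_{\la/\bar\la}(q,t)$ is strictly (not merely weakly) positive for a horizontal strip, which is a one-line consequence of the explicit product; the rest is unwinding the definitions of \eqref{2_support} and \eqref{stoch_links} together with the standard Pieri/branching identity. No subtle estimates or new ideas are expected to be needed.
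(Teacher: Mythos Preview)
Your argument is correct and follows essentially the same route as the paper's proof: both reduce the link positivity to $P_{\la/\bar\la}(\al)>0\iff\bar\la\prech\la$, and both use the branching identity $P_{\la}(\Ab\cup(\al))=\sum_{\bar\la}P_{\bar\la}(\Ab)\,P_{\la/\bar\la}(\al)$ (cited in the paper as \cite[VI.(7.9$'$)]{Macdonald1995}) to conclude $\la\in\Yb(\Ab\cup(\al))$. The only difference is that you spell out the strict positivity of $\psi_{\la/\bar\la}(q,t)$ via \eqref{psi_horiz_strip}, which the paper subsumes under ``Clearly''.
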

\begin{proof}
	Clearly, the conditions $P_{\bar\la}(\Ab\,|\,q,t)>0$
	and $P_{\la/\bar\la}(\al\,|\,q,t)>0$ are satisfied under conditions \eqref{2_support_alpha}.
	Using identity \cite[VI.(7.9')]{Macdonald1995} which can be written in the form
	\begin{align}\label{sequential_P_polys}
		\sum_{\bar\la\colon\bar\la\prech\la}
		P_{\la/\bar\la}(\al\,|\,q,t)P_{\bar\la}(\Ab\,|\,q,t)=
		P_{\la}\big(\Ab\cup(\al)\,|\,q,t\big),
	\end{align}
	we conclude that the condition
	$P_{\la}\big(\Ab\cup(\al)\,|\,q,t\big)>0$ also holds in \eqref{2_support_alpha}.
\end{proof}

Let us fix $h\in\{1,2,\ldots\}\cup\{+\infty\}$. Our
functions $(W_{(\al)},V_{(\al)})=(W_{(\al)}^{h},V_{(\al)}^{h})$ will depend on $h$
as a parameter.

We will
describe
the values
$W_{(\al)}^{h}(\la,\cdot\,|\,\bar\nu)$ and
$V_{(\al)}^{h}(\la,\cdot\,|\,\cdot,\bar\nu)$
for all
meaningful
pairs of Young diagrams $\bar\nu\in\Yb(\Ab)$
and
$\la\in\Yb\big(\Ab\cup\{\al\}\big)$
(entering \eqref{W_conditions}, \eqref{V_condition}, and \eqref{general_identity}).
Looking at \eqref{Q2}, we see that these two diagrams must satisfy
$\bi{\bar\nu}{\la+\square}\in\Yb^{(2)}\big(\Ab;\{\al\}\big)$ for at least one box
$\square\in\Us(\la)$.

If
$\bi{\bar\nu}{\la}\notin\Yb^{(2)}\big(\Ab;\{\al\}\big)$,
then
it means that there is a unique $\bar\square\in\Ds(\bar\nu)$ such that
$\bi{\bar\nu-\bar\square}{\la}\in\Yb^{(2)}\big(\Ab;\{\al\}\big)$.
Pairs $\bi{\bar\nu}{\la}\notin\Yb^{(2)}\big(\Ab;\{\al\}\big)$
do not enter the definition of $W_{(\al)}^{h}$, and
the only nonzero value of $V_{(\al)}^{h}$ in this case is
\begin{align}\label{short_range_alpha}
	V^{h}_{(\al)}(\la,\la+\square\,|\,\bar\nu-\bar\square,\bar\nu)=1,\qquad
	\bi{\bar\nu}{\la}\notin\Yb^{(2)}\big(\Ab;\{\al\}\big),
	\quad
	\bi{\bar\nu}{\la+\square},\bi{\bar\nu-\bar\square}{\la}\in\Yb^{(2)}\big(\Ab;\{\al\}\big).
\end{align}
In words, this means that if
a jump $\bar\nu-\bar\square\to\bar\nu$
on the lower level breaks the condition
$\bi{\bar\nu}{\la}\in\Yb^{(2)}\big(\Ab;\{\al\}\big)$, then there is a unique
jump $\la\to\la+\square$ which must happen (almost surely) to restore this condition.
In \cite[\S 5.3]{BorodinPetrov2013NN}
the property \eqref{short_range_alpha} was called the \emph{short-range pushing}.

In view of \eqref{short_range_alpha}, it remains to define
$W_{(\al)}^{h}(\la,\cdot\,|\,\bar\nu)$ and
$V_{(\al)}^{h}(\la,\cdot\,|\,\cdot,\bar\nu)$
for all possible pairs $\bi{\bar\nu}{\la}\in\Yb^{(2)}\big(\Ab;\{\al\}\big)$.
Denote $\ell:=\ell(\bar\nu)$, so $\ell(\la)\le\ell+1$.
We represent pairs $\bi{\bar\nu}\la$
as interlacing particle configurations
$\{\bar\nu_1,\ldots,\bar\nu_{\ell}\}\sqcup\{\la_1,\ldots,\la_{\ell+1}\}\subset\Z\sqcup\Z$
with $\ell$ and $\ell+1$
particles on the lower and on the upper levels, respectively
(appending $\la$ by zeroes if necessary).
See Fig.~\ref{fig:interlace}.
\begin{figure}[htbp]
\begin{center}
\begin{tikzpicture}[
	    scale=2.2,
	    axis/.style={ultra thick, ->, >=stealth'}]
	    \def\y{.5}
	    \draw[axis] (0,0) -- (5,0) node(xline)[right]{$\bar\nu$};
	    \draw[axis] (0,\y) -- (5,\y) node(xline)[right]{$\la$};
	    \foreach \hh in {.5, 1, 1.5, 2, 2.5, 3, 3.5, 4, 4.5}
	    {
	    	\draw[densely dotted, thick, opacity=.5] (\hh,-.12) -- (\hh,\y+.12);
	    }
	    \def\sp{.09};
	    \def\opac{.55}
	    \draw[line width=.7, color=blue, opacity=\opac]
		(5,\y/2)--(4.5,\y)--
		(3.5+\sp/3,0)--++(-2*\sp/3,\y)--
		(3+\sp/3,0)--++(-2*\sp/3,\y)
		--(2,0)--(1+\sp,\y)--++(-\sp,-\y)--++(-\sp,\y)
		--(0,\y/2);
	    \foreach \pt in
	    {(2,0), (1+\sp,\y), (1-\sp,\y), (1,0),
	    (3+\sp/3,0),(3-\sp/3,\y),
	    (3.5+\sp/3,0),(3.5-\sp/3,\y),
	    (4.5,\y)}
	    {
	    	\draw[fill] \pt circle (1.55pt);
	    }
	\end{tikzpicture}
\end{center}
\caption{An interlacing particle configuration
(zigzag illustrates the interlacing).}
\label{fig:interlace}
\end{figure}
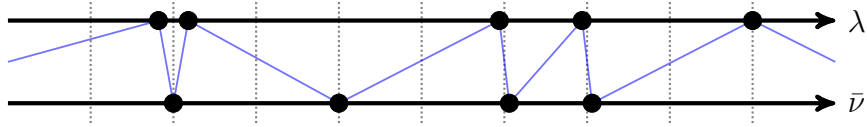

Clearly, adding a box to a Young diagram (say, $\la$)
corresponds to one of the particles $\la_j$ jumping to the right by one.
Denote by $\F(\bar\nu,\la)\subset\{1,\ldots,\ell+1\}$ the set of all indices
of upper particles $\la_j$
which are \emph{free to jump}
(i.e., which can jump to the right without breaking the interlacing with lower particles).
For example, on Fig.~\ref{fig:interlace} we have $\F(\bar\nu,\la)=\{1,2,4\}$.
Denote for $m\in\{1,2,\ldots\}\cup\{+\infty\}$:
\begin{align}\label{next}
	\nt(m):=\max\{j\in\F(\bar\nu,\la)\colon j\le m\}.
\end{align}
In words, this is the index of the first free particle to the right of $\la_m$ (including $\la_m$).

Now we can complete the definition of
the functions
$W_{(\al)}^{h}$ and
$V_{(\al)}^{h}$.
The only nonzero value of $W_{(\al)}^{h}(\la,\cdot\,|\,\bar\nu)$
is
\begin{align}\label{W_alpha}
	W_{(\al)}^{h}(\la,\la+\de_{\nt(h)}\,|\,\bar\nu)=\al.
\end{align}
The values of $V_{(\al)}^{h}$ are given
for any $m,j+1\in\F(\bar\nu,\la)$ by
\begin{align}\label{V_alpha}
	V_{(\al)}^{h}
	(\la,\la+\de_{m}\,|\,\bar\nu-\bar\de_j,\bar\nu)
	=\rp_j^{h}(\bar\nu,\la)\mathbf{1}_{m=\nt(j)}+
	\big(1-\rp_j^{h}(\bar\nu,\la)\big)\mathbf{1}_{m=j+1},
\end{align}
with
\begin{align}\label{r_j_h}
	\rp_j^{h}(\bar\nu,\la):=
	\frac{1}{T_j(\bar\nu,\la\,|\,q,t)}
	\left(
	\sum_{i=1}^{j}S_i(\bar\nu,\la\,|\,q,t)
	-\sum_{i=1}^{j-1}T_i(\bar\nu,\la\,|\,q,t)
	-\mathbf{1}_{j\ge h}
	\right),
\end{align}
where the quantities $S_{i}$, $T_{i}$ are defined by \eqref{T_i}--\eqref{S_j}
(with the understanding that if $\la_i=\bar\nu_{i-1}$, i.e.,
the particle $\la_i$ is blocked and cannot move to the right,
then $S_i=T_{i-1}=0$).

\begin{proposition}\label{prop:alpha_bi}
	For each $h\in\{1,2,\ldots\}\cup\{+\infty\}$,
	the `rates of independent jumps'
	$W_{(\al)}^{h}$ \eqref{W_alpha} and the
	`probabilities of triggered moves'
	$V_{(\al)}^{h}$ \eqref{short_range_alpha}, \eqref{V_alpha}--\eqref{r_j_h}
	define an RSK-type bivariate `dynamics'
	$\Qs^{(2)}_{\Ab;(\al)}$
	via the construction explained in \S \ref{sub:bivariate_dynamics}.
\end{proposition}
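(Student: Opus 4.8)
The plan is to verify the hypotheses of Theorem~\ref{thm:general_identity} together with the RSK-type condition of Definition~\ref{def:RSK_type}; the bivariate `dynamics' $\Qs^{(2)}_{\Ab;(\al)}$ is then produced by the construction of \S\ref{sub:bivariate_dynamics}. Thus three things must be checked: (i) that the pair $(W_{(\al)}^{h},V_{(\al)}^{h})$ satisfies the structural conditions \eqref{W_conditions} and \eqref{V_condition}; (ii) that $V_{(\al)}^{h}(\la,\la\,|\,\bar\nu-\bar\square,\bar\nu)=0$ (the RSK-type condition); and (iii) the general identity \eqref{general_identity} with $\Bb=(\al)$. Items (i)--(ii) I expect to settle by inspecting the definitions \eqref{short_range_alpha}, \eqref{W_alpha}, \eqref{V_alpha}: each nonzero value of $W_{(\al)}^{h}$ or $V_{(\al)}^{h}$ is attached to a move that adds exactly one box on the upper level, so no transition $\la\to\la$ is ever triggered, which gives both the RSK-type property and the first part of \eqref{V_condition}; the two indicators in \eqref{V_alpha} have disjoint supports because $\nt(j)\le j<j+1$, so $\sum_{\nu}V_{(\al)}^{h}(\la,\nu\,|\,\bar\nu-\bar\de_j,\bar\nu)=\rp_j^{h}+(1-\rp_j^{h})=1$; and \eqref{W_conditions} holds by construction, since the diagonal of $W_{(\al)}^{h}$ is defined through it. Along the way I would record that $\rp_j^{h}$ in \eqref{r_j_h} is well defined precisely on those $\bi{\bar\nu}{\la}$ with $\bar\nu-\bar\de_j\prech\la$, i.e.\ exactly where $T_j(\bar\nu,\la)\ne0$.

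The substance of the argument is (iii). First I would specialize $\Bb=(\al)$ and use \eqref{PQ_and_psi}: every term of \eqref{general_identity} is then homogeneous of degree $|\la|-|\bar\nu|+1$ in $\al$. On a pair with $\bi{\bar\nu}{\la}\in\Yb^{(2)}\big(\Ab;(\al)\big)$ one may cancel $\al^{|\la|-|\bar\nu|+1}$, divide by $\psi_{\la/\bar\nu}(q,t)\ne0$, and invoke the definitions \eqref{T_i}--\eqref{S_j}, so that for a fixed box $\square=\de_m\in\Us(\la)$ the identity \eqref{general_identity} becomes
\begin{align*}
	\sum_{j\,:\,\bar\de_j\in\Ds(\bar\nu)}V_{(\al)}^{h}(\la,\la+\de_m\,|\,\bar\nu-\bar\de_j,\bar\nu)\,T_j(\bar\nu,\la)+\mathbf{1}_{m=\nt(h)}=S_m(\bar\nu,\la).
\end{align*}
Substituting \eqref{V_alpha} and rewriting \eqref{r_j_h} as $\rp_j^{h}(\bar\nu,\la)\,T_j(\bar\nu,\la)=\sum_{i=1}^{j}S_i-\sum_{i=1}^{j-1}T_i-\mathbf{1}_{j\ge h}$, the left-hand side turns into a sum over the indices $j$ with $\nt(j)=m$ of the blocks $\bigl(\sum_{i\le j}S_i-\sum_{i<j}T_i-\mathbf{1}_{j\ge h}\bigr)$, plus, when $j=m-1$ is an admissible index, the single block $\bigl(T_{m-1}-\sum_{i\le m-1}S_i+\sum_{i<m-1}T_i+\mathbf{1}_{m-1\ge h}\bigr)$, plus the leftover $\mathbf{1}_{m=\nt(h)}$. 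The indices $j$ with $\nt(j)=m$ run over $m,m+1,\dots$ up to the index just before the next free particle (so $\la_m$ is free and $\la_{m+1},\la_{m+2},\dots$ are blocked along this range, with the convention $S_i=T_{i-1}=0$ when $\la_i$ is blocked); along it the $S$- and $T$-sums telescope in pairs and the indicators $\mathbf{1}_{j\ge h}$ collapse against $\mathbf{1}_{m=\nt(h)}$, leaving precisely $S_m(\bar\nu,\la)$. On the remaining pairs $\bi{\bar\nu}{\la}\notin\Yb^{(2)}\big(\Ab;(\al)\big)$ one has $\psi_{\la/\bar\nu}=0$, so the $W$-term drops out and \eqref{general_identity} degenerates to the single short-range identity $\psi_{\la/\bar\nu-\bar\de_{m-1}}(q,t)\,\psi'_{\bar\nu/\bar\nu-\bar\de_{m-1}}(q,t)=\psi_{\la+\de_m/\bar\nu}(q,t)\,\psi'_{\la+\de_m/\la}(q,t)$, which I would verify directly from \eqref{psi_horiz_strip} and \eqref{psip_one_box}.

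For $h=+\infty$, when the $\mathbf{1}_{j\ge h}$ and $\mathbf{1}_{m=\nt(h)}$ terms disappear, this telescoping is exactly the computation carried out in the proof of \cite[Prop.~5.3]{BorodinPetrov2013NN} (see \cite[\S5.4]{BorodinPetrov2013NN}); so the only genuinely new work is tracking the two extra indicator contributions and checking that they still cancel. The hard part will be the careful bookkeeping of the index set $\{j:\nt(j)=m\}$, its intersection with $\{j:\bar\de_j\in\Ds(\bar\nu)\}$, and its interaction with the blocked-particle conventions $S_i=T_{i-1}=0$, together with the degenerate boundary cases: $m=\ell(\bar\nu)+1$, where $\la$ may acquire a new row and the sum over $\bar\de_j\in\Ds(\bar\nu)$ on the left becomes empty; and $h$ larger than the number of particles, where $\nt(h)$ sits at the last site. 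Once all cases of \eqref{general_identity} are confirmed, Theorem~\ref{thm:general_identity} delivers the desired bivariate `dynamics', which is RSK-type by (ii).
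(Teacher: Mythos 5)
Your plan is essentially the intended argument: the paper itself offers no independent proof of Proposition \ref{prop:alpha_bi} but defers to \cite[\S 6.4.2, \S 6.5.3]{BorodinPetrov2013NN}, and what you propose --- checking \eqref{W_conditions}, \eqref{V_condition} and the RSK-type condition by inspection, then verifying the characterization \eqref{general_identity} of Theorem \ref{thm:general_identity} with $\Bb=(\al)$ after cancelling $\al^{|\la|-|\bar\nu|+1}$ and dividing by $\psi_{\la/\bar\nu}$, so that it becomes the $S$/$T$ identity you display --- is exactly the computation carried out there, extended by the $\mathbf{1}_{j\ge h}$ bookkeeping. Two concrete corrections to your sketch. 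First, in the short-range case $\bi{\bar\nu}{\la}\notin\Yb^{(2)}\big(\Ab;(\al)\big)$ the broken inequality is $\bar\nu_m=\la_m+1$, and $\bar\nu-\bar\de_j\prech\la$ forces $j=m$; so the degenerate identity is $\psi_{\la/\bar\nu-\bar\de_{m}}\psi'_{\bar\nu/\bar\nu-\bar\de_{m}}=\psi_{\la+\de_m/\bar\nu}\psi'_{\la+\de_m/\la}$ (same row $m$ on both levels, as befits a push by the particle directly below), not the version with $\bar\de_{m-1}$ that you wrote. Second, the bookkeeping you flag as the hard part is lighter than you fear: a box $\bar\de_j$ contributes to the sum in \eqref{general_identity} iff $\bar\nu_j>\la_{j+1}$, i.e.\ iff $\la_{j+1}$ is free, so among the indices $j$ with $\nt(j)=m$ at most one is admissible, namely the index $M$ just below the next free upper particle. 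Writing $A_j:=\sum_{i\le j}S_i-\sum_{i<j}T_i-\mathbf{1}_{j\ge h}$, the left-hand side of your reduced identity is $A_M+(T_{m-1}-A_{m-1})\mathbf{1}_{m\ge2}+\mathbf{1}_{m=\nt(h)}$, and the conventions $S_i=T_{i-1}=0$ for blocked $\la_i$ collapse it to $S_m-\mathbf{1}_{M\ge h}+\mathbf{1}_{m-1\ge h}+\mathbf{1}_{m=\nt(h)}$; since $\nt(h)=m$ exactly when $m\le h\le M$, the indicators cancel for every $h$, with no genuine telescoping over a range of $j$'s. The one place where this bookkeeping alone does not suffice is the boundary situation with no admissible $j\ge m$ (for instance $m=\ell(\bar\nu)+1$, or all upper particles above row $m$ blocked); there you additionally need $\sum_i S_i-\sum_i T_i=1$, which is precisely the infinitesimal skew Cauchy identity \eqref{inf_skew_Cauchy} specialized to $\Ab=(\al)$ and divided by $\psi_{\la/\bar\nu}$, available from \S\ref{sub:infinitesimal_skew_cauchy_identity} without circularity. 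With these adjustments your verification goes through and reproduces the cited proof.
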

\begin{proof}
	This is proven in \cite[\S 6.4.2 and \S 6.5.3]{BorodinPetrov2013NN}.
\end{proof}
The `dynamics' corresponding to
$(W_{(\al)}^{h},V_{(\al)}^{h})$
defined by \eqref{W_alpha}--\eqref{r_j_h} above
can be intuitively interpreted in the following way.\footnote{We are using
probabilistic terms despite the fact that some `probabilities' can be negative (see the beginning of \S \ref{sec:macdonald_processes_and_bivariate_continuous_time_dynamics_} for more detail). When we speak about conditioning on an
event which possibly can have negative probability, this should be understood 
as an intuitive appeal 
to the product rule 
\eqref{Q2}
defining the jump rates via 
the quantities \eqref{W_alpha}--\eqref{r_j_h}.}
Let $\bi{\bar\la}\la\in\Yb^{(2)}\big(\Ab;(\al)\big)$
denote the current state of the `dynamics'.
We argue in terms of interlacing arrays, cf. Fig.~\ref{fig:interlace}.

The only particle that can jump on the upper level
is $\la_{\nt(h)}$, and it jumps to the right by one
according to an exponential clock of rate
$\al$.\footnote{Setting $h=+\infty$ means that the last (i.e., the leftmost) particle
jumps independently.}
One can equivalently say that the particle $\la_h$ itself tries to jump (with rate $\al$), but if it is blocked, then
it \emph{donates} its jump to the first free particle to the right of itself.

On the lower level, if a particle $\bar\la_{j}$
moves to the right by one,\footnote{We assume that the lower level particles evolve according
to the univariate dynamics $\Qs_{\Ab}$. The functions
$(W_{(\al)}^{h},V_{(\al)}^{h})$ then
provide the necessary ``induction step''
leading to the upper univariate dynamics $\Qs^{(2)}_{\Ab;(\al)}$.} then it instantaneously pushes
(to the right by one)
its first free upper right neighbor
$\la_{\nt(j)}$ with probability $\rp_j^{h}(\bar\la+\bar\de_j,\la)$ \eqref{r_j_h},
or pulls (also to the right by one)
its upper left neighbor $\la_{j+1}$
with the complementary probability
$1-\rp_j^{h}(\bar\la+\bar\de_j,\la)$.
See Fig.~\ref{fig:rl}.

\begin{figure}[htbp]
\begin{center}
	\begin{tikzpicture}[
	    scale=2.2,
	    axis/.style={ultra thick, ->, >=stealth'}]
	    \def\y{.5}
	    \draw[axis] (0,0) -- (5,0) node(xline)[right]{$\bar\nu$};
	    \draw[axis] (0,\y) -- (5,\y) node(xline)[right]{$\la$};
	    \foreach \hh in {.5, 1, 1.5, 2, 2.5, 3, 3.5, 4, 4.5}
	    {
	    	\draw[densely dotted, thick, opacity=.5] (\hh,-.12) -- (\hh,\y+.12);
	    }
	    \def\sp{.09};
	    \def\opac{.25}
	    \draw[line width=.7, color=blue, opacity=\opac]
		(5,\y/2)--(4.5,\y)--
		(3.5+\sp/3,0)--++(-2*\sp/3,\y)--
		(3+\sp/3,0)--++(-2*\sp/3,\y)
		--(2,0)--(1+\sp,\y)--++(-\sp,-\y)--++(-\sp,\y)
		--(0,\y/2);
	    \foreach \pt in
	    {(2,0), (1+\sp,\y), (1-\sp,\y), (1,0),
	    (3+\sp/3,0),(3-\sp/3,\y),
	    (3.5+\sp/3,0),(3.5-\sp/3,\y),
	    (4.5,\y)}
	    {
	    	\draw[fill] \pt circle (1.55pt);
	    }
	    \draw[->,densely dotted, ultra thick, color = blue]
	    (1.53,-0.03) to [in=180, out=-60] (1.75,-.25) to [in=-120, out=0] (1.96,-0.05);
	    \draw[->,densely dotted, line width=2] (2,0) to [in=225, out=60] (4.5-.04,\y-.045);
	    \draw[->,densely dotted, line width=2] (2,0) to [in=-45, out=110] (1+\sp+.04,\y-.045);
	    \node at (3.87,\y/2+.105) {\small{}$\rp_j^{h}(\bar\nu,\la)$};
	    \node at (0.99,\y/2-.06) {\small{}$1-\rp_j^{h}(\bar\nu,\la)$};
	    \node at (2.11,-.2) {$\bar\nu_j$};
	    \node at (1.21,\y+.21) {$\la_{j+1}$};
	    \node at (4.7,\y+.21) {$\la_{\nt(j)}$};
	    \node at (1.4, -.4) {\color{blue}just moved};
	\end{tikzpicture}
\end{center}
\caption{Pushing and pulling `probabilities' in \eqref{V_alpha}, $\bar\nu_j=\bar\la_j+1$.}
\label{fig:rl}
\end{figure}
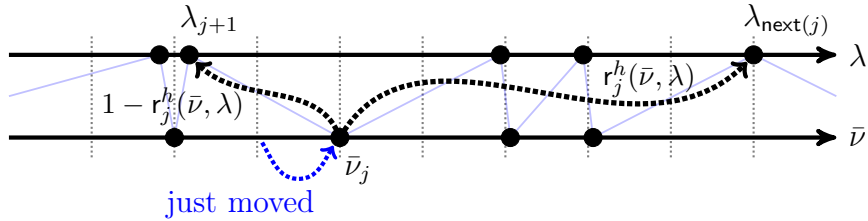

\begin{remark}\label{rmk:NN_dynamics}
	The `dynamics' $\Qs^{(2)}_{\Ab;(\al)}$
	on interlacing configurations
	belongs to the class of so-called \emph{nearest neighbor}
	`dynamics', in which a moving lower level particle can
	(with some probabilities)
	either
	push its first free upper right neighbor, or pull its
	immediate upper left neighbor.
	The main result of \cite{BorodinPetrov2013NN}
	is a
	complete classification of
	nearest neighbor `dynamics'
	of the form $\Qs^{(2)}_{\Ab;(\al)}$, where $\Ab$
	is a finite length specialization (see \eqref{finite_length_spec}).
	It is
	possible to extend that classification
	to arbitrary $(q,t)$-nonnegative specializations
	$\Ab$, and also to the cases
	when the usual specialization
	$(\al)$
	is replaced by a dual
	or a Plancherel one
	(see \S \ref{sub:duality} and \S \ref{sub:adding_a_plancherel_parameter} below).
	We do not pursue this direction here.
\end{remark}


\subsection{Transposition, and bivariate `dynamics' with a dual variable} 
\label{sub:duality}

Using a `dynamics'
with a usual variable $\al$ (\S \ref{sub:adding_a_usual_variable})
together with identities
from \S \ref{sub:_q_t_quantities},
it is possible to construct a `dynamics' $\Qs^{(2)}_{\Ab;(\be)}$
with a dual variable $\be>0$ to the specialization.

\begin{lemma}\label{lemma:2_support_beta}
	The state space
	\eqref{2_support}
	of a bivariate `dynamics'
	$\Qs^{(2)}_{\Ab;(\be)}$
	can be characterized as
	\begin{align}\label{2_support_beta}
		\Yb^{(2)}\big(\Ab;(\be)\big)=
		\left\{
		\bi{\bar\la}{\la}\colon
		\text{$\bar\la\in\Yb(\Ab)$ and $\bar\la\precv\la$}
		\right\}.
	\end{align}
\end{lemma}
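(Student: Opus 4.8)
The plan is to follow verbatim the argument used for Lemma~\ref{lemma:2_support_alpha}, with the dual variable $\be$ (which I write $\hat\be$, in the notation of \eqref{PQ_and_psi}) playing the role previously played by the usual variable $\al$. Unfolding the definitions \eqref{2_support} and \eqref{stoch_links}, a pair $\bi{\bar\la}{\la}$ belongs to $\Yb^{(2)}\big(\Ab;(\hat\be)\big)$ precisely when the three quantities
$P_{\bar\la}(\Ab\,|\,q,t)$, $P_{\la/\bar\la}(\hat\be\,|\,q,t)$, and $P_{\la}\big(\Ab\cup(\hat\be)\,|\,q,t\big)$ are all strictly positive. By \eqref{spec_support} the positivity of the first of these is exactly the condition $\bar\la\in\Yb(\Ab)$, so the task is to identify the content of the remaining two.

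First I would treat $P_{\la/\bar\la}(\hat\be\,|\,q,t)$. The one-variable formula \eqref{PQ_and_psi} gives $P_{\la/\bar\la}(\hat\be\,|\,q,t)=\be^{|\la|-|\bar\la|}\,\varphi'_{\la/\bar\la}(q,t)\,\mathbf{1}_{\bar\la\precv\la}$, so the claim reduces to the positivity $\varphi'_{\la/\bar\la}(q,t)>0$ for every vertical strip $\bar\la\precv\la$ and all $q,t\in[0,1)$. This is the only step that is not pure bookkeeping, and I would settle it by transposition: $\bar\la\precv\la$ is equivalent to $\bar\la'\prech\la'$, and by \eqref{phi_psi_bb} and \eqref{bb_and_psi_swap} one has $\varphi'_{\la/\bar\la}(q,t)=\dfrac{b_{\bar\la}(q,t)}{b_{\la}(q,t)}\,\psi_{\la'/\bar\la'}(t,q)$; the product formula \eqref{psi_horiz_strip} (with $q$ and $t$ interchanged) together with the positivity of the $b$-factors shows that every factor here is strictly between $0$ and $1$ for $q,t\in[0,1)$, since no exponent of the form $1-q^{a}t^{b}$ with integers $a,b\ge0$ is killed when $q,t<1$. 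Hence $P_{\la/\bar\la}(\hat\be\,|\,q,t)>0\iff\bar\la\precv\la$.

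Finally, for the third quantity I would invoke the dual analogue of identity \eqref{sequential_P_polys} — namely the specialization of the coproduct of $P_\la$ at a single dual variable, i.e. $\sum_{\mu\colon\mu\precv\la}P_{\la/\mu}(\hat\be)\,P_{\mu}(\Ab)=P_{\la}\big(\Ab\cup(\hat\be)\big)$, which is again \cite[VI.(7.9')]{Macdonald1995} read through \eqref{PQ_swap}. Every summand is nonnegative because $\Ab$ is Macdonald-nonnegative, so whenever $\bar\la\in\Yb(\Ab)$ and $\bar\la\precv\la$ the single term $\mu=\bar\la$ already gives $P_\la(\Ab\cup(\hat\be))\ge P_{\la/\bar\la}(\hat\be)\,P_{\bar\la}(\Ab)>0$; thus the third condition is automatic once the first two hold, and combining the three equivalences yields \eqref{2_support_beta}. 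I expect the positivity of $\varphi'$ on a vertical strip to be the only (mild) obstacle. One can also bypass the whole argument: by Remark~\ref{rmk:endo} the endomorphism $\omega_{t,q}$ turns a dual variable for the $(q,t)$-theory into a usual variable for the $(t,q)$-theory while transposing all Young diagrams — sending $\prech$ to $\precv$ — so \eqref{2_support_beta} is literally Lemma~\ref{lemma:2_support_alpha} for the parameter pair $(t,q)$ read off the transposed diagrams.
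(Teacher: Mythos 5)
Your proposal is correct and follows essentially the same route as the paper: the paper's proof of this lemma is simply ``similar to the proof of Lemma \ref{lemma:2_support_alpha}'', which is exactly what you carry out — unfolding the three positivity conditions, using the one-dual-variable formula \eqref{PQ_and_psi} with the positivity of $\varphi'$ on vertical strips, and the branching identity for $P_\la(\Ab\cup(\be))$ (your transposition/$\omega_{t,q}$ shortcut is likewise the duality already built into \S\ref{sub:duality}).
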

\begin{proof}
	Similar to the proof of Lemma \ref{lemma:2_support_alpha}.
\end{proof}

To describe functions
$(W_{(\be)},
V_{(\be)})$
satisfying
\eqref{W_conditions}, \eqref{V_condition}, and \eqref{general_identity},
let us rewrite \eqref{general_identity} in terms of transposed Young diagrams:
\begin{lemma}\label{lemma:general_identity_beta}
	For $\Bb=(\be)$, identity \eqref{general_identity} can be rewritten in the following
	equivalent form:
	\begin{align}\label{general_identity_beta}
		\begin{array}{ll}
		&\displaystyle
		\sum_{\bar\square\in\Ds(\bar\nu)}
		V(\la,\la+\square\,|\,\bar\nu-\bar\square,\bar\nu)
		P_{\la'/(\bar\nu-\bar\square)'}(\be\,|\,t,q)
		\psi'_{\bar\nu'/(\bar\nu-\bar\square)'}(t,q)
		\\&\hspace{40pt}{}+
		\dfrac{1-t}{1-q}
		W(\la,\la+\square\,|\,\bar\nu)P_{\la'/\bar\nu'}(\be\,|\,t,q)=
		P_{(\la+\square)'/\bar\nu'}(\be\,|\,t,q)\psi'_{(\la+\square)'/\la'}(t,q),
	\end{array}
	\end{align}
	for all $\bar\nu\in\Yb(\Ab)$, $\la\in\Yb\big(\Ab\cup(\be)\big)$,
	and all $\square\in\Us(\la)$, such that
	$\bi{\bar\nu}{\la+\square}\in\Yb^{(2)}\big(\Ab;(\be)\big)$.
\end{lemma}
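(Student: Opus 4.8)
The plan is to derive \eqref{general_identity_beta} as a purely algebraic reformulation of the general identity \eqref{general_identity} in the special case $\Bb=(\be)=\hat\be$, obtained by transposing all Young diagrams while simultaneously swapping the Macdonald parameters $q\leftrightarrow t$. First I would write out \eqref{general_identity} with $\Bb=\hat\be$ and translate every factor via the symmetry relations of \S \ref{sub:_q_t_quantities}: by \eqref{PQ_swap} the skew factors $P_{\la/(\bar\nu-\bar\square)}(\hat\be\,|\,q,t)$, $P_{\la/\bar\nu}(\hat\be\,|\,q,t)$, $P_{(\la+\square)/\bar\nu}(\hat\be\,|\,q,t)$ turn into $Q_{\la'/(\bar\nu-\bar\square)'}(\be\,|\,t,q)$, $Q_{\la'/\bar\nu'}(\be\,|\,t,q)$, $Q_{(\la+\square)'/\bar\nu'}(\be\,|\,t,q)$ respectively, and by \eqref{bb_and_psi_swap} the factors $\psi'_{\bar\nu/(\bar\nu-\bar\square)}(q,t)$ and $\psi'_{(\la+\square)/\la}(q,t)$ turn into $\psi_{\bar\nu'/(\bar\nu-\bar\square)'}(t,q)$ and $\psi_{(\la+\square)'/\la'}(t,q)$. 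Since transposition exchanges horizontal and vertical strips, all the strip/indicator conditions recorded in \eqref{PQ_and_psi} match up automatically (e.g.\ $\bar\nu\precv\la\iff\bar\nu'\prech\la'$); in particular the condition $\bar\la\precv\la$ of Lemma \ref{lemma:2_support_beta} is the natural transpose of the condition $\bar\la\prech\la$ of Lemma \ref{lemma:2_support_alpha}, so the domain conditions of \eqref{general_identity_beta} are inherited directly from those of \eqref{general_identity}.

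Next I would convert the $Q$'s back to $P$'s and the $\psi$'s back to $\psi'$'s, which is where \eqref{P_Q_bb}, \eqref{phi_psi_bb}, and \eqref{phi_psi_one_box} come in. Applying $Q_{\mu'/\kappa'}(\cdot\,|\,t,q)=\dfrac{b_{\mu'}(t,q)}{b_{\kappa'}(t,q)}P_{\mu'/\kappa'}(\cdot\,|\,t,q)$ to each skew $Q$ produces a ratio of $b$-factors, and since $\bar\nu/(\bar\nu-\bar\square)$ and $(\la+\square)/\la$ are single boxes, \eqref{phi_psi_one_box} at parameters $(t,q)$ together with \eqref{phi_psi_bb} gives
\begin{align*}
	\psi_{\bar\nu'/(\bar\nu-\bar\square)'}(t,q)=\frac{1-q}{1-t}\,\frac{b_{(\bar\nu-\bar\square)'}(t,q)}{b_{\bar\nu'}(t,q)}\,\psi'_{\bar\nu'/(\bar\nu-\bar\square)'}(t,q),\qquad
	\psi_{(\la+\square)'/\la'}(t,q)=\frac{1-q}{1-t}\,\frac{b_{\la'}(t,q)}{b_{(\la+\square)'}(t,q)}\,\psi'_{(\la+\square)'/\la'}(t,q).
\end{align*}
Substituting everything into the transposed identity, one finds that in each of the three terms the $b$-factor of the intermediate diagram cancels ($b_{(\bar\nu-\bar\square)'}$ in the sum over $\bar\square$, and $b_{(\la+\square)'}$ on the right), leaving a common prefactor $b_{\la'}(t,q)/b_{\bar\nu'}(t,q)$ in every term. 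Dividing that out and then multiplying the whole identity by $\dfrac{1-t}{1-q}$ moves the surviving constant onto the $W$-term and yields exactly \eqref{general_identity_beta}.

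There is no real obstacle here — the argument is bookkeeping — so the only points I would be careful about are: distinguishing the single-box skew shapes $\bar\nu/(\bar\nu-\bar\square)$ and $(\la+\square)/\la$ (to which \eqref{phi_psi_one_box} legitimately applies) from the genuine, possibly empty, horizontal strips $\la/\bar\nu$ and $\la/(\bar\nu-\bar\square)$ (which are handled by \eqref{P_Q_bb} and \eqref{phi_psi_bb} only); and verifying that the $b$-factors really telescope down to the single global factor $b_{\la'}(t,q)/b_{\bar\nu'}(t,q)$ that may be cancelled. Since every step above is an equivalence, \eqref{general_identity_beta} holds if and only if \eqref{general_identity} does with $\Bb=(\be)$, which is the claimed reformulation.
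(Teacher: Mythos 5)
Your proposal is correct and follows essentially the same route as the paper's proof: apply the transposition symmetries \eqref{PQ_swap} and \eqref{bb_and_psi_swap} to each coefficient in \eqref{general_identity} with $\Bb=\hat\be$, convert back to $P$'s and $\psi'$'s via \eqref{P_Q_bb}, \eqref{phi_psi_bb}, and the single-box relation \eqref{phi_psi_one_box} at swapped parameters, and observe that the intermediate $b$-factors cancel leaving the common prefactor $\frac{b_{\la'}(t,q)}{b_{\bar\nu'}(t,q)}\cdot\frac{1-q}{1-t}$ (the latter absent only from the $W$-term), which produces \eqref{general_identity_beta} after dividing out. The paper writes out only the first coefficient explicitly and treats the others ``similarly,'' so your more complete bookkeeping, including the telescoping of the $b$-factors in the right-hand side, is exactly what the paper intends.
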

Note that \eqref{general_identity_beta} involves specializations
of Macdonald polynomials
into one \emph{usual} variable equal to $\be$.
\begin{proof}
	Using identities from \S \ref{sub:_q_t_quantities}, we can rewrite
	expressions entering \eqref{general_identity} as follows:
	\begin{align*}
		P_{\la/\bar\nu-\bar\square}(\hat\be\,|\,q,t)\psi'_{\bar\nu/\bar\nu-\bar\square}(q,t)
		&=
		Q_{\la'/(\bar\nu-\bar\square)'}(\be\,|\,t,q)
		\psi_{\bar\nu'/(\bar\nu-\bar\square)'}(t,q)
		\\&=
		P_{\la'/(\bar\nu-\bar\square)'}(\be\,|\,t,q)
		\frac{b_{\la'}(t,q)}{b_{(\bar\nu-\bar\square)'}(t,q)}
		\varphi_{\bar\nu'/(\bar\nu-\bar\square)'}(t,q)
		\frac{b_{(\bar\nu-\bar\square)'}(t,q)}{b_{\bar\nu'}(t,q)}
		\\&=
		\frac{b_{\la'}(t,q)}{b_{\bar\nu'}(t,q)}
		P_{\la'/(\bar\nu-\bar\square)'}(\be\,|\,t,q)
		\frac{1-q}{1-t}
		\psi'_{\bar\nu'/(\bar\nu-\bar\square)'}(t,q).
	\end{align*}
	In the last equality we have used \eqref{phi_psi_one_box} with
	parameters $q$ and $t$ interchanged, hence the coefficient $\frac{1-q}{1-t}$.
	Rewriting the other two coefficients in \eqref{general_identity}
	in a similar way, one gets \eqref{general_identity_beta}.
\end{proof}

\begin{proposition}\label{prop:usual_dual}
	Functions $(W_{(\be)},
	V_{(\be)})$
	correspond (as in \S \ref{sub:bivariate_dynamics})
	to an RSK-type bivariate `dynamics'
	$\Qs^{(2)}_{\Ab;(\be)}$
	with a dual variable $\be$ if and only if they have the form
	\begin{align*}
		W_{(\be)}(\la,\la+\square\,|\,\bar\nu)&=
		\left.W_{(\al)}(\la',(\la+\square)'\,|\,\bar\nu')
		\right\vert_{q\leftrightarrow t},
		&
		\bi{\bar\nu}\la,
		\bi{\bar\nu}{\la+\square}\in\Yb^{(2)}\big(\Ab;(\be)\big);
		\\
		V_{(\be)}(\la,\la+\square\,|\,\bar\nu-\bar\square,\bar\nu)&=
		\left.V_{(\al)}(\la',(\la+\square)'\,|\,(\bar\nu-\bar\square)',\bar\nu')
		\right\vert_{q\leftrightarrow t},
		&
		\bi{\bar\nu-\bar\square}\la,
		\bi{\bar\nu}{\la+\square}\in\Yb^{(2)}\big(\Ab;(\be)\big);
	\end{align*}
	where
	$\bar\square\in\Ds(\bar\nu)$,
	$\square\in\Us(\la)$,
	and the
	functions $(W_{(\al)},
	V_{(\al)})$
	correspond to
	a bivariate `dynamics'
	with the usual variable
	$\al:=\frac{1-q}{1-t}\be$,
	but
	in the setting with the
	swapped Macdonald parameters
	$(q,t)\to(t,q)$.
\end{proposition}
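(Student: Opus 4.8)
The plan is to deduce the statement from Theorem~\ref{thm:general_identity} together with the transposed form of the general identity in Lemma~\ref{lemma:general_identity_beta}, by tracking powers of the variables $\be$ and $\al$. First, by Theorem~\ref{thm:general_identity}, a pair $(W_{(\be)},V_{(\be)})$ satisfying \eqref{W_conditions} and \eqref{V_condition} corresponds to a bivariate `dynamics' $\Qs^{(2)}_{\Ab;(\be)}$ if and only if it satisfies the general identity \eqref{general_identity} with $\Bb=(\be)$, which by Lemma~\ref{lemma:general_identity_beta} is equivalent to \eqref{general_identity_beta}. So it suffices to compare \eqref{general_identity_beta} with the corresponding identity for a usual variable.

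Next I would expand every one-variable (usual) specialization in \eqref{general_identity_beta} by means of \eqref{PQ_and_psi}. Writing $k:=|\la|-|\bar\nu|\ge0$, each nonzero summand of $\sum_{\bar\square\in\Ds(\bar\nu)}(\cdots)$ then carries the factor $\be^{k+1}$, the $W$-term carries $\be^{k}$, and the right-hand side carries $\be^{k+1}$. Dividing \eqref{general_identity_beta} through by $\be^{k+1}$ therefore turns it into a relation among the quantities $\psi_{\cdot/\cdot}(t,q)$ and $\psi'_{\cdot/\cdot}(t,q)$ alone, in which $V(\la,\la+\square\,|\,\bar\nu-\bar\square,\bar\nu)$ occurs with coefficient $1$ while $W(\la,\la+\square\,|\,\bar\nu)$ occurs multiplied by the extra factor $\tfrac{1-t}{1-q}\be^{-1}$.

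Then I would write down the general identity \eqref{general_identity} for a \emph{usual} variable $\al$ in the setting with swapped Macdonald parameters $(q,t)\to(t,q)$, applied to the transposed diagrams $\la'$, $\bar\nu'$, $(\bar\nu-\bar\square)'=\bar\nu'-\bar\square'$, $(\la+\square)'=\la'+\square'$; here I use that transposition carries $\Us(\la)$, $\Ds(\bar\nu)$, $\precv$ bijectively onto $\Us(\la')$, $\Ds(\bar\nu')$, $\prech$, and that by Lemmas~\ref{lemma:2_support_alpha} and~\ref{lemma:2_support_beta} the state spaces $\Yb^{(2)}(\Ab;(\be))$ and $\Yb^{(2)}(\Ab;(\al))$ correspond under transposition. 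Expanding its one-variable specializations in the same way, all of its terms carry powers of $\al$ exactly parallel to the above powers of $\be$; dividing by $\al^{k+1}$ yields \emph{literally the same} relation among $\psi_{\cdot/\cdot}(t,q)$ and $\psi'_{\cdot/\cdot}(t,q)$, now with $V_{(\al)}$ occurring with coefficient $1$ and $W_{(\al)}$ multiplied by $\al^{-1}$. Hence the substitution ``replace all diagrams by their transposes, swap $q\leftrightarrow t$, put $V_{(\al)}:=V_{(\be)}$ and $W_{(\al)}:=\tfrac{1-t}{1-q}\tfrac{\al}{\be}W_{(\be)}$'' identifies the system \eqref{general_identity_beta} (over all admissible $\bar\nu,\la,\square$) with the system \eqref{general_identity} for the usual variable $\al$ with swapped parameters. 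The choice $\al=\tfrac{1-q}{1-t}\be$ makes the $W$-substitution collapse to $W_{(\al)}=W_{(\be)}$, giving exactly the asserted correspondence. It remains to note that \eqref{W_conditions}, \eqref{V_condition}, and the RSK-type vanishing of Definition~\ref{def:RSK_type} are preserved by transposition (they only involve the combinatorics of adding and removing boxes), so that RSK-type bivariate `dynamics' match on the two sides; both directions of the ``if and only if'' then follow from the corresponding equivalence in Theorem~\ref{thm:general_identity}, using Remark~\ref{rmk:general_identity}.(3) that \eqref{general_identity} does not depend on the lower specialization $\Ab$.

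The main obstacle is purely computational: one must verify the power-counting above, i.e., that after transposition every $P$-specialization occurring in \eqref{general_identity_beta} and in the usual-variable identity with swapped parameters carries exactly the claimed power of $\be$ (respectively of $\al$), so that the two reduced identities become the same system. The constant $\tfrac{1-q}{1-t}$ in the rescaling $\al=\tfrac{1-q}{1-t}\be$ is forced by the factor $\tfrac{1-t}{1-q}$ in front of $W$ in \eqref{general_identity_beta}, which itself originates from the one-box relation \eqref{phi_psi_one_box} used in the proof of Lemma~\ref{lemma:general_identity_beta}.
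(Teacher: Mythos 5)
Your proposal is correct and follows essentially the same route as the paper, whose proof simply states that the claim ``readily follows'' from Lemma~\ref{lemma:general_identity_beta} and Theorem~\ref{thm:general_identity}; you supply the implicit details (matching the transposed identity \eqref{general_identity_beta} with the usual-variable identity \eqref{general_identity} at swapped parameters, the $\be$- versus $\al$-power counting that forces $\al=\frac{1-q}{1-t}\be$, and the invariance of \eqref{W_conditions}, \eqref{V_condition}, and the RSK-type condition under transposition), all of which check out.
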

\begin{proof}
	Readily follows from Lemma \ref{lemma:general_identity_beta}
	and Theorem \ref{thm:general_identity}.
\end{proof}
In words,
Proposition \ref{prop:usual_dual} means that
to construct a bivariate `dynamics'
with a dual variable $\be$ and parameters $(q,t)$,
one can take
a bivariate `dynamics' with a \emph{usual} variable
$\frac{1-q}{1-t}\be$ and \emph{swapped} parameters $(t,q)$, and
run the latter `dynamics' in terms of \emph{columns} of
Young diagrams instead of rows.

Denote by
$(W_{(\be)}^{h},
V_{(\be)}^{h})$, where
$h\in\{1,2,\ldots\}\cup\{+\infty\}$, the functions
corresponding
(via Proposition \ref{prop:usual_dual})
to the functions
$(W_{(\al)}^{h}, V_{(\al)}^{h})$
from \S \ref{sub:adding_a_usual_variable}
(here $\al=\frac{1-q}{1-t}\be$).


\subsection{Plancherel specialization and Young graph} 
\label{sub:plancherel_specializations_and_young_graph}

We will need two lemmas describing Plancherel specializations
of skew Macdonald symmetric functions:
\begin{lemma}\label{lemma:skew_Plancherel}
	For any $\la,\kappa\in\Yb$ with $\la\subseteq\kappa$,
	the expression
	$(|\kappa|-|\la|)!\cdot P_{\kappa/\la}(\Pl_1)$ is equal to the
	coefficient of $Q_{\kappa}$ in the expansion of
	$p_1^{|\kappa|-|\la|}Q_\la$ with respect to the linear basis
	$\{Q_{\mu}\}_{\mu\in\Yb}$ of the algebra $\Sym$.
\end{lemma}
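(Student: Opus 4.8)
The plan is to translate both sides of the asserted equality into the Hall scalar product $\langle\cdot\,,\cdot\rangle_{q,t}$ and then match them using two elementary facts. First I would record how the pure Plancherel specialization acts on homogeneous symmetric functions: from \eqref{al_be_ga_Newton} with $\al_i=\be_i=0$ and $\ga=1$ one has $p_1(\Pl_1)=\tfrac{1-q}{1-t}$ and $p_k(\Pl_1)=0$ for $k\ge2$. Hence, writing a homogeneous $f\in\Sym$ of degree $n$ as $f=\sum_\mu a_\mu p_\mu$, only the term $\mu=(1^n)$ survives, so $f(\Pl_1)=a_{(1^n)}\bigl(\tfrac{1-q}{1-t}\bigr)^{n}$. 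Since $a_{(1^n)}=\langle f,p_1^{n}\rangle_{q,t}\big/z_{(1^n)}(q,t)$ and $z_{(1^n)}(q,t)=n!\,\bigl(\tfrac{1-q}{1-t}\bigr)^{n}$, the $(q,t)$-factors cancel and I obtain the key identity
\[
	n!\,f(\Pl_1)=\langle f,p_1^{n}\rangle_{q,t},\qquad f\ \text{homogeneous of degree }n .
\]

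Second, I would use the adjunction property $\langle P_{\kappa/\la},g\rangle_{q,t}=\langle P_\kappa,Q_\la\,g\rangle_{q,t}$, valid for every $g\in\Sym$. This comes straight out of Definition \ref{def:skew_Macd_symm_f}: extending the defining relation $\langle Q_{\kappa/\la},P_\nu\rangle_{q,t}=\langle Q_\kappa,P_\la P_\nu\rangle_{q,t}$ by linearity in the second slot (legitimate since everything is graded, so in each degree this is a finite combination) gives $\langle Q_{\kappa/\la},g\rangle_{q,t}=\langle Q_\kappa,P_\la\,g\rangle_{q,t}$; then using $P_{\kappa/\la}=\tfrac{b_\la}{b_\kappa}Q_{\kappa/\la}$ together with $Q_\mu=b_\mu P_\mu$ (notation of \S\ref{sub:_q_t_quantities}) converts this into the stated form. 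Alternatively one may cite \cite[VI]{Macdonald1995} directly for this adjunction.

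With $n:=|\kappa|-|\la|$, the proof then assembles as follows. Apply the first identity to $f=P_{\kappa/\la}$, which is homogeneous of degree $n$, to get $(|\kappa|-|\la|)!\,P_{\kappa/\la}(\Pl_1)=\langle P_{\kappa/\la},p_1^{n}\rangle_{q,t}$; apply the adjunction property with $g=p_1^{n}$ to rewrite the right-hand side as $\langle P_\kappa,Q_\la\,p_1^{n}\rangle_{q,t}=\langle p_1^{n}Q_\la,P_\kappa\rangle_{q,t}$; and finally observe that, since $\{P_\mu\}$ and $\{Q_\mu\}$ are dual bases of $\Sym$, this last pairing is exactly the coefficient of $Q_\kappa$ in the expansion of $p_1^{n}Q_\la$ in the basis $\{Q_\mu\}$, which is the claim. (Note $p_1^{n}Q_\la$ is homogeneous of degree $|\kappa|$, so only $Q_\mu$ with $|\mu|=|\kappa|$ occur, consistent with the statement.)

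There is no real obstacle here; the only points needing care are the cancellation of the $\tfrac{1-q}{1-t}$ factors coming from $p_1(\Pl_1)$ and $z_{(1^n)}(q,t)$ — this is what makes the combinatorial constant come out to be exactly $(|\kappa|-|\la|)!$ with no residual $(q,t)$-dependence — and the careful extraction of the adjunction identity from the specific normalization of $P_{\kappa/\la}$ adopted in Definition \ref{def:skew_Macd_symm_f}.
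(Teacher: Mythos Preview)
Your proof is correct but follows a different route from the paper. The paper specializes the skew Cauchy identity \eqref{skew_Cauchy} at $\nu=\varnothing$ and $\Ab=\Pl_1$, obtaining $\sum_{\kappa}P_{\kappa/\la}(\Pl_1)\,Q_{\kappa}(\Bb)=e^{p_1(\Bb)}Q_{\la}(\Bb)$, and then picks out the homogeneous component of degree $|\la|+n$ in $\Bb$ to get the expansion $p_1^{n}Q_{\la}=\sum_{\kappa}n!\,P_{\kappa/\la}(\Pl_1)\,Q_{\kappa}$ as an identity in $\Sym$. You instead work entirely with the scalar product: you first establish the pleasant formula $n!\,f(\Pl_1)=\langle f,p_1^{n}\rangle_{q,t}$ for homogeneous $f$, then use the adjunction $\langle P_{\kappa/\la},g\rangle_{q,t}=\langle P_\kappa,Q_\la\,g\rangle_{q,t}$ derived from Definition~\ref{def:skew_Macd_symm_f}, and finally read off the coefficient via the duality of $\{P_\mu\}$ and $\{Q_\mu\}$. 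Both arguments are short; the paper's has the side benefit of producing the full identity \eqref{skew_Plancherel_identity} (needed verbatim in Lemma~\ref{lemma:sum_to_one}) rather than one coefficient at a time, while your approach makes the role of the Plancherel specialization as ``pairing against $p_1^{n}$'' more transparent and avoids invoking the skew Cauchy identity.
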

\begin{proof}
	Fix $\la\in\Yb$,
	and put $\nu=\varnothing$ and $\Ab=\Pl_1$
	in the skew Cauchy identity \eqref{skew_Cauchy}:
	\begin{align*}
		\sum_{\kappa\in\Yb}P_{\kappa/\la}(\Pl_1)Q_{\kappa}(\Bb)
		=e^{p_1(\Bb)}
		Q_{\la}(\Bb)
	\end{align*}
	(we also used \eqref{Cauchy}).
	Considering terms in both sides which are homogeneous with respect to $\Bb$, we get
	the following identities:
	\begin{align}\label{skew_Plancherel_identity}
		{p_1^{n}}\,Q_{\la}=\sum_{\kappa\in\Yb_{|\la|+n}}
		n!\,P_{\kappa/\la}(\Pl_1)\,Q_\kappa,
		\qquad n=1,2,\ldots.
	\end{align}
	(since $\Bb$ is any specialization, we could write identities in the algebra $\Sym$).
\end{proof}

\begin{lemma}\label{lemma:skew_Plancherel_2}
	For $\la,\kappa\in\Yb$ with $\la\subseteq\kappa$ and $n:=|\kappa|-|\la|$, one has
	\begin{align}\label{skew_Plancherel_2}
		n!\,P_{\kappa/\la}(\Pl_1)=
		\sum_{\la=\mu^{(0)}\precb \mu^{(1)}
		\precb\ldots\precb
		\mu^{(n-1)}\precb\mu^{(n)}=\kappa}
		\varphi'_{\mu^{(1)}/\mu^{(0)}}
		\varphi'_{\mu^{(2)}/\mu^{(1)}}
		\ldots
		\varphi'_{\mu^{(n)}/\mu^{(n-1)}},
	\end{align}
	where the quantities $\varphi'_{\mu^{(i+1)}/\mu^{(i)}}
	=\varphi'_{\mu^{(i+1)}/\mu^{(i)}}(q,t)$ are defined in \eqref{PQ_and_psi}.
\end{lemma}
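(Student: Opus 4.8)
The plan is to reduce the statement to the single-box case and then iterate the corresponding Pieri-type rule for multiplication by $p_1$, comparing with the already-established Lemma \ref{lemma:skew_Plancherel}.

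\textbf{Step 1: the one-box identity.} First I would show that for any $\mu,\kappa\in\Yb$ with $\mu\precb\kappa$ one has $P_{\kappa/\mu}(\Pl_1)=\varphi'_{\kappa/\mu}$. The key observation is that $P_{\kappa/\mu}$ is homogeneous of degree $|\kappa|-|\mu|=1$, and the degree-one component of $\Sym$ is one-dimensional, spanned by $p_1$; hence $P_{\kappa/\mu}=c\,p_1$ for a scalar $c$ depending on $\kappa,\mu$. Evaluating this at the one-dual-variable specialization $\hat1$ and using \eqref{PQ_and_psi} (a single box is in particular a vertical strip, so $\mu\precv\kappa$) gives $\varphi'_{\kappa/\mu}=P_{\kappa/\mu}(\hat1)=c\,p_1(\hat1)$, while evaluating at $\Pl_1$ gives $P_{\kappa/\mu}(\Pl_1)=c\,p_1(\Pl_1)$. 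Since $p_1(\hat1)=p_1(\Pl_1)=\frac{1-q}{1-t}\neq0$ by \eqref{al_be_ga_Newton} (for $q,t\in[0,1)$), the two evaluations coincide, proving the claim. In particular, the $n=1$ instance of \eqref{skew_Plancherel_identity} (whose sum over $\kappa\in\Yb_{|\mu|+1}$ is automatically supported on $\mu\precb\kappa$) reads $p_1Q_\mu=\sum_{\kappa\colon\mu\precb\kappa}\varphi'_{\kappa/\mu}\,Q_\kappa$.

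\textbf{Step 2: iteration.} Starting from $Q_\la$ and multiplying by $p_1$ repeatedly $n$ times, I would expand using the one-box identity at each step to obtain, as an identity in $\Sym$,
\begin{align*}
	p_1^{\,n}\,Q_\la=\sum_{\la=\mu^{(0)}\precb\mu^{(1)}\precb\ldots\precb\mu^{(n)}}
	\varphi'_{\mu^{(1)}/\mu^{(0)}}\varphi'_{\mu^{(2)}/\mu^{(1)}}\cdots\varphi'_{\mu^{(n)}/\mu^{(n-1)}}\;Q_{\mu^{(n)}},
\end{align*}
where the sum runs over all length-$n$ chains in the Young lattice starting at $\la$. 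On the other hand, \eqref{skew_Plancherel_identity} for this $n$ expands $p_1^{\,n}Q_\la$ as $\sum_{\kappa\in\Yb_{|\la|+n}}n!\,P_{\kappa/\la}(\Pl_1)\,Q_\kappa$. Since $\{Q_\mu\}_{\mu\in\Yb}$ is a linear basis of $\Sym$, I would compare the coefficient of $Q_\kappa$ on the two sides; the chains contributing to $Q_\kappa$ are exactly those with $\mu^{(n)}=\kappa$, and this yields \eqref{skew_Plancherel_2}.

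There is essentially no hard step here: the argument is a telescoping of the $p_1$-Pieri rule together with Lemma \ref{lemma:skew_Plancherel}. The only point requiring a little care is Step 1, namely identifying $P_{\kappa/\mu}$ (for a single added box) with a scalar multiple of $p_1$ and checking that this scalar is read off consistently whether one specializes at $\hat1$ or at $\Pl_1$; this reduces to the numerical coincidence $p_1(\hat1)=p_1(\Pl_1)$ recorded above.
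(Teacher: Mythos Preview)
Your proof is correct and follows essentially the same route as the paper: establish the one-box identity $P_{\la+\square/\la}(\Pl_1)=\varphi'_{\la+\square/\la}$, then iterate multiplication by $p_1$ and compare coefficients with \eqref{skew_Plancherel_identity}. The only cosmetic difference is in the base case: the paper obtains the one-box identity by matching the $n=1$ case of \eqref{skew_Plancherel_identity} with Macdonald's Pieri formula \cite[VI.(6.24.iii)]{Macdonald1995}, whereas you derive it directly from the fact that $P_{\kappa/\mu}$ has degree $1$ together with $p_1(\hat1)=p_1(\Pl_1)$; both are equally short.
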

\begin{proof}
	For $n=1$, we have
	by Lemma \ref{lemma:skew_Plancherel}:
	\begin{align*}
		p_1Q_{\la}=\sum_{\square\in\Us(\la)}P_{\la+\square/\la}(\Pl_1)Q_{\la+\square}.
	\end{align*}
	Comparing this with the Pieri formula \cite[VI.(6.24.iii)]{Macdonald1995}, we see that
	$P_{\la+\square/\la}(\Pl_1)=\varphi'_{\la+\square/\la}$.
	The general $n$ statement is readily established by induction.
\end{proof}
\begin{remark}\label{rmk:first_power}
	In particular,
	$P_{\la+\square/\la}(\Pl_1)$
	is equal to the coefficient by the first power of $\be$
	in $P_{\la+\square/\la}(\hat\be\mid q,t)$, and also
	(by \eqref{phi_psi_one_box})
	to $\frac{1-q}{1-t}$ times the coefficient
	by the first power of $\al$
	in $P_{\la+\square/\la}(\al\mid q,t)$, see \eqref{PQ_and_psi}.
\end{remark}

Lemmas \ref{lemma:skew_Plancherel} and \ref{lemma:skew_Plancherel_2}
reflect the structure of the Young graph
($=$~the lattice of all Young diagrams ordered by inclusion)
with formal Macdonald $(q,t)$ edge multiplicities:
The multiplicity of the edge $\la\nearrow\la+\square$
is given by $P_{\la+\square/\la}(\Pl_1\,|\,q,t)=\varphi'_{\la+\square/\la}(q,t)$.
Moreover, for any $\la,\mu\in\Yb$, the quantity
$(|\la|-|\mu|)!\cdot P_{\la/\mu}(\Pl_1\,|\,q,t)$
is equal to the total number of paths from $\mu$ to $\la$
(counted with these edge multiplicities).
See \cite{Kerov-book} and, e.g.,
\cite[\S9.1]{Petrov2011sl2} for more detail.

The problem of classifying $(q,t)$-nonnegative
specializations of the algebra $\Sym$ (\S \ref{sub:specializations_of_sym_})
is equivalent to classifying certain \emph{coherent measures}
on the Young graph with these edge multiplicities.
The coherency property of a family of probability measures
$\M_n$ on $\Yb_n$ is formulated as
\begin{align*}
	\sum_{\la\colon\la\searrow\mu}\M_n(\la)
	\,\frac{P_\mu(\Pl_1\,|\,q,t)}{n\cdot
	P_\la(\Pl_1\,|\,q,t)}\varphi'_{\la/\mu}(q,t)= \M_{n-1}(\mu)
	\qquad
	\mbox{for all $n\ge1$ and all $\mu\in\Yb_{n-1}$}.
\end{align*}
One can readily check that the coherent
measures defined in \S \ref{sub:_q_t_coherent_measures}
satisfy this relation by reducing it to
the Pieri formula \cite[VI.(6.24.iii)]{Macdonald1995}
(hence the name for the measures \eqref{coherent_def}).
See, e.g., \cite{Kerov1998}, \cite{Borodin2000}, \cite{Kerov-book} for more detail
about coherent measures and boundaries of branching graphs.

Connection between the above coherency relation and
stochastic links \eqref{stoch_links}--\eqref{MM_stoch_links}
is explained in \cite{BorodinOlsh2011Bouquet}
in the Schur ($q=t$) case,
when it highlights the
interplay between
representation theory of
the infinite symmetric group $S(\infty)$
and the infinite-dimensional unitary group
$U(\infty)$.

In the Hall--Littlewood ($q=0$) case, the above coherency
property may be interpreted as coming from
central measures on infinite uni-uppertriangular
matrices over a finite field, see
\S \ref{sub:infinite_random_matrices_over_a_finite_field}
and Remark \ref{rmk:coupling_intro} in particular.


\subsection{Bivariate `dynamics' with a Plancherel parameter} 
\label{sub:adding_a_plancherel_parameter}

We will now
construct bivariate `dynamics'
$\Qs^{(2)}_{\Ab;\Pl_\ga}$, where $\Ab$ is an arbitrary
$(q,t)$-nonnega\-tive specialization and $\ga>0$.
First, observe that
condition
$\bi{\bar\la}\la\in\Yb^{(2)}(\Ab;\Pl_\ga)$
means precisely that
$\bar\la\in\Yb(\Ab)$ and $\bar\la\subseteq\la$
(this follows from Lemma \ref{lemma:skew_Plancherel_2}, see also Lemmas \ref{lemma:2_support_alpha}
and \ref{lemma:2_support_beta}). Our construction will involve two steps,
``infinitesimal'' and ``general''.

\subsubsection{Infinitesimal step} 
\label{ssub:infinitesimal_step}

Let
$(W_{\circ},V_{\circ})$ be functions satisfying the following
identities:
\begin{align}&
	\sum_{\nu\colon \bar\nu\precb\nu}
	W_\circ(\bar\nu,\nu)=1\label{W_circ};\\&
	\sum_{\nu\colon \la\precb\nu,\, \bar\nu\precb\nu}
	V_\circ(\la,\nu\,|\,\bar\la,\bar\nu)=1,&
	\label{V_circ}\bar\la\precb\la,\quad\bar\la\precb\bar\nu;\\&
	\sum_{\bar\la\colon\bar\la\precb\bar\nu}
	V_\circ(\la,\nu\,|\,\bar\la,\bar\nu)
	\psi_{\la/\bar\la}\psi'_{\bar\nu/\bar\la}
	+W_\circ(\la,\nu)\mathbf{1}_{\la=\bar\nu}=\psi_{\nu/\bar\nu}\psi'_{\nu/\la},
	&\bar\nu\precb\nu,\quad\la\precb\nu.
	\label{general_identity_circ}
\end{align}
Note that \eqref{general_identity_circ} can be viewed as an infinitesimal version of the
general identity \eqref{general_identity}
corresponding to taking $\Bb=(\al)$ or $(\be)$
and considering the coefficient by the first power
of $\al$ or $\be$, respectively (cf. Remark \ref{rmk:first_power}).

One can choose $(W_{\circ},V_{\circ})$
using the functions from \S \ref{sub:adding_a_usual_variable}
and \S \ref{sub:duality}:
\begin{align}\label{Plancherel_alpha_choice}
	V_{\circ}^{\al;h}(\la,\nu\,|\,\bar\la,\bar\nu):=V_{(\al)}^{h}(\la,\nu\,|\,\bar\la,\bar\nu),
	\qquad
	W_{\circ}^{\al;h}(\la,\nu):=\frac{1}{p_1(\al)}W_{(\al)}^{h}(\la,\nu\,|\,\la),
\end{align}
or
\begin{align}\label{Plancherel_beta_choice}
	V_{\circ}^{\be;h}(\la,\nu\,|\,\bar\la,\bar\nu):=V_{(\be)}^{h}(\la,\nu\,|\,\bar\la,\bar\nu),
	\qquad
	W_{\circ}^{\be;h}(\la,\nu):=\frac{1}{p_1(\hat\be)}W_{(\be)}^{h}(\la,\nu\,|\,\la).
\end{align}
(Each of the two families
$(W_{\circ}^{h},V_{\circ}^{h})$ depends on
$h\in\{1,2,\ldots\}\cup\{+\infty\}$.)
Then \eqref{general_identity}
readily implies identity \eqref{general_identity_circ}.
Observe that in both cases
the values of the functions
$(W_{\circ},V_{\circ})$ do not depend on the
parameters $\al,\be$.

\begin{remark}\label{rmk:row_col}
	One should think that the
	choice
	\eqref{Plancherel_alpha_choice}
	corresponds
	to the ``row insertion'',
	while
	\eqref{Plancherel_beta_choice}
	leads to the ``column insertion'',
	cf. the idea of transposing Young diagrams employed in
	\S \ref{sub:duality} above (see also connections
	to the classical RSK insertion algorithms discussed in
	\cite[\S7]{BorodinPetrov2013NN}).
	Different choices
	\eqref{Plancherel_alpha_choice}
	and
	\eqref{Plancherel_beta_choice}
	lead to different `dynamics' with a Plancherel parameter.
\end{remark}


\subsubsection{General step: construction of the `dynamics'} 
\label{ssub:construction_of_the_dynamics}

Assume now that we have chosen functions $(W_{\circ},V_{\circ})$ as
in \S \ref{ssub:infinitesimal_step}.
We will now explain
the construction of
functions
$W_{\Pl_\ga}(\la,\cdot\,|\,\bar\la)$ and
$V_{\Pl_\ga}(\la,\cdot\,|\,\bar\la,\cdot)$
corresponding to the desired bivariate `dynamics'
$\Qs^{(2)}_{\Ab;\Pl_\ga}$.
Assume that the pair
$\bi{\bar\la}\la\in\Yb^{(2)}(\Ab;\Pl_\ga)$
is fixed, and denote
$n:=|\la|-|\bar\la|$.
Our construction of the desired
functions is probabilistic and consists of the following steps:
\begin{enumerate}[(1)]
	\item
	Sample random intermediate Young diagrams
	\begin{align*}
		\bar\la=\mu^{(0)}\nearrow\mu^{(1)}\nearrow\mu^{(2)}
		\nearrow \ldots\nearrow\mu^{(n)}=\la
	\end{align*}
	according to the distribution
	(cf. Lemma \ref{lemma:skew_Plancherel_2})
	\begin{align}\label{Plancherel_intermediate}
		\Prob\big(\mu^{(i)}\colon i=0,1,\ldots,n\big)=
		\frac{\varphi'_{\mu^{(1)}/\mu^{(0)}}
		\varphi'_{\mu^{(2)}/\mu^{(1)}}
		\ldots
		\varphi'_{\mu^{(n)}/\mu^{(n-1)}}}
		{n!\,P_{\la/\bar\la}(\Pl_1)}.
	\end{align}
	Let us add auxiliary Young diagrams
	$\mu^{(i+\frac12)}$, $i=0,1,\ldots,n$ as follows:
	\begin{align*}
		\bar\la=\mu^{(0)}=\mu^{(\frac12)}\nearrow\mu^{(1)}=\mu^{(\frac32)}\nearrow\mu^{(2)}=\mu^{(\frac52)}
		\nearrow \ldots\nearrow\mu^{(n)}=\mu^{(n+\frac12)}=\la.
	\end{align*}
	\item Independent jumps $\la\to\la+\square$
	happen according to an exponential clock with rate $p_1(\Pl_\ga)=\ga\frac{1-q}{1-t}$.
	When this clock rings,
	we pick a uniformly random
	number $m\in\{\frac12,\frac32,\ldots,n+\frac12\}$,
	and add a box $\square^{(m)}$ to $\mu^{(m)}$
	with `probability'
	\begin{align*}
		\qProb\big(\mu^{(m)}\to\mu^{(m)}+\square^{(m)}\big)
		=W_\circ
		(\mu^{(m)},\mu^{(m)}+\square^{(m)}).
	\end{align*}
	If $m\le n-\frac12$, any such move will propagate to all the higher levels
	according to the next rule.
	\item Any move happening at any level (recall that the bottommost diagram
	$\bar\la=\mu^{(0)}$  itself
	evolves according to the univariate dynamics $\Qs_{\Ab}$)
	\begin{align*}
		j=0,\tfrac12,1,\tfrac32,2,\ldots,n-\tfrac12,n,n+\tfrac12
	\end{align*}
	almost surely propagates
	all the way to the uppermost diagram $\la=\mu^{(n+\frac12)}$ according to the
	conditional `probabilities'
	\begin{align*}
		&
		\qProb(\mu^{(i+\frac12)}\to\mu^{(i+\frac12)}+\square^{(i+\frac12)}\,|\,
		\mu^{(i)}\to\mu^{(i)}+\square^{(i)})\\&\hspace{49pt}=
		V_\circ
		(\mu^{(i+\frac12)},\mu^{(i+\frac12)}+\square^{(i+\frac12)}\,|\,
		\mu^{(i)},\mu^{(i)}+\square^{(i)}),\qquad
		i=j,j+\tfrac12,j+1,\ldots,n-\tfrac12,n.
	\end{align*}
	Note that if $\mu^{(i+\frac12)}=\mu^{(i)}$ above, then it must be
	$\square^{(i+\frac12)}=\square^{(i)}$ (i.e., the above probability
	is equal to $\mathbf{1}_{\square^{(i+\frac12)}=\square^{(i)}}$).
	This is similar to the short-range pushing mechanism,
	cf. \eqref{short_range_alpha}.
\end{enumerate}
Averaging over
the $\mu^{(\cdot)}$'s with distribution \eqref{Plancherel_intermediate},
one arrives at certain functions $W_{\Pl_\ga}(\la,\cdot\,|\,\bar\la)$ and
$V_{\Pl_\ga}(\la,\cdot\,|\,\bar\la,\cdot)$
describing independent jumps and triggered moves
for the pair $\bi{\bar\la}\la$.

\begin{remark}
	It is worth noting that the above construction describes
	the evolution during a small time interval.
	In particular, each jump of the bivariate `dynamics'
	requires sampling the intermediate Young diagrams $\mu^{(i)}$ again.
	In fact, it is possible to formulate the `dynamics'
	without such an excessive sampling of intermediate diagrams
	(with the help of continuous levels $\mu^{(s)}$, where $s\in[0,1]$).
	We do this (in a slightly different language) in
	\S \ref{sub:full_sampling_algorithm} below.
\end{remark}

\begin{proposition}\label{prop:Plancherel_proof}
	Thus defined functions $(W_{\Pl_\ga},V_{\Pl_\ga})$
	correspond (as in \S \ref{sub:bivariate_dynamics})
	to an RSK-type bivariate `dynamics' $\Qs^{(2)}_{\Ab;\Pl_\ga}$
	with the Plancherel parameter $\ga>0$.
\end{proposition}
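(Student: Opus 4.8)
The plan is to check, for the averaged functions $(W_{\Pl_\ga},V_{\Pl_\ga})$ produced in \S\ref{ssub:construction_of_the_dynamics}, the hypotheses of Theorem~\ref{thm:general_identity} together with the normalizations \eqref{W_conditions}, \eqref{V_condition} and the RSK-type property of Definition~\ref{def:RSK_type}. The last three are essentially bookkeeping. At every level of the stack of intermediate diagrams the propagation is governed by the genuine probability distributions $(W_\circ,V_\circ)$ of \S\ref{ssub:infinitesimal_step} (normalized by \eqref{W_circ}, \eqref{V_circ}), and the distribution \eqref{Plancherel_intermediate} on the intermediate chain itself sums to one by Lemma~\ref{lemma:skew_Plancherel_2}; since moreover every move on the bottom diagram $\mu^{(0)}=\bar\la$ propagates almost surely all the way up to $\mu^{(n+\frac12)}=\la$, one gets $\sum_{\nu}V_{\Pl_\ga}(\la,\nu\,|\,\bar\la,\bar\nu)=1$, $V_{\Pl_\ga}(\la,\la\,|\,\bar\nu-\bar\square,\bar\nu)=0$, and $\sum_{\nu\ne\la}W_{\Pl_\ga}(\la,\nu\,|\,\bar\nu)=-W_{\Pl_\ga}(\la,\la\,|\,\bar\nu)=p_1(\Pl_\ga)$, consistently with Proposition~\ref{prop:RSK-type}.

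The substance is the general identity \eqref{general_identity} with $\Bb=\Pl_\ga$. First I would remove the powers of $\ga$ using homogeneity $P_{\kappa/\mu}(\Pl_\ga)=\ga^{|\kappa|-|\mu|}P_{\kappa/\mu}(\Pl_1)$ (the powers balance because the independent-jump clock in step~(2) runs at rate $p_1(\Pl_\ga)=\ga\tfrac{1-q}{1-t}$, and the surplus factor $\tfrac{1-q}{1-t}$ is absorbed via \eqref{phi_psi_one_box}, which relates the one-box quantities $\varphi'$ appearing in the chain weights \eqref{Plancherel_intermediate} to the $\psi$, $\psi'$ that appear in the infinitesimal identity \eqref{general_identity_circ}). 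Next I would expand each Plancherel skew function $P_{\la/(\bar\nu-\bar\square)}(\Pl_1)$, $P_{\la/\bar\nu}(\Pl_1)$, $P_{\la+\square/\bar\nu}(\Pl_1)$ into a normalized sum over saturated chains via Lemma~\ref{lemma:skew_Plancherel_2} and substitute the explicit averaged expressions for $V_{\Pl_\ga}$ and $W_{\Pl_\ga}$. After this substitution the identity \eqref{general_identity} should decompose into a sum, over saturated chains of the appropriate length, of instances of \eqref{general_identity_circ} applied at the consecutive links $\mu^{(i)}\nearrow\mu^{(i+1)}$ of the stack — with the box $\bar\square$ added at the bottom, or the box $\square^{(m)}$ inserted at the uniformly chosen level $m$, playing the role of the box $\bar\nu\precb\nu$ in \eqref{general_identity_circ}. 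Concretely I expect this to be carried out by induction on $n=|\la|-|\bar\nu|$: for $n=0$ the construction collapses to a single infinitesimal step and the claim reduces to \eqref{general_identity_circ}; for the inductive step one isolates the topmost link $\mu^{(n-1)}\nearrow\mu^{(n)}=\la$, applies \eqref{general_identity_circ} there, and reduces to the $(n-1)$ case after re-summing the chain weights using the branching (Pieri) structure encoded in Lemma~\ref{lemma:skew_Plancherel_2}. This is the scheme used for a single usual variable in \cite[\S5--\S6]{BorodinPetrov2013NN}; the only new ingredient is the passage from a single two-level link to a length-$n$ stack glued together by the Plancherel chain weights.

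The hard part will be precisely this telescoping bookkeeping in the inductive step: one must verify that changing the base of the chain from $\bar\nu$ to $\bar\nu-\bar\square$ and inserting the factor $\psi'_{\bar\nu/(\bar\nu-\bar\square)}$ in the $V_{\Pl_\ga}$-term is exactly compensated, after propagation of the bottom move through all the $V_\circ$ steps, by the net change in the top diagram, and symmetrically that the $W_{\Pl_\ga}$-term correctly accounts for a box inserted at an intermediate level that then propagates upward. The uniform choice of the level $m$ among the $n+1$ levels in step~(2) is what makes the averaged $W$-term symmetric along the chain, matching the symmetry of the chain expansion of $P_{\la+\square/\bar\nu}(\Pl_1)$ from Lemma~\ref{lemma:skew_Plancherel_2}; lining up these combinatorial factors — rather than any analytic difficulty — is where the real work sits. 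An alternative, possibly cleaner route would be to realize $\Pl_\ga$ as the $N\to\infty$ limit of the dual specialization into $N$ variables each equal to $\ga/N$ and to take a limit of the already-constructed `dynamics' $\Qs^{(2)}_{\Ab;(\be)}$, but controlling that limit of Markov `dynamics' looks at least as delicate as the direct argument above.
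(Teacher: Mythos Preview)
Your proposal is correct and follows essentially the same route as the paper: verify the normalizations quickly, expand the Plancherel skew functions as sums over saturated chains via Lemma~\ref{lemma:skew_Plancherel_2}, and reduce \eqref{general_identity} with $\Bb=\Pl_\ga$ to iterated applications of the infinitesimal identity \eqref{general_identity_circ}, using \eqref{phi_psi_one_box} to absorb the factor $\tfrac{1-q}{1-t}$. The only organizational difference is that the paper writes out the case $|\la|-|\bar\nu|=1$ explicitly (summing first over $\bar\square$ and then over the single intermediate diagram $\mu^{(1)}$, each time invoking \eqref{general_identity_circ}) and declares the general case analogous, whereas you package the same telescoping as an induction on $n$; both amount to peeling off links of the chain one at a time.
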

\begin{proof}
	We will check that these functions satisfy \eqref{W_conditions},
	\eqref{V_condition}, and \eqref{general_identity}
	with $\Bb=\Pl_\ga$.
	The first and second identities are straightforward.
	So, we must show that (see \eqref{general_identity})
	\begin{align}\label{general_identity_Plancherel}
		\begin{array}{ll}
		&\displaystyle
		\sum_{\bar\square\in\Ds(\bar\nu)}
		V_{\Pl_\ga}(\la,\la+\square\,|\,\bar\nu-\bar\square,\bar\nu)
		P_{\la/\bar\nu-\bar\square}(\Pl_1)\psi'_{\bar\nu/\bar\nu-\bar\square}
		\\
		&\hspace{100pt}{}+\ga^{-1}W_{\Pl_\ga}
		(\la,\la+\square\,|\,\bar\nu)P_{\la/\bar\nu}(\Pl_1)=
		P_{\la+\square/\bar\nu}(\Pl_1)\psi'_{\la+\square/\la},		
	\end{array}
	\end{align}
	for all $\bar\nu\in\Yb(\Ab)$, $\la\in\Yb(\Ab\cup\Bb)$,
	and all $\square\in\Us(\la)$, such that
	$\bi{\bar\nu}{\la+\square}\in\Yb^{(2)}(\Ab;\Bb)$.

	To simplify the argument, let us establish \eqref{general_identity_Plancherel}
	in the simplest nontrivial case
	$|\la|-|\bar\nu|=1$. The general case is analogous.

	We can write
	\begin{align}&
		V_{\Pl_\ga}(\la,\nu\,|\,\bar\la,\bar\nu)
		=
		\sum_{\mu^{(1)}}
		\frac{\varphi'_{\mu^{(1)}/\bar\la}
		\varphi'_{\la/\mu^{(1)}}}{2P_{\la/\bar\la}(\Pl_1)}
		\sum_{\kappa\colon\kappa\searrow\mu^{(1)}}
		V_\circ(\la,\nu\,|\,
		\mu^{(1)},\kappa)
		V_\circ(\mu^{(1)},\kappa\,|\,
		\bar\la,\bar\nu),\quad
		\bar\la\precb\bar\nu,\ \la\precb \nu.
		\label{VPL}
	\end{align}
	Indeed, in the definition of $V_{\Pl_\ga}$ above
	we have $n=|\la|-|\bar\la|=2$.
	In the summation, $\kappa$ is the new state of the
	diagram $\mu^{(1)}$, which means that the whole transition looks as
	$\left[\begin{smallmatrix}
		\la\\
		\rule{0pt}{7.5pt}\mu^{(1)}\\
		\rule{0pt}{7.5pt}\bar\la
	\end{smallmatrix}\right]\to
	\left[\begin{smallmatrix}
		\nu\\
		\rule{0pt}{9.5pt}\kappa\\
		\rule{0pt}{9.5pt}\bar\nu
	\end{smallmatrix}\right]$.

	On the other hand,
	the independent jump `rate' $W_{\Pl_\ga}$ is given by
	\begin{align}
		\label{WPL}
		W_{\Pl_\ga}(\la,\nu\,|\,\bar\nu)=
		\ga \frac{1-q}{1-t}\left(
		\frac12\sum_{\kappa}
		W_\circ(\bar\nu,\kappa)
		V_\circ(\la,\nu\,|\,\bar\nu,\kappa)
		+\frac12W_\circ(\la,\nu\,|\,\bar\nu)\right).
	\end{align}
	Indeed, this time $n=|\la|-|\bar\nu|=1$ in the definition of $W_{\Pl_\ga}$.
	The diagram $\kappa$ now represents the new
	state of $\mu^{(\frac12)}$. The
	sum over $\kappa$ corresponds
	to an independent jump of $\mu^{(\frac12)}$, and the
	second summand corresponds to an independent jump of $\mu^{(\frac32)}$.
	The factor $\ga \frac{1-q}{1-t}$ is simply the total
	rate of an independent jump of $\la$
	in the pair $\bi{\bar\nu}\la$.

	Plugging \eqref{VPL} into a part
	\eqref{general_identity_Plancherel},
	we obtain
	\begin{align*}&
		\sum_{\bar\square\in\Ds(\bar\nu)}
		V_{\Pl_\ga}(\la,\la+\square\,|\,\bar\nu-\bar\square,\bar\nu)
		P_{\la/\bar\nu-\bar\square}(\Pl_1)\psi'_{\bar\nu/\bar\nu-\bar\square}
		\\&\hspace{13pt}=
		\sum_{\bar\square,\mu^{(1)},\kappa}
		\frac12{\varphi'_{\mu^{(1)}/\bar\nu-\bar\square}
		\varphi'_{\la/\mu^{(1)}}}
		V_\circ(\la,\la+\square\,|\,
		\mu^{(1)},\kappa)
		V_\circ(\mu^{(1)},\kappa\,|\,
		\bar\nu-\bar\square,\bar\nu)
		\psi'_{\bar\nu/\bar\nu-\bar\square}
		\\&\hspace{13pt}=
		\sum_{\mu^{(1)},\kappa}
		\frac12
		\varphi'_{\la/\mu^{(1)}}
		V_\circ(\la,\la+\square\,|\,
		\mu^{(1)},\kappa)
		\left[\varphi'_{\kappa/\bar\nu}\psi'_{\kappa/\mu^{(1)}}-
		\frac{1-q}{1-t}W_\circ(\mu^{(1)},\kappa)\mathbf{1}_{\mu^{(1)}=\bar\nu}\right]
		\\&\hspace{13pt}=
		\sum_{\mu^{(1)},\kappa}
		\frac12
		\varphi'_{\la/\mu^{(1)}}
		V_\circ(\la,\la+\square\,|\,
		\mu^{(1)},\kappa)
		\varphi'_{\kappa/\bar\nu}\psi'_{\kappa/\mu^{(1)}}
		-\frac{1-q}{1-t}\sum_{\kappa}
		\frac12
		\varphi'_{\la/\bar\nu}
		V_\circ(\la,\la+\square\,|\,
		\bar\nu,\kappa)
		W_\circ(\bar\nu,\kappa)
		\\&\hspace{13pt}=
		\sum_{\kappa}
		\frac12
		\varphi'_{\kappa/\bar\nu}
		\varphi'_{\la+\square/\kappa}\psi'_{\la+\square/\la}
		\\&\hspace{80pt}-
		\frac{1-q}{1-t}\left[\frac{1}{2}\varphi'_{\la/\bar\nu}
		W_\circ(\la,\la+\square)
		+\sum_{\kappa}
		\frac12
		\varphi'_{\la/\bar\nu}
		V_\circ(\la,\la+\square\,|\,
		\bar\nu,\kappa)
		W_\circ(\bar\nu,\kappa)\right]
	\end{align*}
	(using \eqref{general_identity_circ} and
	\eqref{phi_psi_one_box}, we summed over $\bar\square$, and then over $\mu^{(1)}$).
	By Lemma \ref{lemma:skew_Plancherel_2},
	we see that the first summand above is
	equal to $\psi'_{\la+\square/\la}P_{\la+\square/\bar\nu}(\Pl_1)$, which is the
	right-hand side of \eqref{general_identity_Plancherel}.
	The two remaining summands above cancel with
	\begin{align*}
		\ga^{-1}W_{\Pl_\ga}
		(\la,\la+\square\,|\,\bar\nu)P_{\la/\bar\nu}(\Pl_1)
		=
		\ga^{-1}W_{\Pl_\ga}
		(\la,\la+\square\,|\,\bar\nu)\varphi'_{\la/\bar\nu},
	\end{align*}
	see \eqref{WPL}. This concludes the proof.
\end{proof}

\begin{remark}\label{rmk:indep_of_A}
	One can readily see that
	the functions $(W_{\Bb},V_{\Bb})$
	constructed above
	(corresponding to bivariate `dynamics'
	$\Qs^{(2)}_{\Ab;\Bb}$,
	where $\Bb$ is $(\al),(\be)$, or $\Pl_\ga$)
	do not depend on the ``lower'' specialization $\Ab$,
	cf. Remark \ref{rmk:general_identity}.(3).
	This is the reason why we didn't include $\Ab$
	in the notation.
\end{remark}




\section{RSK-type algorithm for sampling HL-coherent measures} 
\label{sec:rsk_type_algorithm_for_sampling_hl_coherent_measures}

From now on we will assume that the Macdonald parameter $q$ is zero.\footnote{It is possible 
to develop randomized `sampling' algorithms (i.e., formal Markov `dynamics' with negative probabilities of certain elements in a `transition matrix')
for the general parameters $(q,t)$ by analogy, but we will not pursue this direction here.}
For $q=0$, we construct a randomized algorithm for sampling
HL-coherent measures on Young diagrams
(\S \ref{sub:_q_t_coherent_measures})
which is an honest probabilistic object, that is,
involves only nonnegative probabilities.

In contrast with the setting of \S \ref{sec:macdonald_processes_and_bivariate_continuous_time_dynamics_}
and \S \ref{sec:three_particular_dynamics_on_macdonald_processes},
the discussion of sampling algorithms
is simpler in the language of de-poissonized measures
$\HL_n^{\ab;\bb;\Pl_\ga}$
(cf. Remark \ref{rmk:depoiss}).
One can also readily describe the poissonized version of the algorithm,
see Remark \ref{rmk:poiss_alg} below.
It is worth noting that this does not affect the statement
of the Law of Large Numbers (Theorem \ref{thm:LLN_intro}).

\subsection{$t$-quantities} 
\label{sub:_t_quantities}

First, we need to understand how the quantities $T_i(\bar\nu,\la\,|\,q,t)$ and $S_j(\bar\nu,\la\,|\,q,t)$
\eqref{T_i}--\eqref{S_j} look like when we
take parameters $(0,t)$. Recall that they are defined for $\bar\nu\prech\la$.
Denote $\ell:=\ell(\bar\nu)$, so (possibly appending $\la$ by zeroes)
we may think that $\ell(\la)=\ell+1$.
\begin{proposition}\label{prop:T_0t}
	For each fixed $i=1,\ldots,\ell$, the quantity $T_i(\bar\nu,\la\,|\,0,t)$
	is determined according to the following rules.
	\begin{enumerate}[(T1)]
		\item[(T0)] If $\bar\nu_i=\la_{i+1}$, then $T_i(\bar\nu,\la\,|\,0,t)=0$.
		If $\la_i>\bar\nu_i$ and $\la_{i+1}<\bar\nu_i-1$, then
		$T_i(\bar\nu,\la\,|\,0,t)=1$.
	\end{enumerate}
	Otherwise, denote
	\begin{align*}
		R:=(\mbox{multiplicity of $\bar\nu_i$ in $\bar\nu$})-1,\qquad
		L:=(\mbox{multiplicity of $\bar\nu_i-1$ in $\bar\nu$})
	\end{align*}
	(any of these numbers can be zero).
	One checks that the multiplicity of $\bar\nu_i$ in $\la$ is either $R$ or $R+1$,
	and the multiplicity of $\bar\nu_i-1$ in $\la$ is either $L$ or $L+1$ (see Fig.~\ref{fig:T}). The value of
	$T_i$ in each of the four cases is given by
	\begin{enumerate}[(T1)]
		\item For $(L,R)$, we have $T_i(\bar\nu,\la\,|\,0,t)=({1-t^{L+1}})/({1-t})$.
		\item For $(L,R+1)$, we have $T_i(\bar\nu,\la\,|\,0,t)=({1-t^{L+1}})({1-t^{R+1}})/({1-t})$.
		\item For $(L+1,R)$, we have $T_i(\bar\nu,\la\,|\,0,t)=1/({1-t})$.
		\item For $(L+1,R+1)$, we have $T_i(\bar\nu,\la\,|\,0,t)=({1-t^{R+1}})/({1-t})$.
	\end{enumerate}
	Cases (T0)--(T4) exhaust all possible configurations.
\end{proposition}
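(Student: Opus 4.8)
The plan is to substitute $q=0$ directly into the product formula \eqref{T_i} for $T_i(\bar\nu,\la\,|\,q,t)$ and to identify which factors survive. The elementary fact driving everything is that for a nonnegative integer $a$ and an integer $b\ge1$ one has $\bigl(1-q^{a}t^{b}\bigr)\big|_{q=0}=1-t^{b}$ if $a=0$ and $=1$ if $a\ge1$; thus only those factors of \eqref{T_i} whose exponent of $q$ is exactly $0$ contribute a nontrivial power of $(1-t^{\bullet})$.

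First I would dispose of case (T0). If $\bar\nu_i=\la_{i+1}$, then the factor $(1-q^{\bar\nu_i-\la_{i+1}})=1-q^{0}=0$ in the numerator of the prefactor of \eqref{T_i} vanishes identically in $q$, while every other factor stays finite and nonzero for small $q>0$; hence $T_i(\bar\nu,\la\,|\,q,t)\equiv0$ near $q=0$, so $T_i(\bar\nu,\la\,|\,0,t)=0$. If instead $\la_i>\bar\nu_i$ and $\la_{i+1}<\bar\nu_i-1$, I would note that this is exactly the degenerate subcase $L=R=0$ of (T1) and will fall out of the general computation below; alternatively one checks it at once, since then every factor of \eqref{T_i} has strictly positive exponent of $q$ and so $T_i=1$.

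From now on assume $\bar\nu_i>\la_{i+1}$. Using the interlacing $\la_1\ge\bar\nu_1\ge\la_2\ge\bar\nu_2\ge\cdots$ coming from $\bar\nu\prech\la$, one checks that \emph{every} exponent of $q$ occurring in \eqref{T_i} is $\ge0$, so we may set $q=0$ termwise. The key structural observation is that $i$ must be the \emph{last} index with $\bar\nu_r=\bar\nu_i=:v$ (otherwise $\la_{i+1}\ge\bar\nu_{i+1}=v$, contradicting $\bar\nu_i>\la_{i+1}$); hence the rows of $\bar\nu$ equal to $v$ occupy a block $\{a,a+1,\dots,i\}$ with $i-a=R$, and the rows of $\bar\nu$ equal to $v-1$, if any, occupy a block $\{i+1,\dots,i+L\}$. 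From the interlacing one then reads off which rows of $\la$ equal $v$ (precisely those with index in $\{a+1,\dots,i\}$, together with possibly $\la_a$) and which equal $v-1$ (a block starting at $i+1$ exactly when $\la_{i+1}=v-1$); this is precisely the assertion that the multiplicity of $v$ in $\la$ is $R$ or $R+1$ and that of $v-1$ in $\la$ is $L$ or $L+1$, as in Figure~\ref{fig:T}. In particular the four cases (T1)--(T4) together with (T0) are then seen to be exhaustive.

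Finally I would collect the surviving factors. Group the contribution of the prefactor factor $(1-q^{\la_i-v}t)$ with the product $\prod_{r=1}^{i-1}$ in \eqref{T_i} (these involve only the value $v$), and the contribution of $(1-q^{v-1-\la_{i+1}}t)$ with $\prod_{s=i+1}^{\ell(\bar\nu)}$ (these involve only $v-1$); the remaining prefactor factors have positive $q$-exponent and drop out. In the ``$v$-block'' the denominator $\prod_r(1-q^{\bar\nu_r-v}t^{i-r})$ collects $\prod_{m=1}^{R}(1-t^{m})$, while the numerator $\prod_r(1-q^{\la_r-v}t^{i-r+1})$ collects $\prod_{m=2}^{R+1}(1-t^{m})$ or $\prod_{m=2}^{R}(1-t^{m})$ according as the multiplicity of $v$ in $\la$ is $R+1$ or $R$; combining this with $(1-q^{\la_i-v}t)\big|_{q=0}$, one finds the ``$v$-part'' equals $1-t^{R+1}$ in the first case and $1$ in the second, uniformly in $R$ (the apparent anomaly at $R=0$ being absorbed by the prefactor). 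By the transposition-type symmetry between the two blocks — the $s$-product is governed by the rows equal to $v-1$ exactly as the $r$-product is governed by those equal to $v$, up to a shift of the $t$-exponents by one, with the ``$m=1$'' term supplied by the prefactor — the ``$(v-1)$-part'' equals $\tfrac{1-t^{L+1}}{1-t}$ when the multiplicity of $v-1$ in $\la$ is $L$ and $\tfrac{1}{1-t}$ when it is $L+1$. Multiplying the $v$-part by the $(v-1)$-part yields exactly (T1)--(T4). The main obstacle I anticipate is this last bookkeeping step: correctly matching the surviving $(1-t^{\bullet})$-factors of the two products with the local multiplicity data, and in particular verifying that the boundary cases $R=0$ and $L=0$, where the naive telescoping of the products breaks, are precisely compensated by the prefactor factors $(1-q^{\la_i-v}t)$ and $(1-q^{v-1-\la_{i+1}}t)$, so that the closed forms (T1)--(T4) hold with no further case splitting on whether $R$ or $L$ vanishes.
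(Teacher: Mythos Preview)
Your proposal is correct and follows exactly the approach the paper indicates: the paper's proof is the single line ``Direct $q=0$ substitution in formula \eqref{T_i}'', and what you have written is a careful execution of that substitution, including the bookkeeping of which factors survive and the verification that the boundary cases $R=0$, $L=0$ are absorbed by the prefactor. There is no discrepancy in method; you have simply supplied the details the paper omits.
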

\begin{proof}
	Direct $q=0$ substitution in formula \eqref{T_i}.
\end{proof}
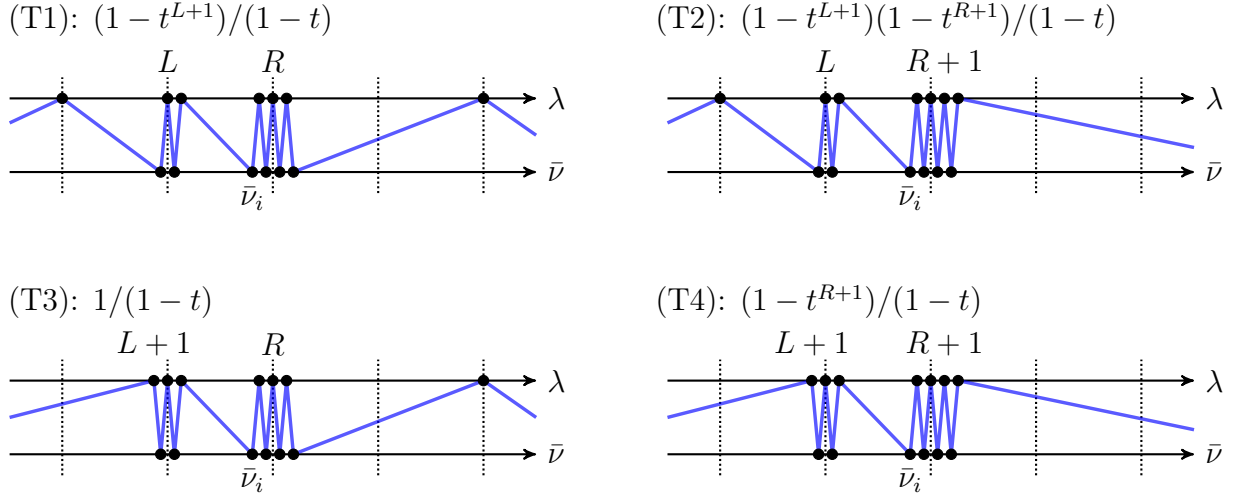
\begin{figure}[htbp]
\begin{center}
	\begin{tabular}{ll}
		(T1): $({1-t^{L+1}})/({1-t})$&\hspace{20pt}(T2): $({1-t^{L+1}})({1-t^{R+1}})/({1-t})$\\
		\begin{tikzpicture}[
		    scale=1.4,
		    axis/.style={thick, ->, >=stealth'}]
		    \def\y{.7}
		    \draw[axis] (0,0) -- (5,0) node(xline)[right]{$\bar\nu$};
		    \draw[axis] (0,\y) -- (5,\y) node(xline)[right]{$\la$};
		    \foreach \hh in {.5, 1.5, 2.5, 3.5, 4.5}
		    {
		    	\draw[densely dotted, thick] (\hh,-.2) -- (\hh,\y+.2);
		    }
		    \def\sp{.13};
		    \def\opac{.65}
			\draw[line width=1.3, color=blue, opacity=\opac]
			(5,\y/2)--(4.5,\y)--++(-2+3/2*\sp,-\y)--++(-\sp/2,\y)--++(-\sp/2,-\y)
			--++(-\sp/2,\y)--++(-\sp/2,-\y)--++(-\sp/2,\y)--++(-\sp/2,-\y)
			--(1.5+\sp,\y)--(1.5+\sp/2,0)--++(-\sp/2,\y)--++(-\sp/2,-\y)--(.5,\y)--(0,2*\y/3);
		    \foreach \pt in
		    {(1.5-\sp/2,0),(1.5+\sp/2,0),
		    (1.5+\sp,\y),(1.5,\y),
		    (2.5-\sp/2,0),(2.5+\sp/2,0),(2.5-3*\sp/2,0),(2.5+3*\sp/2,0),
		    (2.5,\y),(2.5+\sp,\y),(2.5-\sp,\y),(4.5,\y),(.5,\y)}
		    {
		    	\draw[fill] \pt circle (1.4pt);
		    }
		    \node at(2.5-3/2*\sp,-.25) {$\bar\nu_i$};
		    \node at(1.5,\y+.35) {$L$};
		    \node at(2.5,\y+.35) {$R$};
		\end{tikzpicture}
		&\hspace{20pt}
		\begin{tikzpicture}[
		    scale=1.4,
		    axis/.style={thick, ->, >=stealth'}]
		    \def\y{.7}
		    \draw[axis] (0,0) -- (5,0) node(xline)[right]{$\bar\nu$};
		    \draw[axis] (0,\y) -- (5,\y) node(xline)[right]{$\la$};
		    \foreach \hh in {.5, 1.5, 2.5, 3.5, 4.5}
		    {
		    	\draw[densely dotted, thick] (\hh,-.2) -- (\hh,\y+.2);
		    }
		    \def\sp{.13};
		    \def\opac{.65}
			\draw[line width=1.3, color=blue, opacity=\opac]
			(5,\y/3)--(2.5+2*\sp,\y)--(2.5+3/2*\sp,0)--++(-\sp/2,\y)--++(-\sp/2,-\y)
			--++(-\sp/2,\y)--++(-\sp/2,-\y)--++(-\sp/2,\y)--++(-\sp/2,-\y)
			--(1.5+\sp,\y)--(1.5+\sp/2,0)--++(-\sp/2,\y)--++(-\sp/2,-\y)--(.5,\y)--(0,2*\y/3);
		    \foreach \pt in
		    {(1.5-\sp/2,0),(1.5+\sp/2,0),
		    (1.5+\sp,\y),(1.5,\y),
		    (2.5-\sp/2,0),(2.5+\sp/2,0),(2.5-3*\sp/2,0),(2.5+3*\sp/2,0),
		    (2.5,\y),(2.5+\sp,\y),(2.5-\sp,\y),(2.5+2*\sp,\y),(.5,\y)}
		    {
		    	\draw[fill] \pt circle (1.4pt);
		    }
		    \node at(2.5-3/2*\sp,-.25) {$\bar\nu_i$};
		    \node at(1.5,\y+.35) {$L$};
		    \node at(2.5+\sp,\y+.35) {$R+1$};
		\end{tikzpicture}
		\\
		(T3): $1/({1-t})$&\hspace{20pt}(T4): $({1-t^{R+1}})/({1-t})$\rule{0pt}{30pt}
		\\
		\begin{tikzpicture}[
		    scale=1.4,
		    axis/.style={thick, ->, >=stealth'}]
		    \def\y{.7}
		    \draw[axis] (0,0) -- (5,0) node(xline)[right]{$\bar\nu$};
		    \draw[axis] (0,\y) -- (5,\y) node(xline)[right]{$\la$};
		    \foreach \hh in {.5, 1.5, 2.5, 3.5, 4.5}
		    {
		    	\draw[densely dotted, thick] (\hh,-.2) -- (\hh,\y+.2);
		    }
		    \def\sp{.13};
		    \def\opac{.65}
			\draw[line width=1.3, color=blue, opacity=\opac]
			(5,\y/2)--(4.5,\y)--++(-2+3/2*\sp,-\y)--++(-\sp/2,\y)--++(-\sp/2,-\y)
			--++(-\sp/2,\y)--++(-\sp/2,-\y)--++(-\sp/2,\y)--++(-\sp/2,-\y)
			--(1.5+\sp,\y)--(1.5+\sp/2,0)--++(-\sp/2,\y)--++(-\sp/2,-\y)--(1.5-\sp,\y)--(0,\y/2);
		    \foreach \pt in
		    {(1.5-\sp/2,0),(1.5+\sp/2,0),
		    (1.5+\sp,\y),(1.5,\y),
		    (2.5-\sp/2,0),(2.5+\sp/2,0),(2.5-3*\sp/2,0),(2.5+3*\sp/2,0),
		    (2.5,\y),(2.5+\sp,\y),(2.5-\sp,\y),(4.5,\y),(1.5-\sp,\y)}
		    {
		    	\draw[fill] \pt circle (1.4pt);
		    }
		    \node at(2.5-3/2*\sp,-.25) {$\bar\nu_i$};
		    \node at(1.5-\sp,\y+.35) {$L+1$};
		    \node at(2.5,\y+.35) {$R$};
		\end{tikzpicture}
		&\hspace{20pt}
		\begin{tikzpicture}[
		    scale=1.4,
		    axis/.style={thick, ->, >=stealth'}]
		    \def\y{.7}
		    \draw[axis] (0,0) -- (5,0) node(xline)[right]{$\bar\nu$};
		    \draw[axis] (0,\y) -- (5,\y) node(xline)[right]{$\la$};
		    \foreach \hh in {.5, 1.5, 2.5, 3.5, 4.5}
		    {
		    	\draw[densely dotted, thick] (\hh,-.2) -- (\hh,\y+.2);
		    }
		    \def\sp{.13};
		    \def\opac{.65}
			\draw[line width=1.3, color=blue, opacity=\opac]
			(5,\y/3)--(2.5+2*\sp,\y)--(2.5+3/2*\sp,0)--++(-\sp/2,\y)--++(-\sp/2,-\y)
			--++(-\sp/2,\y)--++(-\sp/2,-\y)--++(-\sp/2,\y)--++(-\sp/2,-\y)
			--(1.5+\sp,\y)--(1.5+\sp/2,0)--++(-\sp/2,\y)--++(-\sp/2,-\y)--(1.5-\sp,\y)--(0,\y/2);
		    \foreach \pt in
		    {(1.5-\sp/2,0),(1.5+\sp/2,0),
		    (1.5+\sp,\y),(1.5,\y),
		    (2.5-\sp/2,0),(2.5+\sp/2,0),(2.5-3*\sp/2,0),(2.5+3*\sp/2,0),
		    (2.5,\y),(2.5+\sp,\y),(2.5-\sp,\y),(2.5+2*\sp,\y),(1.5-\sp,\y)}
		    {
		    	\draw[fill] \pt circle (1.4pt);
		    }
		    \node at(2.5-3/2*\sp,-.25) {$\bar\nu_i$};
		    \node at(1.5-\sp,\y+.35) {$L+1$};
		    \node at(2.5+\sp,\y+.35) {$R+1$};
		\end{tikzpicture}
	\end{tabular}
\end{center}
\caption{Cases (T1)--(T4) used to determine the value of
$T_i(\bar\nu,\la\,|\,0,t)$. On the picture we have $R=3$ and $L=2$. Young diagrams $\bar\nu\prech\la$
are represented by interlacing particle configurations.}
\label{fig:T}
\end{figure}

\begin{proposition}\label{prop:S_0t}
	For each fixed $j=1,\ldots,\ell+1$, the quantity $S_j(\bar\nu,\la\,|\,0,t)$
	is determined according to the following rules.
	\begin{enumerate}[(S1)]
		\item[(S0)] If $\la_j=\bar\nu_{j-1}$, then $S_j(\bar\nu,\la\,|\,0,t)=0$.
		If $\la_j>\bar\nu_j$ and $\la_j<\bar\nu_{j-1}-1$, then $S_j(\bar\nu,\la\,|\,0,t)=1$.
	\end{enumerate}
	Otherwise, denote
	\begin{align*}
		R:=(\mbox{multiplicity of $\la_{j}+1$ in $\la$}),\qquad
		L:=(\mbox{multiplicity of $\la_j$ in $\la$})-1
	\end{align*}
	(any of these numbers can be zero).
	Clearly, the multiplicity of $\la_{j}+1$ in $\bar\nu$ can be either $R$ or $R+1$,
	and the multiplicity of $\la_j$ in $\bar\nu$ is either $L$ or $L+1$ (see Fig.~\ref{fig:S}).
	The value of
	$S_j$ in each of the four cases is given by
	\begin{enumerate}[(S1)]
		\item For $(L,R)$, we have $S_j(\bar\nu,\la\,|\,0,t)=({1-t^{R+1}})/({1-t})$.
		\item For $(L,R+1)$, we have $S_j(\bar\nu,\la\,|\,0,t)=1/({1-t})$.
		\item For $(L+1,R)$, we have $S_j(\bar\nu,\la\,|\,0,t)=({1-t^{L+1}})({1-t^{R+1}})/({1-t})$.
		\item For $(L+1,R+1)$, we have $S_j(\bar\nu,\la\,|\,0,t)=({1-t^{L+1}})/({1-t})$.
	\end{enumerate}	
	Cases (S0)--(S4) exhaust all possible configurations.
\end{proposition}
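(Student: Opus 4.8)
The plan is to prove Proposition~\ref{prop:S_0t} by directly setting $q=0$ in the product formula~\eqref{S_j}, exactly as Proposition~\ref{prop:T_0t} is obtained from~\eqref{T_i}. The basic substitution rule is that for an integer exponent $a\ge0$ one has $(1-q^{a}t^{b})\big|_{q=0}=1$ if $a\ge1$ and $(1-q^{a}t^{b})\big|_{q=0}=1-t^{b}$ if $a=0$ (in particular this is $0$ when $a=b=0$). So the first thing to verify is that every exponent of $q$ occurring in~\eqref{S_j} is nonnegative. This follows from $\bar\nu\prech\la$ (which gives $\la_1\ge\bar\nu_1\ge\la_2\ge\bar\nu_2\ge\cdots$) together with the fact that the box $\square\in\Us(\la)$ sits in row $j$, forcing $\la_{j-1}>\la_j$ when $j\ge2$: for $r<j$ one then has $\bar\nu_r\ge\la_{r+1}\ge\la_j$ and $\la_r\ge\la_{j-1}>\la_j$, so the exponents $\bar\nu_r-\la_j$, $\bar\nu_r-\la_j-1$, $\la_r-\la_j$, $\la_r-\la_j-1$ in the first product are $\ge0$; and for $s\ge j$ one has $\bar\nu_s\le\la_s\le\la_j$ and $\la_{s+1}\le\la_j$, so the exponents $\la_j-\bar\nu_s+1$, $\la_j-\bar\nu_s$, $\la_j-\la_{s+1}+1$, $\la_j-\la_{s+1}$ in the second product are $\ge0$.

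Next I would dispose of case~(S0). If $\la_j=\bar\nu_{j-1}$, the $r=j-1$ term in the numerator of the first product of~\eqref{S_j} is $(1-q^{\bar\nu_{j-1}-\la_j}t^{0})=(1-q^{0})$, which vanishes at $q=0$; inspecting the remaining factors shows that no denominator factor has both exponents zero (so the $q=0$ limit is regular, i.e. no $0/0$ arises anywhere), hence $S_j(\bar\nu,\la\mid0,t)=0$, matching the convention stated after~\eqref{r_j_h}. If instead $\la_j>\bar\nu_j$ and $\la_j<\bar\nu_{j-1}-1$, then none of the relevant entries of $\bar\nu$ or $\la$ equals $\la_j$ or $\la_j+1$, so every surviving factor is $1$ and $S_j(\bar\nu,\la\mid0,t)=1$; one checks this is the sub-case $(L,R)=(0,0)$ of~(S1), so it need not be treated separately.

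For the remaining (non-blocked) cases I would argue as follows; write $m_k(\mu)$ for the number of parts of a partition $\mu$ equal to $k$. Non-blockedness gives $\bar\nu_{j-1}>\la_j$, hence $\bar\nu_r>\la_j$ and (by addability) $\la_r>\la_j$ for all $r<j$. After setting $q=0$, all factors $(1-q^{\bar\nu_r-\la_j}t^{\cdot})$, $(1-q^{\la_r-\la_j}t^{\cdot})$, $(1-q^{\la_j-\la_{s+1}+1}t^{\cdot})$, $(1-q^{\la_j-\bar\nu_s+1}t^{\cdot})$ become $1$, so the first product collapses to $\prod_{r<j:\,\la_r=\la_j+1}(1-t^{j-r+1})$ over $\prod_{r<j:\,\bar\nu_r=\la_j+1}(1-t^{j-r})$, and the second product collapses to $\prod_{s\ge j:\,\bar\nu_s=\la_j}(1-t^{s-j+1})$ over $\prod_{s\ge j:\,\la_{s+1}=\la_j}(1-t^{s-j+1})$. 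Because $\la$ and $\bar\nu$ are weakly decreasing, each of these index sets is a block of consecutive rows: with $R=m_{\la_j+1}(\la)$ and $L=m_{\la_j}(\la)-1$, the rows $r<j$ with $\la_r=\la_j+1$ are exactly $\{j-R,\dots,j-1\}$ and the rows $s\ge j$ with $\la_{s+1}=\la_j$ are exactly $\{j,\dots,j+L-1\}$, while the interlacing $\bar\nu\prech\la$ pins the $\bar\nu$-blocks to be $\{j-R,\dots,j-1\}$ or $\{j-R-1,\dots,j-1\}$, and $\{j,\dots,j+L-1\}$ or $\{j,\dots,j+L\}$, according to whether $m_{\la_j+1}(\bar\nu)$ is $R$ or $R+1$ and whether $m_{\la_j}(\bar\nu)$ is $L$ or $L+1$ — precisely the four configurations of Fig.~\ref{fig:S}. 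In each configuration the $(1-t^{k})$ factors common to numerator and denominator cancel, leaving $\frac{1-t^{R+1}}{1-t}$ in case~(S1), $\frac{1}{1-t}$ in~(S2), $\frac{(1-t^{L+1})(1-t^{R+1})}{1-t}$ in~(S3), and $\frac{1-t^{L+1}}{1-t}$ in~(S4), as claimed; that these configurations exhaust all possibilities is immediate from the interlacing constraints just described.

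The obstacle here is not conceptual but bookkeeping: one must keep exact track of which rows $r<j$ and $s\ge j$ contribute a nontrivial $(1-t^{k})$ factor, treat the boundary rows ($r=j-1$ and $s=j,j+L$) and the degenerate subcases $R=0$ or $L=0$ (and, for $j=\ell+1$, the possibility $\la_j=0$) with care, and confirm that the overlapping ranges of exponents cancel correctly in each of the four cases. The verification that no $0/0$ occurs in the non-blocked cases, and that the $q=0$ values glue consistently to the conventions of~(S0), is the delicate point, but it is all routine once the block structure above is in place.
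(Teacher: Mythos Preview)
Your proposal is correct and follows precisely the approach the paper indicates: the paper's proof is the single line ``Direct $q=0$ substitution in formula~\eqref{S_j},'' and you have carried out that substitution carefully and accurately, including the verification that all $q$-exponents are nonnegative, the identification of the surviving $(1-t^k)$ factors via the block structure of rows with $\la_r=\la_j+1$, $\bar\nu_r=\la_j+1$, $\la_{s+1}=\la_j$, $\bar\nu_s=\la_j$, and the telescoping cancellations in each of the four cases.
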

\begin{proof}
	Direct $q=0$ substitution in formula \eqref{S_j}.
\end{proof}
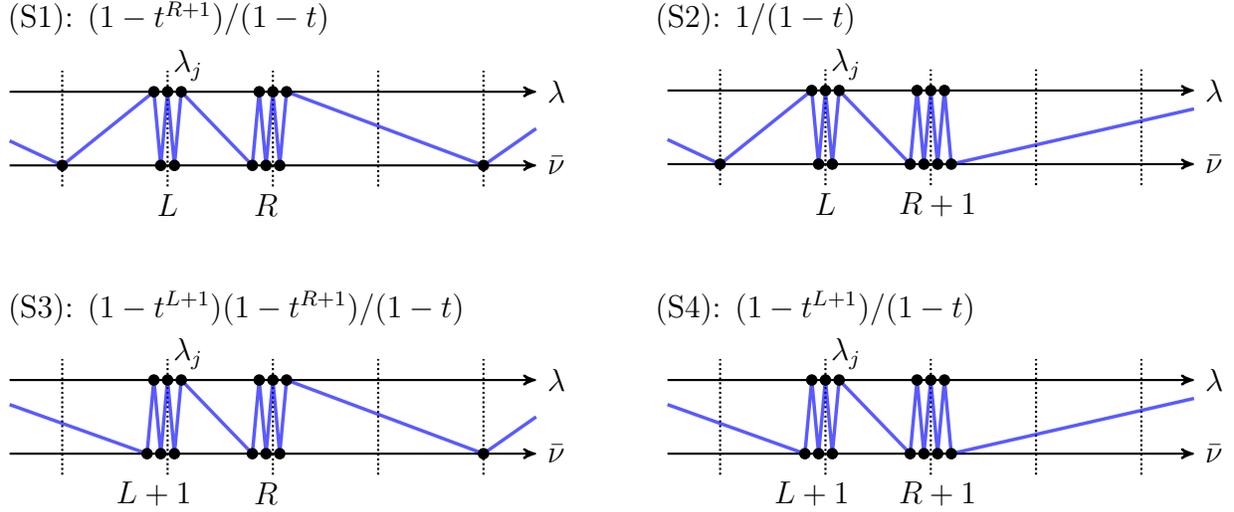
\begin{figure}[htbp]
\begin{center}
	\begin{tabular}{ll}
		(S1): $({1-t^{R+1}})/({1-t})$&\hspace{20pt}(S2): $1/({1-t})$\\
		\begin{tikzpicture}[
		    scale=1.4,
		    axis/.style={thick, ->, >=stealth'}]
		    \def\y{.7}
		    \draw[axis] (0,0) -- (5,0) node(xline)[right]{$\bar\nu$};
		    \draw[axis] (0,\y) -- (5,\y) node(xline)[right]{$\la$};
		    \foreach \hh in {.5, 1.5, 2.5, 3.5, 4.5}
		    {
		    	\draw[densely dotted, thick] (\hh,-.2) -- (\hh,\y+.2);
		    }
		    \def\sp{.13};
		    \def\opac{.65}
			\draw[line width=1.3, color=blue, opacity=\opac]
			(5,\y/2)--(4.5,0)--(2.5+\sp,\y)--++(-\sp/2,-\y)--++(-\sp/2,\y)
			--++(-\sp/2,-\y)--++(-\sp/2,\y)--++(-\sp/2,-\y)
			--(1.5+\sp,\y)--++(-\sp/2,-\y)--++(-\sp/2,\y)--++(-\sp/2,-\y)--++(-\sp/2,\y)
			--(.5,0)--(0,\y/3)
			;
		    \foreach \pt in
		    {(1.5-\sp/2,0),(1.5+\sp/2,0),
		    (1.5+\sp,\y),(1.5,\y),(1.5-\sp,\y),
		    (2.5-\sp/2,0),(2.5+\sp/2,0),(2.5-3*\sp/2,0),
		    (2.5,\y),(2.5+\sp,\y),(2.5-\sp,\y),(4.5,0),(.5,0)}
		    {
		    	\draw[fill] \pt circle (1.4pt);
		    }
		    \node at(1.5+3/2*\sp,\y+.25) {$\la_j$};
		    \node at(1.5,-.38) {$L$};
		    \node at(2.5-\sp/2,-.38) {$R$};
		\end{tikzpicture}
		&\hspace{20pt}
		\begin{tikzpicture}[
		    scale=1.4,
		    axis/.style={thick, ->, >=stealth'}]
		    \def\y{.7}
		    \draw[axis] (0,0) -- (5,0) node(xline)[right]{$\bar\nu$};
		    \draw[axis] (0,\y) -- (5,\y) node(xline)[right]{$\la$};
		    \foreach \hh in {.5, 1.5, 2.5, 3.5, 4.5}
		    {
		    	\draw[densely dotted, thick] (\hh,-.2) -- (\hh,\y+.2);
		    }
		    \def\sp{.13};
		    \def\opac{.65}
			\draw[line width=1.3, color=blue, opacity=\opac]
			(5,3*\y/4)--(2.5+3*\sp/2,0)--(2.5+\sp,\y)--++(-\sp/2,-\y)--++(-\sp/2,\y)
			--++(-\sp/2,-\y)--++(-\sp/2,\y)--++(-\sp/2,-\y)
			--(1.5+\sp,\y)--++(-\sp/2,-\y)--++(-\sp/2,\y)--++(-\sp/2,-\y)--++(-\sp/2,\y)
			--(.5,0)--(0,\y/3)
			;
		    \foreach \pt in
		    {(1.5-\sp/2,0),(1.5+\sp/2,0),
		    (1.5+\sp,\y),(1.5,\y),(1.5-\sp,\y),
		    (2.5-\sp/2,0),(2.5+\sp/2,0),(2.5-3*\sp/2,0),
		    (2.5,\y),(2.5+\sp,\y),(2.5-\sp,\y),(2.5+3*\sp/2,0),(.5,0)}
		    {
		    	\draw[fill] \pt circle (1.4pt);
		    }
		    \node at(1.5+3/2*\sp,\y+.25) {$\la_j$};
		    \node at(1.5,-.38) {$L$};
		    \node at(2.5+\sp/2,-.38) {$R+1$};
		\end{tikzpicture}
		\\
		(S3): $({1-t^{L+1}})({1-t^{R+1}})/({1-t})$&\hspace{20pt}(S4):
		$({1-t^{L+1}})/({1-t})$\rule{0pt}{30pt}
		\\
		\begin{tikzpicture}[
		    scale=1.4,
		    axis/.style={thick, ->, >=stealth'}]
		    \def\y{.7}
		    \draw[axis] (0,0) -- (5,0) node(xline)[right]{$\bar\nu$};
		    \draw[axis] (0,\y) -- (5,\y) node(xline)[right]{$\la$};
		    \foreach \hh in {.5, 1.5, 2.5, 3.5, 4.5}
		    {
		    	\draw[densely dotted, thick] (\hh,-.2) -- (\hh,\y+.2);
		    }
		    \def\sp{.13};
		    \def\opac{.65}
			\draw[line width=1.3, color=blue, opacity=\opac]
			(5,\y/2)--(4.5,0)--(2.5+\sp,\y)--++(-\sp/2,-\y)--++(-\sp/2,\y)
			--++(-\sp/2,-\y)--++(-\sp/2,\y)--++(-\sp/2,-\y)
			--(1.5+\sp,\y)--++(-\sp/2,-\y)--++(-\sp/2,\y)--++(-\sp/2,-\y)--++(-\sp/2,\y)
			--(1.5-3*\sp/2,0)--(0,2*\y/3)
			;
		    \foreach \pt in
		    {(1.5-\sp/2,0),(1.5+\sp/2,0),
		    (1.5+\sp,\y),(1.5,\y),(1.5-\sp,\y),
		    (2.5-\sp/2,0),(2.5+\sp/2,0),(2.5-3*\sp/2,0),
		    (2.5,\y),(2.5+\sp,\y),(2.5-\sp,\y),(4.5,0),(1.5-3*\sp/2,0)}
		    {
		    	\draw[fill] \pt circle (1.4pt);
		    }
		    \node at(1.5+3/2*\sp,\y+.25) {$\la_j$};
		    \node at(1.5-\sp,-.38) {$L+1$};
		    \node at(2.5-\sp/2,-.38) {$R$};
		\end{tikzpicture}
		&\hspace{20pt}
		\begin{tikzpicture}[
		    scale=1.4,
		    axis/.style={thick, ->, >=stealth'}]
		    \def\y{.7}
		    \draw[axis] (0,0) -- (5,0) node(xline)[right]{$\bar\nu$};
		    \draw[axis] (0,\y) -- (5,\y) node(xline)[right]{$\la$};
		    \foreach \hh in {.5, 1.5, 2.5, 3.5, 4.5}
		    {
		    	\draw[densely dotted, thick] (\hh,-.2) -- (\hh,\y+.2);
		    }
		    \def\sp{.13};
		    \def\opac{.65}
			\draw[line width=1.3, color=blue, opacity=\opac]
			(5,3*\y/4)--(2.5+3*\sp/2,0)--(2.5+\sp,\y)--++(-\sp/2,-\y)--++(-\sp/2,\y)
			--++(-\sp/2,-\y)--++(-\sp/2,\y)--++(-\sp/2,-\y)
			--(1.5+\sp,\y)--++(-\sp/2,-\y)--++(-\sp/2,\y)--++(-\sp/2,-\y)--++(-\sp/2,\y)
			--(1.5-3*\sp/2,0)--(0,2*\y/3)
			;
		    \foreach \pt in
		    {(1.5-\sp/2,0),(1.5+\sp/2,0),
		    (1.5+\sp,\y),(1.5,\y),(1.5-\sp,\y),
		    (2.5-\sp/2,0),(2.5+\sp/2,0),(2.5-3*\sp/2,0),
		    (2.5,\y),(2.5+\sp,\y),(2.5-\sp,\y),(2.5+3*\sp/2,0),(1.5-3*\sp/2,0)}
		    {
		    	\draw[fill] \pt circle (1.4pt);
		    }
		    \node at(1.5+3/2*\sp,\y+.25) {$\la_j$};
		    \node at(1.5-\sp,-.38) {$L+1$};
		    \node at(2.5+\sp/2,-.38) {$R+1$};
		\end{tikzpicture}
	\end{tabular}
\end{center}
\caption{Cases (S1)--(S4) used to determine the value of
$S_j(\bar\nu,\la\,|\,0,t)$. On the picture we have $R=3$ and $L=2$.}
\label{fig:S}
\end{figure}

\begin{remark}\label{rmk:T_S_correspondence}
	Observe that cases
	(T1), (T2), (T3), and (T4)
	on Fig.~\ref{fig:T}
	describing the configuration around
	the
	particle $\bar\nu_i$
	correspond to cases (S4), (S3), (S2), and (S1), respectively,
	for the configuration around the particle $\la_{i+1}$
	(see Fig.~\ref{fig:S}).
\end{remark}

For purposes
of `dynamics' with a dual variable (\S \ref{sub:duality}),
we will also need the same quantities
$T_i(\bar\nu,\la\,|\,q,t)$ and $S_j(\bar\nu,\la\,|\,q,t)$
\eqref{T_i}--\eqref{S_j}
with
parameters $(t,0)$:
\begin{proposition}[{\cite[\S8.1]{BorodinPetrov2013NN}}]\label{prop:t0}
	Let $\bar\nu\prech\la$, $\ell(\bar\nu)=\ell$,
	$\ell(\la)=\ell+1$. Then
	\begin{align*}
		T_i(\bar\nu,\la\,|\,t,0)&=\frac{(1-t^{\bar\nu_i-\la_{i+1}})
		(1-t^{\bar\nu_{i-1}-\bar\nu_{i}+1}\mathbf{1}_{i>1})}
		{1-t^{\la_i-\bar\nu_i+1}};
		\\\rule{0pt}{20pt}
		S_j(\bar\nu,\la\,|\,t,0)&=
		\frac{(1-t^{\bar\nu_{j-1}-\la_j}\mathbf{1}_{j>1})(1-
		t^{\la_j-\la_{j+1}+1}\mathbf{1}_{j<\ell+1})}
		{1-t^{\la_j-\bar\nu_j+1}\mathbf{1}_{j<\ell+1}},
	\end{align*}
	where $i=1,\ldots,\ell$ and $j=1,\ldots,\ell+1$.
\end{proposition}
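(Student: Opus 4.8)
The plan is to derive both formulas by directly substituting the Macdonald parameters $(q,t)\mapsto(t,0)$ into the product expressions \eqref{T_i} and \eqref{S_j} — the route taken in \cite[\S8.1]{BorodinPetrov2013NN} — and then simplifying, the only real work being to determine which of the many elementary factors survive. First I would record the elementary observation that a generic factor $1-q^{a}t^{b}$ occurring in \eqref{T_i}--\eqref{S_j} (with $b\ge0$) becomes, upon setting the first Macdonald parameter to $t$ and the second to $0$, equal to $1-t^{a}$ when $b=0$ and equal to $1$ when $b\ge1$; in the latter case the residual power $t^{a}$, whose exponent $a$ could a priori be negative, plays no role since it is multiplied by the zero value of the second parameter. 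Thus the task reduces to listing, in \eqref{T_i} and in \eqref{S_j} separately, the few factors whose power of the second parameter is $t^{0}$, and checking that among these surviving factors the exponents $a$ are always nonnegative (so that no negative powers of $t$ occur in the final answer, which is immediate from the interlacing $\la_1\ge\bar\nu_1\ge\la_2\ge\bar\nu_2\ge\ldots$).

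For $T_i$ I would go factor by factor: in the leading rational prefactor the two factors carrying $t^{1}$ collapse to $1$, leaving $(1-t^{\bar\nu_i-\la_{i+1}})/(1-t^{\la_i-\bar\nu_i+1})$; in the product $\prod_{r=1}^{i-1}$ the power of the second parameter is positive in every factor except $1-q^{\bar\nu_r-\bar\nu_i+1}t^{i-r-1}$ at $r=i-1$, so this product collapses to $1-t^{\bar\nu_{i-1}-\bar\nu_i+1}$ (an empty product, hence $1$, when $i=1$); and in $\prod_{s=i+1}^{\ell(\bar\nu)}$ every factor carries a positive power of the second parameter, so it collapses to $1$. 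Multiplying these pieces yields the asserted formula for $T_i(\bar\nu,\la\,|\,t,0)$, with $\mathbf 1_{i>1}$ recording the empty-product case. The computation for $S_j$ runs in parallel: the product $\prod_{r=1}^{j-1}$ collapses to its $r=j-1$ factor $1-t^{\bar\nu_{j-1}-\la_j}$ (or to $1$ when $j=1$), while $\prod_{s=j}^{\ell(\bar\nu)}$ collapses to its $s=j$ factor $(1-t^{\la_j-\la_{j+1}+1})/(1-t^{\la_j-\bar\nu_j+1})$ (or to $1$ when $j=\ell+1$), giving the stated formula for $S_j(\bar\nu,\la\,|\,t,0)$ with $\mathbf 1_{j>1}$ and $\mathbf 1_{j<\ell+1}$ marking the two empty-product cases.

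I do not expect any genuine obstacle — this is essentially a one-line substitution dressed up as a proposition — and the only points that warrant care are the boundary indices ($i=1$ for $T_i$; $j=1$ and $j=\ell+1$ for $S_j$), where one or both products are empty and the corresponding factor of the final formula is to be read, following the convention in force, as $1-t^{(\mathrm{exponent})}\cdot\mathbf 1_{\mathrm{cond}}=1$ when the condition fails; and the single factor $1-q^{\bar\nu_i-1-\la_{i+1}}t$ in \eqref{T_i}, whose exponent of $q$ can equal $-1$ (precisely when $\bar\nu_i=\la_{i+1}$) — here one only needs to note that this factor carries a positive power of the second parameter and hence simply disappears under the substitution, so the negative exponent is harmless. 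Both checks are routine.
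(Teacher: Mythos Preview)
Your proposal is correct and follows exactly the route the paper indicates (the paper gives no proof of its own, merely citing \cite[\S8.1]{BorodinPetrov2013NN}, where the computation is precisely the direct substitution $(q,t)\mapsto(t,0)$ into \eqref{T_i}--\eqref{S_j} that you describe). Your identification of which factors survive, the handling of the boundary indices via empty products, and the remark about the potentially negative exponent in $1-q^{\bar\nu_i-1-\la_{i+1}}t$ are all accurate; nothing further is needed.
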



\subsection{Pushing `probabilities'} 
\label{sub:pushing_and_pulling_probabilities}

Let us write down explicit formulas for
the pushing `probabilities' $\rp_j^{h}(\bar\nu,\la)$ \eqref{r_j_h}
in the cases of Macdonald parameters $(0,t)$ and $(t,0)$.
Here $h\in\{1,2,\ldots\}\cup\{+\infty\}$ is an additional parameter as before.

Recall \eqref{V_alpha} that the `probabilities' $\rp_j^{h}(\bar\nu,\la)$
are defined for all $j\in\{1,\ldots,\ell(\bar\nu)\}$
such that $\bar\nu_j>\la_{j+1}$. For each such $j$,
$\rp_j^{h}(\bar\nu,\la)$ represents the `probability' that
the particle $\bar\nu_j$ which has just moved on the lower level
will push its first free upper right neighbor.
With the complement `probability' $1-\rp_j^{h}(\bar\nu,\la)$,
the particle $\bar\nu_j$ will pull its upper left neighbor $\la_{j+1}$.
See Fig.~\ref{fig:rl} above.

From \eqref{r_j_h} one has that
$\rp_j^{h}(\bar\nu,\la)=\rp_j^{+\infty}(\bar\nu,\la)-
{T_j^{-1}(\bar\nu,\la\,|\,q,t)}{\mathbf{1}_{j\ge h}}$.
For the Macdonald parameters $(0,t)$, the quantities
$\rp_j^{+\infty}$ have the following form:
\begin{proposition}\label{prop:rj_0t}
	Assume that $\bar\nu\prech\la$, and $j=1,\ldots,\ell(\bar\nu)$ is
	such that $\bar\nu_j>\la_{j+1}$. Denote
	\begin{align*}
		D:=(\mbox{multiplicity of $\bar\nu_j-1$ in $\bar\nu$}),
		\qquad
		U:=(\mbox{multiplicity of $\bar\nu_j-1$ in $\la$})
	\end{align*}
	(any of the numbers can be zero).
	Clearly,
	$U=D$ or $U=D+1$
	(see Fig.~\ref{fig:r}). There are two cases:
	\begin{enumerate}[(r1)]
		\item If $U=D$, then $\rp_j^{+\infty}(\bar\nu,\la\,|\,0,t)=(1-t)/(1-t^{D+1})$.
		\item If $U=D+1$, then $\rp_j^{+\infty}(\bar\nu,\la\,|\,0,t)=1-t$.
	\end{enumerate}
	Cases (r1)--(r2) exhaust all possible configurations.
	Note that when particles are apart,
	more precisely, when
	$\la_{j+1}<\bar\nu_j-1$, then
	$\rp_j^{+\infty}(\bar\nu,\la\,|\,0,t)=1$ by (r1).
\end{proposition}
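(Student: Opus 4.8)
The proof is a direct specialization of the general formula \eqref{r_j_h} for $\rp_j^{h}$ at Macdonald parameters $q=0$, combined with the $t$-quantity computations of Propositions \ref{prop:T_0t} and \ref{prop:S_0t}. The plan is as follows. Since $\rp_j^{+\infty}(\bar\nu,\la)=\frac{1}{T_j(\bar\nu,\la\,|\,0,t)}\bigl(\sum_{i=1}^{j}S_i-\sum_{i=1}^{j-1}T_i\bigr)$ with no indicator term (because $h=+\infty$ makes $\mathbf 1_{j\ge h}=0$), the first step is to recognize that almost all terms in the two sums cancel telescopically. By Remark \ref{rmk:T_S_correspondence}, the configuration around $\bar\nu_i$ that gives $T_i$ in case (T1)/(T2)/(T3)/(T4) is exactly the configuration around $\la_{i+1}$ that gives $S_{i+1}$ in case (S4)/(S3)/(S2)/(S1); comparing the explicit values in Propositions \ref{prop:T_0t} and \ref{prop:S_0t} one checks that in each of these matched cases $T_i(\bar\nu,\la\,|\,0,t)=S_{i+1}(\bar\nu,\la\,|\,0,t)$. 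Hence for every $i$ with $1\le i\le j-1$ the term $-T_i$ is cancelled by the term $+S_{i+1}$, and what survives in $\sum_{i=1}^{j}S_i-\sum_{i=1}^{j-1}T_i$ is just $S_1(\bar\nu,\la\,|\,0,t)$.

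Wait — that cancellation is not quite right as stated, because the surviving term should be $S_j$ not $S_1$: the sum $\sum_{i=1}^j S_i - \sum_{i=1}^{j-1}T_i = S_j + \sum_{i=1}^{j-1}(S_i - T_i)$, and the correspondence of Remark \ref{rmk:T_S_correspondence} pairs $T_i$ with $S_{i+1}$, so one rather writes $\sum_{i=1}^j S_i - \sum_{i=1}^{j-1}T_i = S_1 + \sum_{i=1}^{j-1}(S_{i+1}-T_i)$, and it is this latter grouping where each bracket vanishes. So the second step is: verify $S_{i+1}=T_i$ for $1\le i\le j-1$ by the case-by-case matching, concluding $\sum_{i=1}^{j}S_i-\sum_{i=1}^{j-1}T_i = S_1(\bar\nu,\la\,|\,0,t)$. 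Here one must be slightly careful about boundary/degenerate configurations — the cases (T0) and (S0) where a particle is blocked, and the convention that $S_i=T_{i-1}=0$ when $\la_i=\bar\nu_{i-1}$ — but these are exactly consistent with the matching and pose no real difficulty.

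The third step is to evaluate $S_1(\bar\nu,\la\,|\,0,t)$ and $T_j(\bar\nu,\la\,|\,0,t)$ in terms of the multiplicities $D$ (multiplicity of $\bar\nu_j-1$ in $\bar\nu$) and $U$ (multiplicity of $\bar\nu_j-1$ in $\la$), using that we are looking at the configuration around the just-moved particle $\bar\nu_j$ and its upper-left neighbour $\la_{j+1}$. Under the hypothesis $\bar\nu_j>\la_{j+1}$ we are outside case (T0)/(S0) for the relevant row. When $\la_{j+1}<\bar\nu_j-1$ the particles are far apart, $T_j$ falls under case (T0) second clause giving $T_j=1$ and similarly $S_1$ degenerates so that $\rp_j^{+\infty}=1$, which matches the claimed value in (r1) with $D=0=U$. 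Otherwise, reading off Fig.~\ref{fig:r} (which records $U=D$ or $U=D+1$), one identifies which of (T1)--(T4) governs $T_j$ and which of (S1)--(S4) governs $S_1$, plugs in the formulas $(1-t^{L+1})/(1-t)$ etc., takes the quotient $S_1/T_j$, and obtains $(1-t)/(1-t^{D+1})$ in the case $U=D$ and $1-t$ in the case $U=D+1$.

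The computational bookkeeping is mechanical; the one genuine point requiring care — and the place I'd expect to spend the most attention — is making the telescoping argument airtight across all the degenerate sub-cases, i.e.\ confirming that the pairwise identity $S_{i+1}(\bar\nu,\la\,|\,0,t)=T_i(\bar\nu,\la\,|\,0,t)$ (as invoked via Remark \ref{rmk:T_S_correspondence}) really does hold uniformly, including when intermediate particles are blocked so that some $T_i$ or $S_{i+1}$ vanish, and confirming that the final quotient $S_1/T_j$ is correctly matched to the local picture of just the two particles $\bar\nu_j,\la_{j+1}$ regardless of what happens at other rows. Once that is in place, Propositions \ref{prop:T_0t} and \ref{prop:S_0t} do the rest, and the stated formula follows. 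The authors' actual proof will presumably just say ``direct $q=0$ substitution in \eqref{r_j_h}'' as they did for Propositions \ref{prop:T_0t} and \ref{prop:S_0t}, but the telescoping-plus-quotient structure above is the substance behind that one-liner.
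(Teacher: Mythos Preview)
Your telescoping step is wrong. Remark \ref{rmk:T_S_correspondence} says that the \emph{case label} around $\bar\nu_i$ (one of (T1)--(T4)) matches the case label around $\la_{i+1}$ (the corresponding one of (S4)--(S1)); it does \emph{not} say the values $T_i$ and $S_{i+1}$ coincide. The formulas in Propositions \ref{prop:T_0t} and \ref{prop:S_0t} depend on parameters $L,R$ that are defined differently for $T_i$ (multiplicities in $\bar\nu$) and for $S_{i+1}$ (multiplicities in $\la$), and these do not agree in general. A small example: take $\bar\nu=(q,p,p)$, $\la=(q,p,p,0)$ with $q>p+1$. Then $T_1=1-t$ (case (T2) with $L=R=0$) but $S_2=1-t^2$ (case (S3) with $L=1$, $R=0$), so $S_2-T_1=t-t^2\neq0$. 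Consequently $\sum_{i\le j}S_i-\sum_{i<j}T_i$ is not just $S_1$, and your final quotient $S_1/T_j$ is not the right object; indeed in the example $\rp_3^{+\infty}=1$ while $S_1/T_3=(1-t)/(1-t^2)\ne 1$.

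The paper's proof does \emph{not} reduce to ``direct $q=0$ substitution''; it is more structural. One groups the interlacing configuration $\bar\nu\prech\la$ into blocks of three types $\WW,\NN,\MMM$ according to the local excess of upper over lower particles. The differences $S_i-T_{i-1}$ (for free $\la_i$) are computed block-pair by block-pair and take the values $0$, $t^a$, $-t^b$, or $t^a-t^b$ depending on the pair; see the paper's table. The cancellation is then \emph{across} block pairs: interlacing forces equal numbers of $\WW$ and $\MMM$ blocks to the right of any given position, so the contributions $\pm t^{\cdot}$ cancel in a staircase pattern, leaving only the term from the block containing $\bar\nu_j$. That surviving term, divided by $T_j$, gives the stated formulas in (r1)--(r2). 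You had the right instinct that the sum collapses to something local near $\bar\nu_j$, but the mechanism is this block-wise cancellation, not a termwise identity $T_i=S_{i+1}$.
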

\begin{figure}[htbp]
\begin{center}
	\begin{tabular}{ll}
		(r1)&\hspace{20pt}(r2)\\
		\begin{tikzpicture}[
		    scale=1.4,
		    axis/.style={thick, ->, >=stealth'},
		    block/.style ={rectangle, draw=red,
			align=center, rounded corners, minimum height=1em}]
		    \def\y{.7}
		    \draw[axis] (0,0) -- (5,0) node(xline)[right]{$\bar\nu$};
		    \draw[axis] (0,\y) -- (5,\y) node(xline)[right]{$\la$};
		    \foreach \hh in {.5, 1.5, 2.5, 3.5, 4.5}
		    {
		    	\draw[densely dotted, thick, opacity=.5] (\hh,-.13) -- (\hh,\y+.13);
		    }
		    \def\sp{.13};
		    \def\opac{.25}
			\draw[line width=1.3, color=blue, opacity=\opac]
			(0,2*\y/3)--(.5,\y)--(2.5-\sp,0)--++(\sp/2,\y)--++(\sp/2,-\y)
			--++(\sp/2,\y)--(3.5-\sp/2,0)--++(\sp/2,\y)--++(\sp/2,-\y)
			--(4.5,\y)--(5,3*\y/4);
		    \foreach \pt in
		    {(2.5-\sp,0),(2.5,0),
		    (3.5+\sp/2,0),(3.5-\sp/2,0),
		    (.5,\y),(2.5-\sp/2,\y),(2.5+\sp/2,\y),(3.5,\y),
		    (4.5,\y)}
		    {
		    	\draw[fill] \pt circle (1.4pt);
		    }
		    \draw[->,densely dotted, very thick, color = blue]
	    	(2.5+\sp-.03,-0.03) to [in=180, out=-60] (3,-.4) to [in=-120, out=0] (3.5-\sp/2-.03,-0.05);
	    	\draw[->,densely dotted, ultra thick]
	    	(3.5-\sp/2-.03,-0.05) to [in=-120, out=60] (4.5-.03,\y-0.05);
	    	\draw[->,densely dotted, ultra thick]
	    	(3.5-\sp/2,0) to [in=-80, out=170] (2.5+\sp/2+.02,\y-0.05);
	    	\node at (2.5-\sp,-.26) {$D$};
	    	\node at (2.5,\y+.3) {$D$};
	    	\node at (3.2, -.6) {\scriptsize\color{blue}just moved};
	    	\node at (3.35,.2) {$\bar\nu_j$};
	    	\draw (4.3,\y+.5) node[block] (r) {$\frac{1-t}{1-t^3}$};
	    	\draw (1.4,\y+.5) node[block] (l) {$1-\frac{1-t}{1-t^3}$};
	    	\draw[color=red] (l.south) -- (2.86,.21);
	    	\draw[color=red] (r.south) -- (4.13,.35);
		\end{tikzpicture}
		&\hspace{20pt}
		\begin{tikzpicture}[
		    scale=1.4,
		    axis/.style={thick, ->, >=stealth'},
		    block/.style ={rectangle, draw=red,
			align=center, rounded corners, minimum height=1em}]
		    \def\y{.7}
		    \draw[axis] (0,0) -- (5,0) node(xline)[right]{$\bar\nu$};
		    \draw[axis] (0,\y) -- (5,\y) node(xline)[right]{$\la$};
		    \foreach \hh in {.5, 1.5, 2.5, 3.5, 4.5}
		    {
		    	\draw[densely dotted, thick, opacity=.5] (\hh,-.13) -- (\hh,\y+.13);
		    }
		    \def\sp{.13};
		    \def\opac{.25}
			\draw[line width=1.3, color=blue, opacity=\opac]
			(0,\y/4)--(2.5-3*\sp/2,\y)--(2.5-\sp,0)--++(\sp/2,\y)--++(\sp/2,-\y)
			--++(\sp/2,\y)--(3.5-\sp/2,0)--++(\sp/2,\y)--++(\sp/2,-\y)
			--(4.5,\y)--(5,3*\y/4);
		    \foreach \pt in
		    {(2.5-\sp,0),(2.5,0),
		    (3.5+\sp/2,0),(3.5-\sp/2,0),
		    (2.5-3*\sp/2,\y),(2.5-\sp/2,\y),(2.5+\sp/2,\y),(3.5,\y),
		    (4.5,\y)}
		    {
		    	\draw[fill] \pt circle (1.4pt);
		    }
		    \draw[->,densely dotted, very thick, color = blue]
	    	(2.5+\sp-.03,-0.03) to [in=180, out=-60] (3,-.4) to [in=-120, out=0] (3.5-\sp/2-.03,-0.05);
	    	\draw[->,densely dotted, ultra thick]
	    	(3.5-\sp/2-.03,-0.05) to [in=-120, out=60] (4.5-.03,\y-0.05);
	    	\draw[->,densely dotted, ultra thick]
	    	(3.5-\sp/2,0) to [in=-80, out=170] (2.5+\sp/2+.02,\y-0.05);
	    	\node at (2.5-\sp,-.26) {$D$};
	    	\node at (2.5-\sp/2,\y+.3) {$D+1$};
	    	\node at (3.2, -.6) {\scriptsize\color{blue}just moved};
	    	\node at (3.35,.2) {$\bar\nu_j$};
	    	\draw (4.2,\y+.5) node[block] (r) {$1-t$};
	    	\draw (3.2,\y+.5) node[block] (l) {$t$};
	    	\draw[color=red] (l.south) -- (2.86,.21);
	    	\draw[color=red] (r.south) -- (4.13,.35);
		\end{tikzpicture}
	\end{tabular}
\end{center}
\caption{Cases (r1)--(r2) used to determine the value of
$\rp_j^{+\infty}(\bar\nu,\la\,|\,0,t)$. On the picture we have $D=2$.}
\label{fig:r}
\end{figure}
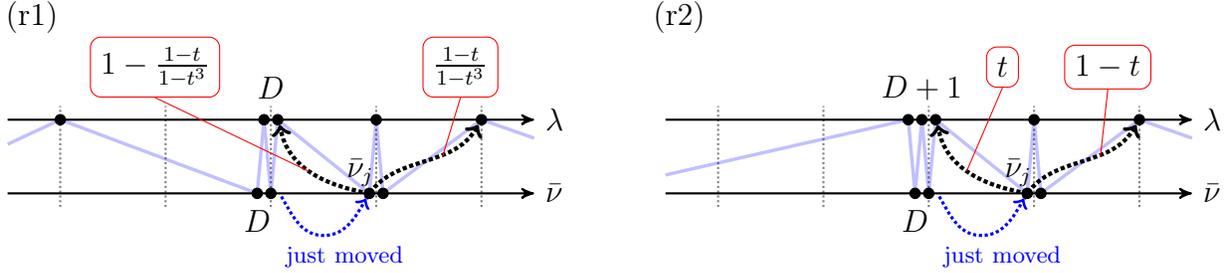
\begin{proof}
	Let us represent the interlacing configuration
	$\bar\nu\prech\la$ as the union of blocks of particles sitting
	at the same position. There are three
	possible types of such blocks depending on
	the difference between the number of
	particles on the upper and the lower levels
	(notation reflects typical shape of zigzags; we understand
	that $\NN$ could also mean $\reflectbox{$\NN$}$):
	\begin{align*}
		\begin{tikzpicture}[
		    scale=1.4,
		    axis/.style={thick, ->, >=stealth'}]
		    \def\y{.7}
		    \node at (.7,\y/2) {$\WW\colon$};
		    \draw[axis] (1,0) -- (2,0) node(xline)[right]{$\bar\nu$};
		    \draw[axis] (1,\y) -- (2,\y) node(xline)[right]{$\la$};
		    \foreach \hh in {1.5}
		    {
		    	\draw[densely dotted, thick] (\hh,-.2) -- (\hh,\y+.2);
		    }
		    \def\sp{.13};
		    \def\opac{.65}
			\draw[line width=1.3, color=blue, opacity=\opac]
			(1.5+\sp,\y)--++(-\sp/2,-\y)--++(-\sp/2,\y)--++(-\sp/2,-\y)
			--++(-\sp/2,\y);
		    \foreach \pt in
		    {(1.5-\sp/2,0),(1.5+\sp/2,0),
		    (1.5+\sp,\y),(1.5,\y),(1.5-\sp,\y)}
		    {
		    	\draw[fill] \pt circle (1.4pt);
		    }
		\end{tikzpicture}
		\qquad
		\qquad
		\begin{tikzpicture}[
		    scale=1.4,
		    axis/.style={thick, ->, >=stealth'}]
		    \def\y{.7}
		    \node at (.7,\y/2) {$\NN\colon$};
		    \draw[axis] (1,0) -- (2,0) node(xline)[right]{$\bar\nu$};
		    \draw[axis] (1,\y) -- (2,\y) node(xline)[right]{$\la$};
		    \foreach \hh in {1.5}
		    {
		    	\draw[densely dotted, thick] (\hh,-.2) -- (\hh,\y+.2);
		    }
		    \def\sp{.13};
		    \def\opac{.65}
			\draw[line width=1.3, color=blue, opacity=\opac]
			(1.5+\sp,\y)--++(-\sp/2,-\y)--++(-\sp/2,\y)--++(-\sp/2,-\y)
			;
		    \foreach \pt in
		    {(1.5-\sp/2,0),(1.5+\sp/2,0),
		    (1.5+\sp,\y),(1.5,\y)}
		    {
		    	\draw[fill] \pt circle (1.4pt);
		    }
		\end{tikzpicture}
		\quad
		\begin{tikzpicture}[
		    scale=1.4,
		    axis/.style={thick, ->, >=stealth'}]
		    \def\y{.7}
		    \node at (.5,\y/2) {or};
		    \draw[axis] (1,0) -- (2,0) node(xline)[right]{$\bar\nu$};
		    \draw[axis] (1,\y) -- (2,\y) node(xline)[right]{$\la$};
		    \foreach \hh in {1.5}
		    {
		    	\draw[densely dotted, thick] (\hh,-.2) -- (\hh,\y+.2);
		    }
		    \def\sp{.13};
		    \def\opac{.65}
			\draw[line width=1.3, color=blue, opacity=\opac]
			(1.5+\sp/2,0)--(1.5,\y)--(1.5-\sp/2,0)--(1.5-\sp,\y)
			;
		    \foreach \pt in
		    {(1.5-\sp/2,0),(1.5+\sp/2,0),
		    (1.5-\sp,\y),(1.5,\y)}
		    {
		    	\draw[fill] \pt circle (1.4pt);
		    }
		\end{tikzpicture}
		\qquad
		\qquad
		\begin{tikzpicture}[
		    scale=1.4,
		    axis/.style={thick, ->, >=stealth'}]
		    \def\y{.7}
		    \node at (.7,\y/2) {$\MMM\colon$};
		    \draw[axis] (1,0) -- (2,0) node(xline)[right]{$\bar\nu$};
		    \draw[axis] (1,\y) -- (2,\y) node(xline)[right]{$\la$};
		    \foreach \hh in {1.5}
		    {
		    	\draw[densely dotted, thick] (\hh,-.2) -- (\hh,\y+.2);
		    }
		    \def\sp{.13};
		    \def\opac{.65}
			\draw[line width=1.3, color=blue, opacity=\opac]
			(1.5+\sp,0)--++(-\sp/2,\y)--++(-\sp/2,-\y)
			--++(-\sp/2,\y)--++(-\sp/2,-\y);
		    \foreach \pt in
		    {(1.5-\sp,0),(1.5,0),(1.5+\sp,0),
		    (1.5-\sp/2,\y),(1.5+\sp/2,\y)}
		    {
		    	\draw[fill] \pt circle (1.4pt);
		    }
		\end{tikzpicture}
	\end{align*}
	If there are $k$ particles on the lower level, then
	we will denote such block by
	$\WW_k$, $\NN_k$, or $\MMM_k$, respectively
	(for $\WW_k$, $k$ is allowed to be zero).
	There are 7 ways in which two blocks can follow
	one another (the distance between the blocks is arbitrary):
	\begin{align}
		\WW\MMM
		\qquad
		\WW\NN
		\qquad
		\NN\WW
		\qquad
		\NN\MMM
		\qquad
		\NN\NN
		\qquad
		\MMM\WW
		\qquad
		\MMM\NN{}
		\label{blocks_7}
	\end{align}

	On the other hand,
	$\rp_j^{+\infty}=T_j^{-1}\big(
	(S_1-T_0)+(S_2-T_1)+\ldots+(S_j-T_{j-1})\big)$
	\eqref{r_j_h}
	is determined by
	accumulating
	successive differences
	$S_{i}-T_{i-1}$, where $i$ runs over all indices
	from $1$ to $j$ for which $\la_i$
	is free to move to the right
	(by agreement, $T_0\equiv0$).
	Nontrivial such differences arise when
	$\la_i$ is be the rightmost particle
	in one of the blocks $\WW_b$ or $\NN_b$
	while $\bar\nu_{i-1}$ is
	the leftmost particle
	in $\MMM_a$ or $\NN_a$ to the right.
	This reduces the number of combinations
	from 7 in \eqref{blocks_7} to 4.
	Using Propositions \ref{prop:T_0t} and \ref{prop:S_0t},
	one readily computes the corresponding
	differences
	$S_{i}-T_{i-1}$:
	\begin{align}
		\begin{array}{c|c|c|c|c}
			&\WW_b \MMM_a
			&
			\WW_b \NN_a
			&
			\NN_b \MMM_a
			&
			\NN_b \NN_a
			\\\hline
			\rule{0pt}{12pt}
			S_i-T_{i-1}&
			0&
			t^{a}
			&-t^{b}
			&t^{a}-t^{b}
		\end{array}
		\label{WNM_conf}
	\end{align}
	(the difference $S_i-T_{i-1}$ does not depend on the
	distance between the blocks which can be arbitrary).
	For $i=1$, there could be only two configurations,
	$\WW_b\NN_0$ or $\NN_b\NN_0$,
	for which $S_1=1$ and $1-t^{b}$, respectively. This
	agrees with \eqref{WNM_conf}.

	Let us now explain how one can
	compute the quantities $\rp_j^{+\infty}$.
	In the case (r1), the particle $\la_{j+1}$
	belongs to a block of type $\NN$
	which can be followed by either $\MMM$ or $\NN$ on the right.
	The case (r2) consists of two other possibilities, $\WW\MMM$ and
	$\WW\NN$ ($\la_{j+1}$ belongs to $\WW$). See Fig.~\ref{fig:r}.
	
	Let us consider the case $\NN_b\NN_a$,
	$\la_{j+1}\in\NN_b$, $\bar\nu_j\in\NN_a$.
	Using \eqref{WNM_conf},
	one checks that (thanks to successive cancellations),
	$S_1+(S_2-T_1)+\ldots+(S_{j}-T_{j-1})$
	is equal to
	$1-t^{a}$ regardless of the configuration of blocks
	to the right of $\NN_a$.
	In fact, due to interlacing, any such configuration must have
	the same number of $\WW$ and $\MMM$ blocks
	(we exclude the situation $\reflectbox{$\NN$}_b\reflectbox{$\NN$}_a$
	because it does not contribute a nontrivial difference $S_i-T_{i-1}$). Moreover,
	by Proposition \ref{prop:T_0t}, we have
	$T_j={(1-t^{b+1})(1-t^{a})}/{(1-t)}$, which yields
	$\rp_{j}^{+\infty}=({1-t})/({1-t^{b+1}})$, as desired.
	The three remaining cases are analogous.
\end{proof}

For the Macdonald parameters $(t,0)$,
it is convenient to represent
$\rp_j^{h}(\bar\nu,\la)=\rp_j^{1}(\bar\nu,\la)+
{T_j^{-1}(\bar\nu,\la\,|\,q,t)}{\mathbf{1}_{j<h}}$
(see \eqref{r_j_h}).
The quantities $\rp_j^{1}$
are given in the next proposition:
\begin{proposition}[{\cite[\S8.2]{BorodinPetrov2013NN}}]\label{prop:rj_t0}
	Assume that $\bar\nu\prech\la$, and let
	$j=1,\ldots,\ell(\bar\nu)$ be such that $\bar\nu_{j}>\la_{j+1}$.
	Then
	\begin{align*}
		\rp_j^{1}(\bar\nu,\la\,|\,t,0)=t^{\la_{j}-\bar\nu_j+1}
		\frac{1-t^{\bar\nu_{j-1}-\la_j}\mathbf{1}_{j>1}}
		{1-t^{\bar\nu_{j-1}-\bar\nu_j+1}\mathbf{1}_{j>1}}.
	\end{align*}
\end{proposition}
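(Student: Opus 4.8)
\textbf{Proof proposal for Proposition \ref{prop:rj_t0}.}
The plan is to derive the formula directly from the definition \eqref{r_j_h} of $\rp_j^h$, specialized to $h=1$ and to Macdonald parameters $(t,0)$. Since $j\ge1$ always, $\mathbf{1}_{j\ge1}=1$, so
\begin{align*}
	\rp_j^{1}(\bar\nu,\la\,|\,t,0)=
	\frac{1}{T_j(\bar\nu,\la\,|\,t,0)}
	\left(\sum_{i=1}^{j}S_i(\bar\nu,\la\,|\,t,0)-\sum_{i=1}^{j-1}T_i(\bar\nu,\la\,|\,t,0)-1\right),
\end{align*}
and everything reduces to evaluating in closed form the partial sum
$A_j:=\sum_{i=1}^{j}S_i-\sum_{i=1}^{j-1}T_i$, using the explicit $(t,0)$-expressions for $S_i,T_i$ recorded in Proposition \ref{prop:t0}.

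First I would pass to the ``gap'' variables $c_i:=\la_i-\bar\nu_i+1\ge1$ and $d_i:=\bar\nu_{i-1}-\la_i\ge0$ (both inequalities come from $\bar\nu\prech\la$; factors carrying $\mathbf{1}_{i>1}$ are dropped at $i=1$). A short rewriting of Proposition \ref{prop:t0} turns the formulas into
\begin{align*}
	S_i=\frac{(1-t^{d_i}\mathbf{1}_{i>1})\,(1-t^{c_i+d_{i+1}})}{1-t^{c_i}},
	\qquad
	T_i=\frac{(1-t^{d_{i+1}})\,(1-t^{c_i+d_i}\mathbf{1}_{i>1})}{1-t^{c_i}}.
\end{align*}
A key observation is that the blocked-particle convention ``$S_i=T_{i-1}=0$ whenever $\la_i=\bar\nu_{i-1}$'' needs no separate treatment: $\la_i=\bar\nu_{i-1}$ means $d_i=0$, and then the factor $1-t^{d_i}$, which is present in both $S_i$ and $T_{i-1}$, vanishes automatically. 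I would also note that the formula claimed for $\rp_j^{1}$ is exactly $F_j/T_j$, where
$F_j:=\dfrac{(1-t^{d_{j+1}})\,t^{c_j}\,(1-t^{d_j}\mathbf{1}_{j>1})}{1-t^{c_j}}$, so the whole proposition is equivalent to the single identity $A_j=1+F_j$.

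This identity I would prove by induction on $j$. The base case $j=1$ is a one-line cancellation, $A_1=S_1=\frac{1-t^{c_1+d_2}}{1-t^{c_1}}=1+F_1$. For the inductive step write $A_j=A_{j-1}+S_j-T_{j-1}=1+F_{j-1}+S_j-T_{j-1}$, so it suffices to check $F_{j-1}+S_j-T_{j-1}=F_j$; this follows at once from the two elementary identities
\begin{align*}
	T_{j-1}-F_{j-1}=1-t^{d_j},
	\qquad
	S_j-F_j=1-t^{d_j}\mathbf{1}_{j>1}=1-t^{d_j}\quad(j\ge2),
\end{align*}
each obtained by clearing the common denominator ($1-t^{c_{j-1}}$, resp.\ $1-t^{c_j}$) and cancelling. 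Finally one divides $A_j=1+F_j$ by $T_j$, which is nonzero precisely because the hypothesis $\bar\nu_j>\la_{j+1}$ forces $d_{j+1}\ge1$; this yields $\rp_j^1=F_j/T_j$, i.e.\ the asserted formula.

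The statement is not deep; the only place requiring care — and hence the ``main obstacle'' — is the bookkeeping of the boundary index $i=1$ (where the $\mathbf{1}_{i>1}$ factors disappear) and checking that the two cancellation identities above hold uniformly whether or not the intervening particles $\la_{j-1},\la_j,\dots$ are blocked; as observed, the $t$-power formulas make this automatic, with $F_{j-1}=T_{j-1}-1+t^{d_j}$ collapsing correctly to $0$ in the blocked case $d_j=0$. As an alternative that parallels the proof of Proposition \ref{prop:rj_0t}, one could instead decompose the interlacing configuration $\bar\nu\prech\la$ into maximal blocks of coinciding coordinates, read off each nonzero difference $S_i-T_{i-1}$ at a block interface from Proposition \ref{prop:t0}, and sum; the resulting telescoping is what makes $A_j$ collapse to $1+F_j$.
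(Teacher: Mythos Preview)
Your proof is correct. The paper does not actually supply a proof of this proposition; it simply cites \cite[\S8.2]{BorodinPetrov2013NN}, so there is no in-paper argument to compare against. Your direct induction on $j$ via the gap variables $c_i,d_i$ is clean and self-contained, and your verification that the boundary cases (the $\mathbf{1}_{i>1}$ factors and the blocked-particle convention $d_i=0$) are handled automatically by the $t$-power formulas is accurate. The alternative block-decomposition route you sketch at the end is indeed what the paper uses for the companion result (Proposition~\ref{prop:rj_0t}) in the $(0,t)$ case, but for the $(t,0)$ case your telescoping identity $T_{j-1}-F_{j-1}=S_j-F_j=1-t^{d_j}$ is more efficient.
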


The cases when the quantities $\rp_j^{h}$ are between 0 and 1
(i.e., when they are honest probabilities)
can also be readily described:
\begin{proposition}\label{prop:nonneg}
	For the Macdonald parameters $(0,t)$,
	\begin{align*}
		0\le \rp_j^{+\infty}(\bar\nu,\la\,|\,0,t)\le 1
		\qquad \mbox{for all possible $\bar\nu\prech\la$ and $j$},
	\end{align*}
	and for other $h=1,2,\ldots$, it can happen that
	$\rp_j^{h}(\bar\nu,\la\,|\,0,t)$ is negative.

	For the Macdonald parameters $(t,0)$,
	\begin{align*}
		0\le \rp_j^{1}(\bar\nu,\la\,|\,t,0)\le 1
		\qquad \mbox{for all possible $\bar\nu\prech\la$ and $j$},
	\end{align*}
	and for other $h=2,3,\ldots;+\infty$, it can happen that
	$\rp_j^{h}(\bar\nu,\la\,|\,t,0)$ is greater than $1$.
\end{proposition}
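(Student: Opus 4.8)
The plan is to reduce the whole statement to the closed-form expressions already recorded in Propositions \ref{prop:rj_0t}, \ref{prop:rj_t0}, \ref{prop:T_0t}, and \ref{prop:t0}, together with two elementary facts about $t\in[0,1)$: that $1-t^{a}$ is nonnegative and nondecreasing in $a\in\Z_{\ge0}$ (and positive for $a\ge1$), and that $t^{a}\le1$ for $a\ge0$. Recall from \eqref{r_j_h} the reformulations used in the statement: in the $(0,t)$ case one has $\rp_j^{h}(\bar\nu,\la)=\rp_j^{+\infty}(\bar\nu,\la)-T_j^{-1}(\bar\nu,\la\,|\,0,t)\mathbf{1}_{j\ge h}$, and in the $(t,0)$ case one has $\rp_j^{h}(\bar\nu,\la)=\rp_j^{1}(\bar\nu,\la)+T_j^{-1}(\bar\nu,\la\,|\,t,0)\mathbf{1}_{j<h}$. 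So there are two things to establish: (a) the extreme quantities $\rp_j^{+\infty}(\,\cdot\,|\,0,t)$ and $\rp_j^{1}(\,\cdot\,|\,t,0)$ always lie in $[0,1]$; and (b) the correction term $T_j^{-1}$ can be large enough to push the remaining $\rp_j^{h}$ outside $[0,1]$.

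For (a) in the $(0,t)$ case I would simply invoke Proposition \ref{prop:rj_0t}: it asserts that $\rp_j^{+\infty}(\bar\nu,\la\,|\,0,t)$ equals either $(1-t)/(1-t^{D+1})$ (case (r1)) or $1-t$ (case (r2)) for some integer $D\ge0$, and that these two cases are exhaustive. Since $0\le t<1$ we have $0<1-t^{D+1}$ and $1-t^{D+1}\ge 1-t>0$, so the ratio lies in $(0,1]$, and of course $1-t\in(0,1]$. For (a) in the $(t,0)$ case I would invoke Proposition \ref{prop:rj_t0}: $\rp_1^{1}(\bar\nu,\la\,|\,t,0)=t^{\la_1-\bar\nu_1+1}$ and, for $j>1$, $\rp_j^{1}(\bar\nu,\la\,|\,t,0)=t^{\la_j-\bar\nu_j+1}\cdot\frac{1-t^{\bar\nu_{j-1}-\la_j}}{1-t^{\bar\nu_{j-1}-\bar\nu_j+1}}$. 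Here the interlacing $\bar\nu\prech\la$ in the form $\bar\nu_{j-1}\ge\la_j\ge\bar\nu_j$ does everything: it makes all three exponents $\ge0$, makes the denominator nonzero, gives $t^{\la_j-\bar\nu_j+1}\le t^{0}=1$, and gives $\bar\nu_{j-1}-\la_j\le\bar\nu_{j-1}-\bar\nu_j+1$, so $t^{\bar\nu_{j-1}-\la_j}\ge t^{\bar\nu_{j-1}-\bar\nu_j+1}$ and the fraction also lies in $[0,1]$; the product of two numbers in $[0,1]$ is in $[0,1]$.

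For (b) it suffices to produce one configuration in each case. In the $(0,t)$ case with $h$ finite I would take a local picture around $\bar\nu_j$ of type (T2) and (r1) in the notation of Propositions \ref{prop:T_0t} and \ref{prop:rj_0t} — for instance $\bar\nu=(3,2)$, $\la=(4,2)$ with $j=2$ and $h=1$ (note $\bar\nu\prech\la$ and $\bar\nu_2=2>\la_3=0$). There $\rp_2^{+\infty}(\bar\nu,\la\,|\,0,t)=1$ and $T_2(\bar\nu,\la\,|\,0,t)=1-t$, so $\rp_2^{1}(\bar\nu,\la\,|\,0,t)=1-(1-t)^{-1}=-t/(1-t)<0$ for $t\in(0,1)$; and more generally, in any type (T2)/(r1) local configuration (so $D=L$) one computes $\rp_j^{1}=-(1-t)t^{R+1}\big/\big((1-t^{L+1})(1-t^{R+1})\big)<0$. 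In the $(t,0)$ case with $h\in\{2,3,\ldots\}\cup\{+\infty\}$ I would take $\bar\nu=(2)$, $\la=(4,1)$, $j=1$, $h=2$ (again $\bar\nu\prech\la$ and $\bar\nu_1=2>\la_2=1$): by Propositions \ref{prop:rj_t0} and \ref{prop:t0} one has $\rp_1^{1}(\bar\nu,\la\,|\,t,0)=t^{3}$ and $T_1(\bar\nu,\la\,|\,t,0)=(1-t)/(1-t^{3})$, whence $\rp_1^{2}(\bar\nu,\la\,|\,t,0)=t^{3}+(1-t^{3})/(1-t)=1+t+t^{2}+t^{3}>1$ for $t\in(0,1)$.

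I do not expect a genuine obstacle here; the argument is a finite case check. The only points that require care are (i) verifying that the chosen counterexamples are legitimate — that $\bar\nu\prech\la$ and that the ``freeness'' condition $\bar\nu_j>\la_{j+1}$ under which $\rp_j^{h}$ is defined in \eqref{r_j_h} is met — and (ii) making sure in (a) that the bound is checked in every one of the cases (r1)--(r2), together with the ``blocked particle'' conventions of \eqref{r_j_h}; but since Propositions \ref{prop:rj_0t} and \ref{prop:rj_t0} already provide closed forms valid in all cases, (ii) reduces to the two one-line estimates displayed above.
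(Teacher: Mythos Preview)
Your proposal is correct and is exactly the ``straightforward verification'' the paper invokes, carried out in detail: you read off the closed forms from Propositions~\ref{prop:rj_0t} and~\ref{prop:rj_t0} for the $[0,1]$ bounds, and exhibit explicit configurations (together with Propositions~\ref{prop:T_0t} and~\ref{prop:t0}) for the failure of the other choices of $h$. One small remark: your $(0,t)$ counterexample with $j=2$ only covers $h\in\{1,2\}$; your general type (T2)/(r1) computation makes clear that the same local picture can be placed at any index $j\ge h$, which is all that is needed to cover every finite $h$, while your $(t,0)$ counterexample with $j=1$ already handles every $h\ge 2$ including $h=+\infty$ since $\mathbf{1}_{j<h}=1$ there.
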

\begin{proof}
	Straightforward verification.
\end{proof}


\subsection{Preliminary sampling algorithms} 
\label{sub:preliminary_sampling_algorithms}

To better understand the desired sampling algorithm
presented in \S \ref{sub:full_sampling_algorithm} below,
let us begin with two simpler constructions involving univariate and bivariate dynamics.

A ``trivial'' way to sample the measure $\HL_n^{\ab;\bb;\Pl_\ga}$
is to use univariate dynamics (\S \ref{sub:univariate_dynamics}) as follows.
Let us write $\Ab=(\ab;\bb;\Pl_\ga)$ for short.

\begin{sampling}\label{alg:toy_sampling}
	Start with an empty Young diagram $\la(0)=\varnothing$.
	At each step $k=1,\ldots,n$, add to the current Young diagram
	$\la=\la(k-1)$ one of the boxes $\square\in\Us(\la)$
	with probability
	\begin{align}\label{toy_sampling}
		\Prob(\la\to\la+\square)=
		\frac{1}{p_1(\Ab)}\frac{P_{\la+\square}(\Ab\,|\,0,t)}{P_\la(\Ab\,|\,0,t)}
		\psi'_{\la+\square/\la}(0,t).
	\end{align}
	Then the distribution of $\la(n)$ is $\HL^{\Ab}_{n}$.
\end{sampling}

Indeed, under $\Qs_{\Ab}$ boxes are added to the Young
diagram (with probabilities \eqref{toy_sampling})
in continuous time according to a Poisson
process of rate $p_1(\Ab)$.
Conditioning on the event that there are
exactly
$n$ points of this Poisson process
during the time segment $[0,\tau]$,
we arrive at Algorithm \ref{alg:toy_sampling}
producing the measure
$\HL^{\Ab}_{n}$.
The fact that this algorithm works
follows from
Remark \ref{rmk:tau=0} and property \eqref{MM_P_update}.

The probabilities \eqref{toy_sampling} have a rather complicated
form due to the presence of the Hall--Littlewood polynomials evaluated
at a specialization $\Ab=(\ab;\bb;\Pl_\ga)$.
Using bivariate dynamics, it is possible to
construct sampling algorithms with simpler probabilities as follows.

\medskip

We will now explain how to sample the measure
$\HL^{\Ab\cup\Bb}_{n}$
($\Ab$ and $\Bb$ are arbitrary HL-nonnegative specializations)
using
the univariate dynamics $\Qs_\Ab$
and an RSK-type bivariate `dynamics'
$\Qs^{(2)}_{\Ab;\Bb}$ (corresponding to functions $(W,V)$, see
\S \ref{sub:bivariate_dynamics}).
One can think inductively
that we \emph{know}
how to sample the measure
$\HL^{\Ab}_{n}$ and produce
a sampling procedure for the measure
$\HL^{\Ab\cup\Bb}_{n}$ with the specialization $\Bb$ added.
In \S \ref{sub:full_sampling_algorithm}
below
we fully employ such an inductive
idea, and also explain how to choose RSK-type bivariate dynamics
with nonnegative jump rates (with the help of results of \S \ref{sub:pushing_and_pulling_probabilities}).

\begin{sampling}\label{alg:sampling_bi}
	Let $\xi_1,\ldots,\xi_n$ be independent identically distributed
	Bernoulli random variables
	with values in the two-letter
	alphabet $\{a,b\}$, and such that for $k=1,\ldots,n$,
	\begin{align*}
		\Prob(\xi_k=a)=\frac{p_1(\Ab)}{p_1(\Ab\cup\Bb)},\qquad
		\Prob(\xi_k=b)=\frac{p_1(\Bb)}{p_1(\Ab\cup\Bb)}.
	\end{align*}
	Start with a pair of empty Young diagrams
	$\bi{\bar\la(0)}{\la(0)}=\bi{\varnothing}{\varnothing}$.
	Clearly, this pair belongs to $\Yb^{(2)}(\Ab;\Bb)$.	
	At each step $k=1,\ldots,n$,
	
	$\bullet$ If $\xi_k=a$,
	add to the lower
	diagram $\bar\la=\bar\la(k-1)$
	one of the boxes $\bar\square\in\Us(\bar\la)$
	as in Algorithm~\ref{alg:toy_sampling}, i.e., with probabilities \eqref{toy_sampling}.
	Then
	add to the upper diagram $\la=\la(k-1)$
	one of the boxes $\square\in\Us(\la)$ chosen according
	to the (conditional) `probabilities'
	\begin{align*}
		\qProb(\la\to\la+\square)=V(\la,\la+\square\,|\,\bar\la,\bar\la+\bar\square).
	\end{align*}

	$\bullet$ Else, if $\xi_k=b$,
	add to the upper Young diagram $\la=\la(k-1)$
	one of the boxes $\square\in\Us(\la)$ chosen according
	to `probabilities'
	\begin{align*}
		\qProb(\la\to\la+\square)
		=\frac{1}{p_1(\Bb)}W\big(\la,\la+\square\,|\,\bar\la(k-1)\big),
	\end{align*}
	and let the lower diagram remain the same.\footnote{Thus, the lower Young diagram $\bar\la(k)$
	has a random number of boxes,
	but $\la(k)$ has exactly $k$ boxes.}

	\smallskip

	The distribution
	of the
	upper Young diagram $\la(n)$ is $\HL^{\Ab\cup\Bb}_{n}$.
\end{sampling}
The fact that this algorithm indeed
produces the desired measure
$\HL^{\Ab\cup\Bb}_{n}$ readily follows from
the definition and properties of bivariate `dynamics' (\S \ref{sub:bivariate_dynamics}).

\begin{remark}\label{rmk:empty_lower_spec}
	Algorithm \ref{alg:sampling_bi} allows the specialization
	$\Ab$ to be the trivial.
	In this case, the lower Young diagram always stays
	empty, and the bivariate `dynamics'
	$\Qs^{(2)}_{\Ab;\Bb}$ is reduced to the
	univariate dynamics
	$\Qs_{\Bb}$ (this can be seen from \eqref{general_identity}
	because in this case $\Yb(\Ab)=\{\varnothing\}$).
\end{remark}

\begin{lemma}\label{lemma:squashed}
	If in the course of Algorithm
	\ref{alg:sampling_bi} one has
	$\#\{k\colon\xi_k=b\}=0$,
	then the
	resulting lower and upper Young diagrams
	coincide, $\bar\la(n)=\la(n)$.
\end{lemma}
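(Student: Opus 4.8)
The plan is to combine three elementary observations into one containment-plus-cardinality argument. First, by the very construction of Algorithm \ref{alg:sampling_bi} the upper Young diagram gains exactly one box at every step (this is noted already in the footnote to the algorithm: $\la(k)$ has exactly $k$ boxes), so in particular $|\la(n)|=n$. Second, I would show that, under the hypothesis $\#\{k\colon\xi_k=b\}=0$, i.e.\ $\xi_k=a$ for all $k=1,\dots,n$, the lower diagram also gains exactly one box at every step, whence $|\bar\la(n)|=n$ as well. Third, I would invoke the fact that the pair $\bi{\bar\la(k)}{\la(k)}$ stays in the state space $\Yb^{(2)}(\Ab;\Bb)$ throughout the algorithm (since Algorithm \ref{alg:sampling_bi} runs the bivariate `dynamics' $\Qs^{(2)}_{\Ab;\Bb}$, which lives on $\Yb^{(2)}(\Ab;\Bb)$), and that membership in $\Yb^{(2)}(\Ab;\Bb)$ forces $\bar\la(k)\subseteq\la(k)$ (see the footnote to \eqref{2_support}). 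Putting $k=n$: $\bar\la(n)\subseteq\la(n)$ together with $|\bar\la(n)|=|\la(n)|=n$ gives $\bar\la(n)=\la(n)$.

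For the second observation, the key point is the RSK-type property of $\Qs^{(2)}_{\Ab;\Bb}$. On a step with $\xi_k=a$, the algorithm first adds a single box $\bar\square\in\Us(\bar\la(k-1))$ to the lower diagram, and then updates the upper diagram using the `conditional probabilities' $V(\la,\la+\square\,|\,\bar\la,\bar\la+\bar\square)$. By Definition \ref{def:RSK_type} we have $V(\la,\la\,|\,\bar\la,\bar\la+\bar\square)=0$, so the upper diagram cannot stay put; and by Remark \ref{rmk:general_identity}.(1) at most one box is added to it during this update. Hence on an $a$-step both diagrams gain exactly one box. Since all $n$ steps are $a$-steps by hypothesis, $|\bar\la(n)|=n$, as claimed. (If one prefers, the same reasoning can be organized as an induction on $k$ showing $\bar\la(k)=\la(k)$ for every $k$, using at each stage that $\bar\la(k)\subseteq\la(k)$ and $|\bar\la(k)|=|\la(k)|$.)

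I do not expect a genuine obstacle here: the statement is essentially bookkeeping about the update rules. The only point that needs to be stated carefully, rather than computed, is the justification of the second observation, namely that a lower-level jump is \emph{forced} to trigger an upper-level jump — this is exactly the content of the RSK-type condition (Definition \ref{def:RSK_type}) combined with the ``at most one box'' remark (Remark \ref{rmk:general_identity}.(1)) — and the invocation of the state-space constraint $\Yb^{(2)}(\Ab;\Bb)$ to get the containment $\bar\la(n)\subseteq\la(n)$. Both of these are available from the earlier parts of the paper, so the proof is short.
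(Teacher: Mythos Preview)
Your proof is correct and rests on the same two ingredients as the paper's: the state-space constraint forces $\bar\la\subseteq\la$, and on every $a$-step both diagrams grow by exactly one box. The paper organizes this as a step-by-step induction, invoking the short-range pushing mechanism (via \eqref{inf_skew_Cauchy} and \eqref{general_identity}) to conclude that when $\bar\la(k-1)=\la(k-1)$ the \emph{same} box $\square=\bar\square$ is forced onto the upper diagram with probability~$1$; you instead package it as a global cardinality count $|\bar\la(n)|=|\la(n)|=n$ plus containment, which is a slightly cleaner route that avoids identifying the specific box. Your parenthetical remark already recovers the inductive form, so the two arguments are essentially equivalent.
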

In this case we say that the diagrams $\bar\la(n)$ and
$\la(n)$ are \emph{squashed together}.
\begin{proof}
	Observe that the bivariate `dynamics'
	lives on the space $\Yb^{(2)}(\Ab;\Bb)$,
	and for any pair of Young diagrams
	$\bi{\bar\la}{\la}\in\Yb^{(2)}(\Ab;\Bb)$
	one has $\bar\la\subseteq\la$.
	
	Since $\xi_k=a$ for all $k$,
	at each $k$th step of Algorithm \ref{alg:sampling_bi}
	we add boxes to
	both diagrams $\bar\la(k-1)$ and $\la(k-1)$
	(i.e., Algorithm \ref{alg:sampling_bi} is always in
	case (1)).
	Arguing by induction, we may assume that
	$\bar\la(k-1)=\la(k-1)$ (because
	at step $0$ both diagrams are empty and hence coincide). Adding a box
	$\bar\square$
	to the lower Young diagram
	$\bar\la(k-1)$ violates the condition
	$\bar\la\subseteq\la$, and so
	one must simultaneously add the \emph{same} box $\square=\bar\square$ to $\la(k-1)=\bar\la(k-1)$
	because it is the only possible
	way to restore this condition.
	Indeed,
	in this case by
	\eqref{inf_skew_Cauchy} and \eqref{general_identity}
	it must be that $V(\la,\la+\square\,|\,\bar\la,\bar\la+\bar\square)=1$
	(cf. the short-range pushing mechanism \eqref{short_range_alpha}).
	Thus, we see that $\bar\la(k)=\la(k)$, and by induction
	$\bar\la(n)=\la(n)$ as well.
\end{proof}

One can also readily see that
the diagrams $\bar\la(k)$ and $\la(k)$
remain squashed
for several
initial steps
of the sampling algorithm, more precisely,
until $\xi_k=b$ for the first time.
Moreover, once the diagrams
\emph{separate},
they
can never get squashed again.
Indeed, under an RSK-type bivariate `dynamics'
it is impossible to add a box to the lower Young diagram
without adding a box to the upper~one.


\subsection{Towers of Young diagrams, $\A$-tableaux, and interlacing arrays} 
\label{sub:state_space_towers_of_young_diagrams_a_tableaux_and_interlacing_configurations}

Before describing 
our full sampling algorithm
producing the
HL-coherent measures $\HL^{\ab;\bb;\Pl_\ga}_{n}$,
let us discuss the state space of this algorithm.
This space admits several equivalent descriptions
explained in Definitions
\ref{def:tower}, \ref{def:Atableaux}, and
\ref{def:interlacing_config} below.

\begin{definition}[Towers of Young diagrams]\label{def:tower}
	The state space of the sampling algorithm consists of infinite
	\emph{towers of Young diagrams}
	$\boldsymbol\la=\{\la^{(a)}\}_{a\in\A}$, where $a$ runs over the alphabet
	\begin{align}
		\A:=\{1,2,3,\ldots\}\cup \{\hat 1, \hat2, \hat 3,\ldots\}
		\cup(0,1).
		\label{alphabet}
	\end{align}
	Also denote $\A^{[\ab]}:=\{1,2,\ldots\}$ (\emph{usual letters}),
	$\A^{[\boldsymbol\be]}:=\{\hat1,\hat2,\ldots\}$ (\emph{dual letters}),
	$\A^{[\Pl]}:=(0,1)$ (\emph{Plancherel letters}).
	We assume that the alphabet
	is linearly ordered in some way, say,
	\begin{align}\label{A_order}
		1<2<3<\ldots<\hat1<\hat2<\hat3<\ldots<\A^{[\Pl]},
	\end{align}
	where points of the continuous segment $\A^{[\Pl]}$
	are linearly ordered in the usual way.
	Young diagrams in the tower should be
	also ordered (by inclusion)
	in the same way:
	for any $a,b\in\A$ with $a<b$,
	it must be $\la^{(a)}\subseteq\la^{(b)}$.
\end{definition}

A tower of Young diagrams will be called \emph{bounded}
if the numbers 
$\{|\la^{(a)}|\}_{a\in\A}$ do not tend to~$+\infty$.
A bounded 
towers of Young diagrams 
possesses a unique
maximal 
(by inclusion)
diagram $\la^{\max}\in\Yb$,
and can be interpreted as a 
certain Young tableaux. 
(In fact, all towers of Young diagrams
produced as 
outcomes of the full sampling algorithm described
in \S\ref{sub:full_sampling_algorithm} below are bounded.)
\begin{definition}[$\A$-tableaux]\label{def:Atableaux}
	Let $\la$ be a Young diagram. A mapping
	\begin{align*}
		T\colon\{\mbox{boxes of $\la$}\}\to\A
	\end{align*}
	is called an
	\emph{$\A$-tableaux of shape $\la$}
	(cf. \cite{Vershik1986}, \cite{BufetovCLT})
	if
	the entries $T(\square)$
	weakly increase (in the sense
	of the linear order \eqref{A_order} on $\A$)
	both along rows and down columns,
	and, moreover, the following constraints are satisfied
	(see an example on Fig.~\ref{fig:Atableaux}):
	\begin{enumerate}[(1)]
		\item In any column of $\la$ there cannot be
		two identical letters from $\A^{[\ab]}$.
		\item In any row of $\la$ there cannot be
		two identical letters from $\A^{[\boldsymbol\be]}$.
		\item Each letter from $\A^{[\Pl]}$ may appear
		only once in the tableau.
	\end{enumerate}
\end{definition}
\begin{figure}[htbp]
\begin{center}
	\begin{adjustbox}{max width=.22\textwidth}\begin{tikzpicture}
		[scale=.9, thick]
		\def\h{.8}
		\def\epp{0.065}
		\draw (0,\h) -- (5,\h);
		\draw (0,0) -- (5,0);
		\draw (0,-\h) -- (5,-\h);
		\draw (0,-2*\h) -- (2,-2*\h);
		\draw (0,-3*\h) -- (2,-3*\h);
		\draw (0,-4*\h) -- (1,-4*\h);
		\draw (0,\h) -- (0,-4*\h);
		\draw (1,\h) -- (1,-4*\h);
		\draw (2,\h) -- (2,-3*\h);
		\draw (3,\h) -- (3,-1*\h);
		\draw (4,\h) -- (4,-1*\h);
		\draw (5,\h) -- (5,-1*\h);
		\node at (1/2,\h/2) {$2$};
		\node at (3/2,\h/2) {$2$};
		\node at (5/2,\h/2) {$3$};
		\node at (7/2,\h/2) {$5$};
		\node at (9/2,\h/2) {$0.1$};
		\node at (1/2,-\h/2) {$3$};
		\node at (3/2,-\h/2) {$5$};
		\node at (5/2,-\h/2) {$5$};
		\node at (7/2,-\h/2+\epp) {$\hat 7$};
		\node at (9/2,-\h/2) {$0.34$};
		\node at (1/2,-3*\h/2) {$5$};
		\node at (3/2,-3*\h/2+\epp) {$\hat 5$};
		\node at (1/2,-5*\h/2+\epp) {$\hat 2$};
		\node at (3/2,-5*\h/2+\epp) {$\hat 5$};
		\node at (1/2,-7*\h/2+\epp) {$\hat 2$};
	\end{tikzpicture}
	\end{adjustbox}
\end{center}
\caption{An $\A$-tableau of shape
$\la=(5,5,2,2,1)$.}
\label{fig:Atableaux}
\end{figure}

The equivalence between $\A$-tableaux and bounded towers of Young diagrams
is given by
\begin{align*}
	\mbox{Young diagram $\la^{(a)}$}
	\ =\
	\mbox{Shape inside the $\A$-tableau occupied by
	all letters $b\in\A$, $b\le a$},
\end{align*}
and the maximal diagram $\la^{\max}$ 
of a tower $\boldsymbol\la$
is the same as the shape of the $\A$-tableau.

\begin{remark}\label{rmk:SSYT_SYT}
	Note that if an $\A$-tableau contains only usual letters,
	then it is simply a semistandard Young tableau, cf. \cite[Ch. 7]{Stanley1999}.
	If it contains only dual letters,
	then it is a transpose
	of a semistandard Young tableau. Finally,
	if it contains only Plancherel letters, then it is a standard
	Young tableau.
\end{remark}

\begin{definition}[Interlacing arrays]
\label{def:interlacing_config}
	$\A$-tableaux (equivalently, bounded towers of Young diagrams)
	can be represented
	as interlacing particle arrays as on Fig.~\ref{fig:A_particle} (see also
	\S \ref{sub:adding_a_usual_variable}).
	Such configurations consist of two parts, the lower $\boldsymbol\la^{[\ab]}$ and
	the upper $\boldsymbol\la^{[\boldsymbol\be\cup\Pl]}$.
	Levels of $\boldsymbol\la^{[\ab]}$ are indexed by usual letters
	$\in\A^{[\ab]}$
	which enter the corresponding
	$\A$-tableau, and levels of $\boldsymbol\la^{[\boldsymbol\be\cup\Pl]}$
	are indexed by dual and Plancherel letters entering this $\A$-tableau.
	In $\boldsymbol\la^{[\ab]}$, Young diagrams on consecutive levels differ by a \emph{horizontal} strip,
	and particles correspond to \emph{row} lengths of these Young diagrams.
	In $\boldsymbol\la^{[\boldsymbol\be\cup\Pl]}$,
	consecutive Young diagrams differ by a \emph{vertical} strip, and
	particles correspond to their \emph{column} lengths.
	We also add the transpose of the uppermost diagram from $\boldsymbol\la^{[\ab]}$
	(on Fig.~\ref{fig:A_particle} this is $(\la^{(5)})'$)
	to $\boldsymbol\la^{[\boldsymbol\be\cup\Pl]}$ because the next diagram
	is obtained from it by adding a vertical strip.
	Let, by agreement, the number of particles increase by one
	from level to level inside each of the parts
	$\boldsymbol\la^{[\ab]}$ and $\boldsymbol\la^{[\boldsymbol\be\cup\Pl]}$.
	This can always be achieved by appending sequences of row/column lengths
	by zeroes.
\end{definition}

\begin{figure}[htbp]
\begin{center}
	\begin{tikzpicture}
		[scale=.9, thick]
		\def\y{.7}
		\foreach \level in {0,1,2,3,4,5,6,7,8}
		{
			\draw[->] (-.5,\y*\level) -- (6.5,\y*\level);
		}		
	    \foreach \hh in {0,1,2,3,4,5,6}
	    {
	    	\draw[densely dotted, thick, opacity=.4] (\hh,8*\y+.22) -- (\hh,-.22);
	    	\node at (\hh,-.42) {\hh};
	    }
	    \def\sp{.2};
	    \def\opac{.65}
	    \def\w{1.1}
	    \def\e{0.03}
	 	\draw[line width=\w, color=blue, opacity=\opac] (3,\y-\e)--(2,0+\e)--(1,\y-\e);
	 	\draw[line width=\w, color=red, opacity=\opac] (4,2*\y-\e)--(3,\y+\e)--(3,2*\y-\e)--(1,\y+\e)--(1,2*\y-\e);
	 	\draw[line width=\w, color=blue, opacity=\opac]
	 	(5,4*\y-\e)--++(-2,-\y+2*\e)--(2+\sp/2,4*\y-\e)--++(0,-\y+2*\e)--(2-\sp/2,4*\y-\e)--++(0,-\y+2*\e)--(1,4*\y-\e)
	 	--++(0,-\y+2*\e)--(0,4*\y-\e);
	 	\draw[line width=\w, color=red, opacity=\opac]
	 	(5,5*\y-\e)--++(0,-\y+2*\e)--(4,5*\y-\e)--++(-2+\sp/2,-\y+2*\e)--(2,5*\y-\e)--++(-\sp/2,-\y+2*\e)--(1,5*\y-\e)
	 	--++(0,-\y+2*\e)--(0+\sp/2,5*\y-\e)--++(-\sp/2,-\y+2*\e)--(0-\sp/2,5*\y-\e);
	 	\draw[line width=\w, color=blue, opacity=\opac]
		(5,6*\y-\e)--++(0,-\y+2*\e)--(4,6*\y-\e)--++(0,-\y+2*\e)--(2+\sp/2,6*\y-\e)--++(-\sp/2,-\y+2*\e)--
		(2-\sp/2,6*\y-\e)--++(-1+\sp/2,-\y+2*\e)--(0+\sp,6*\y-\e)--++(-\sp/2,-\y+2*\e)--(0,6*\y-\e)
		--++(-\sp/2,-\y+2*\e)--(0-\sp,6*\y-\e);
	 	\draw[line width=\w, color=red, opacity=\opac]
		(5,7*\y-\e)--++(0,-\y+2*\e)--(4,7*\y-\e)--++(0,-\y+2*\e)--(2+\sp/2,7*\y-\e)--++(0,-\y+2*\e)--
		(2-\sp/2,7*\y-\e)--++(0,-\y+2*\e)--(1,7*\y-\e)--++(-1+\sp,-\y+2*\e)--
		(0+\sp,7*\y-\e)--++(-\sp,-\y+2*\e)--(0,7*\y-\e)--++(-\sp,-\y+2*\e)--(0-\sp,7*\y-\e);
	 	\draw[line width=\w, color=blue, opacity=\opac]
		(5,8*\y-\e)--++(0,-\y+2*\e)--(4,8*\y-\e)--++(0,-\y+2*\e)--(2+\sp,8*\y-\e)--++(-\sp/2,-\y+2*\e)--(2,8*\y-\e)--++
		(0-\sp/2,-\y+2*\e)--(2-\sp,8*\y-\e)
		--++(-1+1*\sp,-\y+2*\e)--(0+3*\sp/2,8*\y-\e)--++(-\sp/2,-\y+2*\e)--(0+1*\sp/2,8*\y-\e)--++(-\sp/2,-\y+2*\e)--
		(0-1*\sp/2,8*\y-\e)--++(-\sp/2,-\y+2*\e)--(0-3*\sp/2,8*\y-\e);

	    \foreach \pt in
	    {(2,0),(3,\y),(1,\y),
	    (4,2*\y),(3,2*\y),(1,2*\y),
	    (3,3*\y),(2+\sp/2,3*\y),(2-\sp/2,3*\y),(1,3*\y),
	    (5,4*\y),(2-\sp/2,4*\y),(2+\sp/2,4*\y),(1,4*\y),(0,4*\y),
	    (5,5*\y),(4,5*\y),(2,5*\y),(1,5*\y),(0+\sp/2,5*\y),(0-\sp/2,5*\y),
	    (5,6*\y),(4,6*\y),(2+\sp/2,6*\y),(2-\sp/2,6*\y),(0+\sp,6*\y),(0,6*\y),(0-\sp,6*\y),
	    (5,7*\y),(4,7*\y),(2+\sp/2,7*\y),(2-\sp/2,7*\y),(1,7*\y),(0+\sp,7*\y),(0,7*\y),(0-\sp,7*\y),
	    (5,8*\y),(4,8*\y),(2+\sp,8*\y),(2,8*\y),(2-\sp,8*\y),(0+3*\sp/2,8*\y),(0+1*\sp/2,8*\y),(0-1*\sp/2,8*\y),(0-3*\sp/2,8*\y)}
	    {
	    	\draw[fill] \pt circle (2.7pt);
	    }
	    \draw[color=black, dashed, ultra thick] (13.8,\y*2.45)
	    -- (-1.1,\y*2.45)
	    node [left,yshift=1] {transpose};
	    \draw[color=black, dashed, ultra thick] (8.9,-.5*\y) --++ (0,\y*9) ;
	    \node at (7.1,0) {$\la^{(2)}$};
	    \node at (7.1,\y) {$\la^{(3)}$};
	    \node at (7.1,2*\y) {$\la^{(5)}$};
	    \node at (7.2,3*\y) {$(\la^{(5)})'$};
	    \node at (7.2,4*\y) {$(\la^{(\hat 2)})'$};
	    \node at (7.2,5*\y) {$(\la^{(\hat 5)})'$};
	    \node at (7.2,6*\y) {$(\la^{(\hat 7)})'$};
	    \node at (7.3,7*\y) {$(\la^{(0.1)})'$};
	    \node at (7.35,8*\y) {$(\la^{(0.34)})'$};
	    \draw [decorate,decoration={brace,amplitude=5pt,raise=4pt},yshift=0pt] (-.8,-.1) -- (-.8,2*\y+.1)
	    node [midway,xshift=-28] {$\boldsymbol\la^{[\ab]}$};
	    \draw [decorate,decoration={brace,amplitude=5pt,raise=4pt},yshift=0pt] (-.8,3*\y-.1) -- (-.8,8*\y+.1)
	    node [midway,xshift=-28] {$\boldsymbol\la^{[\boldsymbol\be\cup\Pl]}$};
	  	\def\t{.27}
	  	\def\g{12}
	  	\node at (\g,0) {2};
	    \node at (\g+\t,\y) {3};\node at (\g-\t,\y) {1};
	    \node at (\g+2*\t,2*\y) {4};\node at (\g+0*\t,2*\y) {3};\node at (\g-2*\t,2*\y) {1};
	    \node at (\g+3*\t,3*\y) {3};\node at (\g+1*\t,3*\y) {2};\node at (\g-1*\t,3*\y) {2};\node at (\g-3*\t,3*\y) {1};
	    \node at (\g+4*\t,4*\y) {5};\node at (\g+2*\t,4*\y) {2};\node at (\g+0*\t,4*\y) {2};\node at (\g-2*\t,4*\y) {1};\node at (\g-4*\t,4*\y) {0};
	    \node at (\g+5*\t,5*\y) {5};\node at (\g+3*\t,5*\y) {4};\node at (\g+1*\t,5*\y) {2};\node at (\g-1*\t,5*\y) {1};\node at (\g-3*\t,5*\y) {0};\node at (\g-5*\t,5*\y) {0};
	    \node at (\g+6*\t,6*\y) {5};\node at (\g+4*\t,6*\y) {4};\node at (\g+2*\t,6*\y) {2};\node at (\g+0*\t,6*\y) {2};\node at (\g-2*\t,6*\y) {0};\node at (\g-4*\t,6*\y) {0};\node at (\g-6*\t,6*\y) {0};
	    \node at (\g+7*\t,7*\y) {5};\node at (\g+5*\t,7*\y) {4};\node at (\g+3*\t,7*\y) {2};\node at (\g+1*\t,7*\y) {2};\node at (\g-1*\t,7*\y) {1};\node at (\g-3*\t,7*\y) {0};\node at (\g-5*\t,7*\y) {0};\node at (\g-7*\t,7*\y) {0};
	    \node at (\g+8*\t,8*\y) {5};\node at (\g+6*\t,8*\y) {4};\node at (\g+4*\t,8*\y) {2};\node at (\g+2*\t,8*\y) {2};\node at (\g+0*\t,8*\y) {2};\node at (\g-2*\t,8*\y) {0};\node at (\g-4*\t,8*\y) {0};\node at (\g-6*\t,8*\y) {0};\node at (\g-8*\t,8*\y) {0};
	\end{tikzpicture}
\end{center}
\caption{A representation of
the $\A$-tableau on Fig.~\ref{fig:Atableaux}
in terms of interlacing arrays; zigzags indicate interlacing of consecutive
levels (colors are only to better distinguish the levels).
The corresponding array of integers is displayed on the right. Its top
level records column lengths of the Young diagram on 
Fig.~\ref{fig:Atableaux}.}
\label{fig:A_particle}
\end{figure}
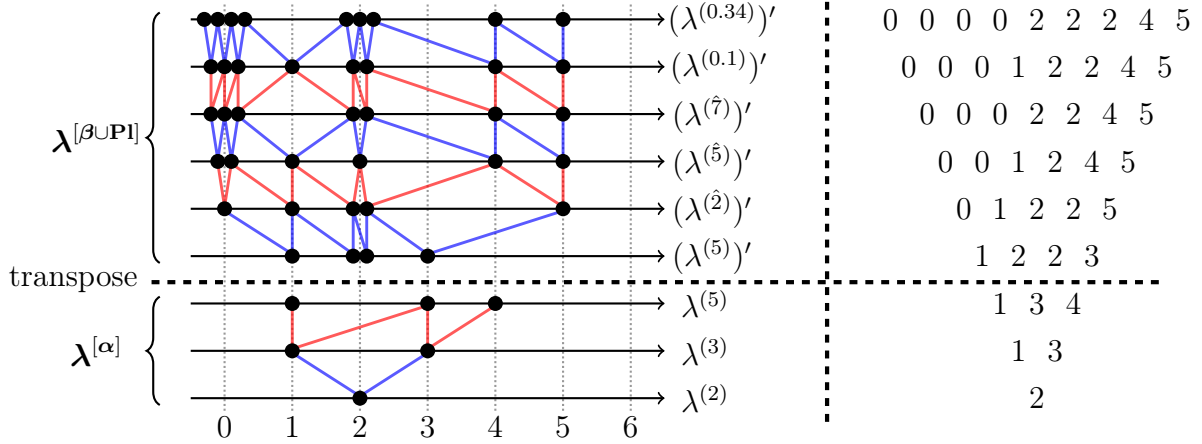


\subsection{Full sampling algorithm for $\HL_n^{\ab;\bb;\Pl_\ga}$} 
\label{sub:full_sampling_algorithm}

We are now in a position to describe
a sampling algorithm
producing the
HL-coherent measures $\HL^{\ab;\bb;\Pl_\ga}_{n}$.
Let $n=1,2,\ldots$ and a specialization $(\ab;\bb;\Pl_\ga)$
be fixed.
According to Remark \ref{rmk:scale_inv_of_coherent},
we may assume that
\begin{align}\label{p1_one}
	p_1(\ab;\bb;\Pl_\ga)=
	\sum_{i\ge1}\al_i+
	\frac{1}{1-t}\bigg(\ga+\sum_{i\ge1}\be_i\bigg)=1.
\end{align}
Let $\mathrm{m}=\mathrm{m}^{\ab;\bb;\Pl_\ga}$ be a probability measure
on the alphabet $\A$
with
\begin{align}\label{measure_m_on_A}
	\mathrm{m}(r)=\al_r,\quad
	\mathrm{m}(\hat r)=\frac{\be_r}{1-t},\qquad r=1,2,3,\ldots,
\end{align}
and such that
on the segment $\A^{[\Pl]}$,
$\mathrm{m}$ reduces
to a (continuous) uniform
measure with $\mathrm{m}\big(\A^{[\Pl]}\big)={\ga}/({1-t})$.
By \eqref{p1_one}, this is indeed a probability measure.

The input of the sampling algorithm
is a random word $w=\xi_1\xi_2 \ldots\xi_n$
of length $n$, where $\xi_i\in\A$ are independent random letters
distributed according to $\mathrm{m}^{\ab;\bb;\Pl_\ga}$.
Our sampling algorithm 
is a combination of the following bivariate
dynamics (which are honest Markov processes by 
Proposition \ref{prop:nonneg}):
\begin{align}
	\left\{\begin{array}{lll}
	(W_{(\al)}^{h=+\infty},V_{(\al)}^{h=+\infty})
	&\mbox{from \S \ref{sub:adding_a_usual_variable} corresponding to a usual letter};
	\\\rule{0pt}{14pt}
	(W_{(\be)}^{h=1},V_{(\be)}^{h=1})&\mbox{from \S \ref{sub:duality} corresponding to a dual letter};
	\\\rule{0pt}{14pt}
	(W_{\circ}^{\be;h=1},V_{\circ}^{\be;h=1})&\mbox{from \S \ref{sub:adding_a_plancherel_parameter}
	corresponding to a Plancherel letter}.
	\end{array}\right.
	\label{WV_choice}
\end{align}
It is helpful to keep this list in mind
while reading the description of the sampling algorithm below.
In particular,
conditional probabilities ``$V$'' above
lead to propagation rules described in step {\rm{}\bf(IV)}.

\begin{sampling}\label{alg:main}
	{\rm{}\bf(I)} The algorithm starts at step $k=0$ 
	with an empty interlacing array $\boldsymbol\la(0)$ containing no particles.

	{\rm{}\bf(II)} At each step $k=1,\ldots,n$,
	the modification of the array $\boldsymbol\la(k-1)\to\boldsymbol\la(k)$ 
	begins at level $\la^{(\xi_k)}$.
	If there is no such level $\la^{(\xi_k)}$ because the letter $\xi_k$ never appeared before
	in the random input, then $\la^{(\xi_k)}(k-1)$ is created by cloning 
	the previous existing level $\la^{(\eta)}$:
	\begin{itemize}
		\item If $k=1$, then let $\la^{(\xi_1)}(0)$ 
		be a single particle at position 0.
		\item For $k\ge2$, let 
		$\eta$ be the maximal letter $\le \xi_k$
		(in terms of the order \eqref{A_order} on $\A$)
		which has already appeared in the input word, and set $\la^{(\xi_k)}(k-1):=\la^{(\eta)}(k-1)$.
		\item After that, add 
		one extra particle at position $0$
		to all existing levels $\la^{(a)}$, $a\ge \xi_k$.
	\end{itemize}
	
	{\rm{}\bf(III)} The modification
	$\la^{(\xi_k)}(k-1)\to\la^{(\xi_k)}(k)$
	consists in 
	the leftmost free (respectively, the rightmost) 
	particle at level $\xi_k$
	moving to the right by one
	if $\xi_k\in\A^{[\ab]}$ (respectively, 
	$\xi_k\in\A^{[\boldsymbol\be]}\cup\A^{[\Pl]}$).

	{\rm{}\bf(IV)} Further modifications of the interlacing array at the same step 
	$k$ consist in propagation of moves through all existing levels 
	$>\xi_k$ (with respect to order \eqref{A_order}). 
	This propagation is performed inductively, from the level immediately above $\xi_k$
	upwards. Let us fix any two existing levels $d>c\ge\xi_k$ of the interlacing array 
	such that there are no other existing
	levels between them. 
	\begin{itemize}
		\item If $d\in\A^{[\ab]}$,
		we argue in terms of row lengths.
		Suppose that a particle $\la^{(c)}_{j}$
		has moved at level 
		$c$. Then it 
		pushes its first upper right neighbor
		$\la^{(d)}_{\nt(j)}$
		at level $d$ which is free to move with 
		probability $\rp_j^{+\infty}(\la^{(c)}(k),\la^{(d)}(k-1)\mid 0,t)$,
		or 
		pulls its immediate upper left neighbor
		$\la^{(d)}_{j+1}$ with the complementary probability 
		$1-\rp_j^{+\infty}(\la^{(c)}(k),\la^{(d)}(k-1)\mid 0,t)$, 
		where $\rp_j^{+\infty}(\cdot,\cdot\mid 0,t)$ is given in Proposition \ref{prop:rj_0t}.

		\item If $d\in\A^{[\bb]}\cup\A^{[\Pl]}$, 
		we will argue in terms of column lengths (this is possible even if 
		$c\in\A^{[\ab]}$, cf. the ``transpose'' line on Fig.~\ref{fig:A_particle}).
		Suppose that a particle $(\la^{(c)})'_{j}$
		has moved at level $c$. 
		Then it 
		pushes its immediate upper right neighbor
		$(\la^{(d)})'_{j}$
		at level $d$ with 
		probability 
		$\rp_j^{1}\big((\la^{(c)}(k))',(\la^{(d)}(k-1))'\mid 0,t\big)$,
		or 
		pulls its immediate upper left neighbor
		$(\la^{(d)})'_{j+1}$ with the complementary probability 
		$1-\rp_j^{1}\big((\la^{(c)}(k))',(\la^{(d)}(k-1))'\mid 0,t\big)$, 
		where $\rp_j^{1}(\cdot,\cdot\mid t,0)$ 
		is given in Proposition \ref{prop:rj_t0}.
	\end{itemize}

	{\rm{}\bf(V)} Finally, existing levels $\la^{(a)}(k-1)$ for $a<\xi_k$ (in order \eqref{A_order})
	are not modified at step $k$.
\end{sampling}
In {\rm\bf(III)}--{\rm\bf(IV)}, if the moving particle is blocked from below, then its move is donated to the
first free particle to the right of itself (see~\S \ref{sub:adding_a_usual_variable} and
Fig.~\ref{fig:rl} in particular). The mandatory short-range pushing mechanism
\eqref{short_range_alpha} is also present in all moves in {\rm\bf(IV)}.
Note also that the cloning operation in {\rm{}\bf(II)}
corresponds to Lemma \ref{lemma:squashed}. 

Since all probabilities involved in the description of Algorithm \ref{alg:main}
are nonnegative (by Proposition \ref{prop:nonneg}), 
this is indeed an honest randomized sampling procedure.

\begin{theorem}\label{thm:it_samples}
	Sampling Algorithm \ref{alg:main}
	indeed produces the HL-coherent measure
	$\HL^{\ab;\bb;\Pl_\ga}_{n}$
	(as
	the distribution of the maximal Young diagram
	$\la^{\max}(n)$ at step $n$).
\end{theorem}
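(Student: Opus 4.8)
The plan is to realize Algorithm~\ref{alg:main} as the de-poissonization of a continuous-time Markov dynamics on towers of Young diagrams obtained by \emph{stacking} the three elementary RSK-type bivariate `dynamics' of \S\ref{sec:three_particular_dynamics_on_macdonald_processes}, and then to apply the formalism of \S\ref{sec:macdonald_processes_and_bivariate_continuous_time_dynamics_} to identify the resulting law. After normalizing $p_1(\ab;\bb;\Pl_\ga)=1$ (Remark~\ref{rmk:scale_inv_of_coherent}), I would let letters arrive as a rate-$1$ Poisson process with i.i.d.\ marks distributed according to $\mathsf m^{\ab;\bb;\Pl_\ga}$ of \eqref{measure_m_on_A}; equivalently, each usual letter $r$ carries an independent exponential clock of rate $\al_r$, each dual letter $\hat r$ a clock of rate $\be_r/(1-t)$, and the Plancherel letters form a Poisson point process on $\A^{[\Pl]}\times[0,\infty)$ of intensity $\tfrac{\ga}{1-t}\,dc\,dt$. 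Performing at each arrival time the per-letter moves \textbf{(II)}--\textbf{(V)} of Algorithm~\ref{alg:main} defines a continuous-time process on interlacing arrays; conditioning on the event ``exactly $n$ letters have arrived by time $\tau$'' recovers Algorithm~\ref{alg:main} fed with a uniformly random length-$n$ word with i.i.d.\ letters, and on that event $|\la^{\max}(\tau)|=n$ since every elementary `dynamics' is RSK-type, so each insertion propagates all the way to the top.

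The heart of the proof is to check, level by level in the order \eqref{A_order} and using that the functions $(W,V)$ are independent of the lower specialization (Remark~\ref{rmk:indep_of_A}), that this continuous-time process coincides with the stacking of the bivariate `dynamics' that successively add $(\al_1),(\al_2),\dots$, then $(\hat\be_1),(\hat\be_2),\dots$, and finally $\Pl_\ga$, built from the particular blocks \eqref{WV_choice}. Concretely one verifies: (a) an incoming usual letter $r$ triggers the independent jump of the block $(W_{(\al)}^{h=+\infty},V_{(\al)}^{h=+\infty})$ on level $\la^{(r)}$ (the leftmost free particle moves, step~\textbf{(III)}) and then propagates upward by the triggered moves, whose push/pull `probabilities' are exactly the $\rp_j^{+\infty}(\cdot\mid 0,t)$ of Proposition~\ref{prop:rj_0t} together with the mandatory short-range pushing \eqref{short_range_alpha}; (b) a dual letter $\hat r$ implements the block $(W_{(\be)}^{h=1},V_{(\be)}^{h=1})$, which by the transposition of Proposition~\ref{prop:usual_dual} becomes the column-coordinate pushes $\rp_j^{1}(\cdot\mid t,0)$ of Proposition~\ref{prop:rj_t0} appearing in step~\textbf{(IV)} (the ``transpose'' line in Fig.~\ref{fig:A_particle} records the passage from $\boldsymbol\la^{[\ab]}$ to $\boldsymbol\la^{[\boldsymbol\be\cup\Pl]}$); (c) the continuum of Plancherel levels $\la^{(c)}$, $c\in\A^{[\Pl]}$, together with the uniform law of $c$ built into $\mathsf m$, realizes the ``continuous-levels'' reformulation of the construction of \S\ref{ssub:construction_of_the_dynamics} (the Plancherel letters playing the role of the intermediate diagrams $\mu^{(i)}$ there), so that the Plancherel levels together realize the bivariate `dynamics' $\Qs^{(2)}_{\Ab;\Pl_\ga}$ with $\Ab$ the usual-plus-dual part of the specialization; and (d) the ``cloning'' of step~\textbf{(II)} is precisely the squashing/separation phenomenon of Lemma~\ref{lemma:squashed}, since before a letter first occurs its elementary specialization contributes nothing and its level coincides with the one immediately below. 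Granting this, the blocks \eqref{WV_choice} satisfy the general identity \eqref{general_identity} (Propositions~\ref{prop:alpha_bi}, \ref{prop:usual_dual}, \ref{prop:Plancherel_proof}) and, for these values of $h$, have honest nonnegative rates (Proposition~\ref{prop:nonneg}); stacking them and iterating the Gibbs-preservation and upper-level reduction properties of bivariate `dynamics' (conditions (1)--(2) of \S\ref{sub:bivariate_dynamics}, exactly as in the correctness argument for Algorithm~\ref{alg:sampling_bi}) shows that the top level evolves as the univariate dynamics $\Qs_{\Ab}$ for $\Ab=(\ab;\bb;\Pl_\ga)$, so by \eqref{MM_P_update} its law at time $\tau$ is $\MM_\tau^{\ab;\bb;\Pl_\ga}$ of \eqref{poissonized_of_lambda}.

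It then remains to de-poissonize. The event ``exactly $n$ letters by time $\tau$'' equals $\{|\la^{\max}(\tau)|=n\}$, of probability $e^{-\tau}\tau^n/n!$; conditioning \eqref{poissonized_of_lambda} on it and using $Q_\la(\Pl_\tau)=\tau^{|\la|}Q_\la(\Pl_1)$ together with $p_1(\ab;\bb;\Pl_\ga)=1$ gives the conditional law $n!\,P_\la(\ab;\bb;\Pl_\ga)Q_\la(\Pl_1)=\HL^{\ab;\bb;\Pl_\ga}_n(\la)$ of \eqref{coherent_def} with $q=0$, while on the same event the Poissonized algorithm is Algorithm~\ref{alg:main}. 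The reduction from infinitely many usual and dual specializations to the finitely many occurring in a length-$n$ word is immediate, since a specialization with no associated letter leaves the tower untouched (cf.\ Remark~\ref{rmk:empty_lower_spec}). I expect the main obstacle to be part~(c): rigorously matching the ``excessive-resampling'' description of \S\ref{ssub:construction_of_the_dynamics} with the continuous Plancherel levels $\la^{(c)}$ of the interlacing array — that is, verifying that creating one fresh level per Plancherel letter and propagating by the $(t,0)$ column pushes of Proposition~\ref{prop:rj_t0} genuinely reproduces the bivariate `dynamics' $\Qs^{(2)}_{\Ab;\Pl_\ga}$, and not merely a process with the same one-level marginals.
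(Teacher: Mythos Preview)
Your approach is essentially the paper's: realize Algorithm~\ref{alg:main} as a stacking of Algorithm~\ref{alg:sampling_bi} using the three elementary RSK-type bivariate dynamics in \eqref{WV_choice}, so that the top level evolves by the univariate dynamics $\Qs_{\Ab}$ with $\Ab=(\ab;\bb;\Pl_\ga)$ and hence has the right law; then de-poissonize. The paper does not go through the Poissonization explicitly but appeals directly to Algorithm~\ref{alg:sampling_bi} and the properties of bivariate `dynamics' (\S\ref{sub:bivariate_dynamics}, \S\ref{sec:three_particular_dynamics_on_macdonald_processes}); your version simply spells this out.

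One point where the paper is more careful than your sketch is the passage from finitely to infinitely many $\al_i$'s and $\be_j$'s. Your sentence ``a specialization with no associated letter leaves the tower untouched'' is the right intuition at the level of a single realization, but it does not by itself identify the \emph{distribution} of $\la^{\max}(n)$ with $\HL^{\ab;\bb;\Pl_\ga}_n$ for the full infinite specialization: you still need to explain why averaging over which letters occur reproduces $P_\la(\ab;\bb;\Pl_\ga)$ and not just $P_\la$ at some random finite sub-specialization. The paper closes this via an algebraic projective-limit argument: for each fixed $\la\in\Yb_n$, both the target weight $\HL^{\ab;\bb;\Pl_\ga}_n(\la)$ and the algorithm's output probability are bounded-degree power series, symmetric separately in $\{\al_i\}$ and $\{\be_j\}$; the truncation $\HL^{[N]}_n(\la)$ is precisely the restriction of the algorithm to inputs avoiding letters beyond $N$, and equality for every $N$ forces equality in $\Sym$ (cf.\ Remark~\ref{rmk:Sym_proj_lim}). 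Your identification of (c)---matching the continuous Plancherel levels to the bivariate `dynamics' $\Qs^{(2)}_{\Ab;\Pl_\ga}$ of \S\ref{ssub:construction_of_the_dynamics}---as the delicate step is apt; the paper also leaves this largely implicit, pointing to the remark there that the continuous-level reformulation is carried out ``in a slightly different language'' in \S\ref{sub:full_sampling_algorithm}.
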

\begin{proof}
	For finitely many nonzero $\al_i$'s and
	$\be_j$'s in the specialization $(\ab;\bb;\Pl_\ga)$
	this readily follows
	from
	Algorithm~\ref{alg:sampling_bi}
	together with
	properties
	of bivariate
	dynamics (see \S \ref{sub:bivariate_dynamics} and \S \ref{sec:three_particular_dynamics_on_macdonald_processes}).

	When at least one of the sequences
	$\{\al_i\}$ or $\{\be_j\}$ is infinite,
	the desired result can be derived
	via an ``algebraic'' limit transition.
	Observe that
	for each $\la\in\Yb_n$,
	the probability weight $\HL^{\ab;\bb;\Pl_\ga}_{n}(\la)$
	\eqref{coherent_def}
	is a
	power series
	(with bounded degrees of monomials)
	which is symmetric
	in two infinite sets of variables $\{\al_i\}_{i=1}^{\infty}$
	and $\{\be_j\}_{j=1}^{\infty}$.
	Indeed, this is because the Hall--Littlewood symmetric functions $P_\la$ are polynomials
	in the Newton power sums $p_1,p_2,\ldots$, and the specialization
	$(\ab;\bb;\Pl_\ga)$ reduces to the mapping $p_1\to 1$, and
	$p_k\to p_k(\ab;\bb;\Pl_\ga)$ for $k\ge 2$ (see \eqref{al_be_ga_Newton} with $q=0$).
	
	Thus, similarly to Remark \ref{rmk:Sym_proj_lim},
	one can also view $\HL^{\ab;\bb;\Pl_\ga}_{n}(\la)$
	as a ``projective limit'' of a sequence of weights
	\begin{align*}
		\HL^{[N]}_{n}(\la)=
		{n!}\,
		P_\la(\ab^{[N]};\bb^{[N]};\Pl_\ga\,|\,0,t)
		Q_\la(\Pl_{1}\,|\,0,t),
		\qquad
		N\to\infty,
	\end{align*}
	where $\ab^{[N]}=(\al_1,\ldots,\al_N)$ and
	$\boldsymbol\be^{[N]}=(\be_1,\ldots,\be_N)$
	are the truncated sequences of parameters.
	In other words, $\HL^{[N]}_{n}(\la)$
	is the restriction of the
	desired measure $\HL^{\ab;\bb;\Pl_\ga}_{n}(\la)$
	to realizations of the randomized
	Algorithm \ref{alg:main}
	in which there are no letters $\xi_i$ equal to $\al_{N+1},\al_{N+2},\ldots$
	or
	$\be_{N+1},\be_{N+2},\ldots$
	(note that the weights $\HL^{[N]}_{n}(\la)$ do
	not sum to one).

	One can readily check that our Algorithm
	\ref{alg:main} is compatible with that ``projective limit''
	as well. This completes the proof
	for all specializations
	$(\ab;\bb;\Pl_\ga)$.
\end{proof}

Let us make several comments
on Sampling Algorithm \ref{alg:main}.

\begin{remark} 
	It is possible to reword
	the above description
	purely in terms of
	operations on $\A$-tableaux.
	Namely, at each step $k$
	the new letter $\xi_k\in\A$ distributed according to
	the measure \eqref{measure_m_on_A} is \emph{inserted}
	into the existing $\A$-tableau. The insertion always starts from the
	first column of the tableau.
	All following elementary \emph{bumping/shifting} operations
	in the course of the insertion
	are performed at random
	according to probabilities in step {\rm\bf(IV)} of Algorithm \ref{alg:main}
	(this also depends on the
	type of the ketter $\xi_k$).
	See also \cite[\S7.2]{BorodinPetrov2013NN} for more detail
	on how the setting of 
	interlacing arrays
	can be translated into the language of Young tableaux.
\end{remark}

\begin{remark}\label{rmk:ordering}
	As can be readily seen from the construction of the full sampling algorithm,
	other choices
	of a linear order on $\A$
	lead to other (different) algorithms
	which, however, sample \emph{the same} HL-coherent measure.
	Equivalently, permuting the probability
	weights assigned by the measure $\mathrm{m}$
	(e.g., setting $\mathrm{m}^{\ab;\bb;\Pl_\ga}(i)=\al_{\sigma(i)}$
	for any permutation $\sigma$ of
	natural numbers) also amounts to sampling of the same HL-coherent measure.
	We will utilize such different orderings below
	in \S \ref{sec:proof_of_the_law_of_large_numbers}.
\end{remark}

\begin{remark}\label{rmk:poiss_alg}
	Notice that if letters $\xi_i$ in the random input word
	$w$ appear in continuous time according to certain independent
	Poisson clocks,
	then after time $\tau\ge0$,
	Algorithm~\ref{alg:main} produces
	the poissonized version of
	the HL-coherent measure, i.e., the Macdonald measure
	$\MM_\tau^{\ab;\bb;\Pl_\ga}$
	(\S \ref{sub:poissonization_and_macdonald_measures})
	with $q=0$.
	More precisely, letters $r$ and $\hat r$ ($r=1,2,\ldots$)
	should appear at rates
	$\al_r$ and $\frac{\be_r }{1-t}$, respectively,
	and letters from $\A^{[\Pl]}$
	should appear at total rate $\frac{\ga }{1-t}$.
	Note that each letter from $\A^{[\Pl]}$
	is almost surely new, i.e., it has not appeared in the word
	before (cf. \eqref{measure_m_on_A}).
\end{remark}

\begin{remark}\label{rmk:coupling}
	In the course of the sampling algorithm,
	we construct a sequence of
	random Young diagrams $\la(n)$,
	$|\la(n)|=n$, on the same probability space.
	The marginal distribution of
	each $\la(n)$ is $\HL^{\ab;\bb;\Pl_\ga}_{n}$.
	It can be readily seen that the
	joint distribution of this sequence
	of random Young diagrams reflects the coherency
	property of the measures $\HL^{\ab;\bb;\Pl_\ga}_{n}$ (see \S \ref{sub:plancherel_specializations_and_young_graph}).
	When $t=\q^{-1}=p^{-d}$ is the inverse of a prime power, this
	``big'' probability space corresponds
	to the distribution of the infinite
	uni-uppertriangular random matrix
	over the finite field $F_\q$,
	see \S \ref{sub:infinite_random_matrices_over_a_finite_field}
	and Remark \ref{rmk:coupling_intro} in particular.
\end{remark}

\begin{remark}
	In the
	Schur case, i.e., for $t=0$, all
	randomized insertion
	steps become \emph{deterministic} (see also \cite[\S7]{BorodinPetrov2013NN} for more discussion),
	and one arrives at (a column version of) the generalization of the
	Robinson--Schensted--Knuth (RSK)
	insertion introduced
	in \cite{Vershik1986} (and also employed in, e.g., \cite{BufetovCLT}, \cite{Sniady2013}).
	In fact, it is possible to obtain the algorithm of \cite{Vershik1986}
	itself by taking a suitable order on $\A$ (cf. \eqref{A_order}),
	and also suitable other indices $h$ in
	\eqref{WV_choice}. 
\end{remark}


\subsection{An example of the randomized insertion} 
\label{sub:an_example}

Let us demonstrate the randomized
insertion of the letter $\xi=3$ into
the $\A$-tableau on Fig.~\ref{fig:Atableaux}.
We will use integer arrays as on Fig.~\ref{fig:A_particle}, right.
The letter $3$ is already present in the tableau, 
so no cloning of levels will occur.
The initial insertion means that the leftmost particle at level $\la^{(3)}$ 
jumps to the right by one (observe that this particle is free to 
jump, so no move donation occurs).
Then, with probability
$\rp_2^{+\infty}\big(
(3,2),(4,3,1)\,|\,0,t\big)=1-t$, this particle pushes the first free particle to the right on the next level.
With the complementary probability $t$ it pulls its upper left neighbor. Thus, after propagation of the move
to
the level $\la^{(5)}$, the random configuration on levels $\la^{(2)}\prech\la^{(3)}\prech\la^{(5)}$
looks as follows (here and below ``w.p.'' stands for ``with probability''):
\begin{align}
\framebox{\scalebox{.7}{
\begin{tikzpicture}
[scale=.9, thick]
\def\y{.7}
\node at (7.1,0) {$\la^{(2)}$};
\node at (7.1,\y) {$\la^{(3)}$};
\node at (7.1,2*\y) {$\la^{(5)}$};
	\def\t{.27}
	\def\g{9}
	\node at (\g,0) {2};
\node at (\g+\t,\y) {3};\node at (\g-\t,\y) {\framebox{2}};
\node at (\g+2*\t,2*\y) {\framebox{5}};\node at (\g+0*\t,2*\y) {3};\node at (\g-2*\t,2*\y) {1};
\end{tikzpicture}}}
\raisebox{20pt}{\quad\mbox{w.p. $1-t$,}}\qquad \qquad
\framebox{\scalebox{.7}{
\begin{tikzpicture}
[scale=.9, thick]
\def\y{.7}
\node at (7.1,0) {$\la^{(2)}$};
\node at (7.1,\y) {$\la^{(3)}$};
\node at (7.1,2*\y) {$\la^{(5)}$};
	\def\t{.27}
	\def\g{9}
	\node at (\g,0) {2};
\node at (\g+\t,\y) {3};\node at (\g-\t,\y) {\framebox{2}};
\node at (\g+2*\t,2*\y) {{4}};\node at (\g+0*\t,2*\y) {3};\node at (\g-2*\t,2*\y) {\framebox{2}};
\end{tikzpicture}}}
\raisebox{20pt}{\quad\mbox{w.p. $t$.}}
\label{alg_example_1}
\end{align}

In the first case, the propagation to several next levels is deterministic due to the short-range
pushing, cf. \eqref{short_range_alpha}.
Note that here we had to add one more (leftmost) particle to each level in the upper part
$\boldsymbol\la^{[\boldsymbol\be\cup\Pl]}$.
The resulting distribution in the first case looks as follows:
\begin{align}
	\framebox{\scalebox{.7}{
	\begin{tikzpicture}
	[scale=.9, thick]
	\def\y{.7}
	\draw[color=black, dashed, thick] (13.8,\y*2.5)
	-- (8.6,\y*2.5);
		\def\t{.27}
		\def\g{11.5}
		\node at (\g-\t,0) {2};
	\node at (\g,\y) {3};\node at (\g-2*\t,\y) {\framebox{2}};
	\node at (\g+1*\t,2*\y) {\framebox{5}};\node at (\g-1*\t,2*\y) {3};\node at (\g-3*\t,2*\y) {1};
	\node at (\g+3*\t,3*\y) {3};\node at (\g+1*\t,3*\y) {2};\node at (\g-1*\t,3*\y) {2};\node at (\g-3*\t,3*\y) {1};
	\node at (\g+4*\t,4*\y) {5};\node at (\g+2*\t,4*\y) {2};\node at (\g+0*\t,4*\y) {2};\node at (\g-2*\t,4*\y) {1};\node at (\g-4*\t,4*\y) {\framebox{1}};
	\node at (\g+5*\t,5*\y) {5};\node at (\g+3*\t,5*\y) {4};\node at (\g+1*\t,5*\y) {2};\node at (\g-1*\t,5*\y) {1};\node at (\g-3*\t,5*\y) {\framebox{1}};\node at (\g-5*\t,5*\y) {0};
	\node at (\g+6*\t,6*\y) {5};\node at (\g+4*\t,6*\y) {4};\node at (\g+2*\t,6*\y) {2};\node at (\g+0*\t,6*\y) {2};\node at (\g-2*\t,6*\y) {\framebox{1}};\node at (\g-4*\t,6*\y) {0};\node at (\g-6*\t,6*\y) {0};
	\node at (\g+7*\t,7*\y) {5};\node at (\g+5*\t,7*\y) {4};\node at (\g+3*\t,7*\y) {2};\node at (\g+1*\t,7*\y) {2};\node at (\g-1*\t,7*\y) {1};\node at (\g-3*\t,7*\y) {\framebox{1}};\node at (\g-5*\t,7*\y) {0};\node at (\g-7*\t,7*\y) {0};
	\node at (\g+8*\t,8*\y) {5};\node at (\g+6*\t,8*\y) {4};\node at (\g+4*\t,8*\y) {2};\node at (\g+2*\t,8*\y) {2};\node at (\g+0*\t,8*\y) {2};\node at (\g-2*\t,8*\y) {\framebox{1}};\node at (\g-4*\t,8*\y) {0};\node at (\g-6*\t,8*\y) {0};\node at (\g-8*\t,8*\y) {0};
	\node at (\g-5*\t, 3*\y) {\framebox{1}};
	\node at (\g-6*\t, 4*\y) {{0}};
	\node at (\g-7*\t, 5*\y) {{0}};
	\node at (\g-8*\t, 6*\y) {{0}};
	\node at (\g-9*\t, 7*\y) {{0}};
	\node at (\g-10*\t, 8*\y) {{0}};
	\end{tikzpicture}}}
	\raisebox{50pt}{\quad\mbox{w.p. $(1-t)\dfrac1{1+t}$,}}
	\qquad \quad
	\framebox{\scalebox{.7}{
	\begin{tikzpicture}
	[scale=.9, thick]
	\def\y{.7}
	\draw[color=black, dashed, thick] (13.8,\y*2.5)
	-- (8.6,\y*2.5);
		\def\t{.27}
		\def\g{11.5}
		\node at (\g-\t,0) {2};
	\node at (\g,\y) {3};\node at (\g-2*\t,\y) {\framebox{2}};
	\node at (\g+1*\t,2*\y) {\framebox{5}};\node at (\g-1*\t,2*\y) {3};\node at (\g-3*\t,2*\y) {1};
	\node at (\g+3*\t,3*\y) {3};\node at (\g+1*\t,3*\y) {2};\node at (\g-1*\t,3*\y) {2};\node at (\g-3*\t,3*\y) {1};
	\node at (\g+4*\t,4*\y) {5};\node at (\g+2*\t,4*\y) {2};\node at (\g+0*\t,4*\y) {2};\node at (\g-2*\t,4*\y) {1};\node at (\g-4*\t,4*\y) {\framebox{1}};
	\node at (\g+5*\t,5*\y) {5};\node at (\g+3*\t,5*\y) {4};\node at (\g+1*\t,5*\y) {2};\node at (\g-1*\t,5*\y) {1};\node at (\g-3*\t,5*\y) {\framebox{1}};\node at (\g-5*\t,5*\y) {0};
	\node at (\g+6*\t,6*\y) {5};\node at (\g+4*\t,6*\y) {4};\node at (\g+2*\t,6*\y) {2};\node at (\g+0*\t,6*\y) {2};\node at (\g-2*\t,6*\y) {\framebox{1}};\node at (\g-4*\t,6*\y) {0};\node at (\g-6*\t,6*\y) {0};
	\node at (\g+7*\t,7*\y) {5};\node at (\g+5*\t,7*\y) {4};\node at (\g+3*\t,7*\y) {2};\node at (\g+1*\t,7*\y) {2};\node at (\g-1*\t,7*\y) {\framebox{2}};\node at (\g-3*\t,7*\y) {0};\node at (\g-5*\t,7*\y) {0};\node at (\g-7*\t,7*\y) {0};
	\node at (\g+8*\t,8*\y) {5};\node at (\g+6*\t,8*\y) {4};\node at (\g+4*\t,8*\y) {2};\node at (\g+2*\t,8*\y) {2};\node at (\g+0*\t,8*\y) {2};\node at (\g-2*\t,8*\y) {\framebox{1}};\node at (\g-4*\t,8*\y) {0};\node at (\g-6*\t,8*\y) {0};\node at (\g-8*\t,8*\y) {0};
	\node at (\g-5*\t, 3*\y) {\framebox{1}};
	\node at (\g-6*\t, 4*\y) {{0}};
	\node at (\g-7*\t, 5*\y) {{0}};
	\node at (\g-8*\t, 6*\y) {{0}};
	\node at (\g-9*\t, 7*\y) {{0}};
	\node at (\g-10*\t, 8*\y) {{0}};
	\end{tikzpicture}}}
	\raisebox{50pt}{\quad\mbox{w.p. $(1-t)\dfrac{t}{1+t}$.}}
	\label{alg_example_2}
\end{align}
Indeed, this is because
$\rp_5^{1}\big(
(5,4,2,2,1,0,0,0),(5,4,2,2,1,0,0,0,0)\,|\,t,0\big)=\frac{t}{1+t}$
(propagation from $(\la^{(\hat 7)})'$ to $(\la^{(0.1)})'$, cf. Fig.~\ref{fig:A_particle}), and
$\rp_5^{1}\big(
(5,4,2,2,2,0,0,0,0),(5,4,2,2,2,0,0,0,0,0)\,|\,t,0\big)=0$
(propagation from $(\la^{(0.1)})'$ to $(\la^{(0.34)})'$).
\begin{remark}\label{rmk:no_donations}
	In fact, in the upper part
	$\boldsymbol\la^{[\boldsymbol\be\cup\Pl]}$
	of the array
	there are no donations of jumps or pushes, see \cite[\S8.2.1]{BorodinPetrov2013NN}.
\end{remark}

In the second case, the resulting distribution looks as follows:
\begin{align}
	\framebox{\scalebox{.7}{
	\begin{tikzpicture}
	[scale=.9, thick]
	\def\y{.7}
	\draw[color=black, dashed, thick] (13.8,\y*2.5)
	-- (9.2,\y*2.5);
	\def\t{.27}
	\def\g{11.5}
	\node at (\g,0) {2};
	\node at (\g+\t,\y) {3};\node at (\g-\t,\y) {\framebox{2}};
	\node at (\g+2*\t,2*\y) {4};\node at (\g-0*\t,2*\y) {3};\node at (\g-2*\t,2*\y) {\framebox{2}};
	\node at (\g+3*\t,3*\y) {3};\node at (\g+1*\t,3*\y) {\framebox{3}};\node at (\g-1*\t,3*\y) {2};\node at (\g-3*\t,3*\y) {1};
	\node at (\g+4*\t,4*\y) {5};\node at (\g+2*\t,4*\y) {\framebox{3}};\node at (\g+0*\t,4*\y) {2};\node at (\g-2*\t,4*\y) {1};\node at (\g-4*\t,4*\y) {0};
	\node at (\g+5*\t,5*\y) {5};\node at (\g+3*\t,5*\y) {4};\node at (\g+1*\t,5*\y) {\framebox{3}};\node at (\g-1*\t,5*\y) {1};\node at (\g-3*\t,5*\y) {0};\node at (\g-5*\t,5*\y) {0};
	\node at (\g+6*\t,6*\y) {5};\node at (\g+4*\t,6*\y) {4};\node at (\g+2*\t,6*\y) {\framebox{3}};\node at (\g+0*\t,6*\y) {2};\node at (\g-2*\t,6*\y) {0};\node at (\g-4*\t,6*\y) {0};\node at (\g-6*\t,6*\y) {0};
	\node at (\g+7*\t,7*\y) {5};\node at (\g+5*\t,7*\y) {4};\node at (\g+3*\t,7*\y) {\framebox{3}};\node at (\g+1*\t,7*\y) {2};\node at (\g-1*\t,7*\y) {0};\node at (\g-3*\t,7*\y) {0};\node at (\g-5*\t,7*\y) {0};\node at (\g-7*\t,7*\y) {0};
	\node at (\g+8*\t,8*\y) {5};\node at (\g+6*\t,8*\y) {4};\node at (\g+4*\t,8*\y) {\framebox{3}};\node at (\g+2*\t,8*\y) {2};\node at (\g+0*\t,8*\y) {2};\node at (\g-2*\t,8*\y) {0};\node at (\g-4*\t,8*\y) {0};\node at (\g-6*\t,8*\y) {0};\node at (\g-8*\t,8*\y) {0};
	\end{tikzpicture}}}
	\raisebox{50pt}{\quad\mbox{w.p. $t\,\dfrac{1+t}{1+t+t^2}$,}}
	\qquad \qquad \qquad
	\framebox{\scalebox{.7}{
	\begin{tikzpicture}
	[scale=.9, thick]
	\def\y{.7}
	\draw[color=black, dashed, thick] (13.8,\y*2.5)
	-- (9.2,\y*2.5);
	\def\t{.27}
	\def\g{11.5}
	\node at (\g,0) {2};
	\node at (\g+\t,\y) {3};\node at (\g-\t,\y) {\framebox{2}};
	\node at (\g+2*\t,2*\y) {4};\node at (\g-0*\t,2*\y) {3};\node at (\g-2*\t,2*\y) {\framebox{2}};
	\node at (\g+3*\t,3*\y) {3};\node at (\g+1*\t,3*\y) {\framebox{3}};\node at (\g-1*\t,3*\y) {2};\node at (\g-3*\t,3*\y) {1};
	\node at (\g+4*\t,4*\y) {5};\node at (\g+2*\t,4*\y) {\framebox{3}};\node at (\g+0*\t,4*\y) {2};\node at (\g-2*\t,4*\y) {1};\node at (\g-4*\t,4*\y) {0};
	\node at (\g+5*\t,5*\y) {5};\node at (\g+3*\t,5*\y) {\framebox{5}};\node at (\g+1*\t,5*\y) {2};\node at (\g-1*\t,5*\y) {1};\node at (\g-3*\t,5*\y) {0};\node at (\g-5*\t,5*\y) {0};
	\node at (\g+6*\t,6*\y) {5};\node at (\g+4*\t,6*\y) {\framebox{5}};\node at (\g+2*\t,6*\y) {2};\node at (\g+0*\t,6*\y) {2};\node at (\g-2*\t,6*\y) {0};\node at (\g-4*\t,6*\y) {0};\node at (\g-6*\t,6*\y) {0};
	\node at (\g+7*\t,7*\y) {5};\node at (\g+5*\t,7*\y) {\framebox{5}};\node at (\g+3*\t,7*\y) {2};\node at (\g+1*\t,7*\y) {2};\node at (\g-1*\t,7*\y) {0};\node at (\g-3*\t,7*\y) {0};\node at (\g-5*\t,7*\y) {0};\node at (\g-7*\t,7*\y) {0};
	\node at (\g+8*\t,8*\y) {5};\node at (\g+6*\t,8*\y) {\framebox{5}};\node at (\g+4*\t,8*\y) {2};\node at (\g+2*\t,8*\y) {2};\node at (\g+0*\t,8*\y) {2};\node at (\g-2*\t,8*\y) {0};\node at (\g-4*\t,8*\y) {0};\node at (\g-6*\t,8*\y) {0};\node at (\g-8*\t,8*\y) {0};
	\end{tikzpicture}}}
	\raisebox{50pt}{\quad\mbox{w.p. $t\,\dfrac{t^2}{1+t+t^2}$.}}
	\label{alg_example_3}
\end{align}
Indeed, one can check that
$\rp_2^{1}\big(
(5, 3, 2, 1, 0),(5, 4, 2, 1, 0, 0)\,|\,t,0\big)=\frac{t^{2}}{1+t+t^{2}}$
(propagation from $(\la^{(\hat 2)})'$ to $(\la^{(\hat 5)})'$, cf. Fig.~\ref{fig:A_particle}),
and propagation to all further levels is deterministic.

Thus, we see that the result of the insertion is a random $\A$-tableau described in
\eqref{alg_example_2}--\eqref{alg_example_3}.




\section{Proof of the Law of Large Numbers} 
\label{sec:proof_of_the_law_of_large_numbers}

This section is devoted to the proof of the main result of the present paper (see \S \ref{sub:law_of_large_numbers}
in the Introduction).
Throughout the section we assume that the Macdonald
parameter $q$ is set to zero.
Fix two sequences $\ab=(\al_1,\al_2,\ldots)$ and $\boldsymbol\be=(\be_1,\be_2,\ldots)$ such that
$p_1(\ab;\bb)=\sum_{i=1}^{\infty}\al_i+\frac{1}{1-t}\sum_{i=1}^{\infty}\be_i=1$
(cf. \eqref{p1_one}). That is, we will consider
a HL-nonnegative specialization
$(\ab;\bb;\Pl_\ga)=(\ab;\bb;\Pl_0)$ with the Plancherel parameter $\gamma$ set to zero.

\begin{theorem}\label{thm:main_s7}
	Let for each $n\ge1$, $\la(n)$ be a random Young diagram
	with $n$ boxes distributed according to the
	HL-coherent measure $\HL_n^{\ab;\bb;\Pl_0}$
	(defined in \S \ref{sub:_q_t_coherent_measures};
	we assume that the parameters satisfy
	\eqref{p1_one}).
	The following Law of Large Numbers holds:
	\begin{align}\label{LLN_s7}
		\frac{\la_i(n)}{n}\to\al_i,\qquad
		\frac{\la_i'(n)}{n}\to \frac{\be_i}{1-t},\qquad n\to \infty,
	\end{align}
	where $\la_i'$ are the column lengths of $\la$ (i.e., row lengths of the
	transposed diagram $\la$).

	The convergence in \eqref{LLN_s7} is almost sure
	with respect
	to the probability space
	carrying all random Young diagrams
	$\la(n)$, $n=1,2,3,\ldots$
	(cf.~Remarks \ref{rmk:coupling} and
	\ref{rmk:coupling_intro}).
\end{theorem}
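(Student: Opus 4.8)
The plan is to exploit Sampling Algorithm \ref{alg:main} (Theorem \ref{thm:it_samples}), which realizes $\la(n)=\la^{\max}(n)$ as the output of a randomized RSK-type insertion applied to a random word $w=\xi_1\cdots\xi_n$ with i.i.d.\ letters $\xi_i\sim\mathrm{m}^{\ab;\bb;\Pl_0}$. Because $\ga=0$, the alphabet $\A$ effectively reduces to usual and dual letters, so $\mathrm{m}(r)=\al_r$ and $\mathrm{m}(\hat r)=\be_r/(1-t)$. The key structural point is that in the interlacing array the bottom level $\la^{(r)}$ and the ``top'' transposed level $(\la^{(\hat r)})'$ are each obtained from the levels below them by a sequence of insertions whose \emph{first} (initial) move always adds a box to the corresponding level, and subsequent propagation only ever moves boxes to the right. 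The plan is therefore to sandwich $\la_i(n)$ and $\la_i'(n)$ between quantities determined by the raw letter counts of $w$ and quantities that account for the randomized pushing/pulling.

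First I would establish the elementary lower bounds. Let $N_r(n):=\#\{k\le n:\xi_k=r\}$ and $\hat N_r(n):=\#\{k\le n:\xi_k=\hat r\}$. By the strong law of large numbers, $N_r(n)/n\to\al_r$ and $\hat N_r(n)/n\to\be_r/(1-t)$ almost surely. I would show that at level $\la^{(1)}$ the first particle moves right exactly once for each occurrence of the letter $1$ and never moves otherwise (it is never blocked from below and receives no pushes since there is no level below it), so $\la_1^{(1)}(n)=N_1(n)$; more generally, tracking how moves at level $r$ initiate and propagate, one gets $\la_1(n)\ge\la_1^{(1)}(n)=N_1(n)$ and, by a short induction on $i$ using the interlacing and the fact that a move at level $r$ always places its initiating box at the leftmost free slot, $\sum_{j\le i}\la_j(n)\ge\sum_{r\le i}N_r(n)$. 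The dual argument in terms of columns gives $\sum_{j\le i}\la_j'(n)\ge\sum_{r\le i}\hat N_r(n)$. Dividing by $n$ and letting $n\to\infty$ yields, a.s., $\liminf_n \frac{1}{n}\sum_{j\le i}\la_j(n)\ge\sum_{r\le i}\al_r$ and similarly for columns.

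Next I would prove the matching upper bounds, which is where the main work lies. The total number of boxes is exactly $n$, and one has the sum rule $|\la(n)|=\sum_i\la_i(n)=\sum_i\la_i'(n)=n$ together with $\sum_i\al_i+\frac1{1-t}\sum_i\be_i=1$ (cf.\ \eqref{p1_one}). So it suffices to control the total ``leakage'' of boxes from the first $r$ usual levels into higher levels and from the first $r$ dual levels likewise. The crucial estimate is that a box inserted at level $r\le R$ can end up contributing to a row $\la_j$ with $j>i$ (for the relevant $i$) only through a chain of pushes/pulls, and I would bound the expected number of such ``displaced'' boxes. The clean way: run the algorithm with the order on $\A$ chosen (Remark \ref{rmk:ordering}) so that the letters $1,\dots,R,\hat 1,\dots,\hat R$ come first; then the sub-tableau built from these letters is itself an $\A$-tableau with at most $R$ usual and $R$ dual levels, hence its maximal diagram has at most $R$ rows contributed by usual letters and at most $R$ columns by dual letters, giving deterministic bounds $\la_{R+1}(n)\le(\text{\# letters} \notin\{1,\dots,R\})$ and similarly for columns. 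Combined with the lower bounds and the SLLN on letter frequencies, a standard $\varepsilon$-argument (the tails $\sum_{r>R}\al_r$ and $\sum_{r>R}\be_r$ are as small as desired) pins down $\la_i(n)/n\to\al_i$ and $\la_i'(n)/n\to\be_i/(1-t)$ a.s.

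The main obstacle I anticipate is making the ``leakage'' bound rigorous: the randomized pushing/pulling moves can in principle transport boxes far to the right, so one must argue that boxes inserted by a letter $\le R$ stay within the first $R$ rows (or columns). The cleanest route is the reordering trick together with the observation (from Definition \ref{def:Atableaux} and the squashing Lemma \ref{lemma:squashed}) that the portion of the interlacing array built from any prefix of the alphabet is a genuine $\A$-tableau on at most that many levels, which forces a purely combinatorial bound on the number of rows/columns it can occupy — independent of the random choices in step {\bf(IV)}. Once that deterministic sandwich is in place, the rest is the strong law of large numbers for the i.i.d.\ letter counts plus a diagonal/tail argument to pass from each fixed truncation $R$ to the full specialization. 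I would also remark that no de-poissonization (Remark \ref{rmk:depoiss}) is needed because the algorithm works directly with words of fixed length $n$.
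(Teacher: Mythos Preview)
There is a genuine gap. Your lower bound $\sum_{j\le i}\la_j(n)\ge\sum_{r\le i}N_r(n)$ is correct and easy (it is just $|\la^{(i)}(n)|=\sum_{r\le i}N_r(n)$ together with $\la^{(i)}\subseteq\la$ and $\ell(\la^{(i)})\le i$), but it is only a \emph{partial-sum} bound. Your ``upper bound'' $\la_{R+1}(n)\le n-\sum_{r\le R}N_r(n)$ is the \emph{same} information rewritten: it follows from $\sum_{j\le R}\la_j\ge\sum_{r\le R}N_r$ and $\sum_j\la_j=n$, so the second step adds nothing. And partial-sum constraints on rows and columns do not determine the individual limits. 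Concretely, take $\ab=(\tfrac12,\tfrac12)$ and $\boldsymbol\be=0$: your inequalities reduce to $\liminf\la_1/n\ge\tfrac12$, $\liminf(\la_1+\la_2)/n\ge1$, and $\la_j=o(n)$ for $j\ge3$, while the column bounds are vacuous. A sequence of Young diagrams with $\la_1(n)\approx 0.7n$, $\la_2(n)\approx 0.3n$ satisfies all of these, so from them alone you cannot conclude $\la_1(n)/n\to\tfrac12$.

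What is missing is an \emph{individual} lower bound $\liminf_n\la_i(n)/n\ge\al_i$ for each fixed $i$, and this is where the real dynamics of the insertion enter: one must show that the leftmost particle $\la^{(i)}_i$ on level $i$ is not blocked by $\la^{(i-1)}_{i-1}$ often enough to lower its speed. The paper does this (Lemma~\ref{lemma:row_proof_s7}) by coupling the diagonal particles $(\la^{(m)}_m)_m$ with a finite-particle TASEP $(\T_m)_m$ with ordered speeds $\al_1\ge\cdots\ge\al_\anum$ so that $\la^{(i)}_i(n)\ge\T_i(n)$ pathwise, and then invoking the LLN $\T_i(n)/n\to\al_i$ (Proposition~\ref{prop:s7_T}); the column case uses an analogous coupling with an auxiliary long-range push process (Lemma~\ref{lemma:col_proof_s7}, Proposition~\ref{prop:s7_Q}). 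Once the individual lower bounds $\liminf\la_i/n\ge\al_i$ and $\liminf\la_i'/n\ge\be_i/(1-t)$ are in hand (Lemma~\ref{lemma:main_s7_lower}), the matching upper bounds do follow from the total-mass identity by exactly the $\varepsilon$-truncation argument you sketch (Lemma~\ref{lemma:main_s7_upper}). Your reordering observation and the combinatorial fact that usual letters $1,\dots,R$ can occupy only the first $R$ rows are correct and are precisely what the paper uses to reduce the infinite case to the finite one, but they do not replace the coupling step.
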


See \S \ref{sub:law_of_large_numbers} in the Introduction
for further conjectures related to Theorem \ref{thm:main_s7}.

\subsection{Strategy of the proof} 
\label{sub:strategy_of_the_proof}

Let us briefly outline our strategy of the proof of
Theorem~\ref{thm:main_s7}.
Assume first that both sequences $\ab$ and $\boldsymbol\be$
are finite,
\begin{align*}
	\ab=\{\al_1\ge\al_2\ge \ldots\ge\al_\anum\},\qquad
	\boldsymbol\be=\{\be_1\ge\be_2\ge \ldots\ge\be_\bnum\}.
\end{align*}
Alphabet $\A$ \eqref{alphabet} then reduces to
\begin{align*}
	\A=\{1,2,\ldots,\anum\}\cup\{\hat 1,\hat 2,\ldots,\hat{\bnum}\}.
\end{align*}
Let us slightly modify
(with the help of Remark \ref{rmk:ordering})
the probability distribution
$\mathrm{m}=\mathrm{m}^{\ab;\bb}$ \eqref{measure_m_on_A} on
$\A$
such that
\begin{align}\label{how_define_measure}
	\mathrm{m}^{\ab;\bb}(i)=\al_i,\quad i=1,\ldots,\anum;\qquad \qquad
	\mathrm{m}^{\ab;\bb}(\hat j)=\frac{\be_{\bnum+1-j}}{1-t},\quad j=1,\ldots,\bnum,
\end{align}
so that
$\mathrm{m}(1)\ge\mathrm{m}(2)\ge \ldots\ge \mathrm{m}(\anum)$, and
$\mathrm{m}(\hat 1)\le\mathrm{m}(\hat 2)\le \ldots\le \mathrm{m}(\hat{\bnum})$.

\subsubsection{Row lengths} 
\label{ssub:row_lengths}

To deal with row lengths, we order the alphabet $\A$
as
\begin{align*}
	1<2<\ldots<\anum<\hat 1<\hat 2<\ldots<\hat{\bnum}
\end{align*}
(this ordering coincides with the one considered
in \S \ref{sec:rsk_type_algorithm_for_sampling_hl_coherent_measures}).
Then the lower part $\boldsymbol\la^{[\ab]}$ of the
array on Fig.~\ref{fig:A_particle}
consists of a finite number of interlacing particles, namely,
$\{\la^{(m)}_{i}\}$, where
$m=1,\ldots,\anum$, $j=1,\ldots,m$.
To establish the
row part of Theorem \ref{thm:main_s7} (i.e., the
convergence $\la_j(n)/n\to\al_j$), it suffices to
show that the coordinates of the
particles at the top $\anum$th level behave as
$\la^{(\anum)}_{j}(n)/n\to\al_j$ as $n\to\infty$, $j=1,\ldots,\anum$.
Indeed, recall the bijection of
arrays with $\A$-tableaux 
(\S \ref{sub:state_space_towers_of_young_diagrams_a_tableaux_and_interlacing_configurations}), and observe that
in each row of an $\A$-tableau (as on Fig.~\ref{fig:Atableaux})
there can be at most one letter of the form
$\hat \imath$, where $i=1,\ldots,\bnum$.

Next, note that the evolution of the particle coordinates
$\la^{(\anum)}_{j}(n)$ depends only on the particles in the lower part
$\boldsymbol\la^{[\ab]}$ of the array. Thus, we can
freely assume that all the parameters $\be_j$ of our specialization
are zero, and
\begin{align}\label{finite_alphas_def}
	\al_1\ge \al_2\ge \ldots\ge\al_\anum\ge0,\qquad
	\sum_{i=1}^{\anum}\al_i=1.
\end{align}
This is the setup which we are going to apply
to prove the row part of Theorem \ref{thm:main_s7} (in the
case of both sequences $\ab$ and $\boldsymbol\be$
being finite).

Our strategy of the proof relies on the observation
that when (in the course of the sampling algorithm)
each particle
is far enough from its upper left neighbor,
a move of $\la^{(m)}_{j}$ triggers
the move of its upper right neighbor $\la^{(m+1)}_{j}$,
which in turn pushes $\la^{(m+2)}_{j}$, and so on.
Our choice of speeds of independently jumping particles
$\la^{(m)}_m$ (i.e., the bottommost
particle $\la^{(1)}_{1}$ is the fastest one)
informally suggests that
the particles will be in that generic situation most of the
time, and
the whole
array will asymptotically look like on Fig.~\ref{fig:alpha_array}.
That is, particles
$\la^{(m)}_{j}$ with fixed $j$
and $m$ running over $j,j+1,\ldots,\anum$
will have horizontal coordinate $\sim\al_j n$
(plus random fluctuations of order $\sqrt n$, cf. Remark \ref{rmk:CLT}).
It is helpful to keep this observation in mind while reading the
rigorous arguments below in this section.
\begin{figure}[htbp]
\begin{center}
	\begin{tikzpicture}
		[scale=.9, thick]
		\def\y{.7}
		\foreach \level in {0,1,2,3}
		{
			\draw[->] (-.5,\y*\level) -- (10.5,\y*\level);
		}		
	
	    \def\sp{.2};
	    \def\opac{.65}
	    \def\w{1.1}
	    \def\e{0.03}
	    \def\a{9}
	    \def\aa{5}
	    \def\aaa{2.2}
	    \def\aaaa{0}
	    \foreach \pt in
	    {(\a-.05,0),(\a,\y),(\a+.04,2*\y),(\a+.1,3*\y),
	    (\aa-.08,\y),(\aa,2*\y),(\aa+.13,3*\y),
	    (\aaa,2*\y),(\aaa+.1,3*\y),
	    (\aaaa,3*\y)}
	    {
	    	\draw[fill] \pt circle (2.7pt);
	    }
	    \node at (11.1,0) {$\la^{(1)}$};
	    \node at (11.1,\y) {$\la^{(2)}$};
	    \node at (11.1,2*\y) {$\ldots$};
	    \node at (11.2,3*\y) {$\la^{(\anum)}$};
	    \node at (\a,4*\y) {$\sim\al_1 n$};
	    \node at (\aa,4*\y) {$\sim\al_2 n$};
	    \node at (\aaa,4*\y) {$\ldots$};
	    \node at (\aaaa,4*\y) {$\sim\al_{\anum} n$};
	\end{tikzpicture}
\end{center}
\caption{Clusters of particles under a
specialization $\ab=(\al_1,\ldots,\al_\anum)$
into $\anum$ usual variables.}
\label{fig:alpha_array}
\end{figure}


\subsubsection{Column lengths} 
\label{ssub:column_lengths}

For the purpose of dealing with column lengths, we reorder $\A$
as
\begin{align*}
	\hat 1<\hat 2<\ldots<\hat{\bnum}<1<2<\ldots<\anum
\end{align*}
(this is allowed by Remark \ref{rmk:ordering}).

In this case by a similar argument
we see that it also
suffices to consider only the case when all $\al_j$'s
are zero, and the specialization has the parameters
\begin{align}\label{finite_betas_def}
	\be_1\ge\be_2\ge \ldots\ge\be_\bnum,\qquad
	\sum_{i=1}^{\bnum}\be_i=(1-t).
\end{align}
Then in the array $\boldsymbol\la$
there is a finite number of interlacing particles
$(\la^{(\hat m)}_{j})'$, $m=1,\ldots,\bnum$, $j=1,\ldots,m$,
on the first $\bnum$ levels.
To shorten the notation, denote their coordinates by
$\tau^{(m)}_j:=(\la^{(\hat m)}_{j})'$.\footnote{In fact, these particles evolve
according to Dynamics 8 in \cite{BorodinPetrov2013NN},
up to renaming $t$ by $q$ and considering the evolution in continuous time,
cf. Remark \ref{rmk:poiss_alg}.}
To establish the column part of Theorem \ref{thm:main_s7},
we need to show that the top
row particles behave as $\tau^{(\bnum)}_{j}(n)/n\to\be_j/(1-t)$ as $n\to\infty$,
$j=1,\ldots,\bnum$.

Our choice of speeds of independently jumping particles
$\tau^{(m)}_1$ (i.e., the bottommost
particle $\tau^{(1)}_{1}$ is the slowest one)
informally suggests that
our interlacing particles behave as on Fig.~\ref{fig:beta_array}
(plus random fluctuations of order $\sqrt n$, cf. Remark \ref{rmk:CLT}).
Indeed, when the particles are sufficiently
far from each other, a move of any particle $\la^{(m)}_{j}$
will trigger the move of its upper left neighbor
$\la^{(m+1)}_{j+1}$. It is also helpful
to keep this observation in mind while reading the
rigorous arguments below.

\begin{figure}[htbp]
\begin{center}
	\begin{tikzpicture}
		[scale=.9, thick]
		\def\y{.7}
		\foreach \level in {0,1,2,3}
		{
			\draw[->] (-.5,\y*\level) -- (10.5,\y*\level);
		}		
	
	    \def\sp{.2};
	    \def\opac{.65}
	    \def\w{1.1}
	    \def\e{0.03}
	    \def\a{9}
	    \def\aa{6}
	    \def\aaa{2.2}
	    \def\aaaa{0}
	    \foreach \pt in
	    {(\aaaa+.15,0),(\aaa+.07,\y),(\aa+.04,2*\y),(\a+.1,3*\y),
	    (\aaaa+.08,\y),(\aaa,2*\y),(\aa-.1,3*\y),
	    (\aaaa,2*\y),(\aaa,3*\y),
	    (\aaaa-.02,3*\y)}
	    {
	    	\draw[fill] \pt circle (2.7pt);
	    }
	    \node at (11.1,0) {$\tau^{(1)}$};
	    \node at (11.1,\y) {$\tau^{(2)}$};
	    \node at (11.1,2*\y) {$\ldots$};
	    \node at (11.2,3*\y) {$\tau^{(\bnum)}$};
	    \node at (\a,4*\y) {$\sim\frac{\be_1 n}{1-t}$};
	    \node at (\aa,4*\y) {$\sim\frac{\be_2 n}{1-t}$};
	    \node at (\aaa,4*\y) {$\ldots$};
	    \node at (\aaaa,4*\y) {$\sim\frac{\be_\bnum n}{1-t}$};
	\end{tikzpicture}
\end{center}
\caption{Clusters of particles under a
specialization $\boldsymbol\be=(\be_1,\ldots,\be_\bnum)$
into $\bnum$ dual variables.}
\label{fig:beta_array}
\end{figure}

\begin{remark}\label{rmk:qpushtasep}
The evolution of $\{\tau^{(m)}_{1}\}_{m=1}^{\bnum}$
becomes the $q$-PushTASEP
\cite{BorodinPetrov2013NN},
\cite{CorwinPetrov2013}
after the renaming $t$ by $q$ and considering the evolution in continuous time. Thus, Theorem
\ref{thm:main_s7} that we are proving implies
existence of asymptotic speeds
of particles under the
$q$-PushTASEP.
\end{remark}


\subsubsection{Outline of the section} 
\label{ssub:outline}

In \S \ref{sub:auxiliary_dynamics} we define certain simple Markov
dynamics which are used in coupling arguments in
the case when both sequences of parameters
$\ab$ and $\boldsymbol\be$ are finite. Proofs in this case
are presented in \S \ref{sub:row_lengths_finite_case}
(row part)
and \S \ref{sub:column_lengths_finitely_many_dual_variables}
(column part).
With the help of
an additional argument, the case when
sequences
$\ab$ and $\boldsymbol\be$
can be infinite
is reduced to the
case when both these sequences are finite.
We deal with this reduction in
\S \ref{sub:completing_the_proof_when_one_of_the_specializations_is_infinite}
below.



\subsection{Auxiliary dynamics and estimates} 
\label{sub:auxiliary_dynamics}

Let $\mm(n)$ ($n\ge0$ --- discrete time) be a Markov chain
with state space $\Z_{\ge0}$ depending on
parameters $\pu\ge \pd>0$ and $\pz>0$
with one-step transitions defined as
\begin{align*}
	\mm(n+1)=\begin{cases}
		\mm(n)+1,&\mbox{with probability $\pu$};\\
		\mm(n)-1,&\mbox{with probability $\pd$};\\
		\mm(n),&\mbox{with probability $1-\pu-\pd$}
	\end{cases}
\end{align*}
if $\mm(n)\ne0$, and
\begin{align*}
	\mm(n+1)=\begin{cases}
		1,&\mbox{with probability $\pz$};\\
		0,&\mbox{with probability $1-\pz$}.
	\end{cases}
\end{align*}

For any $c\in(0,1)$, define
\begin{align}\label{w_phi}
	\wm(n):=
	\sum_{i=0}^{n}
	\mathbf{1}_{\mm(i)=0},
	\qquad
	\phi(n):=
	\sum_{i=0}^{n}
	c^{\mm(i)}.
\end{align}

\begin{lemma}\label{lemma:s7_1}
	As $n\to\infty$,
	\begin{align*}
		\E\wm(n)=O(\sqrt n),
		\qquad
		\E\phi(n)=O(\sqrt n).
	\end{align*}
\end{lemma}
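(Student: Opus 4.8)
\textbf{Proof strategy for Lemma \ref{lemma:s7_1}.}

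The plan is to analyze the Markov chain $\mm(n)$ as a lazy random walk on $\Z_{\ge0}$ with a reflecting-type boundary at $0$ and \emph{negative drift} $\pd-\pu<0$ away from the boundary. The key structural fact is that such a chain is positive recurrent with a stationary distribution that decays geometrically: away from $0$ the chain behaves like a biased walk stepping up with probability $\pu$ and down with probability $\pd>\pu$ wait, we have $\pu\ge\pd$, so actually the drift is \emph{towards} $+\infty$; I would instead observe that the correct reading is that $\mm$ is pushed up but the quantities $\wm(n)$ (time spent at $0$) and $\phi(n)$ (exponential functional with $c\in(0,1)$) are both controlled by how rarely $\mm$ visits small values. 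First I would set up the comparison with a genuine biased walk on $\Z$ (removing the boundary) to get a clean hypergeometric/reflection estimate, then transfer back.

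Concretely, the first step is to bound $\E\wm(n)=\sum_{i=0}^n\Prob(\mm(i)=0)$. By a standard local central limit / reflection argument for the lazy walk with increments in $\{-1,0,+1\}$, one has $\Prob(\mm(i)=0)=O(i^{-1/2})$ uniformly in $i\ge1$ (the return probability to a fixed level for a walk with nonzero variance decays like $i^{-1/2}$, and the boundary at $0$ only decreases this by the reflection principle). Summing, $\sum_{i=1}^n O(i^{-1/2})=O(\sqrt n)$, which gives the first claim. For the second step, $\E\phi(n)=\sum_{i=0}^n\E c^{\mm(i)}$, and since $c^{\mm(i)}\le 1$ always and $c^{\mm(i)}$ is only non-negligible when $\mm(i)$ is small, I would write $\E c^{\mm(i)}=\sum_{k\ge0}c^k\Prob(\mm(i)=k)$ and use $\Prob(\mm(i)=k)=O(i^{-1/2})$ uniformly in $k$ (again local CLT, or a direct coupling to the unconstrained walk where $\Prob(S_i=k)\le O(i^{-1/2})$ for all $k$). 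Then $\E c^{\mm(i)}\le O(i^{-1/2})\sum_{k\ge0}c^k=O(i^{-1/2})$ since the geometric series converges, and summing over $i$ gives $\E\phi(n)=O(\sqrt n)$.

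The main obstacle — and the place where care is genuinely needed — is establishing the uniform local estimate $\sup_{k\ge0}\Prob(\mm(i)=k)=O(i^{-1/2})$ for the \emph{boundary-modified} chain, rather than for the free walk. The free lazy walk satisfies this by the classical local CLT (its increments have mean $\pu-\pd$, finite variance, and aperiodicity is ensured by the holding probability $1-\pu-\pd>0$), but the reflection at $0$ with its special transition rule $\pz$ could a priori create an atom. I would handle this by a coupling argument: construct $\mm$ and a free walk $S$ on the same probability space so that $\mm(i)\ge S_i' $ wait — better, couple so that excursions of $\mm$ above $0$ coincide with excursions of a free walk, and bound $\Prob(\mm(i)=k)$ by the probability that a free walk started at the last visit to $0$ is at height $k$ at time $i$, averaged over the (random) last visit time; since the free-walk heat kernel is $O(j^{-1/2})$ uniformly over target heights for any elapsed time $j\ge1$, and the contribution of $j=0$ (i.e. $\mm(i)=0$) is exactly the first estimate, the bound propagates. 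An alternative, cleaner route is to invoke the explicit geometric stationary distribution $\pi(k)\asymp \rho^k$ with $\rho=\pd/\pu\le1$ together with a Doeblin-type mixing bound to control $\Prob(\mm(i)=k)$ for large $i$ by $\pi(k)+$ (exponentially small error), reducing everything to $\sum_i[\pi(\{0\})+e^{-ci}]=O(n)$ — but this only gives $O(n)$, not $O(\sqrt n)$, so the $i^{-1/2}$ transient decay really is needed, which is why the local CLT for the reflected chain is the crux.
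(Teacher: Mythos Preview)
Your core idea is right and matches the paper: show $\sup_k\Prob(\mm(i)=k)=O(i^{-1/2})$ via a local CLT for the underlying lazy walk, then sum over $i$ (and for $\phi$, also sum the geometric series in $k$). Where you diverge from the paper is in how you handle the boundary, and the paper's route is considerably cleaner than the last-passage/excursion decomposition you sketch.

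The paper makes two simplifications you miss. First, it reduces to the worst case $\pu=\pd$: with strictly positive drift the chain visits low levels less often, so it suffices to treat the driftless case. Second, once $\pu=\pd$, the reflected chain (up to its special rule at $0$) is just $\mm'(k)=|\nu_1+\cdots+\nu_k|$ for an unconstrained symmetric lazy walk $\nu_1+\cdots+\nu_k$ on $\Z$. For this object the local bound $\Prob(\mm'(k)=r)\le const/\sqrt{k}$, uniformly in $r$, is immediate from the ordinary CLT/local CLT with no reflection argument needed. The discrepancy between $\mm$ and $\mm'$ is then only the holding rule at $0$ (leave with probability $\pz$ versus $2\pd$), which rescales the expected occupation of each level by a fixed constant factor; hence $\E\sum_{i\le n}\mathbf 1_{\mm(i)=r}\le \frac{1}{\pz}\E\sum_{i\le n}\mathbf 1_{\mm'(i)=r}\le const\sqrt n$ for every $r$, and both estimates follow.

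One genuine slip in your write-up: your ``alternative route'' invokes a geometric stationary distribution $\pi(k)\asymp(\pd/\pu)^k$, but for $\pu\ge\pd$ the chain is transient (if $\pu>\pd$) or null-recurrent (if $\pu=\pd$), so no invariant probability $\pi$ exists. You discard that route anyway, but for the wrong reason. Your last-passage decomposition would ultimately work, but as you note it threatens circularity and requires care; the $|S_n|$ trick sidesteps all of that.
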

\begin{proof}
	Clearly, it suffices to establish the statement of the
	lemma for the case $\pu=\pd$.
	Let $\{\nu_i\}_{i\ge 1}$ be independent random variables with
	distribution $\pd \delta_{-1}+(1-2\pd)\delta_0+\pd \delta_1$
	(here $\delta$ means the Dirac probability distribution at a
	point). Denote
	\begin{align*}
		\mm'(k):=|\nu_1+\nu_2+\ldots+\nu_k|,\qquad
		\wm'_{r}(k):=\sum_{i=0}^{k}
		\mathbf{1}_{\mm(i)=r},
	\end{align*}
	where $r=0,1,2,\ldots$.
	
	It can be readily shown
	(for example, using the standard Central Limit Theorem) that
	\begin{align*}
		P(\mm'(k)=r)\le \frac{const}{\sqrt k}
	\end{align*}
	for some $const$ not depending on $k$.
	Therefore,
	\begin{align*}
		\E \wm'_{r}(n)=
		\sum_{k=0}^{n}P(\mm'(k)=r)\le
		\sum_{k=0}^{n} \frac{const}{\sqrt k}
		\le const \sqrt n.
	\end{align*}

	The chain $\mm(n)$ differs from $\mm'(n)$
	only by transition probabilities at zero.
	Therefore, because on average the chain $\mm(n)$
	spends time $1/\pz$ at zero, we have
	\begin{align*}
		\E\sum_{i=0}^{n}\mathbf{1}_{\mm(i)=r}\le
		\frac{1}{\pz}\E\wm'_r(n)\le const \sqrt n
	\end{align*}
	for any $r=0,1,\ldots$.
	This readily implies the desired estimates.
\end{proof}

Now let us introduce two auxiliary
many-particle systems on $\Z$.
The first of the systems is (the discrete-time version of,
cf. Remark \ref{rmk:poiss_alg})
the well-known Totally Asymmetric Simple
Exclusion Process (TASEP) with a finite number
$\anum=1,2,\ldots$ of particles, and with particle-dependent
speeds $\{\al_i\}_{i=1}^{\anum}$ satisfying \eqref{finite_alphas_def}.

Let us denote positions of the particles in that process by
$\T_1(n)\ge \T_2(n)\ge \ldots\ge T_\anum(n)$ ($n$ is the discrete time).
The dynamics preserves this ordering.
The evolution
of this TASEP
goes as follows. Initially, $\T_1(0)= \T_2(0)= \ldots= T_\anum(0)=0$.
At each moment of the discrete time,
exactly one of the particles receives a ``jumping signal'';
the probability that this is $\T_i$ is equal to $\al_i$ (independently of previous ``signals''). Then, if
$\T_i(n)<\T_{i-1}(n)$, the coordinate of $\T_i$ increases by one.
Otherwise, if $\T_i(n)=\T_{i-1}(n)$, then no jump occurs
(so the particle $\T_i$ can be blocked by the next one $\T_{i-1}$).

\begin{proposition}\label{prop:s7_T}
	The dynamics $\{\T_i(n)\}_{i=1}^{\anum}$ satisfies the following
	Law of Large Numbers:
	\begin{align*}
		\frac{\T_i(n)}n\to\al_i,\qquad n\to\infty, \qquad \mbox{almost surely}.
	\end{align*}
\end{proposition}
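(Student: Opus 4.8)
The plan is to prove Proposition~\ref{prop:s7_T} by induction on the particle index $i$. Let $\zeta_m\in\{1,\dots,\anum\}$ denote the particle that receives the jumping signal at step $m$, so the $\zeta_m$ are i.i.d.\ with $\Prob(\zeta_m=i)=\al_i$. The base case $i=1$ is immediate: the rightmost particle $\T_1$ is never blocked, so $\T_1(n)=\#\{m\le n:\zeta_m=1\}$ is a sum of i.i.d.\ Bernoulli$(\al_1)$ variables and $\T_1(n)/n\to\al_1$ a.s.\ by the strong law of large numbers. For the inductive step fix $i\ge2$ and assume $\T_j(n)/n\to\al_j$ a.s.\ for all $j<i$. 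Write $\T_i(n)=N_i(n)-B_i(n)$, where $N_i(n):=\#\{m\le n:\zeta_m=i\}$ is the number of signals received by $\T_i$ and $B_i(n)$ is the number of those signals that were blocked (i.e.\ arrived while $\T_i=\T_{i-1}$). By the strong law $N_i(n)/n\to\al_i$ a.s., and since $\T_i(n)\le N_i(n)$ we get $\limsup_n\T_i(n)/n\le\al_i$ a.s.; hence it suffices to prove that $B_i(n)/n\to0$ almost surely.

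First I would dispose of the case $\al_{i-1}>\al_i$. By the induction hypothesis $\T_{i-1}(n)/n\to\al_{i-1}>\al_i\ge\limsup_n\T_i(n)/n$ almost surely, so almost surely there is a random $n_0$ with $\T_{i-1}(n)>\T_i(n)$ for all $n\ge n_0$; after step $n_0$ no signal to $\T_i$ can be blocked, so $B_i(n)\le B_i(n_0)=O(1)$ and $B_i(n)/n\to0$.

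The remaining, and main, case is $\al_{i-1}=\al_i$. Here I would work with the gap $G_i(n):=\T_{i-1}(n)-\T_i(n)\ge0$ and note $B_i(n)\le Z_i(n):=\#\{0\le m\le n:G_i(m)=0\}$, since a signal to $\T_i$ is blocked only while $G_i=0$. The point is that $G_i$ evolves \emph{exactly} as the Markov chain $\mm$ of \S\ref{sub:auxiliary_dynamics} with $\pu=\al_{i-1}$, $\pd=\al_i$, $\pz=\al_{i-1}$ (the leftover probability being a lazy step) \emph{except} at steps where the predecessor $\T_{i-1}$ is itself blocked; and $\T_{i-1}$ is blocked only when $G_{i-1}=0$, hence at most $Z_{i-1}(n)$ times up to step $n$. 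Coupling $G_i$ with a genuine such chain driven by the same signal sequence, using the elementary bound $\mathbf 1_{G_i(m)=0}\le \qq^{\,G_i(m)}$ for a fixed $\qq\in(0,1)$ to exploit the estimate $\E\phi(n)=O(\sqrt n)$ of Lemma~\ref{lemma:s7_1}, and controlling the episodes during which $G_i$ stays at $0$ (each has $O(1)$ expected length and the number of such episodes up to step $n$ is $O(\sqrt n)$ in expectation), one arrives at a bound of the form $\E Z_i(n)\le O(\sqrt n)+\E Z_{i-1}(n)$. Since $G_2=\T_1-\T_2$ is a genuine $\mm$-chain (because $\T_1$ is never blocked), Lemma~\ref{lemma:s7_1} gives $\E Z_2(n)=O(\sqrt n)$, and as there are only finitely many particles, iterating yields $\E Z_i(n)=O(\sqrt n)$ and therefore $\E B_i(n)=O(\sqrt n)$.

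Finally, to pass from this $L^1$ bound to almost sure convergence I would invoke Markov's inequality along the dyadic times $n_k=2^k$: for any $\varepsilon>0$, $\Prob\big(B_i(2^k)>\varepsilon 2^k\big)\le \E B_i(2^k)/(\varepsilon 2^k)=O(2^{-k/2})$, which is summable, so by Borel--Cantelli $B_i(2^k)/2^k\to0$ a.s.; since $B_i(\cdot)$ is nondecreasing and $n_{k+1}/n_k=2$, the full limit $B_i(n)/n\to0$ a.s.\ follows, and hence $\T_i(n)/n=N_i(n)/n-B_i(n)/n\to\al_i$ almost surely, closing the induction. The main obstacle is exactly the equal-speed case: when $\al_{i-1}=\al_i$ the gap $G_i$ is not literally a clean $\mm$-chain, because $\T_{i-1}$ gets blocked from time to time, and one must show that this perturbation — which affects only $O(\sqrt n)$ of the steps, yet acts at the same $\sqrt n$ scale as the diffusive fluctuations of $G_i$ — does not destroy the $O(\sqrt n)$ bound on the occupation time of $\{G_i=0\}$; this is what forces the inductive bookkeeping of the $Z_j$'s together with the geometric-weight estimate of Lemma~\ref{lemma:s7_1}, rather than a naive stochastic domination.
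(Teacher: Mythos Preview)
Your approach is genuinely different from the paper's, which identifies $\T_i(n)$ (via the column RSK insertion) with the last row of a random Young diagram distributed according to a Schur-type coherent measure \eqref{Schur_coherent_measures} and then quotes the Vershik--Kerov Law of Large Numbers for those measures. Your route---induction on $i$ via the blocking count $B_i(n)$, followed by Borel--Cantelli along dyadic times---is more elementary and self-contained, and would be valuable if completed.

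However, there is a genuine gap in the equal-speed step. The coupling you propose, driving a genuine $\mm$-chain $\tilde G_i$ by the same signal sequence, yields $\tilde G_i(n)\ge G_i(n)$ for all $n$ (each perturbation suppresses an \emph{upward} step of $G_i$), hence $\mathbf{1}_{G_i=0}\ge\mathbf{1}_{\tilde G_i=0}$ and $c^{G_i}\ge c^{\tilde G_i}$: both inequalities go the \emph{wrong} way to transfer the estimate of Lemma~\ref{lemma:s7_1} from $\tilde G_i$ to $G_i$. Your further claim that ``each episode at $0$ has $O(1)$ expected length'' is also unjustified for the perturbed chain, since the escape probability from $0$ is $\al_{i-1}\cdot\Prob(\T_{i-1}\text{ not blocked}\mid\text{current state})$, and the second factor is not bounded below a priori. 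A clean fix is to bypass $Z_i$ and bound $B_i$ directly by a discrete Skorokhod reflection: setting $S(n)=N_{i-1}(n)-N_i(n)$ (a random walk with i.i.d.\ increments, mean zero when $\al_{i-1}=\al_i$), one checks that $G_i(n)=S(n)-B_{i-1}(n)+B_i(n)$ with $B_i$ nondecreasing and increasing only when $G_i=0$, whence
\[
B_i(n)=\max_{0\le m\le n}\bigl(B_{i-1}(m)-S(m)\bigr)\le B_{i-1}(n)+\max_{0\le m\le n}\bigl(-S(m)\bigr),
\]
and $\E\bigl[\max_{m\le n}(-S(m))\bigr]=O(\sqrt n)$ by standard random walk estimates (in fact $O(1)$ when $\al_{i-1}>\al_i$, so the case split is unnecessary). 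This gives $\E B_i(n)\le \E B_{i-1}(n)+O(\sqrt n)$, hence $\E B_i(n)=O(\sqrt n)$ by induction from $B_1\equiv 0$, after which your Borel--Cantelli argument goes through unchanged.
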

\begin{proof}
	This fact is well-known. 
	However, we were not able to find an exact reference in the literature. For the sake of completeness 
	we provide one of many possible explanations.

	The TASEP $\{\T_i(n)\}_{i=1}^{\anum}$
	can be identified
	with 
	the dynamics
	of the leftmost particles in an interlacing
	array under the column RSK insertion process, 
	e.g., see \cite[\S7.1.3]{BorodinPetrov2013NN}. 
	(This fact can be traced back to 
	\cite{johansson2000shape}, \cite{baik1999distribution},
	\cite{OConnell2003Trans}, \cite{OConnell2003},
	see also \cite{BorFerr2008DF}.)
	Therefore, 
	$\T_i(n)$ can be identified
	with
	the last row $\la_i$ of 
	a Young diagram $\la$ with $\ell(\la)\le i$
	distributed as follows. First, let $k$
	be a binomial random variable with distribution
	\begin{align*}
		\Prob(k)=\binom{n}{k}(\al_1+\ldots+\al_i)^{k}
		(\al_{i+1}+\ldots+\al_{\anum})^{n-k}
	\end{align*}
	(recall the condition \eqref{finite_alphas_def} on $\al_j$'s).
	Then, given $k$, let $\la$ be a Young diagram with the distribution
	\begin{align}\label{Schur_coherent_measures}
		\Prob(\la):=\frac{k!}{(\al_1+\ldots+\al_{i})^{k}}
		\,s_\la(\al_1,\ldots,\al_{i})s_\la(\Pl_1),
		\qquad |\la|=k,\quad \ell(\la)\le i
	\end{align}
	(cf. \eqref{Schur_coherent_intro}, note also that 
	this distribution is a particular
	$q=t$ case of the coherent measures discussed in \S \ref{sub:_q_t_coherent_measures}).

	The Law of Large Numbers for 
	the measures \eqref{Schur_coherent_measures}
	(for nonrandom growing $k$)
	was established in 
	\cite{VK81AsymptoticTheory}, stating that 
	\begin{align*}
		\frac{\la_i(k)}{k}\to \frac{\al_i}{\al_1+\ldots+\al_i},
		\qquad k\to\infty.
	\end{align*}
	Since $k$ satisfies the classical Law of Large Numbers,
	$k\approx (\al_1+\ldots+\al_i)n$, 
	a standard argument shows that the desired 
	statement holds. 
\end{proof}

Let us introduce the second dynamics which has $\bnum\ge1$ particles on $\Z$
and depends on our Hall--Littlewood parameter $0<t<1$, as well as on parameters
$\{\be_{j}\}_{j=1}^{\bnum}$ satisfying \eqref{finite_betas_def}.
Let us denote positions of the particles in our second process by
$\Q_1(n)\le \Q_2(n)\le \ldots\le Q_\bnum(n)$ ($n$ is the discrete time).
The dynamics preserves this ordering.
The evolution of this system is described as follows.
Initially, all particles start at zero.
Next, at each moment of the discrete time,
exactly one of the particles,
namely, $\Q_i$ with probability $\be_{\bnum+1-j}/(1-t)$,
receives a ``jumping signal''.
The particle $\Q_i$ which received this signal
jumps to the right by one (it cannot be blocked).
After that, with probability $t^{\Q_{i+1}(n)-\Q_{i}(n)}$, \emph{all}
particles $\Q_{i+1},\Q_{i+2},\Q_{i+3},\ldots$ also move to the right by one. With
the complementary probability $1-t^{\Q_{i+1}(n)-\Q_{i}(n)}$,
no particles other than $\Q_{i}$ (which has already jumped) move.
Note that if $\Q_{i+1}(n)=\Q_{i}(n)$, then the pushing probability reduces to $1$.

We have chosen the speeds of independently jumping particles
in this dynamics so that the first particle $\Q_{1}$ is the slowest one,
cf. \S \ref{ssub:column_lengths}.

\begin{remark}\label{rmk:qpushtasep_difference}
	Note the difference between the Markov dynamics $\{\Q_{j}(n)\}_{j=1}^{\bnum}$
	and the Markov evolution of the particles $\{\tau^{(m)}_{1}\}_{m=1}^{\bnum}$
	from \S \ref{ssub:column_lengths}.
	In the former dynamics,
	if a long-range push happens (with probability $t^{\Q_{i+1}(n)-\Q_{i}(n)}$),
	then all particles with indices $i+1,i+2,\ldots$ move.
	In the latter dynamics, a long-range push
	is applied only to the $(i+1)$-st particle,
	and further pushes (of particles $i+2,i+3,\ldots$) happen under an additional randomness.
\end{remark}

\begin{proposition}\label{prop:s7_Q}
	The dynamics $\{\Q_j(n)\}_{j=1}^{\bnum}$ satisfies the following
	Law of Large Numbers:
	\begin{align*}
		\frac{\Q_j(n)}n\to
		\frac{\be_{\bnum+1-j}}{1-t},\qquad n\to\infty, \qquad \mbox{almost surely}.
	\end{align*}
\end{proposition}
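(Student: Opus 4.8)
\textbf{Plan of proof for Proposition \ref{prop:s7_Q}.}
The strategy is to decouple the ``leading-order'' motion of each particle $\Q_j$ from the long-range pushes it receives from slower particles to its left, and to show that those pushes are negligible on the scale $n$. First I would note that the particle $\Q_\bnum$ (the rightmost, fastest one) receives a jumping signal with probability $\be_1/(1-t)$ at each step and is never blocked, hence by the classical Law of Large Numbers $\Q_\bnum(n)/n \to \be_1/(1-t)$ almost surely. The content of the proposition is therefore the behaviour of $\Q_j$ for $j<\bnum$, where on top of its own signals (received with probability $\be_{\bnum+1-j}/(1-t)$) the particle also moves whenever a long-range push initiated at some $\Q_i$, $i\le j$, propagates through it. The key observation is that a push from $\Q_i$ reaches $\Q_{i+1}$ only with probability $t^{\Q_{i+1}-\Q_i}$, so such pushes are frequent only when consecutive particles are close, i.e.\ when a ``gap variable'' $\Q_{i+1}-\Q_i$ is small.

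The plan is to run an induction on $j$ decreasing from $\bnum$. Suppose the statement is known for $\Q_{j+1},\ldots,\Q_\bnum$; I want to deduce it for $\Q_j$. Consider the gap $G(n):=\Q_{j+1}(n)-\Q_j(n)\ge0$. When $G(n)>0$ the pair $(\Q_j,\Q_{j+1})$ behaves, as far as $G$ is concerned, like an excursion of the Markov chain $\mm(\cdot)$ from \S\ref{sub:auxiliary_dynamics}: $G$ increases when $\Q_{j+1}$ gets a signal and $\Q_j$ does not receive a propagated push, decreases when $\Q_j$ moves and the push does not reach $\Q_{j+1}$, stays put otherwise; and when $G=0$ the next signal to $\Q_j$ forces a push onto $\Q_{j+1}$ (since $t^0=1$), so $G$ either returns to $1$ or stays at $0$ depending on whether $\Q_{j+1}$ itself gets the signal. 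Since $\be_{\bnum+1-j}\ge\be_{\bnum-j}$, i.e.\ the left particle $\Q_j$ is not faster than its right neighbour $\Q_{j+1}$, the gap process has nonnegative drift away from $0$, so the comparison with $\mm(\cdot)$ is of the right sign. By a coupling with the chain $\mm$ (with $\pu\ge\pd$ chosen from the relevant signal probabilities and $\pz$ from the push-from-zero probability) together with Lemma \ref{lemma:s7_1}, the number of steps up to time $n$ at which $G(i)=0$, as well as $\sum_{i\le n} t^{G(i)}$, are both $O(\sqrt n)$ in expectation. The total number of extra moves $\Q_j$ makes due to pushes from $\Q_{j+1}$ during $[0,n]$ is bounded by $\sum_{i\le n}\mathbf{1}_{\text{push from }\Q_{j+1}\text{ reaches }\Q_j}$, whose expectation is $\le \sum_{i\le n} t^{G(i)} = O(\sqrt n)$; similarly pushes originating further to the left that survive all the way to $\Q_j$ are controlled because they must in particular survive the passage $\Q_j$–$\Q_{j+1}$, again an $O(\sqrt n)$ count. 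Hence $\Q_j(n)$ equals (number of own signals up to time $n$) $+\,O(\sqrt n)$ in an $L^1$ sense, which gives $\Q_j(n)/n\to \be_{\bnum+1-j}/(1-t)$ in probability; upgrading to almost sure convergence is done by the standard subsequence-plus-monotonicity argument (the process is monotone in $n$, so convergence along $n=k^2$ together with $O(\sqrt n)$ increments over each block forces full almost-sure convergence).

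I would present the details in the following order: (1) reduce to showing the aggregate number of propagated pushes received by each $\Q_j$ up to time $n$ is $O(\sqrt n)$ in expectation; (2) set up the explicit coupling of the gap process $G(n)=\Q_{j+1}(n)-\Q_j(n)$ (on excursions away from $0$) with the chain $\mm(\cdot)$, verifying $\pu\ge\pd$ from $\be_{\bnum+1-j}\ge\be_{\bnum-j}$; (3) invoke Lemma \ref{lemma:s7_1} to bound $\E\wm(n)$ and $\E\phi(n)$ with $c=t$, obtaining the $O(\sqrt n)$ bound on pushes crossing a single gap; (4) iterate over gaps (a push reaching $\Q_j$ from the left must cross every intermediate gap, so its count is dominated by the single-gap count) to bound all pushes into $\Q_j$; (5) conclude LLN in probability and then almost surely via monotonicity. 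The main obstacle I expect is step (2): the gap process is not literally the chain $\mm$ because the two particles' signals are mutually exclusive (exactly one particle is signalled per step) and because a push that does reach $\Q_{j+1}$ simultaneously affects $G$, so one must carefully check that the increments of $G$ are stochastically dominated by (or can be coupled to) those of a genuine $\mm$-type chain with the claimed drift sign, and that the boundary behaviour at $G=0$ matches the reflecting rule of $\mm$. Once that comparison is set up cleanly, the remaining estimates are routine applications of Lemma \ref{lemma:s7_1}.
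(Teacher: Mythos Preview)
Your overall strategy---bound the pushes received by each $\Q_j$ by $O(\sqrt n)$ via a coupling of gap processes with the chain $\mm$ of Lemma~\ref{lemma:s7_1}---is exactly the paper's approach. However, you have the direction of the pushes reversed, and this causes several concrete errors.

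In the dynamics $\{\Q_j\}$, a push \emph{originates} at $\Q_i$ (when $\Q_i$ receives its own signal) and, with probability $t^{\Q_{i+1}-\Q_i}$, moves \emph{all} higher-index particles $\Q_{i+1},\ldots,\Q_\bnum$. Hence pushes go from low index to high index; the particle that receives no pushes is $\Q_1$, not $\Q_\bnum$. Your base case ``$\Q_\bnum(n)/n\to\be_1/(1-t)$ by classical LLN'' is therefore not justified: $\Q_\bnum$ receives pushes from every other particle. The gap $G=\Q_{j+1}-\Q_j$ controls pushes \emph{from} $\Q_j$ \emph{into} $\Q_{j+1},\ldots,\Q_\bnum$, not pushes into $\Q_j$; your sentence about ``extra moves $\Q_j$ makes due to pushes from $\Q_{j+1}$'' has it backwards, since $\Q_{j+1}$ never pushes $\Q_j$. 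Likewise, ``pushes originating further to the left\ldots must survive the passage $\Q_j$--$\Q_{j+1}$'' is wrong for this particular dynamics: whether a push from $\Q_i$ propagates depends \emph{only} on the single gap $\Q_{i+1}-\Q_i$, not on intermediate gaps (contrast Remark~\ref{rmk:qpushtasep_difference}).

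Once the direction is fixed, the argument is simpler and the induction is unnecessary. For each $i$ the gap $\mathsf{d}_i:=\Q_{i+1}-\Q_i$ has exactly the transition structure you describe (pushes from below move both $\Q_i$ and $\Q_{i+1}$ and hence do not affect $\mathsf{d}_i$), with up-probability $\be_{\bnum-i}/(1-t)\ge$ down-probability $\le\be_{\bnum+1-i}/(1-t)$, so it dominates a copy of $\mm$ and Lemma~\ref{lemma:s7_1} gives $\E\sum_{k\le n}t^{\mathsf{d}_i(k)}=O(\sqrt n)$. That bounds the expected number of pushes \emph{originating} at $\Q_i$. Since $\Q_j$ receives pushes only from $\Q_1,\ldots,\Q_{j-1}$, the total is a finite sum of $O(\sqrt n)$ terms---this is the paper's proof. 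Your subsequence-plus-monotonicity upgrade to almost-sure convergence is correct and fills in a step the paper leaves implicit. Your worry in step~(2) about the gap not being literally $\mm$ is also easily resolved: the only discrepancy is that the down-probability is $\be_{\bnum+1-i}(1-t^{\mathsf{d}_i})/(1-t)$, which is \emph{at most} the constant $\pd$, so the domination $\mathsf{d}_i\ge\mm$ is immediate.
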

\begin{proof}
	Denote $\mathsf{d}_i(n):=\Q_{i+1}(n)-\Q_{i}(n)$.
	This quantity increases when $\Q_{i+1}$ jumps independently,
	and decreases when $\Q_{i}$ jumps independently and does not push $\Q_{i+1}$.
	A natural coupling with the process $\mm(n)$ from Lemma \ref{lemma:s7_1}
	implies
	\begin{align*}
		\sum_{k=0}^{n}t^{\mathsf{d}_i(k)}\le const \sqrt n.
	\end{align*}

	Therefore, the expectation of
	the number of times (before time $n$)
	when an independent jump
	of $\Q_{i}$ leads to a move of $\Q_{i+1}$
	can be estimated by $const \sqrt n$. We see that the
	evolution of each particle $\Q_i$
	is affected by pushes $\le const \sqrt n$ times,
	so the asymptotic speed of this particle
	is determined by its independent jumps. This concludes the proof.
\end{proof}

It is helpful to look at Fig.~\ref{fig:alpha_array} and
\ref{fig:beta_array} in connection with
Propositions \ref{prop:s7_T} and \ref{prop:s7_Q}, respectively.

\begin{remark}\label{rmk:CLT}
	When $\gamma=0$ and the remaining parameters
	are distinct (when they are positive), i.e.,
	$\al_1>\al_2> \ldots$
	and $\be_1>\be_2> \ldots$,
	the estimates of this subsection
	can be readily improved from
	$O(\sqrt n)$ to $O(1)$.
	In this case, arguments of
	\S \S \ref{sub:row_lengths_finite_case}--\ref{sub:completing_the_proof_when_one_of_the_specializations_is_infinite}
	should give a Central Limit Theorem
	(Conjecture \ref{conj:CLT_intro}),
	which would follow
	from the corresponding Central Limit
	Theorem for the independent random
	letters in the input word
	$w=\xi_1\xi_2 \ldots$
	(\S \ref{sub:full_sampling_algorithm}).
	This explains the nature of the covariance matrix in Conjecture \ref{conj:CLT_intro}.
\end{remark}


\subsection{Finitely many usual variables, and row lengths} 
\label{sub:row_lengths_finite_case}

Here we will consider the evolution
of an interlacing particle array $\{\la^{(m)}_{j}(n)\}$, where $m=1,\ldots,\anum$,
$j=1,\ldots,m$, which depends on parameters $\{\al_i\}_{i=1}^{\anum}$
satisfying \eqref{finite_alphas_def}
(see \S \ref{ssub:row_lengths} and Fig.~\ref{fig:alpha_array}).
We can assume that all $\be_j$'s are zero, see \S \ref{sub:strategy_of_the_proof}.
Under this evolution, only the leftmost particles $\la^{(m)}_{m}$, $m=1,\ldots,\anum$, can jump independently.
Let $N_m (n)$ be the number of independent jumps performed by the particle $\la^{(m)}_m$.
Note that the bottommost particle
$\la^{(1)}_{1}$ is the fastest one.

The goal of this section is to prove Proposition \ref{prop:row_proof_s7}.

\begin{lemma}\label{lemma:row_proof_s7}
	The following
	convergence holds for every $i=1,\ldots,\anum$:
	\begin{align*}
		\frac{\la^{(i)}_{i}(n)}n\to
		\mathrm{m}^{\ab;\bb}(i)=\al_i,
		\qquad n\to\infty, \qquad \mbox{almost surely}.
	\end{align*}
\end{lemma}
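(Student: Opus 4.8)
The plan is to prove Lemma \ref{lemma:row_proof_s7} by induction on the level $i$, reducing at each step to the auxiliary TASEP dynamics $\{\T_i(n)\}$ of Proposition \ref{prop:s7_T} via a coupling that isolates the effect of long-range pushes. First I would describe the evolution of the particle $\la^{(i)}_i$ explicitly: it jumps independently with rate $\al_i$, and in addition it can be pushed by a move of $\la^{(i-1)}_{i-1}$ below it (the leftmost free particle on level $i-1$ is exactly $\la^{(i-1)}_{i-1}$, and by the pushing/pulling rule of step {\rm\bf(IV)} of Algorithm \ref{alg:main}, together with Proposition \ref{prop:rj_0t}, a move of $\la^{(i-1)}_{i-1}$ pushes $\la^{(i)}_i$ with probability $\rp^{+\infty}(\cdot,\cdot\mid 0,t)$, which depends on the gap $\la^{(i)}_i-\la^{(i-1)}_{i-1}$). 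Thus $\la^{(i)}_i(n) = N_i(n) + (\text{number of times $\la^{(i)}_i$ was pushed by $\la^{(i-1)}_{i-1}$ up to time $n$})$, and $N_i(n)/n \to \al_i$ almost surely by the strong Law of Large Numbers for the i.i.d.\ input word (the letter $i$ appears with frequency $\al_i = \mathrm{m}^{\ab;\bb}(i)$). So it suffices to show the number of pushes is $o(n)$ almost surely.

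The key step is to control the gap process $\mathsf{g}_i(n) := \la^{(i)}_i(n) - \la^{(i-1)}_{i-1}(n)$. This gap increases when $\la^{(i-1)}_{i-1}$ jumps (independently or by being pushed) and decreases when $\la^{(i)}_i$ jumps independently or is pushed. The push probability $\rp^{+\infty}$ on level $i$, by Proposition \ref{prop:rj_0t}, is bounded above by $1$ and, crucially, is of order $t^{\mathsf{g}_i}$-ish when particles are close but equals $1$ only when $\mathsf{g}_i = 0$ or small; more precisely, from cases (r1)--(r2) the probability of a push occurring is controlled by a geometric-type factor in the local gap. Since $\al_{i-1} \ge \al_i$, the gap $\mathsf{g}_i$ has nonnegative drift away from zero, so I would couple $\mathsf{g}_i(n)$ (from above, in the sense of frequency of visits near $0$) with the chain $\mm(n)$ of Lemma \ref{lemma:s7_1} with $\pu \ge \pd$: the expected number of time steps up to $n$ at which $\mathsf{g}_i$ is at a given small value is $O(\sqrt n)$, hence the expected number of pushes $\la^{(i-1)}_{i-1} \to \la^{(i)}_i$ up to time $n$ is $O(\sqrt n)$. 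Here I would use the inductive hypothesis that $\la^{(i-1)}_{i-1}(n)/n \to \al_{i-1}$ to know that $\la^{(i-1)}_{i-1}$ itself is pushed only $o(n)$ times (in fact $O(\sqrt n)$), so that its jump statistics are asymptotically those of an i.i.d.\ source with rate $\al_{i-1} \ge \al_i$, justifying the comparison with $\mm(n)$. A Borel--Cantelli argument along a subsequence (e.g.\ using Markov's inequality on the $O(\sqrt n)$ bound at times $n_k = k^2$, plus monotonicity of the push count in $n$) upgrades the $O(\sqrt n)$ expectation bound to almost-sure $o(n)$.

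The base case $i=1$ is immediate: $\la^{(1)}_1$ is never pushed (it is on the bottom level), so $\la^{(1)}_1(n) = N_1(n)$ and $N_1(n)/n \to \al_1$ a.s.\ by the strong LLN. For the inductive step, combining $N_i(n)/n \to \al_i$ with the a.s.\ bound that the push count is $o(n)$ gives $\la^{(i)}_i(n)/n \to \al_i$ almost surely, completing the induction and hence the lemma. The main obstacle I anticipate is making the coupling with $\mm(n)$ fully rigorous: one must argue that the gap process, which is not itself Markov (the push probability depends on the finer local configuration of particles near $\la^{(i-1)}_{i-1}$ and $\la^{(i)}_i$, not just on $\mathsf{g}_i$), is nonetheless stochastically dominated in the appropriate sense by a genuine birth-death chain with the right drift. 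This requires checking, using the explicit formulas in Proposition \ref{prop:rj_0t} and the structure of cases (r1)--(r2), that the worst-case local configuration still gives push probabilities compatible with a comparison chain having $\pu \ge \pd$, and that the contribution of pushes received by $\la^{(i-1)}_{i-1}$ itself (which perturb its jump rate away from a clean i.i.d.\ $\al_{i-1}$) is negligible at the $O(\sqrt n)$ scale. I expect these verifications to be somewhat delicate but elementary, relying on Lemma \ref{lemma:s7_1} as the analytic engine.
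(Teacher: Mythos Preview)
Your approach is genuinely different from the paper's, and as written it has concrete gaps. The paper does not analyze contributions to $\la^{(i)}_i$ term by term; instead it couples the entire array with the TASEP $\{\T_i(n)\}$ of Proposition~\ref{prop:s7_T} by feeding both processes the \emph{same} input word $w=\xi_1\xi_2\ldots$, and proves by a short induction on $n$ (three cases, according to whether $\la^{(k)}_k(n)>\T_k(n)$ or equality holds, and in the latter case whether $\T_k$ is blocked in TASEP) that $\la^{(i)}_i(n)\ge\T_i(n)$ for all $i$ everywhere on the coupling space. This yields $\liminf\la^{(i)}_i(n)/n\ge\al_i$ directly from Proposition~\ref{prop:s7_T}; the matching $\limsup$ then comes from the constraints $\la^{(\anum)}_i\ge\la^{(i)}_i$ and $\sum_{i=1}^\anum\la^{(\anum)}_i(n)=n$. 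No gap process, no Lemma~\ref{lemma:s7_1}, no Borel--Cantelli.

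Your decomposition $\la^{(i)}_i(n)=N_i(n)+(\text{pushes})$ is incorrect: when letter $i$ arrives and $\la^{(i)}_i=\la^{(i-1)}_{i-1}$, the particle $\la^{(i)}_i$ is \emph{blocked} and the move is donated rightward, so in fact $\la^{(i)}_i(n)=N_i(n)-(\text{blocked count})+(\text{pulls})$, and both corrections must be controlled. Second, $\la^{(i)}_i$ is the upper \emph{left} neighbour of $\la^{(i-1)}_{i-1}$, so it is moved by the pulling branch (probability $1-\rp^{+\infty}$); applying Proposition~\ref{prop:rj_0t} with $j=i-1$ gives $D=0$ (since $\la^{(i-1)}_{i-1}$ is leftmost on its level), hence pull probability $0$ when the gap is positive and $t$ when the gap is zero --- not a geometric factor in the gap. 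Third, the upward steps of your gap process are the moves of $\la^{(i-1)}_{i-1}$, which themselves include pulls from level $i-2$ and are suppressed by blocking; your inductive hypothesis (only the LLN) does not furnish the ``asymptotically i.i.d.\ rate $\al_{i-1}$'' structure needed to compare with $\mm(n)$. A workable version would have to carry an $O(\sqrt{n})$ bound on pulls and blocking through the induction, and even then the domination by a birth--death chain is not immediate. The paper's TASEP coupling bypasses all of this in a few lines.
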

\begin{proof}
	The key point in the proof is to bound the quantities
	$\la^{(i)}_{i}(n)$ \emph{from below}, which is done by
	means of their \emph{coupling} with the dynamics $\{\T_i(n)\}_{i=1}^{\anum}$
	from \S \ref{sub:auxiliary_dynamics}.
	We will put both dynamics
	$\{\la^{(m)}_{i}(n)\}$ and $\{\T_i(n)\}$
	on the same probability space $\Omega_n$ (with $n$ being arbitrary), such that
	\begin{align}\label{TASEP_estimate}
		\la^{(i)}_{i}(n)\ge\T_i(n)\qquad \mbox{everywhere on $\Omega_n$}.
	\end{align}
	We are assuming that both dynamics
	depend on the same parameters $\{\al_i\}_{i=1}^{\anum}$
	satisfying \eqref{finite_alphas_def}.

	Let us begin with defining the space $\Omega_n$ for the dynamics on
	interlacing triangular
	arrays $\{\la^{(m)}_{i}(n)\}$.
	Initially, $\Omega_0$ contains one point, and all $\la^{(m)}_{i}(0)$'s are equal to zero.
	The passage from $\Omega_n$ to $\Omega_{n+1}$ amounts to
	splitting every element of $\Omega_n$
	into $\anum$ parts of relative probability weights $\al_1,\ldots,\al_{\anum}$;
	the $i$-th such part corresponds to the particle $\la^{(i)}_{i}$ jumping independently.
	Furthermore, one has to
	account for probabilities of subsequent
	pushes (see \S \ref{sub:full_sampling_algorithm}).

	Thus, every element of $\Omega_{n+1}$ can be described as a word
	$w=\xi_1 \ldots \xi_{n+1}$, $\xi_i\in\{1,2,\ldots,\anum\}$ (this is the input word of the
	sampling algorithm, cf. \S \ref{sub:full_sampling_algorithm}),
	plus the whole history of the dynamics of the interlacing array
	\begin{align*}
		\{\la^{(m)}_{i}(k)\colon m=1,\ldots,\anum; \ i=1,\ldots,m; \ k=1,\ldots,n+1\}.
	\end{align*}
	It is clear how the dynamics
	on interlacing arrays assigns
	a probability weight to each such element of
	$\Omega_{n+1}$.

	The dynamics $\{\T_i(n)\}$ can also be put onto probability space $\Omega_n$
	in a rather straightforward
	way. Namely, projecting an element of $\Omega_n$ to the word $w=\xi_1 \ldots \xi_{n}$
	in the alphabet $\{1,2,\ldots,\anum\}$, we associate to each new incoming
	letter $\xi_j$ the independent jump of the particle $\T_{\xi_j}$ in the TASEP.
	In other words, particles $\la^{(i)}_{i}$ and $\T_i$ receive ``jump signals'' simultaneously.

	Let us now establish \eqref{TASEP_estimate}.
	Initially, at time $n=0$, \eqref{TASEP_estimate} clearly holds. By induction on $n$,
	assume that these inequalities hold for each element of $\Omega_n$.
	Let the new letter be $\xi_{n+1}=k$. Then
	\begin{enumerate}[$\bullet$]
		\item If $\la^{(k)}_k(n)>\T_k(n)$ at step $n$, then at step $n+1$ all the inequalities
		continue to hold.
		Indeed, at the next step each of the coordinates $\la^{(k)}_k$
		and $\T_k$ can potentially increase by 1, and no other coordinates
		of the form $\la^{(j)}_j$ and $\T_j$, $j\ne k$ can increase.\footnote{Of course,
		particles in the bulk of the interlacing array
		(i.e., all particles except the leftmost ones $\la^{(i)}_i$)
		will move in some way, but this
		cannot break the desired inequalities. This remark applies to other two cases as well.}
		\item If $\la^{(k)}_k(n)=\T_k(n)$ and $\T_{k-1}(n)>\T_{k}(n)$, then
		$\la^{(k)}_k(n)<\la^{(k-1)}_{k-1}(n)$
		by the induction hypothesis. Thus, both particles $\la^{(k)}_k$ and $\T_k$ are not blocked and thus
		jump,
		and the inequalities continue to hold
		(since no other particles
		$\la^{(j)}_j$ and $\T_j$, $j\ne k$, move).
		\item If $\la^{(k)}_k(n)=\T_k(n)$ and $\T_{k-1}(n)=\T_{k}(n)$,
		then the particles $\T_j$, $j=1,\ldots,\anum$, do not move at the next step.
		On the other hand, $\la^{(k)}_k$ can potentially jump (if it is not blocked),
		while other particles $\la^{(j)}_j$, $j\ne k$, will not move. Thus, the inequalities
		continue to hold.
	\end{enumerate}
	This argument completes the proof of \eqref{TASEP_estimate}.

	\smallskip

	Now let us finish the proof of the lemma.
	From \eqref{TASEP_estimate} and
	Proposition \ref{prop:s7_T} it follows that
	\begin{align*}
		\liminf_{n\to\infty}\frac{\la^{(i)}_{i}(n)}n\ge\al_i,\qquad
		\mbox{almost surely}
	\end{align*}
	for every $i=1,\ldots,\anum$.
	Moreover,
	$\la^{(\anum)}_{i}(n)\ge\la^{(i)}_{i}(n)$, and $\sum_{i=1}^{n}\la^{(\anum)}_{i}(n)=n$.
	By an elementary
	contradiction argument,
	this implies the claim of the lemma.
\end{proof}

\begin{proposition}\label{prop:row_proof_s7}
	The following
	Law of Large Numbers holds for every $m=1, \dots, \anum$ and every $i=1,\ldots, m$:
	\begin{align}\label{row_proof_s7}
		\frac{\la^{(m)}_{i}(n)}n\to
		\mathrm{m}^{\ab;\bb}(i)=\al_i,
		\qquad n\to\infty, \qquad \mbox{almost surely}.
	\end{align}
\end{proposition}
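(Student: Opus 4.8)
The plan is to deduce the convergence for all particles $\la^{(m)}_i$ from the convergence of the leftmost particles $\la^{(i)}_i$ already established in Lemma~\ref{lemma:row_proof_s7}, by combining a lower bound coming from interlacing with an exact count of the total number of boxes on each level. Since all $\be_j$ are assumed to be zero, every letter of the input word $w=\xi_1\xi_2\ldots$ lies in $\{1,\ldots,\anum\}$, and by the RSK-type property (Definition~\ref{def:RSK_type}) the arrival of a letter $\xi_k=c$ adds exactly one box to each of the levels $\la^{(c)},\la^{(c+1)},\ldots,\la^{(\anum)}$ and leaves the lower levels unchanged. Hence for every $m$,
\[
	|\la^{(m)}(n)|=\#\{k\le n\colon \xi_k\le m\}=\sum_{i=1}^{m}\la^{(m)}_i(n),
\]
and since the $\xi_k$ are i.i.d.\ with $\Prob(\xi_k=c)=\al_c$, the Strong Law of Large Numbers gives $|\la^{(m)}(n)|/n\to\al_1+\ldots+\al_m$ almost surely.

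Next I would use the interlacing relations $\la^{(m)}_{i+1}\le\la^{(m-1)}_i\le\la^{(m)}_i$: for fixed $i$ the sequence $m\mapsto\la^{(m)}_i(n)$ is nondecreasing, so $\la^{(m)}_i(n)\ge\la^{(i)}_i(n)$ for all $m\ge i$. Together with Lemma~\ref{lemma:row_proof_s7} this yields
\[
	\liminf_{n\to\infty}\frac{\la^{(m)}_i(n)}{n}\ge\al_i\qquad\text{almost surely,}\quad i=1,\ldots,m.
\]
Finally, run the elementary contradiction argument on the almost sure event where both of the above hold. The vector $\big(\la^{(m)}_1(n)/n,\ldots,\la^{(m)}_m(n)/n\big)$ lies in the compact cube $[0,1]^m$, so along any subsequence there is a further subsequence converging to some $(\ell_1,\ldots,\ell_m)$. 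Each $\ell_i\ge\al_i$ by the $\liminf$ bound, while $\sum_{i=1}^{m}\ell_i=\lim_n|\la^{(m)}(n)|/n=\sum_{i=1}^{m}\al_i$; therefore $\sum_{i=1}^m(\ell_i-\al_i)=0$ with all summands nonnegative, forcing $\ell_i=\al_i$ for every $i$. As every subsequential limit equals $(\al_1,\ldots,\al_m)$, the whole sequence converges, which is exactly \eqref{row_proof_s7}.

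I do not expect a serious obstacle here: the proposition is essentially a ``squeezing'' of Lemma~\ref{lemma:row_proof_s7} using conservation of the box count, and once the box-count identity is in hand the rest is deterministic bookkeeping plus compactness. The only step requiring a moment of care is that identity, i.e.\ checking from the description of Algorithm~\ref{alg:main} together with the RSK-type property that a letter $c$ contributes precisely one box to each level $\ge c$ and none to the levels $<c$; everything after that is routine.
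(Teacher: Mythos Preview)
Your proposal is correct and follows essentially the same route as the paper: both use the interlacing bound $\la^{(m)}_i(n)\ge\la^{(i)}_i(n)$ together with Lemma~\ref{lemma:row_proof_s7} for the $\liminf$, and both use the box-count identity $\sum_{i=1}^m\la^{(m)}_i(n)=\#\{k\le n:\xi_k\le m\}$ (the paper writes this as $\sum_i N_i(n)$) together with the strong law for i.i.d.\ letters. The only cosmetic difference is the closing step: the paper peels off one index at a time by an induction on $i=m,m-1,\ldots$ to bound the $\limsup$, whereas you pass to subsequential limits in $[0,1]^m$ and use $\sum_i(\ell_i-\al_i)=0$ with nonnegative summands; these are interchangeable standard squeezes.
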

\begin{proof}
	We argue by
	induction on $i=m,m-1,\ldots,1$,
	and the case $i=m$, i.e., the convergence of $\la^{(m)}_{m}(n)/n$,
	follows from Lemma~\ref{lemma:row_proof_s7}.
	Now assume that we have established the convergence
	of $\la^{(m)}_{i}(n)/n$ for all $i=r+1,\ldots,m$,
	and let us prove that $\la^{(m)}_{r}(n)/n\to\al_r$.

	Clearly, $\la^{(m)}_{i}(n)\ge\la^{(i)}_{i}(n)$
	due to interlacing. Therefore, by Lemma \ref{lemma:row_proof_s7},
	\begin{align}
		\liminf_{n\to\infty}\frac{\la^{(m)}_{i}(n)}{n}\ge \al_i
		\qquad
		\mbox{almost sure}.
		\label{liminf_row}
	\end{align}

	On the other hand, we know that
	$$
    \sum_{i=1}^{m}\la^{(m)}_i(n)= \sum_{i=1}^m N_i (n),
    $$
    (because at the $n$th step of the
	sampling algorithm the Young tableau has exactly $n$ boxes,
	cf. \S \ref{sec:rsk_type_algorithm_for_sampling_hl_coherent_measures}). From the classical Law of Large Numbers for independent
    random variables we have
    $$
    \frac{N_i (n)}{n} \to \al_i, \qquad n \to \infty \qquad \mbox{almost surely}.
    $$
	This implies
	\begin{align*}&
		\al_{m}+\ldots+\al_{r+1}+\limsup_{n\to\infty}\frac{\la^{(m)}_{r}(n)}{n}
		\\&\qquad=
		\limsup_{n\to\infty}
		\frac{\la^{(m)}_{m}(n)+\la^{(m)}_{m-1}(n)+\ldots+\la^{(m)}_{r}(n)}{n}
		\\&\qquad=
		\limsup_{n\to\infty}
		\frac{(N_1(n) + \dots + N_m (n))-\la^{(m)}_{1}(n)-\la^{(m)}_{2}(n)- \ldots-\la^{(m)}_{r-1}(n)}{n}
		\\&\qquad=
		(\al_1+ \dots + \al_m) -\liminf_{n\to\infty}
		\frac{\la^{(m)}_{1}(n)+\la^{(m)}_{2}(n)+ \ldots+\la^{(m)}_{r-1}(n)}{n}
		\le (\al_1 +\dots + \al_m) -\al_1- \ldots-\al_{r-1}
        \\&\qquad = \al_r + \dots + \al_m.
	\end{align*}
	Therefore,
	\begin{align*}
		\limsup_{n\to\infty}\frac{\la^{(m)}_{r}(n)}{n}\le \al_r
		\qquad
		\mbox{almost sure},
	\end{align*}
	which (together with \eqref{liminf_row}) implies the desired convergence.
\end{proof}


\subsection{Finitely many dual variables, and column lengths} 
\label{sub:column_lengths_finitely_many_dual_variables}

Now let us consider the evolution
of an interlacing particle array $\{\tau^{(m)}_{j}(n)\}$, where $m=1,\ldots,\bnum$,
$j=1,\ldots,m$, which depends on parameters $\{\be_i\}_{i=1}^{\bnum}$
satisfying \eqref{finite_betas_def} (see \S \ref{ssub:column_lengths} and Fig.~\ref{fig:beta_array}).
We can assume that all $\al_j$'s are zero, see \S \ref{sub:strategy_of_the_proof}.
Under this evolution, only the rightmost particles $\tau^{(m)}_1$, $m=1,\ldots,\bnum$,
can jump independently. Moreover, they form a Markov chain, cf. Remark \ref{rmk:qpushtasep}.
Note that the bottommost particle
$\tau^{(1)}_{1}$ is the slowest one. We will prove

The main goal of this section is to prove Proposition \ref{prop:col_proof_s7}.

\begin{lemma}\label{lemma:col_proof_s7}
	The following
	convergence holds for every $i=1,\ldots,\bnum$:
	\begin{align*}
		\frac{\tau^{(i)}_{1}(n)}n\to
		\mathrm{m}^{\ab;\bb}(\hat \imath)=\frac{\be_{\bnum+1-i}}{1-t},
		\qquad n\to\infty, \qquad \mbox{almost surely}.
	\end{align*}
\end{lemma}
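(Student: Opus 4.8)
The plan is to mimic the proof of Lemma~\ref{lemma:row_proof_s7}, replacing the coupling with the TASEP $\{\T_i(n)\}$ by a coupling with the pushing dynamics $\{\Q_j(n)\}_{j=1}^{\bnum}$ from \S\ref{sub:auxiliary_dynamics}. First I would recall that, under the ordering $\hat1<\hat2<\ldots<\hat{\bnum}<1<\ldots<\anum$ with all $\al_j=0$, only the rightmost particles $\tau^{(m)}_1$ jump independently, and they do so exactly when the input letter equals $\hat m$; by \eqref{how_define_measure} the particle $\tau^{(m)}_1$ receives its jump signal with probability $\mathrm{m}^{\ab;\bb}(\hat m)=\be_{\bnum+1-m}/(1-t)$, so the bottommost particle $\tau^{(1)}_1$ is the slowest. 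Let $M_m(n)$ denote the number of independent jumps of $\tau^{(m)}_1$ up to time $n$; by the classical Law of Large Numbers $M_m(n)/n\to\be_{\bnum+1-m}/(1-t)$ almost surely. Since a particle's position only increases, $\tau^{(m)}_1(n)\ge M_m(n)$ trivially, which already gives $\liminf_{n}\tau^{(m)}_1(n)/n\ge\be_{\bnum+1-m}/(1-t)$ almost surely.

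The remaining task is the matching upper bound $\limsup_n \tau^{(m)}_1(n)/n\le\be_{\bnum+1-m}/(1-t)$. Here I would build a common probability space $\Omega_n$, exactly as in the proof of Lemma~\ref{lemma:row_proof_s7}: each element is the input word $w=\xi_1\ldots\xi_n$ (letters in $\{\hat1,\ldots,\hat{\bnum}\}$) together with the full history of the randomized bumping/shifting in the interlacing array, carrying the weight assigned by Sampling Algorithm~\ref{alg:main}. On the same word I run the auxiliary dynamics $\{\Q_j(n)\}$ of \S\ref{sub:auxiliary_dynamics}, associating to an incoming letter $\hat m$ the independent jump of $\Q_{\bnum+1-m}$ (so that $\tau^{(m)}_1$ and $\Q_{\bnum+1-m}$ receive their signals simultaneously). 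I claim that one can couple the pushing randomness so that
\begin{align}\label{Q_upper_bound_sketch}
	\tau^{(m)}_1(n)\le \Q_{\bnum+1-m}(n)\qquad\text{everywhere on }\Omega_n,\quad m=1,\ldots,\bnum,
\end{align}
and this, together with Proposition~\ref{prop:s7_Q} (which gives $\Q_{\bnum+1-m}(n)/n\to\be_{\bnum+1-m}/(1-t)$ almost surely), yields the desired $\limsup$ bound and hence the lemma.

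To prove \eqref{Q_upper_bound_sketch} I would argue by induction on $n$, examining what happens when the new letter is $\hat m$. In the array, $\tau^{(m)}_1$ jumps right by one and then, by Remark~\ref{rmk:no_donations} and the choice $(W_{(\be)}^{h=1},V_{(\be)}^{h=1})$ in \eqref{WV_choice}, the move propagates upward: with probability $\rp_1^{1}\big(\cdot,\cdot\mid t,0\big)$ the particle at the next level ($\tau^{(m+1)}_1$) is pushed, otherwise $\tau^{(m+1)}_2$ is pulled; crucially, a \emph{pull} never increases $\tau^{(m')}_1$ for $m'>m$, while a \emph{push} of $\tau^{(m')}_1$ only happens via this chain. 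In the $\Q$-dynamics, by contrast, when $\Q_{\bnum+1-m}$ jumps it pushes \emph{all} of $\Q_{\bnum+1-m+1},\Q_{\bnum+1-m+2},\ldots$ with probability $t^{\Q_{\bnum+2-m}(n)-\Q_{\bnum+1-m}(n)}$; this is a ``more generous'' push than in the array (cf.\ Remark~\ref{rmk:qpushtasep_difference}). The main obstacle — and the step I expect to take the most care — is to verify that one can realize both sets of pushing coin-flips on a common space so that every time a push reaches $\tau^{(m')}_1$ in the array, the corresponding push reaches $\Q_{\bnum+1-m'}$ in the $\Q$-dynamics; this requires checking that the relevant gap in the array dominates the gap in $\Q$ (so that the array's push probability, by Proposition~\ref{prop:rj_t0}, is bounded by the $\Q$-push probability $t^{\mathrm{gap}}$), which itself follows by maintaining \eqref{Q_upper_bound_sketch} as part of the induction hypothesis together with a companion lower bound $\tau^{(m+1)}_1(n)-\tau^{(m)}_1(n)\ge \Q_{\bnum+1-m}(n)-\Q_{\bnum-m}(n)$ on consecutive gaps. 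Once the coupling is in place, interlacing gives $\tau^{(\bnum)}_1(n)\ge\tau^{(m)}_1(n)$ for all $m$, and combining the $\liminf$ and $\limsup$ bounds (with the constraint coming from $\sum_m$-type bookkeeping as in Proposition~\ref{prop:row_proof_s7}, if needed) completes the proof.
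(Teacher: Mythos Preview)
Your overall strategy---trivial lower bound plus an upper bound via coupling with $\{\Q_j\}$---matches the paper's, and your lower bound $\tau^{(m)}_1(n)\ge M_m(n)$ is correct (and cleaner than the paper's bookkeeping). But the upper-bound coupling has two genuine problems.

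\textbf{Indexing.} The particle $\tau^{(m)}_1$ receives its jump signal with probability $\be_{\bnum+1-m}/(1-t)$, and in the definition of $\{\Q_j\}$ in \S\ref{sub:auxiliary_dynamics} it is $\Q_m$ (not $\Q_{\bnum+1-m}$) that jumps with this same probability. So the correct pairing is $\tau^{(m)}_1\leftrightarrow\Q_m$; with your indexing the two particles you pair have different jump rates and cannot share signals. The target inequality is $\tau^{(i)}_1(n)\le\Q_i(n)$, exactly as in the paper's \eqref{qPushTASEP_estimate}.

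\textbf{The gap invariant fails.} With the correct indexing, your plan is to maintain simultaneously $\tau^{(m)}_1\le\Q_m$ and the companion inequality $\tau^{(m+1)}_1-\tau^{(m)}_1\ge\Q_{m+1}-\Q_m$. The second is not preserved. Take three consecutive levels with $\tau^{(j)}_1=\Q_j$ for $j=i,i+1,i+2$ and let letter $\hat\imath$ arrive. In the $\Q$-dynamics a single Bernoulli decides whether \emph{all} of $\Q_{i+1},\Q_{i+2}$ move; in the $\tau$-dynamics the pushes cascade level by level. Couple the level-$(i{+}1)$ push with the $\Q$-push (both have probability $t^{\text{gap}}$). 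On the event that this push succeeds, $\Q_{i+1},\Q_{i+2}$ both advance, but $\tau^{(i+2)}_1$ advances only with a further probability $t^{\tau^{(i+2)}_1-\tau^{(i+1)}_1}<1$; on the complementary event the gap $\tau^{(i+2)}_1-\tau^{(i+1)}_1$ drops by one while $\Q_{i+2}-\Q_{i+1}$ does not. At the next step with letter $\hat\jmath$, $\jmath=i+1$, one then has $P(\text{push }\tau^{(i+2)}_1)=t^{\tau^{(i+2)}_1-\tau^{(i+1)}_1}>t^{\Q_{i+2}-\Q_{i+1}}=P(\text{push }\Q_{i+2})$, so your intended inclusion ``whenever a push reaches $\tau^{(m')}_1$, it also reaches $\Q_{m'}$'' cannot be realized.

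\textbf{What the paper does instead.} It abandons the level-by-level idea and couples only two summary variables at each step: $\eta\in\{0,1,\ldots\}$, the number of rightmost particles pushed in the $\tau$-dynamics, and $\zeta\in\{+,-\}$, whether the $\Q$-push occurs. It imposes the monotone (quantile) coupling on $(\eta,\zeta)$: there is a threshold $u$ with $\{\eta>u\}\subset\{\zeta=+\}$ and $\{\eta<u\}\subset\{\zeta=-\}$ (the explicit joint law is \eqref{joint_probab}). The induction then carries \emph{only} the position inequalities $\tau^{(m)}_1\le\Q_m$. The point is that the only dangerous level is the smallest $\ell\ge1$ with $\Q_{i+\ell}=\tau^{(i+\ell)}_1$; for that $\ell$ one has
\[
P(\eta\ge\ell)=t^{\tau^{(i+\ell)}_1-\tau^{(i)}_1}\le t^{\Q_{i+\ell}-\Q_i}\le t^{\Q_{i+1}-\Q_i}=P(\zeta=+),
\]
using $\Q_{i+\ell}=\tau^{(i+\ell)}_1$, $\Q_i\ge\tau^{(i)}_1$, and monotonicity of $j\mapsto\Q_j$. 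This forces $u<\ell$ in the monotone coupling, hence $\{\eta\ge\ell\}\subset\{\zeta=+\}$, and the position inequality at level $i+\ell$ (and a fortiori at all higher equality levels) survives. No gap hypothesis is needed.
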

\begin{proof}
	The key point in the proof is to bound the quantities
	$\tau^{(i)}_{1}(n)$ \emph{from above}, which is achieved by
	means of their \emph{coupling} with the dynamics $\{\Q_i(n)\}_{i=1}^{\bnum}$
	from \S \ref{sub:auxiliary_dynamics}.
	This coupling is more complicated than that
	of Proposition \ref{prop:row_proof_s7} (cf. Remark \ref{rmk:qpushtasep_difference}).
	We will put both dynamics
	$\{\tau^{(m)}_{i}(n)\}$ (on interlacing arrays)
	and
	$\{\Q_i(n)\}$
	on the same probability space $\Omega'_n$ (with $n$ being arbitrary),
	such that
	\begin{align}\label{qPushTASEP_estimate}
		\tau^{(i)}_{1}(n)\le\Q_i(n)\qquad \mbox{everywhere on $\Omega'_n$}.
	\end{align}
	We are assuming that both dynamics
	depend on the same parameters $\{\be_i\}_{i=1}^{\bnum}$
	satisfying \eqref{finite_betas_def}.

	We will construct the desired probability spaces $\Omega'_n$
	by induction on $n$. Initially, for $n=0$,
	all particles $\{\tau^{(m)}_{i}(0)\}$ and
	$\{\Q_i(0)\}$ are at zero, and $\Omega'_0$
	consists of one point.
	At time $n$, each point in the space $\Omega'_n$ can be realized
	as a sequence of $n$ triples
	$(\xi_k,\eta_k,\zeta_k)$, $k=1,\ldots,n$,
	plus the whole history
	of the dynamics of the interlacing array
	\begin{align*}
		\{\tau^{(m)}_{i}(k)\colon m=1,\ldots,\bnum; \ i=1,\ldots,m; \ k=1,\ldots,n\}.
	\end{align*}
	Here $w=\xi_1 \ldots\xi_n$,
	where $\xi_i\in\{\hat 1,\hat 2,\ldots,\hat\bnum\}$, is
	the input word
	of the sampling algorithm
	(\S \ref{sub:full_sampling_algorithm}).
	If $\xi_n=\hat \imath$ at any time $n$, then $\eta_n\in\{1,\ldots,\bnum-i-1\}$
	encodes the number of particles
	of the form $\tau^{(j)}_{1}$, $j>i$, which are pushed
	(during time step $n-1\to n$) by the jump of
	$\tau^{(i)}_{1}$.\footnote{Note that once a particle $\tau^{(m)}_{1}$, for some $m>i$
	was not pushed, all upper particles $\tau^{(j)}_{1}$ ($j>m$)
	also cannot be pushed, see Remark \ref{rmk:no_donations}.}
	The quantity $\zeta_n\in\{+,-\}$ encodes the event of pushing
	in the
	dynamics of $\{\Q_j\}$
	(during time step $n-1\to n$).
	Namely, if $\xi_n=\hat \imath$, then $\zeta_n=+$ means that
	all particles $\Q_{j}$, $j>i$, were pushed, and
	$\zeta_n=-$ means that no such particles were pushed.
	See Fig.~\ref{fig:beta_omega}.
	We will denote elements of $\Omega'_n$
	by $(\boldsymbol\xi,\boldsymbol\eta,\boldsymbol\zeta,\boldsymbol\tau)$.
	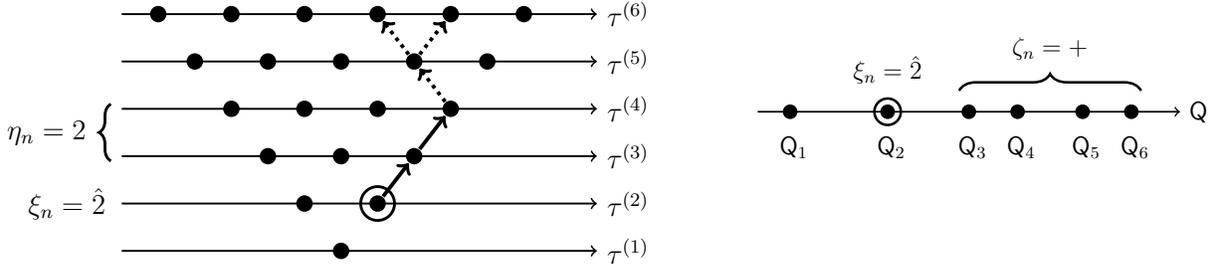
\begin{figure}[htbp]
	\begin{center}
		\begin{tabular}{ll}
			\scalebox{.9}{\begin{tikzpicture}
				[scale=1, very thick]
				\def\sp{0.1};
				\def\eps{0.15};
				\def\x{.54};
				\def\y{.7};
				\foreach \pt in
				{
					(0,0),
					(-\x,\y), (\x,\y),
					(-2*\x,2*\y), (0,2*\y), (2*\x,2*\y),
					(-3*\x,3*\y), (-1*\x,3*\y), (1*\x,3*\y), (3*\x,3*\y),
					(-4*\x,4*\y), (-2*\x,4*\y), (0*\x,4*\y), (2*\x,4*\y), (4*\x,4*\y),
					(-5*\x,5*\y), (-3*\x,5*\y), (-1*\x,5*\y), (1*\x,5*\y), (3*\x,5*\y), (5*\x,5*\y)
				}
				{
					\draw[fill] \pt circle (\sp);
				}
				\draw (\x,\y) circle (2.5*\sp);
				\draw[->, ultra thick] (\x+\eps*\x,\y+\eps*\y) --++ (\x-2*\eps*\x,\y-2*\eps*\y);
				\draw[->, ultra thick] (2*\x+\eps*\x,2*\y+\eps*\y) --++ (\x-2*\eps*\x,\y-2*\eps*\y);
				\draw[->, ultra thick, dotted] (3*\x-\eps*\x,3*\y+\eps*\y) --++ (-\x+2*\eps*\x,\y-2*\eps*\y);
				\draw[->, ultra thick,dotted] (2*\x-\eps*\x,4*\y+\eps*\y) --++ (-\x+2*\eps*\x,\y-2*\eps*\y);
				\draw[->, ultra thick,dotted] (2*\x+\eps*\x,4*\y+\eps*\y) --++ (\x-2*\eps*\x,\y-2*\eps*\y);
				\foreach \ll in {1,2,3,4,5,6}
				{
					\draw[->, thick] (-6*\x,\ll*\y-\y) -- (7*\x,\ll*\y-\y)
					node[right] {$\tau^{(\ll)}$};
				}
				\node at (-7.5*\x,\y) {$\xi_{n}=\hat2$};

				\draw [decorate,decoration={brace,amplitude=5pt,raise=3pt},yshift=0pt]
				(-6.1*\x,1.9*\y) -- (-6.1*\x,3.1*\y)
				node [midway,xshift=-30] {$\eta_{n}=2$};
			\end{tikzpicture}}&\hspace{20pt}
			\raisebox{40pt}{\scalebox{.8}
			{
			\begin{tikzpicture}
				[scale=1, very thick]
				\def\sp{0.1};
				\def\eps{0.15};
				\def\x{.54};
				\def\y{.7};
				\foreach \pt in
				{
					-5,-2,.5,2,4,5.5
				}
				{
					\draw[fill] (\pt*\x,0) circle (\sp);
				}
				\def\hit{-.6};
				\def\xs{0.06};
				\node at (-5*\x+\xs,\hit) {$\Q_{1}$};
				\node at (-2*\x+\xs,\hit) {$\Q_{2}$};
				\node at (.5*\x+\xs,\hit) {$\Q_{3}$};
				\node at (2*\x+\xs,\hit) {$\Q_{4}$};
				\node at (4*\x+\xs,\hit) {$\Q_{5}$};
				\node at (5.5*\x+\xs,\hit) {$\Q_{6}$};

				\draw (-2*\x,0) circle (2.2*\sp);
				\node at (-2*\x,\y) {$\xi_{n}=\hat2$};
				\draw[->, thick] (-6*\x,0) -- (7*\x,0)
					node[right] {$\Q$};

				\draw [decorate,decoration={brace,amplitude=8pt,
				raise=-10pt},yshift=0pt]
				(.2*\x,\y) -- (5.8*\x,\y)
				node [midway,yshift=10] {$\zeta_{n}=+$};
			\end{tikzpicture}
			}}
		\end{tabular}
	\end{center}
	\caption{Quantities $(\xi_{n},\eta_{n},\zeta_{n})$.
	Particles $\tau^{(2)}_{1}$ and $\Q_{2}$ jump together;
	simultaneously particles $\tau^{(3)}_{1}$ and $\tau^{(4)}_{1}$ (but not $\tau^{(5)}_{1}$) are pushed
	in the dynamics $\{\tau^{(m)}_{i}\}$,
	and all particles $\Q_3,\Q_4,\ldots$ are pushed in the dynamics
	$\{\Q_{i}\}$.}
	\label{fig:beta_omega}
	\end{figure}

	We will let (random)
	interactions between particles in the bulk
	of the interlacing array
	(i.e., all particles except the rightmost ones $\tau^{(i)}_1$)
	to be independent of
	the dynamics $\{\Q_{i}(n)\}$, so it suffices to consider
	only the projection of $\Omega'_n$
	to $(\boldsymbol\xi,\boldsymbol\eta,\boldsymbol \zeta)$.

	Employing our induction on $n$, let us assign
	a \emph{joint} probability distribution
	to the triple $(\xi_n,\eta_n,\zeta_n)$
	which is added during time $n-1\to n$.
	First, we want
	both dynamics to receive the same
	``jumping signal'' $\xi_n$ with
	probability
	$P(\xi_n=\hat \imath)=\mathrm{m}^{\ab;\bb}(\hat \imath)={\be_{\bnum+1-i}}/({1-t})$
	(this part is similar to the proof of Lemma \ref{lemma:row_proof_s7}).

	Now, conditioning on
	$\xi_n=\hat \imath$ for some fixed $\hat \imath$,
	let us denote by $r_j$, $j=0,1,2,\ldots$ the \emph{marginal} probability that $\eta_n=j$,
	and by $r_{\pm}$ the \emph{marginal} probability that $\zeta_n=\pm$.
	(It is worth noting that these probabilities depend on the
	state of the corresponding dynamics at time $n-1$,
	which is in turn determined by the
	history
	$(\eta_1,\ldots,\eta_{n-1})$ or $(\zeta_1,\ldots,\zeta_{n-1})$,
	respectively.)
	Let us choose $u=0,1,2,\ldots$ such that
	\begin{align}\label{joint_probab_u}
		r_0+r_1+\ldots+r_{u-1}\le r_{-},\qquad
		r_0+r_1+\ldots+r_{u}> r_{-},
	\end{align}
	and assign the following joint probability weights
	(recall that they are conditional on $\xi_n=\hat \imath$):
	\begin{align}
		\begin{array}{rcll}
			P(\eta_n=j, \zeta_n=-)&=&r_j,&\qquad j=0,1,\ldots,u-1;\\
			P(\eta_n=u, \zeta_n=-)&=&r_-;\\
			P(\eta_n=u, \zeta_n=+)&=&r_0+r_1+\ldots+r_u-r_-;\\
			P(\eta_n=j, \zeta_n=+)&=&r_j,&\qquad j=u+1,u+2,\ldots.
		\end{array}
		\label{joint_probab}
	\end{align}
	All other joint probabilities are set to zero.
	Clearly, thus defined probability distribution on
	$(\xi_n,\eta_n,\zeta_n)$
	projects in a desired way to
	$(\xi_n,\eta_n)$ (which corresponds to the dynamics
	on interlacing arrays) and to
	$(\xi_n,\zeta_n)$
	(which corresponds to the auxiliary dynamics $\{\Q_i\}$).

	Formulas \eqref{joint_probab} can be interpreted as follows:
	if $\ge u$ of the rightmost particles is pushed in the
	dynamics on interlacing arrays, then
	the pushing event happens in the
	auxiliary dynamics $\{\Q_j\}$ as well.

	Now let us prove the inequalities \eqref{qPushTASEP_estimate}.
	By induction, assume that they hold at time $n$.
	Let $\xi_{n+1}=\hat \imath$ for some $\hat \imath$.
	Consider the following cases:
	\begin{enumerate}[$\bullet$]
		\item $\Q_{i+\ell}(n)>\tau^{(i+\ell)}_{1}(n)$ for each $\ell=0,1,2,\ldots$.
		Then, since during the time $n\to n+1$ all coordinates can increase at most
		by one, the inequalities continue to hold at time $n+1$.
		\item $\Q_{i}(n)>\tau^{(i)}_{1}(n)$, and for some $\ell>0$
		we have $\Q_{i+\ell}(n)=\tau^{(i+\ell)}_1(n)$ (assume that this $\ell$
		is the smallest among all indices with this second property).
		Then, in the notation before \eqref{joint_probab_u}, we clearly have
		\begin{align*}
			r_+\ge t^{\Q_{i+\ell}-\Q_{i}},\qquad
			r_\ell+r_{\ell+1}+r_{\ell+2}+\ldots=t^{\tau^{(i+\ell)}_1-\tau^{(i)}_{1}}.
		\end{align*}
		Therefore,
		\begin{align*}
			r_+\ge r_\ell+r_{\ell+1}+r_{\ell+2}+\ldots,
		\end{align*}
		so $u\le \ell$, where $u$ is defined in \eqref{joint_probab_u}.
		This implies that if
		the particles $\tau^{(i+1)}_{1},\ldots,\tau^{(i+\ell)}_{1}$
		(and maybe some of the particles $\tau^{(j)}_{1}$,
		$j>i+\ell$, as well)
		were pushed, then the pushing event also happened in the auxiliary dynamics,
		so all the particles $\Q_j$, $j>i$, were pushed.
		This readily implies that the desired inequalities continue to hold at time $n+1$.
		\item $\Q_i(n)=\tau^{(i)}_{1}(n)$. Then
		also $\Q_{i+1}(n)\ge\tau^{(i+1)}_{1}(n)$ by the induction hypothesis,
		and thus $r_0\ge r_-$ in the notation before \eqref{joint_probab_u}.
		By \eqref{joint_probab},
		this means that if
		the particle $\tau^{(i+1)}_{1}$
		(and maybe some of the particles $\tau^{(j)}_{1}$,
		$j>i+1$, as well)
		was pushed, then
		then the pushing event also happened in the auxiliary dynamics
		$\{\Q_j\}$. Thus, inequalities \eqref{qPushTASEP_estimate} continue to hold at time $n+1$
		as well.
	\end{enumerate}
	The above cases show that \eqref{qPushTASEP_estimate} holds.

	\smallskip

	The desired limiting
	bound from below is achieved
	with the help of \eqref{qPushTASEP_estimate}
	and Proposition \ref{prop:s7_Q}
	in exactly the same way as
	in the end of the proof of Lemma \ref{lemma:row_proof_s7}.
	This completes the proof of the claim.
\end{proof}

\begin{proposition}\label{prop:col_proof_s7}
	The following
	Law of Large Numbers holds for every $m=1, \dots, \bnum$ and every $i=1,\ldots,m$:
	\begin{align}\label{col_proof_s7}
		\frac{\tau^{(m)}_{i}(n)}n\to\frac{\be_{i}}{1-t},
		\qquad n\to\infty, \qquad \mbox{almost surely}.
	\end{align}
\end{proposition}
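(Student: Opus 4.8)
The plan is to run the argument of Proposition \ref{prop:row_proof_s7} ``in the transposed direction'': where that proof used \emph{lower} bounds on the extreme (independently jumping) particles coming from the TASEP coupling, here I will use the \emph{upper} bounds on the extreme particles furnished by Lemma \ref{lemma:col_proof_s7}, and then pin everything down by the exact conservation of the total number of boxes. As in §\ref{sub:strategy_of_the_proof}, I may assume all $\al_j=0$ (reorder $\A$ so dual letters come first, invoking Remark \ref{rmk:ordering}), so that the running state of the sampling algorithm is the finite interlacing array $\{\tau^{(m)}_i(n)\}_{1\le i\le m\le\bnum}$ of the dual part $\boldsymbol\la^{[\bb]}$, with $\tau^{(m)}_i(n)=\bigl(\la^{(\hat m)}(n)\bigr)'_i$ the $i$-th column length of $\la^{(\hat m)}(n)$, and in which only the rightmost particles $\tau^{(m)}_1$ jump independently.

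First I would record the interlacing. In $\boldsymbol\la^{[\bb]}$ consecutive diagrams differ by a vertical strip, so the transposed diagrams differ by a horizontal strip; by \eqref{interlace} this gives $\tau^{(m)}_i\le\tau^{(m-1)}_{i-1}$ for all admissible $i,m$, and iterating, $\tau^{(m)}_i(n)\le\tau^{(m-i+1)}_1(n)$ for $1\le i\le m\le\bnum$. Lemma \ref{lemma:col_proof_s7} gives $\tau^{(m-i+1)}_1(n)/n\to\be_{\bnum+i-m}/(1-t)$ almost surely, whence
\begin{align*}
	\limsup_{n\to\infty}\frac{\tau^{(m)}_i(n)}{n}\;\le\;\frac{\be_{\bnum+i-m}}{1-t},\qquad 1\le i\le m\le\bnum,\quad\text{almost surely.}
\end{align*}
Next, since a row of $\la^{(\hat m)}$ contains no repeated dual letter from $\{\hat 1,\ldots,\hat m\}$, one has $\la^{(\hat m)}_1\le m$, so $\bigl(\la^{(\hat m)}(n)\bigr)'$ has at most $m$ parts and therefore $\sum_{i=1}^m\tau^{(m)}_i(n)=|\la^{(\hat m)}(n)|$ equals the number of letters $\xi_k$, $1\le k\le n$, lying in $\{\hat 1,\ldots,\hat m\}$. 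By the classical Law of Large Numbers for the i.i.d.\ input this is $\tfrac{n}{1-t}\sum_{k=1}^m\be_{\bnum+1-k}+o(n)=\tfrac{n}{1-t}\sum_{i=1}^m\be_{\bnum+i-m}+o(n)$ almost surely (for $m=\bnum$ it is exactly $n$, and $\sum_i\be_i=1-t$).

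Finally I would conclude by the elementary argument already used at the end of the proof of Proposition \ref{prop:row_proof_s7}, with $\liminf$ and $\limsup$ interchanged: if the nonnegative quantities $a_i(n):=\tau^{(m)}_i(n)/n$ satisfy $\sum_{i=1}^m a_i(n)\to\sum_{i=1}^m b_i$ with $b_i:=\be_{\bnum+i-m}/(1-t)$, and $\limsup_n a_i(n)\le b_i$ for each $i$, then writing $a_{i_0}(n)=\sum_i a_i(n)-\sum_{i\ne i_0}a_i(n)$ and taking $\liminf$ forces $a_i(n)\to b_i$ almost surely for every $i$. Specializing to the top level $m=\bnum$, where $\be_{\bnum+i-\bnum}=\be_i$, yields \eqref{col_proof_s7}; this is the case that feeds into Theorem \ref{thm:main_s7}. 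The one step I expect to be delicate is the \emph{lower} estimate: pure interlacing only gives $\tau^{(m)}_i\ge\tau^{(i)}_i$, whose limit is too small, so the argument cannot be a mirror image of the row case and must instead be carried entirely through the upper bounds plus the exact mass conservation. The substantive upper bound itself — Lemma \ref{lemma:col_proof_s7}, i.e.\ the coupling \eqref{qPushTASEP_estimate} with $\{\Q_j\}$ together with Proposition \ref{prop:s7_Q} — is already in hand, so no further probabilistic input is needed.
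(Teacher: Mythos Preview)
Your proposal is correct and follows essentially the same approach as the paper: the paper's proof consists of the single sentence ``Since $\tau^{(m)}_i\le\tau^{(m+1-i)}_{1}$ due to interlacing, this proposition follows from Lemma~\ref{lemma:col_proof_s7} by induction in exactly the same way as Proposition~\ref{prop:row_proof_s7} follows from Lemma~\ref{lemma:row_proof_s7},'' and you have unpacked precisely this---the interlacing upper bound feeding into Lemma~\ref{lemma:col_proof_s7}, then the mass-conservation/induction argument from the row case with $\limsup$ and $\liminf$ swapped. You also correctly spotted that the argument actually yields $\tau^{(m)}_i(n)/n\to\be_{\bnum-m+i}/(1-t)$ for general $m$, which agrees with the displayed \eqref{col_proof_s7} only at $m=\bnum$ (the case that feeds into Theorem~\ref{thm:main_s7}); this is a minor imprecision in the paper's statement rather than in your reasoning.
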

\begin{proof}
	Since $\tau^{(m)}_i\le\tau^{(m+1-i)}_{1}$ for $i=1,\ldots,m$ due to
	interlacing, this proposition
	follows from Lemma \ref{lemma:col_proof_s7}
	by induction in exactly the same way as
	Proposition \ref{prop:row_proof_s7} follows from Lemma~\ref{lemma:row_proof_s7}.
\end{proof}


\subsection{Completing the proof} 
\label{sub:completing_the_proof_when_one_of_the_specializations_is_infinite}

With the results of
\S \ref{sub:row_lengths_finite_case} and \S \ref{sub:column_lengths_finitely_many_dual_variables},
we have now proved Theorem \ref{thm:main_s7} in the case
when both sequences $\ab$ and $\bb$ are finite.
Let us now extend this statement to the general case when these sequences
are allowed to be infinite, so the specialization depends on
\begin{align}\label{al_be_infinite}
	\al_1\ge\al_2\ge \ldots\ge0,
	\qquad
	\be_1\ge\be_2\ge \ldots\ge0,
	\qquad
	\sum_{i=1}^{\infty}\al_i+
	\frac{1}{1-t}\sum_{i=1}^{\infty}\be_i=1.
\end{align}

\smallskip

Assume that $\la(n)$ is the Young diagram
distributed according to the measure
$\HL_n^{\ab;\bb;\Pl_0}$. Recall that by $\la_j(n)$
and $\la'_j(n)$ we denote row and column lengths of this diagram.

\begin{lemma}\label{lemma:main_s7_lower}
	For any $k=1,2,\ldots$,
	we have
	\begin{align*}
		\liminf_{n\to\infty}\frac{\la_k(n)}{n}\ge\al_k,
		\qquad
		\liminf_{n\to\infty}\frac{\la_k'(n)}{n}\ge
		\frac{\be_k}{1-t}.
	\end{align*}
\end{lemma}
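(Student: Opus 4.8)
The plan is to reduce the infinite-parameter case to the finite-parameter case already settled in \S\ref{sub:row_lengths_finite_case} and \S\ref{sub:column_lengths_finitely_many_dual_variables}, using a monotone coupling built from the sampling algorithm. Fix $k$ and choose $N\ge k$. Let $\ab^{[N]}=(\al_1,\ldots,\al_N)$ and $\bb^{[N]}=(\be_1,\ldots,\be_N)$ be the truncated sequences, and let $c_N := p_1(\ab^{[N]};\bb^{[N]}) = \sum_{i=1}^{N}\al_i + (1-t)^{-1}\sum_{i=1}^{N}\be_i \le 1$. The key observation is that the full sampling Algorithm \ref{alg:main} for the measure $\HL_n^{\ab;\bb;\Pl_0}$ can be coupled with the sampling algorithm for $\HL_m^{\ab^{[N]}/c_N;\bb^{[N]}/c_N;\Pl_0}$ (rescaled so that $p_1=1$) as follows: run Algorithm \ref{alg:main} with input word $w=\xi_1\xi_2\cdots\xi_n$ of i.i.d.\ letters distributed according to $\mathrm{m}^{\ab;\bb}$, and simply \emph{ignore} every letter $\xi_i\in\{N+1,N+2,\ldots\}\cup\{\widehat{N+1},\widehat{N+2},\ldots\}$ (equivalently, by Remark \ref{rmk:ordering} we may put all the large letters at the top of the order, so their insertions do not affect the levels indexed by letters $\le N$ or $\le \widehat N$). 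The random number of retained letters among the first $n$ is $\mathrm{Binomial}(n,c_N)$, which by the classical Law of Large Numbers is $\approx c_N n$ almost surely; and the sub-array built from the retained letters is distributed exactly as the array produced by Algorithm \ref{alg:main} for the $N$-truncated specialization after that many steps. Since adding more letters to the word only moves particles to the right (each insertion step in \S\ref{sub:full_sampling_algorithm} consists of pushes/pulls to the right by one), the row lengths $\la_j(n)$ of the full diagram dominate the corresponding row lengths of the truncated sub-diagram, and likewise for column lengths (using the transpose/dual order of \S\ref{ssub:column_lengths}).

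Concretely, for the row part: order $\A$ so that $1<2<\cdots<N<(\text{everything else})$, and let $\la^{(N)}(n)$ denote the top level of the lower part $\boldsymbol\la^{[\ab^{[N]}]}$ of the array at step $n$. Then $\la_k(n)\ge \la^{(N)}_k(n)$ for all $n$ (a box contributing to row $k$ of the truncated sub-tableau is also a box in row $k$ of the full tableau). Conditionally on the positions of the retained letters, $\la^{(N)}(n)$ after $M$ retained steps has the same distribution as $\la^{(N)}(M)$ in the finite-parameter model with parameters $\ab^{[N]}$ (not rescaled — the rescaling only affects the time parametrization, not the combinatorial dynamics). By Proposition \ref{prop:row_proof_s7}, $\la^{(N)}_k(M)/M\to\al_k$ almost surely as $M\to\infty$; combining with $M/n\to c_N$ gives
\begin{align*}
	\liminf_{n\to\infty}\frac{\la_k(n)}{n}\ge \liminf_{n\to\infty}\frac{\la^{(N)}_k(n)}{n} = c_N\cdot\al_k \qquad\text{almost surely}.
\end{align*}
Now let $N\to\infty$: since $c_N\to 1$ by \eqref{al_be_infinite}, we conclude $\liminf_{n\to\infty}\la_k(n)/n\ge\al_k$ almost surely. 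The column part is identical, using the order $\widehat 1<\widehat 2<\cdots<\widehat N<(\text{rest})$, Proposition \ref{prop:col_proof_s7}, and $\mathrm{m}^{\ab;\bb}(\widehat\imath)=\be_i/(1-t)$, which yields $\liminf_{n\to\infty}\la'_k(n)/n\ge \be_k/(1-t)$.

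The main obstacle — the step requiring genuine care rather than bookkeeping — is justifying the monotone coupling: that ignoring the ``large'' letters in Algorithm \ref{alg:main} produces, on the retained sub-levels, \emph{exactly} the finite-parameter dynamics, and that the retained row/column lengths are pointwise dominated by the full ones. The first point follows from the locality of the update rules in \S\ref{sub:full_sampling_algorithm} (the modification at step $k$ only involves levels $\ge\xi_k$ in the order \eqref{A_order}, so placing all large letters above all of $\{1,\ldots,N,\widehat 1,\ldots,\widehat N\}$ makes their insertions invisible to the retained sub-array) together with the projective-limit compatibility already invoked in the proof of Theorem \ref{thm:it_samples}. The domination point is the statement that in any RSK-type bivariate dynamics boxes are only ever added (never removed), so the shape inside the sub-tableau occupied by retained letters is contained in the full shape, hence each row and column is no longer — this is essentially the content of the $\A$-tableau description in \S\ref{sub:state_space_towers_of_young_diagrams_a_tableaux_and_interlacing_configurations}. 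Both points are conceptually straightforward but need to be stated precisely; everything after that is an application of the finite-case results plus the classical Law of Large Numbers for $\mathrm{Binomial}(n,c_N)$.
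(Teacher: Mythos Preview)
Your approach is essentially the paper's: order $\A$ so that the first few letters sit at the bottom, use that the corresponding lower floors of the tower evolve autonomously (insertions of higher letters never touch them, by the locality in step {\rm\bf(IV)} of Algorithm~\ref{alg:main}), apply the finite-case Propositions~\ref{prop:row_proof_s7} and~\ref{prop:col_proof_s7} to those floors, and then use the tower inclusion $\la^{(N)}\subseteq\la^{\max}=\la(n)$ to get $\la_k(n)\ge\la^{(N)}_k(n)$. The paper carries this out with $N=k$.

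There is, however, a normalization slip in your bookkeeping. Proposition~\ref{prop:row_proof_s7} is stated for parameters summing to $1$ (condition~\eqref{finite_alphas_def}); in your truncated model the letter probabilities are $\al_i/c_N$, so the proposition actually gives $\la^{(N)}_k(M)/M\to\al_k/c_N$, not $\al_k$. Combined with $M/n\to c_N$ this yields $\la^{(N)}_k(n)/n\to\al_k$ directly for every fixed $N\ge k$, so the limit $N\to\infty$ and the intermediate bound $c_N\al_k$ are unnecessary detours --- the final answer survives, but by accident. The paper avoids the rescaling issue altogether by a clean trick: it views the first $k$ floors as the first $k$ floors of a $(k{+}1)$-parameter model with usual parameters $\al_1,\ldots,\al_k$ together with one dummy parameter $1-\sum_{i\le k}\al_i$; Proposition~\ref{prop:row_proof_s7} then applies verbatim and gives $\la^{(k)}_k(n)/n\to\al_k$ in one stroke. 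Your simultaneous truncation of the $\be$'s is harmless but plays no role in the row argument, since dual letters lie above level $N$ in your ordering and never update~$\la^{(N)}$; symmetrically for the column argument.
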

\begin{proof}
	Let us fix $k$ and prove that $\liminf_{n\to\infty}\frac{\la_k(n)}{n}\ge\al_k$. Consider the following ordering of the alphabet
	$\A$:
	\begin{align*}
		1<2<\ldots<k<\mbox{rest of the letters (ordered arbitrarily)}.
	\end{align*}
	The sampling algorithm
	for the measure $\HL_n^{\ab;\bb;\Pl_0}$
	under this ordering
	(\S \ref{sub:state_space_towers_of_young_diagrams_a_tableaux_and_interlacing_configurations}--\S \ref{sub:full_sampling_algorithm})
	is a Markov dynamics with state space consisting
	of towers of Young diagrams. First $k$
	floors in such a tower constitute an interlacing
	particle configuration
	$\{\la^{(m)}_{i}(n)\}_{1\le i\le m\le k}$.
	Moreover, employing the bijection with $\A$-tableaux,
	we see that $\la^{(k)}_{k}(n)$
	is the number of letters $k$ in the $k$-th row
	of the Young diagram $\la(n)$
	(this diagram is the shape of the $\A$-tableau).
	Therefore, $\la_k(n)\ge\la^{(k)}_k(n)$.

	On the other hand, note that the behavior of the first $k$ floors does not depend on what is happening above them.
    Therefore, we are in a position to apply Proposition
	\ref{prop:row_proof_s7} to the first $k$ floors (formally, we can consider the dynamics with finitely many
    $\al$- parameters $\al_1$, $\al_2$, $\dots$, $\al_k$, and $1 - \sum_{i=1}^k \al_i$, and apply Proposition
	\ref{prop:row_proof_s7} to this dynamics; the distribution of the first $k$ floors will be the same).
    We obtain that
	$\la^{(k)}_k(n)/n\to\al_k$.

	The corresponding statement about column lengths
	follows by considering the ordering
	\begin{align*}
		\hat 1<\hat 2<\ldots<\hat k<\mbox{rest of the letters (ordered arbitrarily)},
	\end{align*}
	and referring to Proposition \ref{prop:col_proof_s7}; again, we are able to apply this proposition due to the fact that the behavior
    of the first $k$ floors does not depend on what is happening above them.
\end{proof}

To finish the proof of
Theorem \ref{thm:main_s7},
it now remains to establish
upper bounds corresponding to the
lower bounds of Lemma \ref{lemma:main_s7_lower}:

\begin{lemma}\label{lemma:main_s7_upper}
	For any $k=1,2,\ldots$ and any $\varepsilon>0$,
	we have
	\begin{align*}
		\limsup_{n\to\infty}\frac{\la_k(n)}{n}<\al_k+\varepsilon,
		\qquad
		\limsup_{n\to\infty}\frac{\la_k'(n)}{n}<
		\frac{\be_k}{1-t}+\varepsilon.
	\end{align*}
\end{lemma}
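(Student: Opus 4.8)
The plan is to prove the two $\limsup$ bounds by combining the lower bounds of Lemma \ref{lemma:main_s7_lower} with the conservation of boxes, after first reducing to a situation with finitely many parameters. First I would fix $k$ and $\varepsilon>0$; since $\sum_{i\ge1}\al_i+\frac{1}{1-t}\sum_{i\ge1}\be_i=1$, choose $\anum\ge k$ and $\bnum$ large enough that the ``tail mass''
\begin{align*}
	\delta:=\sum_{i>\anum}\al_i+\frac1{1-t}\sum_{j>\bnum}\be_j<\varepsilon.
\end{align*}
The idea is that the boxes of $\la(n)$ which are \emph{not} accounted for by the first $\anum$ rows and first $\bnum$ columns of the limiting shape asymptotically make up at most a $\delta$-fraction of all $n$ boxes, so no single row among the first $k$ can exceed $\al_k+\delta$ in the limit.

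To make this precise I would use the sampling Algorithm \ref{alg:main} with a convenient order on $\A$: put the usual letters $1<2<\dots<\anum$ first, then the dual letters $\hat1<\dots<\hat\bnum$, then all remaining letters in arbitrary order. In the associated $\A$-tableau of shape $\la(n)$, every box is either (a) a usual letter $\le\anum$, (b) a dual letter $\le\hat\bnum$, or (c) one of the ``tail'' letters. By the classical Law of Large Numbers for the i.i.d.\ input word $w=\xi_1\dots\xi_n$, the number of boxes of type (c) is $n\bigl(\delta+o(1)\bigr)$ almost surely. A box of type (a) lying in row $i$ contributes to $\la^{(i)}_i$ when the letter equals $i$; more usefully, the number of boxes of type (a) in the first $\anum$ rows equals $\sum_{i=1}^{\anum}\la^{(i)}_i(n)\le\sum_{i=1}^{\anum}\la^{(\anum)}_i(n)$, which is the total count of usual letters $\le\anum$, namely $n\bigl(\sum_{i\le\anum}\al_i+o(1)\bigr)$ a.s.\ Similarly the boxes of type (b) number $n\bigl(\frac1{1-t}\sum_{j\le\bnum}\be_j+o(1)\bigr)$ a.s. Now I would argue: the box in position $(k,\la_k(n))$ (the rightmost box of row $k$) is either a letter $\le\anum$ (hence of type (a), and since rows are weakly increasing and usual letters in a column are distinct, only letters $1,\dots,k$ can appear in row $k$, so $\la_k(n)\le\#\{\text{letters }1,\dots,k\text{ in row }k\}\le\la^{(k)}_k(n)$ plus letters $>k$...). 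Actually the clean route: $\la_k(n)=\la^{(k)}_k(n)+(\text{boxes in row }k\text{ with letter }>k)$; the latter are of type (b) or (c), hence bounded by the total count of such letters.

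Putting these estimates together, $\limsup_n\la_k(n)/n\le\al_k+\bigl(\text{contribution of dual and tail letters to row }k\bigr)$. The cleanest way to bound that contribution is a global counting argument: by Lemma \ref{lemma:main_s7_lower}, $\liminf_n\la_i(n)/n\ge\al_i$ for all $i$ and $\liminf_n\la'_j(n)/n\ge\be_j/(1-t)$ for all $j$; summing the first over $i=1,\dots,\anum$ except $i=k$, the second over $j=1,\dots,\bnum$, and using $\sum_i\la_i(n)+\dots$... more directly, from $\sum_{i\ge1}\la_i(n)=n$ and Fatou-type reasoning,
\begin{align*}
	\limsup_{n\to\infty}\frac{\la_k(n)}{n}
	\le 1-\sum_{i\ne k}\liminf_{n\to\infty}\frac{\la_i(n)}{n}
	\le 1-\sum_{i\ne k}\al_i
	=\al_k+\frac1{1-t}\sum_{j\ge1}\be_j+\sum_{i>\anum}\al_i\;(\text{roughly}),
\end{align*}
which is not good enough because it includes \emph{all} the $\be$-mass. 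So the counting must instead be done with \emph{columns}: the first $\bnum$ column lengths already absorb the $\be$-mass, and I would apply Proposition \ref{prop:col_proof_s7} to the first $\bnum$ ``dual floors'' (which, by the same argument as in Lemma \ref{lemma:main_s7_lower}, do not feel anything above them) to get $\tau^{(\bnum)}_j(n)/n=(\la^{(\hat\bnum)}_j(n))'/n\to\be_j/(1-t)$ exactly. Then $\sum_{i\le\anum}\la_i(n)+\sum_{j\le\bnum}\la'_j(n)-(\text{overlap, the top-left }\anum\times\bnum\text{ block})$ together with the three box-type counts forces $\la_k(n)/n$ to converge to exactly $\al_k$: the lower bound from Lemma \ref{lemma:main_s7_lower} plus the fact that the complement (everything outside row $k$, within the first $\anum$ rows and first $\bnum$ columns, plus the tail) has total relative size $\le 1-\al_k+o(1)$. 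I expect the main obstacle to be bookkeeping the overlap region (the $\anum\times\bnum$ corner is double-counted by ``rows'' and ``columns'') and making the inclusion-exclusion rigorous uniformly in $n$; the cure is to phrase everything in terms of the $\A$-tableau, where each box carries exactly one letter, so each box is counted once and the partition of boxes into types (a)/(b)/(c) is exact. With that partition, the a.s.\ convergence of the type counts (classical LLN for $w$) plus the exact limits $\la^{(i)}_i(n)/n\to\al_i$ and $\tau^{(\bnum)}_j(n)/n\to\be_j/(1-t)$ from Propositions \ref{prop:row_proof_s7} and \ref{prop:col_proof_s7} pin down $\la_k(n)/n\to\al_k$ and $\la'_k(n)/n\to\be_k/(1-t)$, which is stronger than the claimed strict inequalities and hence completes the proof of Theorem \ref{thm:main_s7}.
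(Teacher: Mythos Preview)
Your core strategy — combine the lower bounds of Lemma~\ref{lemma:main_s7_lower} with box conservation — is exactly the paper's approach, but you have buried it under several unnecessary detours and then misidentified where the difficulty lies. The paper's proof is four lines. After choosing $\anum>k$ and $\bnum$ with $\sum_{i\le\anum}\al_i+\frac1{1-t}\sum_{j\le\bnum}\be_j>1-\varepsilon$, it uses the single elementary inequality
\[
\sum_{i=1}^{\anum}\la_i(n)+\sum_{j=1}^{\bnum}\la'_j(n)\le n+\anum\bnum,
\]
valid because the only boxes counted twice lie in the first $\anum$ rows and first $\bnum$ columns. Isolating $\la_k(n)$, dividing by $n$, taking $\limsup$, and applying the $\liminf$ bounds of Lemma~\ref{lemma:main_s7_lower} to every other term gives $\limsup\la_k(n)/n\le\al_k+1-\sum_{i\le\anum}\al_i-\frac1{1-t}\sum_{j\le\bnum}\be_j<\al_k+\varepsilon$. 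That is the whole proof: the ``overlap obstacle'' you flag is disposed of by the trivial bound $\anum\bnum$, not by any tableau bookkeeping.

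Your proposed ``cure'' via the $\A$-tableau partition is both unnecessary and, as written, has a gap. You want to invoke Proposition~\ref{prop:col_proof_s7} on the dual floors to get exact column limits, but in the ordering you fix (usual letters first, then dual), the dual floors sit \emph{above} the usual floors and do feel them; Proposition~\ref{prop:col_proof_s7} is proved for a pure dual specialization and does not apply. If instead you switch to the dual-first ordering to get column information, you are then just using the $\liminf$ bounds of Lemma~\ref{lemma:main_s7_lower} for both rows and columns (that lemma already uses different orderings for the two halves), which brings you back to the paper's argument. Drop the letter-type partition and the appeals to Propositions~\ref{prop:row_proof_s7} and~\ref{prop:col_proof_s7}; only Lemma~\ref{lemma:main_s7_lower} and the displayed inequality above are needed.
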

\begin{proof}
	Let us prove the bound for the row lengths
	(the case of the column lengths is analogous).
	We argue similarly to the
	proof of
	Proposition \ref{prop:row_proof_s7}.

	Fix $k$ and $\varepsilon$.
	Using \eqref{al_be_infinite}, choose $\anum$ and $\bnum$
	so large that $\anum>k$ and that
	\begin{align*}
		\sum_{i=1}^{\anum}\al_i+
		\sum_{j=1}^{\bnum}\frac{\be_j}{1-t}>1-\varepsilon.
	\end{align*}
	
	Moreover, we know that
	\begin{align*}
		\sum_{i=1}^{\anum}\la_i(n)+
		\sum_{j=1}^{\bnum}\la_j'(n)\le n+\anum\bnum,
	\end{align*}
	because the Young diagram $\la(n)$
	has $n$ boxes,
	and in the summation over $i$ and $j$
	above we can count twice only the boxes
	from first $\anum$ rows and $\bnum$ columns.

	Therefore, we can write (omitting dependence
	on $n$ in $\la_j$ and $\la_j'$)
	\begin{align*}
		\limsup_{n\to\infty}
		\frac{\la_k(n)}{n}&\le
		\limsup_{n\to\infty}
		\frac{1}{n}
		\Big(n-\anum\bnum-\la_1-\la_2- \ldots-\la_{k-1}
		-\la_{k+1}
		-\ldots-\la_\anum
		-\la_1'- \ldots-\la'_\bnum
		\Big)
		\\&\le
		1-\liminf_{n\to\infty}
		\frac{1}{n}
		\Big(
		\la_1+ \ldots+\la_{k-1}
		+\la_{k+1}
		+\ldots+\la_\anum
		+\la_1'+ \ldots+\la'_\bnum
		\Big)
		\\&\le
		\al_k+1-\sum_{i=1}^{\anum}\al_i-\sum_{j=1}^{\bnum}
		\frac{\be_j}{1-t}< \al_k+\varepsilon.	
	\end{align*}
	Here in the last estimate for $\liminf$ we have used
	Lemma \ref{lemma:main_s7_lower}.
\end{proof}

Lemmas \ref{lemma:main_s7_lower}
and \ref{lemma:main_s7_upper} readily
imply Theorem \ref{thm:main_s7}.



\providecommand{\bysame}{\leavevmode\hbox to3em{\hrulefill}\thinspace}
\providecommand{\MR}{\relax\ifhmode\unskip\space\fi MR }
\providecommand{\MRhref}[2]{%
  \href{http://www.ams.org/mathscinet-getitem?mr=#1}{#2}
}
\providecommand{\href}[2]{#2}

\end{document}